\numberwithin{equation}{section}
\numberwithin{figure}{section}
\theoremstyle{plain}
\newtheorem*{thm*}{\protect\theoremname}
\theoremstyle{plain}
\newtheorem{thm}{\protect\theoremname}
\theoremstyle{plain}
\newtheorem{lem}[thm]{\protect\lemmaname}
\theoremstyle{plain}
\newtheorem{cor}[thm]{\protect\corollaryname}
\theoremstyle{plain}
\newtheorem{prop}[thm]{\protect\propositionname}
\theoremstyle{definition}
\newtheorem{defn}[thm]{\protect\definitionname}
\theoremstyle{definition}
\newtheorem{example}[thm]{\protect\examplename}
\theoremstyle{remark}
\newtheorem{rem}[thm]{\protect\remarkname}
\theoremstyle{remark}
\newtheorem*{rem*}{\protect\remarkname}
\theoremstyle{plain}
\newtheorem*{prop*}{\protect\propositionname}
\newcommand{\jr}[1]{{\color{red} #1 }}
\def\GL{\mathrm{GL}}
\providecommand{\corollaryname}{Corollary}
\providecommand{\definitionname}{Definition}
\providecommand{\examplename}{Example}
\providecommand{\lemmaname}{Lemma}
\providecommand{\propositionname}{Proposition}
\providecommand{\remarkname}{Remark}
\providecommand{\theoremname}{Theorem}
\begin{document}
\title[Hypertranscendence and $q$-difference equations]{Hypertranscendence and $q$-difference equations over elliptic function
fields}
\author{Ehud de Shalit} 
\address{Einstein Institute of Mathematics, The Hebrew University of Jerusalem.}
\email{ehud.deshalit@mail.huji.ac.il}
\author{Charlotte Hardouin}
\address{Institut de Math\'ematiques de Toulouse, Universit\'e Paul Sabatier, 118, route de Narbonne, 31062 Toulouse, France.}
\email{hardouin@math.univ-toulouse.fr}
\author{Julien Roques}
\address{Universite Claude Bernard Lyon 1, CNRS, Ecole Centrale de Lyon, INSA Lyon, Universit\'e Jean Monnet, ICJ UMR5208, 69622 Villeurbanne, France.}
\email{Julien.Roques@univ-lyon1.fr}
\subjclass[2020]{39A06, 12H05}
\keywords{Difference equations, differential Galois theory, elliptic functions, hypertranscendence.}
\begin{abstract}
The differential nature of solutions of linear difference equations over the projective line was recently elucidated. In contrast, little is known about the differential nature of solutions of linear difference equations over elliptic curves. In the present paper, we study power series $f(z)$ with complex coefficients satisfying a linear difference equation over a field of elliptic functions $K$,
with respect to the difference operator $\phi f(z)=f(qz)$, $2\le q\in\mathbb{Z}$,
arising from an endomorphism of the elliptic curve. Our main theorem
says that such an $f$ satisfies, in addition, a polynomial differential
equation with coefficients from $K,$ if and only if it belongs to
the ring $S=K[z,z^{-1},\zeta(z,\Lambda)]$ generated over $K$ by
$z,z^{-1}$ and the Weierstrass $\zeta$-function. This is the first elliptic extension of recent theorems of Adamczewski, Dreyfus and Hardouin  concerning the differential transcendence of solutions of difference equations with coefficients in $\mathbb{C}(z),$ in which various difference operators were considered (shifts, $q$-difference
operators or Mahler operators). While the general approach, of using
parametrized Picard-Vessiot theory, is similar, many features, and
in particular the emergence of monodromy considerations and the ring
$S$, are unique to the elliptic case and are responsible for non-trivial difficulties. We emphasize that, among the intermediate results,  
we prove an integrability result for difference-differential systems over elliptic
curves which is a genus one analogue of the integrability results obtained by Sch\"afke and Singer over the projective line.
\end{abstract}

\maketitle

\section{Introduction}

In 1886 Otto H\"{o}lder \cite{Hol} proved that the Gamma function $\Gamma(z)$
is not only transcendental, but \emph{hypertranscendental}: it does
not satisfy any polynomial differential equation $P(f,f',,...,f^{(r)})=0$
whose coefficients are polynomials in $z$. This lies in contrast
to other well-known transcendental functions like $\exp(z),$ hypergeometric
functions or theta functions, which all satisfy familiar (and important)
polynomial differential equations over the field $\mathbb{C}(z)$
of rational functions. 

It turns out that what prevents $\Gamma(z)$ from satisfying a polynomial
differential equation over $\mathbb{C}(z)$ is the relation
\[
\Gamma(z+1)-z\Gamma(z)=0,
\]
in itself an instance of a linear \emph{difference equation} in the
difference operator $\phi f(z)=f(z+1).$ Indeed, as Adamczewski, Dreyfus
and Hardouin showed recently \cite{A-D-H}, if $F$ is a field of
meromorphic functions on $\mathbb{C}$, invariant under $\phi,$ satisfying
the following two properties
\begin{itemize}
\item $F^{\phi}=\{f\in F|\,\phi f=f\}=\mathbb{C},$
\item $F\cap\mathbb{C}(z,\exp(az)|\,a\in\mathbb{C})=\mathbb{C}(z)$,
\end{itemize}
and if $f\in F$ satisfies a linear $\phi$-difference equation \emph{and}
a polynomial differential equation, both with coefficients from $\mathbb{C}(z)$,
then $f\in\mathbb{C}(z)$. This includes H\"{o}lder's theorem as a special
case. The above-cited paper considered several other theorems of the
same nature, pertaining to difference operators that are shifts, $q$-difference
operators or Mahler operators, all over the ground field $\mathbb{C}(z).$
This gives us a good understanding of the differential nature of solutions of difference equations on $\mathbb{P}^{1}(\mathbb{C})$.

By Hurwitz's automorphisms theorem, any automorphism of a compact Riemann surface of genus $g >1$   is of  finite order. Therefore,  the algebro-differential study of solutions of difference equations over Riemann surfaces reduces to the genus zero or one case. Indeed,  the structure of a difference equation associated with a finite order automorphism is not rich enough to capture the algebraic nature of its solutions.  For genus one Riemann surfaces, that is, complex elliptic curves, there are essentially two kinds of endomorphisms: translations by a point of the curve and isogenies.

Our goal in the present work is to consider elliptic function fields
as ground fields, and a difference operator arising from an isogeny
of the elliptic curve\footnote{The case of a translation on the elliptic curve will be addressed in a forthcoming
paper by the last two authors.}. In this framework, we prove  an  analogue of the main result of \cite{A-D-H}. In our case, the dichotomy between rational solutions (that is, those belonging to the function field of the curve) and differentially transcendental solutions, observed in the genus zero case, is no longer true. Overcoming this difficulty leads to an explicit description of all the differentially algebraic solutions in terms of special functions. To describe our main result, let us fix some notation.

\bigskip{}

Let $\Lambda_{0}$ be a lattice in $\mathbb{C}$ and $K$ the field
of all meromorphic functions in the complex plane that are $\Lambda$-periodic
with respect to some sublattice $\Lambda\subset\Lambda_{0}.$ Fix
$2\le q\in\mathbb{Z}$. The field $K$ admits an \emph{automorphism
$\phi$ }and a\emph{ derivation $\partial$ }given by\emph{
\[
\phi f(z)=f(qz),\,\,\,\,\,\partial(f)=f',\,\,\,\,\partial\circ\phi=q\phi\circ\partial.
\]
}Both $\phi$ and $\partial$ extend to $M,$ the field of all meromorphic
functions in the complex plane, and to the field of Laurent power
series $F=\mathbb{C}((z)).$ Clearly, $K\subset M\subset F.$

A power series $f\in F$ is said to satisfy a linear homogeneous $\phi$\emph{-difference
equation} (called also a $q$-difference equation) over $K$ if
\begin{equation}\label{eq:phi diff eq intro}
\sum_{i=0}^{n}a_{n-i}\phi^{i}f=\sum_{i=0}^{n}a_{n-i}(z)f(q^{i}z)=0 
\end{equation}
for some $n$ and $a_{j}\in K.$ It is called \emph{$\partial$-algebraic
}over $K$ if there exists a nonzero polynomial $P\in K[X_{0},...,X_{r}]$,
for some $r\ge0,$ such that
\[
P(f,\partial f,...,\partial^{r}f)=0.
\]
A power series that is not $\partial$-algebraic is called \emph{hypertranscendental
(over $K$).}

The main result of this paper is the following complete description of the $\partial$-algebraic solutions in $F$ of $\phi$-difference equations of the form \eqref{eq:phi diff eq intro}.

\begin{thm*}[Theorem \ref{thm:Main theorem}]
Assume that $f\in F$ satisfies a non-trivial linear homogeneous $\phi$-difference
equation over $K.$ Then, the following properties are equivalent: 
\begin{enumerate}
\item $f$ is $\partial$-algebraic over $K$;
\item $f$ lies in the ring generated over $K$ by $z^{\pm1}$ and the
Weierstrass zeta function $\zeta(z,\Lambda_{0})$ of $\Lambda_{0}$, {\it i.e.},
\[
f\in S=K[z^{-1},z,\zeta(z,\Lambda_{0})].
\]
\end{enumerate}
\end{thm*}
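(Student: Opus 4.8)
The implication $(2)\Rightarrow(1)$ is straightforward. Since $\partial z=1$, $\partial(z^{-1})=-z^{-2}$ and $\partial\zeta(z,\Lambda_{0})=-\wp(z,\Lambda_{0})$ with $\wp(z,\Lambda_{0})\in K$ (being $\Lambda_{0}$-periodic), the field $K(z,\zeta(z,\Lambda_{0}))$ is stable under $\partial$ and has transcendence degree at most $2$ over $K$; hence every one of its elements, and in particular every element of $S=K[z^{-1},z,\zeta(z,\Lambda_{0})]$, is $\partial$-algebraic over $K$.

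For $(1)\Rightarrow(2)$ the plan is to run parametrized Picard--Vessiot (PPV) theory in the present ``twisted'' setting $\partial\circ\phi=q\,\phi\circ\partial$. First I would attach to $f$ the $\phi$-difference module $N$ over $K$ that it generates and, using the integrability result for difference--differential systems over elliptic curves referred to above (the genus-one analogue of Sch\"afke--Singer), equip $N$ with a compatible $\partial$-structure. This produces a PPV extension of $K$ together with a parametrized Galois group $G$, a linear differential algebraic subgroup of some $\mathrm{GL}_{n}$ over the field of $\phi$-constants of $K$. A meromorphic function that is $\Lambda$-periodic and satisfies $f(qz)=f(z)$ has a dense group of periods, hence is constant, so this constant field is $\mathbb{C}$; after passing to a differential closure of $\mathbb{C}$ (or by descent, $\mathbb{C}$ being algebraically closed) the theory applies, the Zariski closure $\overline{G}$ of $G$ is the ordinary difference Galois group of $N$, and the $\partial$-transcendence degree over $K$ of the PPV extension equals the differential dimension of $G$.

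Next I would transfer the hypertranscendence hypothesis into group theory: since every element of $G$ fixes $K$ and commutes with $\partial$, any nonzero polynomial differential relation over $K$ satisfied by $f$ is $G$-invariant, which through the Galois correspondence forces $G$ to be ``small in the directions seen by $f$''. Concretely, after a d\'evissage of $N$ along a composition series as a $\phi$-module --- reducing, as in the work of Adamczewski--Dreyfus--Hardouin, to rank-one equations $\phi(y)=ay$ with $a\in K^{\times}$ and to inhomogeneous equations $\phi(y)=y+b$ with $b\in K$ --- the $\partial$-algebraicity of $f$ should translate into cohomological constraints of Hardouin--Singer type, twisted in accordance with $\partial\circ\phi=q\,\phi\circ\partial$: a rank-one piece can contribute a $\partial$-algebraic solution only if $\mathcal{L}(\partial a/a)$ is a twisted $\phi$-coboundary in $K$, of the shape $q^{m}\phi(g)-g$, for some nonzero $\mathcal{L}\in\mathbb{C}[\partial]$, and an inhomogeneous piece only if $\mathcal{L}(b)$ is such a coboundary for some nonzero $\mathcal{L}$.

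The hard part will be to solve these cohomological conditions inside the elliptic function field $K$ and then to recognize the resulting solutions as elements of $F=\mathbb{C}((z))$. For the first, I would use the divisor and residue description of elliptic functions, so that the obstruction to ``$\phi$-integrating'' an elliptic function within $K$ is carried by finitely many residue- and period-type functionals; for the second, I would use monodromy and meromorphy, since a primitive manufactured by integration must in fact lie in $F$, i.e. be meromorphic at the origin and compatible with the analytic continuation imposed by the $\phi$-equation. This is precisely what excludes genuinely multivalued primitives (logarithms and the like) and pins the non-elliptic contributions down to $z$ and $z^{-1}$, coming from $\phi(z)=qz$, together with the Weierstrass zeta function $\zeta(z,\Lambda_{0})$, the essentially unique non-elliptic primitive of a differential of the second kind available over $K$; this gives $f\in K[z^{-1},z,\zeta(z,\Lambda_{0})]=S$. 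I expect this final identification, together with making the d\'evissage and the integrability input precise over a genus-one base, to be where the argument departs most sharply from the genus-zero case.
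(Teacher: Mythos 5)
Your direction $(2)\Rightarrow(1)$ is fine, but the plan for $(1)\Rightarrow(2)$ has a genuine gap right at the start: you propose to use the genus-one integrability theorem to ``equip $N$ with a compatible $\partial$-structure''. That result (Corollary \ref{cor:Little theorem-1}) applies only when $f$ satisfies a \emph{linear} homogeneous differential equation over $K$; here the hypothesis is merely that $f$ is $\partial$-algebraic, and in that generality the $\phi$-module generated by $f$ carries no compatible $\partial$-structure --- indeed $\partial$-integrability forces the difference Galois group to be solvable (Corollary \ref{cor: integrability implies solvability-1}), which fails for a general $f$ satisfying only a polynomial differential relation. So the PPV machinery cannot be launched from an integrable $(\phi,\partial)$-module; it must be run for the bare difference module, with the differential information entering only through the parametrized Galois group. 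Relatedly, there is no PPV theory ``in the twisted setting $\partial\circ\phi=q\,\phi\circ\partial$'': the theory requires the derivation to commute with $\phi$, which is why the paper replaces $\partial$ by $\delta=z\partial$, works over $K'=K(z)$, and then descends back to $K$ (Lemma \ref{lem:Irreducibility lemma}, Lemma \ref{lem:going up and down}, and the descent steps inside Proposition \ref{prop:Main in simple case}); this base change and descent is a substantive part of the argument that your sketch does not address.

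Second, the d\'evissage you describe skips the step that carries most of the weight: one must show that an irreducible constituent admitting a nonzero $\partial$-algebraic solution has rank $1$. In the paper this rests on Cassidy's theorem on Zariski-dense proper differential subgroups of simple algebraic groups (Theorem \ref{thm: Cassidy's theorem on simple Zariski closure}), combined with the integrability-implies-solvability result proved via elliptic monodromy --- not on a Hardouin--Singer type cohomological criterion, which only becomes usable after this reduction. Finally, in the inductive step the inhomogeneous equation is $\phi(w)=aw+b$ with $b\in S$ (not $b\in K$), and after the Galoisian Key Lemma one is left with an identity $\mathscr{L}(b)=(\phi-a)(f)$ inside $S_{\Lambda}$; concluding from this that $w\in S$ requires the technical Proposition \ref{prop:technical_elliptic} of Appendix A (an induction on the degree in $\zeta$ using the Legendre relation), which your appeal to residue- and period-type functionals and to monodromy gestures at but does not supply. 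As it stands the proposal is a plausible road map in the spirit of Adamczewski--Dreyfus--Hardouin, but the elliptic-specific ingredients that make it work are missing or misapplied.
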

The ring $S$ seems ubiquitous when studying functional equations over elliptic curves. It appeared also in \cite{dS21}, in the
classification of elliptic $(p,q)$-difference modules (for $p$ and
$q$ relatively prime $\ge2$). Its emergence in that work was attributed
to the appearance of certain non-trivial vector bundles (Atiyah's
bundles) over the elliptic curve $\mathbb{C}/\Lambda_{0}$. In the
present work, the same ring $S$, or rather its subring $S_{0}=K[z,\zeta(z,\Lambda_{0})]$,
arises, in the context of \emph{linear} differential equations over $K,$ from \emph{monodromy
considerations} (see Theorem \ref{thm:Entries of U are in S_0}).
Once we pass to arbitrary \emph{polynomial} differential equations over $K$, the ring $S$
arises as the \emph{Picard-Vessiot ring} of a certain fundamental
$\partial$-integrable $\phi$-module. Some properties of the ring
$S$ that are needed in later proofs are explored in \S2 and Appendix
A.
\bigskip{}

As an intermediate step toward the proof of the above theorem, we prove the following integrability result describing the elements in $F$ that satisfy both a linear homogeneous $\phi$-difference
equation, and a \emph{linear homogeneous ordinary differential equation} over $K$. It is the first elliptic analogue of several integrability results obtained for difference equations over the projective line, see \cite{ramis1992growth,bezivin1993solutions,bezivin1994classe,SS16}.

\begin{thm*}[Corollary \ref{cor:Little theorem-1}]
 Assume that $f\in F$ satisfies a linear homogeneous $\phi$-difference
equation over $K.$ Then, the following properties are equivalent: 
\begin{enumerate}
\item $f$ satisfies a linear homogeneous ordinary differential equation over $K$;
\item $f$ lies in the ring $S_{0}$ generated over $K$ by $z$ and the
Weierstrass zeta function $\zeta(z,\Lambda_{0})$ of $\Lambda_{0}$, {\it i.e.},
\[
f\in S_{0}=K[z,\zeta(z,\Lambda_{0})].
\]
\end{enumerate}
\end{thm*}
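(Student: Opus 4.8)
The plan is to prove Corollary~\ref{cor:Little theorem-1}, which we may state compactly: for $f\in F$ satisfying a $\phi$-difference equation over $K$, being a solution of a \emph{linear} ODE over $K$ is equivalent to $f\in S_0=K[z,\zeta(z,\Lambda_0)]$.

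The direction (2)$\Rightarrow$(1) is the easy, purely computational half. One checks that $S_0$ is stable under $\partial$: indeed $\partial z=1$, $\partial\zeta(z,\Lambda_0)=-\wp(z,\Lambda_0)\in K$ (since $\wp(\cdot,\Lambda_0)$ is $\Lambda_0$-periodic, hence lies in $K$), and $\partial K\subset K$. Thus $S_0$ is a finitely generated $K[\partial]$-stable module — more precisely $S_0$ is spanned over $K$ by the monomials $z^i\zeta^j$, and $\partial$ raises neither degree — so any $f\in S_0$ lies in a finite-dimensional $K$-vector subspace of $S_0$ stable under $\partial$, whence $f,\partial f,\partial^2 f,\dots$ are $K$-linearly dependent and $f$ satisfies a linear ODE over $K$. (One should note this argument does not even use the $\phi$-difference equation.)

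For (1)$\Rightarrow$(2), the idea is to run the Picard–Vessiot/monodromy machinery that the excerpt has already set up. Since $f$ satisfies a linear ODE over $K$, it generates (together with the $\phi$-difference equation it satisfies) a $\partial$-integrable $\phi$-module over $K$ in the sense alluded to before Theorem~\ref{thm:Entries of U are in S_0}; equivalently, there is a fundamental matrix $U$ of the associated differential system whose entries, by Theorem~\ref{thm:Entries of U are in S_0}, lie in $S_0$. The point is then to descend from ``entries of a fundamental matrix lie in $S_0$'' to ``$f$ itself lies in $S_0$'': $f$ is a $K$-linear combination of entries of $U$ determined by its initial conditions, so $f\in S_0$ directly, \emph{provided} one has arranged the linear ODE and the $\phi$-equation to be compatible, i.e.\ that $f$ generates a genuine $\partial$-integrable $\phi$-module and not merely separately a $\phi$-module and a $\partial$-module. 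Establishing this compatibility — that the linear ODE and the $\phi$-difference equation satisfied by $f$ can be assembled into an integrable difference-differential system over $K$, so that Theorem~\ref{thm:Entries of U are in S_0} applies — is the crux. This is exactly where the elliptic integrability result advertised in the abstract (the genus-one analogue of Sch\"afke–Singer) enters: one uses it to show that the $\partial$-module structure forces enough rigidity that, after possibly enlarging the systems, $\phi$ and $\partial$ act compatibly, producing the integrable module whose Picard–Vessiot ring is controlled by $S_0$.

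The main obstacle, as in the genus-zero case but aggravated here, is this passage from the separate existence of a linear ODE and a $\phi$-equation to their \emph{simultaneous integrability}. Over $\mathbb{P}^1$ one exploits that $\mathbb{C}(z)^\phi=\mathbb{C}$ and a clean classification of the relevant rank-one objects; over the elliptic function field $K$ the constants $K^\phi$ and the relevant extensions are richer, and it is precisely the failure of the naive dichotomy (noted in the introduction) that the ring $S_0$ repairs. Concretely, I expect the hard work to be: (i) normalizing the $\phi$-module generated by $f$ so that its $\partial$-action descends to $K$ (an integrability/consistency check using $\partial\circ\phi=q\,\phi\circ\partial$), and (ii) invoking Theorem~\ref{thm:Entries of U are in S_0} to locate the fundamental solutions in $S_0$ and then reading off that $f$, being a specific entry/combination, lies in $S_0$. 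Step (ii) is then formal; step (i), together with the integrability theorem it relies on, carries the real content.
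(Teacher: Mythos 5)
Your direction (2)$\Rightarrow$(1) is fine and matches what the paper would do: $S_0$ is filtered by finite-dimensional $\partial$-stable $K$-subspaces (since $\partial z=1$ and $\partial\zeta=-\wp\in K$), so every element of $S_0$ satisfies a linear ODE over $K$.

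For (1)$\Rightarrow$(2) you have the right skeleton (apply Theorem \ref{thm:Entries of U are in S_0} to a consistent $(\phi,\partial)$-system attached to $f$, then read $f$ off from the fundamental matrix $U\in\GL_n(S_0)$), but the step you single out as the crux --- ``establishing that the linear ODE and the $\phi$-equation can be assembled into an integrable system over $K$'' --- is both left unproven and misdiagnosed, and the way you propose to fill it is circular. You appeal to ``the elliptic integrability result advertised in the abstract (the genus-one analogue of Sch\"afke--Singer)''; but in this paper that advertised result \emph{is} the very corollary you are proving (together with Theorem \ref{thm:Entries of U are in S_0}), not an independent input one can quote. In fact no rigidity or integrability theorem is needed at this point: since $f$ lives in $F=\mathbb{C}((z))$, on which both $\phi$ and $\partial$ act with $\partial\circ\phi=q\,\phi\circ\partial$, the space $W=\mathrm{Span}_K\{\phi^i\partial^j f:\ 0\le i<m,\ 0\le j<n\}\subset F$ is \emph{automatically} a finite-dimensional $(\phi,\partial)$-module over $K$ (this is Example \ref{exa:basic example-1}; finite-dimensionality uses both equations plus the commutation rule, compatibility is tautological because $\Phi=\phi$ and $\nabla=\partial$ are restrictions of globally defined operators on $F$). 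So the ``hard'' compatibility you defer to an external theorem is a one-line observation, while the genuine content sits entirely in Theorem \ref{thm:Entries of U are in S_0}, which you do invoke correctly.

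There is also a smaller imprecision in your read-off step. One does not express $f$ as a $K$-combination of entries of the fundamental matrix of ``the associated differential system'' in the contravariant convention; rather, choosing a basis $e_1,\dots,e_n$ of $W$ with $e_1=f$ and writing $\phi e_i=\sum_j a_{ij}e_j$, $\partial e_i=\sum_j b_{ij}e_j$, the pair $(A^*,B^*)=((a_{ij}),(b_{ij}))$ (the dual system) is consistent, Theorem \ref{thm:Entries of U are in S_0} gives $U\in\GL_n(S_0)$ with $\partial U=B^*U$, and then one uses that in any $\partial$-extension of $S_0$ with constants $\mathbb{C}$ the solution space of $\partial y=B^*y$ is exactly $U\mathbb{C}^n$; since ${}^t(e_1,\dots,e_n)\in F^n$ is such a solution, each $e_i$, in particular $f$, lies in $S_0$. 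Your phrase ``determined by its initial conditions'' gestures at this but the actual mechanism is the constancy of the constant field, not initial conditions. With the Example \ref{exa:basic example-1} observation replacing your appeal to an external integrability theorem, and the dual-system/constant-field argument making the read-off precise, your outline becomes the paper's proof.
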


As implied above,
the proof of this result uses monodromy considerations for modules
with a connection over $K,$ which admit, in addition, a $\phi$-structure
(termed \emph{$\phi$-isomonodromic}). Dually, these modules can be
regarded as $\partial$-integrable $\phi$-modules. Section \ref{sec:equations and modules}
reviews the language of $\phi$-, $\partial$- and $(\phi,\partial)$-modules,
its relation to linear systems of $\phi$-difference and differential
equations, and the important concepts of $\phi$-isomonodromy and
$\partial$-integrability. Sections 6,7 and 8 lead the way to Theorem
\ref{thm:Entries of U are in S_0} and Corollary \ref{cor:Little theorem-1},
but the results obtained on the way, like the relation between integrability
and solvability (Corollary \ref{cor: integrability implies solvability-1})
are of independent interest, and are used again in the proof of the
Main Theorem.

\bigskip{}

Once we consider arbitrary polynomial differential equations over $K$, we are forced
to use the machinery of $\delta$-parametrized Picard Vessiot (PPV)
theory of $\phi$-difference equations, and $\delta$-parametrized
$\phi$-difference Galois theory. This theory, expounded in \cite{H-S08},
but less familiar than classical (non-parametrized) Picard Vessiot
theory, is used in our work in roughly the same manner as in \cite{A-D-H},
where the ground field is $\mathbb{C}(z).$ The details pertaining
to the ground field are, of course, different. In particular, since
PPV theory only works under the assumption that $\phi$ and $\delta$
commute, we must replace the derivation $\partial$ by $\delta=z\partial.$
This derivation, however, exists only over the field $K'=K(z),$ forcing
us to base-change from $K$ to $K'$, and then use descent arguments.
Classical and $\delta$-parametrized Picard Vessiot theory are surveyed
in Section \ref{sec:Picard-Vessiot-and-parametrized}. Section \ref{sec:A-Galoisian-criterion}
connects the PPV theory to the framework of $\phi$-modules, providing
a Galoisian criterion for $\delta$-integrability (a variant of results
already to be found in the literature).

The proof of the Main Theorem occupies Sections 9-12. Similarly to
the program carried out in \cite{A-D-H} over $\mathbb{C}(z),$ it
starts with the rank 1 case, where again, some elliptic function theory
is needed. We then consider an extreme case, in which the difference
Galois group $G$ associated with our $\phi$-module is \emph{simple.
}A deep theorem of Cassidy (Theorem 19 of \cite{Cas89}, see also
Theorem \ref{thm: Cassidy's theorem on simple Zariski closure} below)
allows us to deduce that in this case, the fundamental matrix of solutions
has a maximal $\delta$-transcendence degree, and in particular any
particular solution is hypertranscendental. This also settles the
Main Theorem in the \emph{irreducible case,} \emph{i.e.}, when the $\phi$-module
associated to the difference equation satisfied by $f$, is irreducible,
or, equivalently, when the standard representation of $G$ is irreducible.

To treat the general case, we use an inductive approach, similar to
the one used in \cite{A-D-H}. The rank 1 case, proved right at the
beginning, becomes instrumental. The last stage of the proof may be
described as ``Galois acrobatics''. At  the very final step, Proposition \ref{prop:technical_elliptic}, proved by a  technical tour de force and unique
to the elliptic set-up, plays a crucial role. 

\bigskip{}

We end with a word for the experts, regarding our use of $\delta$-parametrized
Picard Vessiot theory, and linear differential algebraic groups (LDAG's)
in general. We work in the classical language of Weil and Kolchin,
over a ``universal'' differentially closed field of constants $\mathbb{C}\subset\widetilde{C}$,
requiring descent arguments to come down to $K$ and $S$. While a
scheme-theoretic or Tannakian approach has been developed by various
authors to some extent, not all the results we need are in the literature,
and estabilishing them would have taken us beyond the scope of this
work.

\vskip 5 pt
\noindent {\bf Acknowledgements.} The work of the second and third authors was supported by the ANR De rerum natura project, grant ANR-19-CE40-0018 of the French Agence Nationale de la Recherche.

\section{\label{sec:Elliptic-functions}Elliptic functions and related rings}
\subsection{\label{subsec:K phi and partial}The ground field $K$, the automorphism
$\phi$ and the derivation $\partial$}

In this paper, we use standard notation of difference and differential algebra which can be found in \cite{Kol}, \cite{Cohn}. Algebraic attributes (e.g. Noetherian)
are understood to apply to the underlying ring. Attributes that apply to the difference (resp. differential) structure are usually prefixed with $\phi$ (resp. $\partial$). For instance, a $\phi$-ring is a ring with an endomorphism $\phi$, a $\phi$-ideal is an ideal of a $\phi$-ring that is set-wise invariant by $\phi$ etc.
\subsubsection{The field $K$}

For a lattice $\Lambda\subset\mathbb{C}$ we denote by $K_{\Lambda}$
the field of $\Lambda$-periodic meromorphic functions ($\Lambda$-elliptic
functions). It is well known that
\[
K_{\Lambda}=\mathbb{C}(\wp(z,\Lambda),\wp'(z,\Lambda))
\]
is generated over $\mathbb{C}$ by the Weierstrass $\wp$-function
\[
\wp(z,\Lambda)=\frac{1}{z^{2}}+\sum_{0\ne\omega\in\Lambda}\left(\frac{1}{(z-\omega)^{2}}-\frac{1}{\omega^{2}}\right)
\]
and its derivative. If $\Lambda'\subset\Lambda$ is a sublattice of
$\Lambda$ then $K_{\Lambda}\subset K_{\Lambda'}$. Two lattices $\Lambda$
and $\Lambda'$ are called \emph{commensurable} if $\Lambda\cap\Lambda'$
is a lattice, necessarily of finite index in each of them. Equivalently,
$\Lambda$ and $\Lambda'$ are commensurable if their $\mathbb{Q}$\jr{-}spans coincide: $\mathbb{Q}\Lambda=\mathbb{Q}\Lambda'$. The notion
of being commensurable is an equivalence relation on the set of lattices.
Fix an equivalence class $\mathfrak{L}$ (called a commensurability
class) and let
\[
K=\bigcup_{\Lambda\in\mathfrak{L}}K_{\Lambda}.
\]
It is readily seen that $K$ is a field, indeed equal to the union
of $K_{\Lambda}$ for all sublattices $\Lambda\subset\Lambda_{0}$,
if $\Lambda_{0}$ is any given member of the class $\mathfrak{L}.$
If $E_{\Lambda}$ is the complex elliptic curve whose associated Riemann
surface is $\mathbb{C}/\Lambda$, then $K_{\Lambda_{0}}$ is the field
of rational functions on $E_{\Lambda_{0}},$ and $K$ is the field
of rational functions on its universal cover (in the algebraic sense)
$\lim_{\leftarrow}E_{\Lambda},$ where the limit ranges over all the
unramified coverings $E_{\Lambda}\to E_{\Lambda_{0}}$ ($\Lambda\subset\Lambda_{0}).$

\subsubsection{The automorphism $\phi$}

Let
\[
\phi(z)=qz
\]
for $q\in\mathbb{Z}_{\ge 2}.$ As $\phi$ preserves every $\Lambda\in\mathfrak{L},$
it induces a non-trivial endomorphism (an isogeny) of $E_{\Lambda}$,
and an endomorphism $\phi$  of $K_{\Lambda}$  such that 
$(\phi f)(z)=f(qz)$ for any $f$ in $K_{\Lambda}$. As $\phi(K_{q\Lambda})=K_{\Lambda}\subset K_{q\Lambda},$
we see that $\phi$ induces an \emph{automorphism} of the field $K$.
In the language of difference algebra, $\phi$ is a \emph{difference
operator}, and the pair $(K,\phi)$ is a difference field. 

\begin{rem}
If the lattices $\Lambda\in\mathfrak{L}$ admit complex multiplication
in a quadratic imaginary field $k$, then $q$ could be taken to be
any non-zero non-unit element of the ring of integers $\mathcal{O}_{k}$.
For simplicity, however, we assume that $q$ is a rational integer. 
\end{rem}

The following result will be used much later.
\begin{lem}
\label{lem:no finite phi extensions of K}The field $K$ does not
admit any finite field extension $L/K$ to which $\phi$ extends as
an automorphism.
\end{lem}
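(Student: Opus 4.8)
The plan is to argue by contradiction. Suppose $L/K$ is a finite extension with $L\ne K$ to which $\phi$ extends as an automorphism $\widetilde{\phi}$, and put $d=[L:K]\ge 2$. The strategy is to descend $L$ to a finite ``level'' of the tower $(E_\Lambda)_{\Lambda\in\mathfrak L}$, translate everything into a covering of elliptic curves, and then exploit the fact that $\widetilde\phi$ rescales the lattice to pin down the genus of that covering.

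First I would pick a primitive element $\alpha$ of $L/K$ with minimal polynomial $p\in K[X]$, $\deg p=d$. Since $K=\bigcup_{\Lambda\subseteq\Lambda_0}K_\Lambda$, there is a sublattice $\Lambda_1\subseteq\Lambda_0$ which, after further shrinking, I may assume satisfies $p\in K_{\Lambda_1}[X]$ as well as $\widetilde\phi(\alpha),\widetilde\phi^{-1}(\alpha)\in K_{\Lambda_1}[\alpha]=:M$. Because $p$ is irreducible over $K$, it is irreducible over every subfield $K_\Lambda$ with $\Lambda\subseteq\Lambda_0$; hence $M$ is a field with $[M:K_{\Lambda_1}]=d$, and for every $\Lambda\subseteq\Lambda_1$ the compositum $M\cdot K_\Lambda=K_\Lambda[X]/(p)$ is again a field, of degree $d$ over $K_\Lambda$. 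Geometrically, $M$ is the function field of a connected smooth projective curve $C$ carrying a degree-$d$ covering $\pi\colon C\to E_{\Lambda_1}$ attached to $K_{\Lambda_1}\hookrightarrow M$.

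The key step is to show that $\widetilde\phi$ forces $\pi$ to be unramified. A direct computation gives $\phi(K_{\Lambda_1})=K_{q^{-1}\Lambda_1}$, a subfield of $K_{\Lambda_1}$ of index $q^2$ corresponding to the isogeny $E_{\Lambda_1}\to E_{q^{-1}\Lambda_1}$. Using $\widetilde\phi(\alpha)\in M$ one checks that $\widetilde\phi(M)$ is a subfield of $M$ with $[M:\widetilde\phi(M)]=q^2$, that $K_{\Lambda_1}$ and $\widetilde\phi(M)$ are linearly disjoint over $K_{q^{-1}\Lambda_1}$, and that $K_{\Lambda_1}\cdot\widetilde\phi(M)=M$. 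In geometric terms, writing $C'$ for the smooth projective model of $\widetilde\phi(M)$, this says that $C\to C'$ is the base change of $E_{\Lambda_1}\to E_{q^{-1}\Lambda_1}$, hence an unramified covering of degree $q^2$. On the other hand $\widetilde\phi$ is a field isomorphism $M\cong\widetilde\phi(M)$, so $C$ and $C'$ have the same genus $g$; Riemann--Hurwitz applied to $C\to C'$ then gives $g-1=q^2(g-1)$, whence $g=1$ since $q\ge2$. Applying Riemann--Hurwitz to $\pi\colon C\to E_{\Lambda_1}$ (now both of genus $1$) shows $\pi$ is unramified.

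To finish I would invoke the classification of unramified coverings of $E_{\Lambda_1}=\mathbb C/\Lambda_1$: the connected degree-$d$ covering $\pi$ is, up to isomorphism over $E_{\Lambda_1}$, the quotient map $\mathbb C/H\to\mathbb C/\Lambda_1$ for a sublattice $H\subseteq\Lambda_1$ of index $d$. As $H$ lies in the commensurability class $\mathfrak L$, we have $K_H\subseteq K$. Pulling $\pi$ back along $E_H\to E_{\Lambda_1}$ gives $C\times_{E_{\Lambda_1}}E_H\cong(\mathbb C/H)\times_{\mathbb C/\Lambda_1}(\mathbb C/H)$, a disjoint union of $d\ge 2$ copies of $E_H$, hence disconnected; but the generic fibre of this fibre product over $E_H$ is $\operatorname{Spec}(M\otimes_{K_{\Lambda_1}}K_H)=\operatorname{Spec}\bigl(K_H[X]/(p)\bigr)$, a single point since $p$ is irreducible over $K_H\subseteq K$, so the fibre product is irreducible --- a contradiction. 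I expect the main obstacle to be the bookkeeping of lattice levels --- in particular the fact that $\phi$ preserves no single $K_\Lambda$ but only carries one level isomorphically onto another --- together with the verification that $C\to C'$ is genuinely unramified; granting those, the genus identity $g-1=q^2(g-1)$ and the covering-space conclusion are routine.
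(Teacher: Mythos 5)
Your argument is correct, and it is genuinely different from what the paper does: the paper gives no argument at all for this lemma, simply citing Proposition 7(iii) of \cite{dS23}, so your proposal supplies a self-contained proof where the text has only a reference. I checked the steps you leave as ``one checks'': $\widetilde{\phi}(M)=K_{q^{-1}\Lambda_1}[\widetilde{\phi}(\alpha)]\subseteq M$ and $[M:\widetilde{\phi}(M)]=q^2$ follow from $\phi(K_{\Lambda_1})=K_{q^{-1}\Lambda_1}$ and multiplicativity of degrees; the equality $K_{\Lambda_1}\cdot\widetilde{\phi}(M)=M$ is exactly where your requirement $\widetilde{\phi}^{-1}(\alpha)\in M$ is used (it gives $\alpha\in\widetilde{\phi}(M)$); linear disjointness then follows from $[M:\widetilde{\phi}(M)]=q^2=[K_{\Lambda_1}:K_{q^{-1}\Lambda_1}]$, and since the isogeny $E_{\Lambda_1}\to E_{q^{-1}\Lambda_1}$ is \'etale, the fibre product is smooth and irreducible (its generic fibre is the field $\widetilde{\phi}(M)\otimes_{K_{q^{-1}\Lambda_1}}K_{\Lambda_1}\cong M$), hence is the smooth model $C$, so $C\to C'$ is indeed unramified; the genus comparison uses that $\widetilde{\phi}$ fixes $\mathbb{C}$ pointwise, which holds because $\phi$ does. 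Two simplifications are available: you do not need unramifiedness of $C\to C'$ to get $g=1$, since Riemann--Hurwitz for any degree-$q^2$ map between curves of equal genus already forces $g\le 1$, and $g=0$ is impossible because $M$ contains the genus-one field $K_{\Lambda_1}$ (L\"uroth); and the endgame can be shortened, since once $C\cong\mathbb{C}/H$ over $E_{\Lambda_1}$, i.e.\ $M\cong K_H$ as extensions of $K_{\Lambda_1}$ with $K_H\subset K$, the polynomial $p$ acquires a root in $K$, contradicting its irreducibility over $K$ together with $d\ge 2$, without any appeal to the disconnectedness of the self fibre product. What your route buys is independence from \cite{dS23}; what the paper's citation buys is brevity and consistency with the source where the structure of $(K,\phi)$ is developed.
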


\begin{proof}
See Proposition 7(iii) in \cite{dS23}.
\end{proof}

\subsubsection{The field of Laurent series $F$}

Associating to an elliptic function $f\in K$ its Taylor-Maclaurin
expansion at $0,$ the field $K$ embeds in the field of Laurent series
\[
F=\mathbb{C}((z)).
\]
Clearly $\phi$ induces an automorphism of $F$ as well, compatible
with the embedding $K\hookrightarrow F.$ As the \emph{field of} $\phi$\emph{-constants}
\[
F^{\phi}:=\{f \in F \ \vert \ \phi f=f \}=\mathbb{C},
\]
a-fortiori $K^{\phi}:=\{f \in K \ \vert \ \phi f=f \}=\mathbb{C}.$

\subsubsection{The derivation $\partial$}

The fields $K$ and $F$ are equipped with the derivation
\[
\partial=\frac{d}{dz}.
\]
It satisfies the \emph{commutation relation}
\[
\partial\circ\phi=q\phi\circ\partial
\]
with the automorphism $\phi.$

\subsection{The Weierstrass zeta function $\zeta(z,\Lambda)$ and the rings $S_{0}$
and $S$}

\subsubsection{The two rings}

From now on all the lattices $\Lambda$ will belong to the commensurability
class $\mathfrak{L}$ used to define $K$. The \emph{Weierstrass zeta
function} of $\Lambda$
\[
\zeta(z,\Lambda)=\frac{1}{z}+\sum_{0\ne\omega\in\Lambda}\left(\frac{1}{z-\omega}+\frac{1}{\omega}+\frac{z}{\omega^{2}}\right)
\]
is everywhere meromorphic on $\mathbb{C}$, has simple poles with residue $1$ at the
points of $\Lambda$ and only there, and is characterized, up to an
additive constant, by the relation $\zeta'(z,\Lambda)=-\wp(z,\Lambda).$
Its periods along $\omega\in\Lambda,$
\[
\eta(\omega,\Lambda)=\zeta(z_{0}+\omega,\Lambda)-\zeta(z_{0},\Lambda)=-\int_{z_{0}}^{z_{0}+\omega}\wp(z,\Lambda)dz
\]
(independent of $z_{0}$), are given by the Legendre $\eta$-function.
If $(\omega_{1},\omega_{2})$ is an oriented basis of $\Lambda$ (meaning
that $\mathrm{Im}(\omega_{1}/\omega_{2})>0$) and $\eta_{i}=\eta(\omega_{i},\Lambda)$,
then the \emph{Legendre period relation}
\begin{equation}\label{eq:legendre period relation}
\omega_{1}\eta_{2}-\omega_{2}\eta_{1}=2\pi i 
\end{equation}
holds. 

\begin{lem}\label{lem: real any chi}
 The space $\mathbb{C}z+\mathbb{C}\zeta(z,\Lambda)$ 
realizes every homomorphism $\chi:\Lambda\to\mathbb{C},$ \emph{ i.e.}, for
any such $\chi$ we have a unique function $h_{\chi}=az+b\zeta(z,\Lambda)$
with $a,b\in\mathbb{C}$ such that the $\omega$-period 
$$
\int_{z_{0}}^{z_{0}+\omega}dh_{\chi}=h_{\chi}(z_{0}+\omega)-h_{\chi}(z_{0})
$$
is $\chi(\omega)$ for any $\omega\in\Lambda.$
\end{lem}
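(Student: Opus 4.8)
The plan is to reduce the statement to a $2\times 2$ linear system whose invertibility is precisely the Legendre period relation \eqref{eq:legendre period relation}. First I would fix an oriented basis $(\omega_{1},\omega_{2})$ of $\Lambda$ and write $\eta_{i}=\eta(\omega_{i},\Lambda)$. Since $\Lambda=\mathbb{Z}\omega_{1}\oplus\mathbb{Z}\omega_{2}$ is free abelian of rank $2$, prescribing a homomorphism $\chi\colon\Lambda\to\mathbb{C}$ is the same as prescribing the pair $(\chi(\omega_{1}),\chi(\omega_{2}))\in\mathbb{C}^{2}$, and every pair arises in this way.

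Next, for $h_{\chi}=az+b\zeta(z,\Lambda)$ with $a,b\in\mathbb{C}$, I would compute the period along an arbitrary $\omega\in\Lambda$ straight from the definition of $\eta$:
\[
\int_{z_{0}}^{z_{0}+\omega}dh_{\chi}=h_{\chi}(z_{0}+\omega)-h_{\chi}(z_{0})=a\omega+b\bigl(\zeta(z_{0}+\omega,\Lambda)-\zeta(z_{0},\Lambda)\bigr)=a\omega+b\,\eta(\omega,\Lambda).
\]
Because this is independent of $z_{0}$ and periods of $\zeta$ add along concatenated paths, the map $\omega\mapsto a\omega+b\,\eta(\omega,\Lambda)$ is itself a homomorphism $\Lambda\to\mathbb{C}$; in particular $\eta(m\omega_{1}+n\omega_{2},\Lambda)=m\eta_{1}+n\eta_{2}$. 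Consequently, $h_{\chi}$ realizes $\chi$ (on all of $\Lambda$) if and only if the two homomorphisms $\omega\mapsto a\omega+b\eta(\omega,\Lambda)$ and $\chi$ agree on the basis, i.e.
\[
a\omega_{1}+b\eta_{1}=\chi(\omega_{1}),\qquad a\omega_{2}+b\eta_{2}=\chi(\omega_{2}).
\]

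This is a linear system in $(a,b)$ whose coefficient matrix has determinant $\omega_{1}\eta_{2}-\omega_{2}\eta_{1}=2\pi i\ne 0$ by \eqref{eq:legendre period relation}; hence it admits a unique solution $(a,b)\in\mathbb{C}^{2}$. Existence of $h_{\chi}$ follows; uniqueness follows because any realization of $\chi$ must in particular satisfy these two equations, and the basis-free condition ``period equals $\chi(\omega)$ for all $\omega$'' makes the conclusion independent of the chosen basis. I do not anticipate any real obstacle: the only point deserving a sentence is the additivity of $\omega\mapsto\eta(\omega,\Lambda)$, which is immediate from the path-independence already recorded, and the essential input is simply the non-vanishing of the Legendre determinant.
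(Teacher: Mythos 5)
Your proposal is correct and follows essentially the same route as the paper's proof: reduce to the values on an oriented basis $(\omega_{1},\omega_{2})$, obtain the $2\times 2$ linear system $a\omega_{i}+b\eta_{i}=\chi(\omega_{i})$, and invoke the Legendre relation \eqref{eq:legendre period relation} for the non-vanishing of the determinant. The extra remarks on additivity of $\omega\mapsto\eta(\omega,\Lambda)$ are fine but not needed beyond what the paper states.
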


\begin{proof}
The $\omega_{1}$- and $\omega_{2}$-periods of $h_{\chi}=az+b\zeta(z,\Lambda)$ are given by $a \omega_{1} + b \eta_{1}$ and $a \omega_{2} + b \eta_{2}$ respectively. 
In order to prove the lemma, we thus have to prove that, for any $(\chi_{1},\chi_{2}) \in \mathbb{C}^{2}$, there exist unique $(a,b) \in \mathbb{C}^{2}$ such that 
$$
\chi_{1} = a \omega_{1} + b \eta_{1} \text{ and } \chi_{2} = a \omega_{2} + b \eta_{2}.
$$
That this is true follows immediately from the fact that the matrix 
$$
\begin{pmatrix}
 \omega_{1} & \eta_{1}\\ 
 \omega_{2} & \eta_{2}
\end{pmatrix}
$$
is invertible because it has nonzero determinant in virtue of \eqref{eq:legendre period relation}. 
\end{proof}

\begin{lem}
(i) If $\Lambda'\subset\Lambda$ is a sublattice then
\[
\zeta(z,\Lambda)-[\Lambda:\Lambda']\zeta(z,\Lambda')\in K+ \mathbb{C}z+\mathbb{C} \subset K[z].
\]

(ii) The rings of meromorphic functions
\[
S_{0}=K[z,\zeta(z,\Lambda)]
\]
and
\[
S=K[z,z^{-1},\zeta(z,\Lambda)]
\]
do not depend on the choice of the lattice $\Lambda$ in  $\mathfrak{L}$.
\end{lem}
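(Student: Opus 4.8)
The heart of the matter is (i); granting it, (ii) is essentially formal. For (i) the strategy is to deduce the stated decomposition from the classical \emph{distribution relation} relating the Weierstrass functions of the nested lattices $\Lambda'\subset\Lambda$. Put $N=[\Lambda:\Lambda']$ and fix coset representatives $0=t_{1},t_{2},\dots,t_{N}$ of $\Lambda/\Lambda'$. First I would check that $W(z):=\sum_{j=1}^{N}\wp(z-t_{j},\Lambda')$ is $\Lambda$-periodic --- a shift $z\mapsto z+\omega$ with $\omega\in\Lambda$ permutes the cosets $\Lambda'+t_{j}$ and hence, $\wp(\,\cdot\,,\Lambda')$ being $\Lambda'$-periodic, permutes the summands --- and that $W$ and $\wp(z,\Lambda)$ have the same principal part $1/(z-\omega)^{2}$ at every $\omega\in\Lambda$, since at such a point only the summand with $t_{j}\equiv\omega\bmod\Lambda'$ is singular. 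Hence $W(z)-\wp(z,\Lambda)$ is entire and $\Lambda$-periodic, so it equals a constant $c$.

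Integrating the identity $W(z)-\wp(z,\Lambda)=c$ and using $\zeta'(\,\cdot\,,\Lambda)=-\wp(\,\cdot\,,\Lambda)$, one gets
\[
\zeta(z,\Lambda)=\sum_{j=1}^{N}\zeta(z-t_{j},\Lambda')+cz+d
\]
for suitable $c,d\in\mathbb{C}$. Singling out $t_{1}=0$ and subtracting $N\zeta(z,\Lambda')$ yields $\zeta(z,\Lambda)-N\zeta(z,\Lambda')=g(z)+cz+d$ with
\[
g(z)=\sum_{j=2}^{N}\zeta(z-t_{j},\Lambda')-(N-1)\,\zeta(z,\Lambda').
\]
Now $g$ is $\Lambda'$-periodic: for $\omega\in\Lambda'$ each of the $N-1$ terms $\zeta(z-t_{j},\Lambda')$ picks up $\eta(\omega,\Lambda')$ and $(N-1)\zeta(z,\Lambda')$ picks up $(N-1)\eta(\omega,\Lambda')$, so the increments cancel; and $g$ has at most simple poles, located at $\Lambda\setminus\Lambda'$ from the sum and at $\Lambda'$ from the last term. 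Therefore $g\in K_{\Lambda'}$, and since $\Lambda'$ is commensurable with $\Lambda$ we have $\Lambda'\in\mathfrak{L}$ and $K_{\Lambda'}\subset K$, so $\zeta(z,\Lambda)-N\zeta(z,\Lambda')\in K+\mathbb{C}z+\mathbb{C}\subset K[z]$, which is (i). (Alternatively, one avoids the $\wp$-relation: the function $\zeta(z,\Lambda)-N\zeta(z,\Lambda')$ is $\Lambda'$-quasi-periodic with quasi-period $\omega\mapsto\eta(\omega,\Lambda)-N\eta(\omega,\Lambda')$ on $\Lambda'$, Lemma \ref{lem: real any chi} realizes this quasi-period by some $az+b\zeta(z,\Lambda')$, and counting residues over a fundamental parallelogram of $\Lambda'$ forces $b=0$, so the quasi-period is linear and is removed by subtracting $az$.)

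For (ii), the reduction is that any $\Lambda_{1},\Lambda_{2}\in\mathfrak{L}$ share the sublattice $\Lambda_{1}\cap\Lambda_{2}\in\mathfrak{L}$, so it suffices to prove $K[z,\zeta(z,\Lambda)]=K[z,\zeta(z,\Lambda')]$, and likewise after adjoining $z^{-1}$, whenever $\Lambda'\subset\Lambda$. This is immediate from (i): it gives $\zeta(z,\Lambda)\in\mathbb{C}\,\zeta(z,\Lambda')+K[z]$, and solving the same relation for $\zeta(z,\Lambda')$ (dividing by $N\in\mathbb{C}^{\times}$) gives $\zeta(z,\Lambda')\in\mathbb{C}\,\zeta(z,\Lambda)+K[z]$; hence each of the two rings contains the other. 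The assertion for $S$ then follows since $S=S_{0}[z^{-1}]$.

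I expect the only non-routine point to be (i), specifically the passage from ``entire and $\Lambda$-periodic'' to ``constant'' together with the verification that integration introduces only the linear correction $cz+d$ --- i.e. producing the distribution relation in the precise form displayed above; the remainder is bookkeeping with poles, residues and (quasi-)periods, and (ii) is a formal consequence.
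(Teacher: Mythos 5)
Your proposal is correct and takes essentially the same route as the paper: the paper also proves (i) by showing that $\wp(z,\Lambda)-\sum_{\overline{\omega}\in\Lambda/\Lambda'}\wp(z+\omega,\Lambda')$ is $\Lambda$-periodic and pole-free (hence constant), integrating to get the $\zeta$-relation up to a term $az+b$, and using that $\zeta(z+\omega,\Lambda')-\zeta(z,\Lambda')\in K_{\Lambda'}\subset K$. Your treatment of (ii) just spells out the ``follows easily from (i)'' step in the same way one would expect.
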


\begin{proof}
(i) 
The meromorphic function 
\[
\wp(z,\Lambda)-\sum_{\overline{\omega}\in\Lambda/\Lambda'}\wp(z+\omega,\Lambda')
\]
is $\Lambda$-periodic and its poles are contained
in $\Lambda$. But at 0 the poles of $\wp(z,\Lambda)$ and of $\wp(z,\Lambda')$
cancel each other, while the other terms have no pole. It follows
that this $\Lambda$-periodic function has no poles, hence is a constant.
Integrating, we find that
\[
\zeta(z,\Lambda)-\sum_{\overline{\omega}\in\Lambda/\Lambda'}\zeta(z+\omega,\Lambda')=az+b
\]
for some $a,b\in\mathbb{C}.$ On the other hand $\zeta(z+\omega,\Lambda')-\zeta(z,\Lambda')\in K_{\Lambda'}\subset K.$
It follows that
\[
\zeta(z,\Lambda)-[\Lambda:\Lambda']\zeta(z,\Lambda')\in K+ \mathbb{C}z+\mathbb{C}.
\]

(ii) This follows easily from (i). 
\end{proof}

\subsubsection{Algebraic independence of $z$ and $\zeta(z,\Lambda)$ over $K$}
\begin{lem}
\label{lem:alg indep of z and zeta}The functions $z$ and $\zeta(z,\Lambda)$
are algebraically independent over $K$.
\end{lem}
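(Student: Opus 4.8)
The plan is to prove that $z$ and $\zeta(z,\Lambda)$ are algebraically independent over $K$ by exploiting the different analytic natures of these functions: $z$ is entire, $\zeta(z,\Lambda)$ is meromorphic with simple poles exactly on $\Lambda$, and elements of $K$ are $\Lambda'$-periodic for some sublattice $\Lambda' \subset \Lambda_0$. First I would suppose, for contradiction, that there is a nonzero polynomial relation, and write it as $\sum_{i} P_i(z) \zeta(z,\Lambda)^i = 0$ with $P_i \in K$, not all zero, of minimal degree $d$ in the $\zeta$-variable. Applying the derivation $\partial$ and using $\zeta'(z,\Lambda) = -\wp(z,\Lambda) \in K$, one gets a relation of degree $\le d$ whose leading coefficient is $P_d' - d P_d \wp(z,\Lambda)$; comparing with $P_d$ times the original relation eliminates the top term and, by minimality of $d$, forces $P_d' = d P_d \wp(z,\Lambda)$, i.e. $(P_d)'/P_d = d\,\zeta'(z,\Lambda)$.

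The key step is then to derive a contradiction from $\partial \log P_d = d\, \partial \zeta(z,\Lambda)$, which would give $P_d = c\, e^{d\zeta(z,\Lambda)}$ for some constant $c \ne 0$; but $P_d \in K$ is elliptic (in particular meromorphic with poles of bounded order), whereas $e^{d\zeta(z,\Lambda)}$ has an essential singularity at each point of $\Lambda$ (since $\zeta$ has a simple pole there), so they cannot be equal unless $d = 0$ — contradicting $d \ge 1$. Alternatively, and perhaps more cleanly for the write-up, once $d$ is reduced to $1$ (the case $d = 0$ being trivial since then $P_0 = 0$), the relation reads $P_1(z)\zeta(z,\Lambda) + P_0(z) = 0$ with $P_1 \ne 0$, whence $\zeta(z,\Lambda) = -P_0/P_1 \in K$; but $K$ consists of functions periodic for some sublattice $\Lambda' \subset \Lambda_0$, while the periods of $\zeta(z,\Lambda)$ are governed by the nonzero quasi-periods $\eta_i$ (as recorded in Lemma \ref{lem: real any chi} and the Legendre relation \eqref{eq:legendre period relation}), so $\zeta(z,\Lambda)$ is not periodic for any lattice. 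This contradiction proves the claim.

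I expect the main obstacle to be handled with care rather than difficulty: namely, making the induction on $d$ rigorous, since after differentiating we must combine the new relation with the old one using coefficients in $K$ (multiplying the original by $P_d' - d P_d\wp$ and the derived relation by $P_d$, then subtracting) and invoke minimality of $d$ to conclude the lower-degree relation is trivial, which pins down $P_d$. A slightly subtle point is that minimality must be taken among all nonzero relations, and one should normalize, say, $P_d = 1$ is \emph{not} available since $K$-coefficients need not allow it; instead one argues with the coefficient ratio. Once the structure $\partial \log P_d = -d\,\wp(z,\Lambda)$ is reached, the contradiction is immediate from either the essential-singularity argument or the non-periodicity of $\zeta$, and I would present whichever is shortest. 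No deep input is needed beyond the elementary theory of elliptic functions and the facts about $\zeta$ already assembled in this section.
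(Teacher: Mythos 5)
Your differentiation step is mis-bookkept, and the identity you base the contradiction on does not follow. First note that for the lemma you must take the coefficients $P_i$ in $K[z]$ (a relation with $P_i\in K$ would only show $\zeta$ transcendental over $K$, which is weaker than algebraic independence of $z$ and $\zeta$ over $K$). Writing the relation as $\sum_{i\le d}P_i\,\zeta^i=0$ with $P_i\in K[z]$ and applying $\partial$, the coefficient of $\zeta^d$ in the derived relation is $\partial P_d$, while the term $-d\,P_d\,\wp$ lands in degree $d-1$; it is not the leading coefficient. Eliminating the top term ($P_d\cdot(\text{derived})-\partial P_d\cdot(\text{original})$) and invoking minimality of $d$, the $\zeta^{d-1}$ coefficient gives $P_d\,\partial P_{d-1}-P_{d-1}\,\partial P_d=d\,P_d^2\,\wp$, i.e. $\partial\bigl(P_{d-1}/P_d\bigr)=-d\,\partial\zeta$; equivalently, taking the minimal polynomial of $\zeta$ over $K(z)$ monic, one gets $\zeta\in K(z)+\mathbb{C}$. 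One does not get $\partial\log P_d=d\,\partial\zeta$, so the contradiction via the essential singularities of $e^{d\zeta}$ is aimed at an identity you have not established.

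The fallback ending has the same gap: from $P_1\zeta+P_0=0$ you obtain $\zeta=-P_0/P_1\in K(z)$, not $\zeta\in K$, and elements of $K(z)$ need not be periodic, so non-periodicity of $\zeta$ does not finish the argument. What must actually be excluded is $\zeta\in K(z)+\mathbb{C}$, and that requires a genuine argument: for instance, if $\zeta-c=R(z)$ with $R$ a rational function of $z$ with coefficients in some $K_{\Lambda'}$, then $R(z+\omega)-R(z)=\eta(\omega)$ for all $\omega\in\Lambda'$, with $\eta$ not identically zero on $\Lambda'$ by the Legendre relation \eqref{eq:legendre period relation}, and one must rule this out using the transcendence of $z$ over $K$ (itself not ``trivial'': your case $d=0$ is exactly the statement that $z$ is transcendental over $K$, which needs the translation-and-evaluation trick). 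This is in effect what the paper's proof does directly: it replaces $z,\zeta$ by the combinations $u=az+b\zeta$, $v=cz+d\zeta$ of Lemma \ref{lem: real any chi} and evaluates a putative relation at the points $z_0+k\omega_1$, producing a polynomial with infinitely many roots. Your differentiation idea can be salvaged (it leads to $\zeta\in K(z)+\mathbb{C}$, which one then refutes by quasi-periodicity), but as written the elimination step and the endgame both contain genuine gaps.
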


\begin{proof}
Since $K$ is algebraic over $K_{\Lambda},$ it is enough to show
that they are algebraically independent over $K_{\Lambda}$. The choice of the lattice $\Lambda$ being  clear from context, we abbreviate 
$\zeta(z,\Lambda)$ by $\zeta$. Let $(\omega_{1},\omega_{2})$
be an oriented basis of $\Lambda$. Lemma \ref{lem: real any chi} ensures that there are linear
combinations
\[
u=az+b\zeta,\,\,\,v=cz+d\zeta
\]
with coefficients $a,b,c,d \in \mathbb{C}$ such that $u$ is $\omega_{2}$-periodic
but $u(z+\omega_{1})-u(z)=1,$ while $v$ is $\omega_{1}$-periodic,
but $v(z+\omega_{2})-v(z)=1.$ Since $K_{\Lambda}(z,\zeta)=K_{\Lambda}(u,v)$,
it is enough to show that $u$ and $v$ are algebraically independent
over $K_{\Lambda}.$ Suppose there were a nontrivial algebraic relation
\[
\sum a_{ij}u^{i}v^{j}=0,
\]
with $a_{ij}\in K_{\Lambda}.$ Pick such a relation of lowest total
degree. Without loss of generality $u$ appears in it. The coefficient
of the highest power of $u$, denoted $a_{0}(v)$, is a polynomial
in $K_{\Lambda}[v]$ that does not vanish identically, as otherwise
$a_{0}(v)=0$ would be a relation of lower total degree between $u$ and $v$. Divide by it and
rewrite the relation as
\[
u^{n}+\alpha_{1}u^{n-1}+\cdots+\alpha_{n-1}u+\alpha_{n}=0
\]
where $n\ge1,$ $\alpha_{i}\in K_{\Lambda}(v).$ Let $z_{0}$ be a
point where $u(z_{0})$ and all the $\alpha_{i}(z_{0})$ are analytic,
and $z_{k}=z_{0}+k\omega_{1}.$ Using the periodicity of the $\alpha_{i}$
in $\omega_{1}$ we get, after evaluating at $z_{k}$ and using the fact that $u(z_{k})=u(z_{0})+k$, 
\[
(u(z_{0})+k)^{n}+\alpha_{1}(z_{0})(u(z_{0})+k)^{n-1}+\cdots+\alpha_{n-1}(z_{0})(u(z_{0})+k)+\alpha_{n}(z_{0})=0
\]
Thus, the polynomial $X^{n}+\alpha_{1}(z_{0})X^{n-1}+\cdots+\alpha_{n-1}(z_{0})X+\alpha_{n}(z_{0})$ has infinitely many roots and this
yields a contradiction.
\end{proof}

\subsubsection{$\partial$-simplicity of $S_{0}$ and $S$}

The derivation $\partial$ extends from $K$ to $S_{0}$ and $S$.
A ring with a derivation $\partial$ is called \emph{$\partial$-simple}
if it does not admit a non-trivial ideal $I$ invariant under $\partial.$
\begin{lem}
\label{lem:partial simplicity of S}
The rings $S_{0}$ and $S$ are $\partial$-simple.
\end{lem}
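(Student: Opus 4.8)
The plan is to show that any nonzero $\partial$-ideal of $S_0$ (resp. $S$) contains $1$. Write $R=S_0=K[z,\zeta]$, where $\zeta=\zeta(z,\Lambda)$, and recall from Lemma \ref{lem:alg indep of z and zeta} that $z$ and $\zeta$ are algebraically independent over $K$, so $R$ is a polynomial ring in two variables over $K$. The derivation $\partial$ acts on $R$ by $\partial z=1$ and $\partial\zeta=-\wp(z,\Lambda)\in K$, and it restricts to a derivation of $K$. A nonzero $\partial$-ideal $I\subset R$ contains a nonzero element; I want to run a degree-reduction argument to conclude $I\ni c$ for some nonzero $c\in K$, and then deduce $I=R$ (for $S_0$) or handle invertibility over $S$.

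First I would reduce the number of ``transcendental'' variables one at a time. Suppose $0\ne P\in I$ has minimal total degree in $z$ and $\zeta$. If $\deg_{\zeta}P\ge 1$, differentiate: $\partial P\in I$, and since $\partial\zeta=-\wp\in K$ while $\partial$ raises no $\zeta$-degree, the $\zeta$-leading term of $P$ gets killed in the leading part — more precisely, writing $P=\sum_{j} p_j(z)\zeta^{j}$ with $p_j\in K[z]$, one computes that the $\zeta^{d}$-coefficient of $P$ (with $d=\deg_\zeta P$) is $p_d$, and the $\zeta^{d}$-coefficient of $\partial P$ is $\partial p_d$ where now $\partial$ is the derivation of $K[z]$ with $\partial z=1$. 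Iterating $\partial$ on the leading coefficient $p_d(z)=\sum_i a_i z^i$ with $a_i\in K$ eventually produces, after enough applications and subtractions, a nonzero element of $I$ of strictly smaller $\zeta$-degree — because a polynomial identity $\partial^{(m)} p_d=0$ forces $p_d$ to be a constant in $K$, and then a further $K$-linear combination of $P$ and its $\zeta$-degree-$d$ part lands in lower $\zeta$-degree. Repeating, I reach a nonzero element of $I$ lying in $K[z]$.

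Next, the same mechanism applied to the single remaining variable $z$ finishes the argument: if $0\ne Q=\sum_i a_i z^i\in I\cap K[z]$ with $a_i\in K$ and $a_n\ne 0$, then $\partial Q,\partial^2 Q,\dots\in I$, and a suitable $K$-linear combination (Vandermonde / repeated differentiation, using that $\partial$ acts on the scalars $a_i\in K$ too) yields an element of $I$ of strictly smaller $z$-degree, unless all iterated $z$-derivatives of the top coefficient vanish. Carried to the end, one obtains a nonzero $c\in K\cap I$. For $S_0$ this already gives $I=S_0$: indeed, any nonzero $c\in K$ generates a $\partial$-ideal of $K$, but $K$ is a field, so $1\in I$, hence $I=S_0$, proving $\partial$-simplicity. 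For $S=K[z,z^{-1},\zeta]=S_0[z^{-1}]$, one checks that a $\partial$-ideal of a localization comes from a $\partial$-ideal of $S_0$ (localization at the multiplicative set $\{z^k\}$, which is $\partial$-stable up to units since $\partial z^{-1}=-z^{-2}$), so $\partial$-simplicity of $S_0$ forces $\partial$-simplicity of $S$; alternatively, rerun the same degree argument allowing negative powers of $z$, clearing denominators by multiplying by a power of $z$.

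The main obstacle is making the degree-reduction in $z$ genuinely work over the non-constant coefficient field $K$: unlike the classical case where coefficients are scalars in $\mathbb{C}$, here $\partial$ does not annihilate $K$, so ``differentiate and subtract'' must be organized carefully — one must track both the $z$-degree and an auxiliary invariant (e.g. for the $\zeta$-reduction, the pair $(\deg_\zeta P,\deg_z p_d)$ under lexicographic order, and for the $z$-reduction, that a nonzero polynomial in $K[z]$ cannot have all its iterated $d/dz$-derivatives vanish). The key elementary fact that unblocks everything is that if $g\in K[z]$ is nonzero of $z$-degree $n$, then $(d/dz)^{n}g$ is a nonzero element of $K$; combined with $\partial z=1$, $\partial\zeta\in K$, and algebraic independence, this drives both reductions to completion.
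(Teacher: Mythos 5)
Your two-stage reduction rests on a statement that is false, and the steps that would repair it are only gestured at. The ``key elementary fact'' you invoke --- that a nonzero $g\in K[z]$ of $z$-degree $n$ satisfies $\partial^{n}g\in K\setminus\{0\}$ --- fails precisely because $\partial$ does not annihilate $K$: take $g=\wp(z,\Lambda)\,z$. Then $\partial g=\wp'(z,\Lambda)\,z+\wp(z,\Lambda)$ still has $z$-degree $1$, and $\partial^{m}g$ has top coefficient $\wp^{(m)}(z,\Lambda)\neq 0$ for every $m$, so no iterate of $\partial$ lands in $K$ or even lowers the degree. (If instead you meant the formal derivative that treats the coefficients $a_i\in K$ as constants, the fact becomes true but that operator does not preserve the $\partial$-ideal $I$, so it cannot be used.) The same problem undercuts the $\zeta$-reduction: the $\zeta^{d}$-coefficient of $\partial^{m}P$ is $\partial^{m}p_{d}$, which vanishes for some $m$ only in the special case $p_{d}\in\mathbb{C}[z]$; your justification (``$\partial^{(m)}p_{d}=0$ forces $p_{d}$ constant'') covers only that case, and the ``suitable $K$-linear combinations'' meant to handle the general case are never written down nor shown to be nonzero. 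So both reductions, as argued, have a genuine gap; note also that your leading coefficient $p_{d}\in K[z]$ is not a unit of $S_0$, so you cannot normalize it away.

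The gap is repairable, and the repair is essentially the paper's proof: expand $f=\sum a_{i,j}z^{i}\zeta^{j}$ with $a_{i,j}\in K$ (unique by Lemma \ref{lem:alg indep of z and zeta}), order the exponent pairs $(i,j)$ lexicographically, take $0\neq f\in I$ with minimal leading pair, and --- the decisive move --- multiply by $a_{i_0,j_0}^{-1}$ to make the leading coefficient $1$; this is legitimate because that coefficient lies in the field $K$, hence is a unit of $S_{0}$. Since $\partial z=1$ and $\partial\zeta=-\wp(z,\Lambda)\in K$, one application of $\partial$ then strictly lowers the leading pair, so minimality forces $\partial f=0$, i.e.\ $f\in\mathbb{C}$, contradicting properness. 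Alternatively your scheme can be saved by a Wronskian-type combination such as $(\partial a)\,f-a\,\partial f$ with $a$ the leading coefficient, together with a check that it is nonzero (it vanishes only when $f$ is a constant multiple of $a$); but some such device must actually be supplied. Your final step, deducing the case of $S=S_{0}[z^{-1}]$ from that of $S_{0}$ by localization, is correct and is also how the paper proceeds.
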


\begin{proof}
Since $S$ is a localization of $S_{0},$ it is enough to check the
assertion for $S_{0}.$ Let $\zeta=\zeta(z,\Lambda).$ The functions
$z,\zeta$ are algebraically independent over $K,$ hence every element
of $S_{0}$ has a unique expression
\[
f=\sum a_{i,j}z^{i}\zeta^{j},\,\,\,a_{i,j}\in K.
\]
Order the pairs $(i,j)$ lexicographically (first by $i,$ then by
$j$). If $f\ne0,$ we denote the maximal $(i,j)$ for which $a_{i,j}\ne0$
by $d(f).$

Let $I$ be a non-zero proper $\partial$-ideal, and consider $0\ne f\in I$
of minimal $d(f).$ Since $I$ is proper, $d(f)=(i_{0},j_{0})>(0,0).$
We may also assume that $a_{i_{0},j_{0}}=1.$ Since $\partial(z)=1$
and $\partial(\zeta)\in K$,
\[
d(\partial(f))<d(f).
\]
By our assumption, $\partial(f)\in I,$ hence by the minimality of
$d(f),$ we must have $\partial(f)=0.$ This forces $f$ to be constant,
a contradiction.
\end{proof}

The ring $S$  (but not $S_0$) is also $\phi$-simple, see Lemma \ref{lem:phi simplicity of S} below.

\subsection{A technical result on the ring $S_{0}$}

The following Proposition is a crucial ingredient in the proof of
the main theorem of the paper. However, it will be needed only at
the very end, and its proof, which is lengthy, is deferred to Appendix
A.

Besides $\partial,$ the rings $S_{0}$ and $S$ (but not $K$) carry
also the derivation
\[
\delta=z\partial=z\frac{d}{dz}.
\]
The advantage of $\delta$ over $\partial$ is that it commutes with
$\phi$: $\delta\circ\phi=\phi\circ\delta$.
\begin{prop}
\label{prop:technical_elliptic}Let $f,g\in S_{\Lambda}=K_{\Lambda}[z,\zeta(z,\Lambda)],$
$a,c\in\mathbb{C}$ and $p\in\mathbb{C}[z]$ be such that
\[
(\delta-c)(g)=(\phi-a)(f)+p.
\]
Then $g=(\phi-a)(f_{1})+p_{1}$ for some $f_{1}\in S_{\Lambda}$ and
$p_{1}\in\mathbb{C}[z].$ Furthermore, if $a=q^{r}$ for some $r\ge0$
we can take $p_{1}=dz^{r},$ $d\in\mathbb{C},$ and otherwise we can
take $p_{1}=0.$
\end{prop}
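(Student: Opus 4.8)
The plan is to work with the explicit expansion of elements of $S_\Lambda$ in the algebraically independent variables $z$ and $\zeta=\zeta(z,\Lambda)$ over $K_\Lambda$, and to analyze the equation $(\delta-c)(g)=(\phi-a)(f)+p$ degree-by-degree in $\zeta$. Write $g=\sum_{j=0}^{m} g_j(z)\zeta^j$ and $f=\sum_{j=0}^{N} f_j(z)\zeta^j$ with $g_j,f_j\in K_\Lambda[z]$ (after clearing denominators; in fact since $\zeta$ has a pole only on $\Lambda$ and $\phi$ pulls $\zeta$ back to something with poles on $q^{-1}\Lambda$, a careful pole analysis shows the $f_j,g_j$ are genuinely polynomial in $z$ over $K_\Lambda$, or one localizes only at $z$). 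The key structural input is how $\delta$ and $\phi$ act on powers of $\zeta$: since $\partial\zeta=-\wp\in K_\Lambda$, the operator $\delta-c$ lowers the $\zeta$-degree by at most the obvious amount but its top-degree part in $\zeta^m$ is $z\,g_m'(z)\zeta^m + (\text{lower})$, while $\phi$ sends $\zeta(z,\Lambda)$ to $\zeta(qz,\Lambda)$, and using Lemma 2.? (the sublattice relation, applied to $\Lambda'=q\Lambda\subset\Lambda$ read backwards, i.e. $\zeta(qz,\Lambda)$ versus $\zeta(z,q^{-1}\Lambda)$) one expresses $\phi(\zeta)$ as $q^{-1}\zeta(z,q^{-1}\Lambda)$ up to an element of $K+\mathbb{C}z+\mathbb{C}$; iterating and chasing through $\mathfrak{L}$, the upshot is that $\phi$ acts on the "$\zeta$-filtration" in a controlled, essentially scalar-plus-lower-order way.

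The heart of the argument is then an induction on the $\zeta$-degree of $g$. For the top coefficient one gets a scalar equation over $K_\Lambda[z]$ of the shape $(\delta-c)(g_m) \equiv (\phi - a q^{-m})(f_m) \pmod{\text{lower }\zeta\text{-degree artifacts}}$ — more precisely, after matching the leading $\zeta^m$ terms one reduces to an equation purely in $K_\Lambda[z]$ (no $\zeta$), and here one invokes exactly the genus-zero / $\mathbb{C}(z)$-type analysis: solving $(\delta-c)(G)=(\phi-A)(F)+P$ with $F,G\in K_\Lambda[z]$, $P\in\mathbb{C}[z]$. That scalar problem is where the dichotomy $a=q^r$ vs. $a\neq q^r$ enters: the operator $\phi-a$ on $K_\Lambda[z]$ (with $\phi f(z)=f(qz)$) is "invertible modulo polynomials" precisely away from the eigenvalues $q^r$ of $\phi$ acting on $\mathbb{C}[z]$ (since $\phi(z^r)=q^r z^r$), and the cokernel is spanned by $z^r$ when $a=q^r$. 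One shows that one can absorb $g_m\zeta^m$ into $(\phi-a)$ of a new element of $S_\Lambda$ at the cost of modifying $p$ only by a polynomial of the allowed type, then subtracts and repeats with $m$ replaced by $m-1$. The base case $m=0$ is literally the scalar statement just described.

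I expect the main obstacle to be the bookkeeping of the "lower-order artifacts": when $\phi$ acts on $f=\sum f_j\zeta^j$, the term $\phi(f_j)\phi(\zeta)^j$ produces, via $\phi(\zeta)=q^{-1}\zeta + (\text{element of }K+\mathbb{C}z+\mathbb{C})$, a whole cascade of contributions in degrees $\le j$, and similarly $(\delta-c)(g_j\zeta^j)$ contributes in degrees $j$ and $j-1$ (because $\delta\zeta = z\partial\zeta = -z\wp \in K_\Lambda[z]$). Keeping track of which pieces land in $K_\Lambda[z]$ versus which force genuine $\zeta$-dependence, and verifying that the correction functions $f_1$ one builds at each stage really lie in $S_\Lambda$ (not in a larger field after division), is the delicate part. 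A clean way to organize this is to filter $S_\Lambda$ by $\zeta$-degree, note that $\phi$ and $\delta$ are filtered operators whose associated graded action is transparent, prove the statement on the associated graded (which is where the genus-zero scalar lemma does all the work), and then lift back up the filtration by the usual successive-approximation argument, checking at each step that the residual polynomial contribution $p_1$ stays inside $\mathbb{C}[z]$ and collapses to $dz^r$ exactly when $a=q^r$. The final reconciliation — that after processing all $\zeta$-degrees the leftover polynomial is exactly of the claimed form — is then a consequence of the rank-one scalar computation applied once, in degree zero.
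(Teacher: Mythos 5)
Your overall plan (expand in the two algebraically independent generators, induct on a degree, reduce to a ``scalar'' lemma) is structurally similar to the paper's proof, which expands in powers of $z$ with coefficients in $K_{\Lambda}[\zeta]$ and reduces, via the recursion $g_{i-1}'+(i-c)g_{i}=(q^{i}\phi-a)(f_{i})+p_{i}$, to a key lemma about equations $g'=(q\phi-a)(f)+\gamma$ in $K_{\Lambda}[\zeta]$ (Lemma \ref{lem:A9}). But your version of the reduction has a genuine gap at exactly the hard step. You transpose the roles of $z$ and $\zeta$ and then claim that the resulting top-degree equation over $K_{\Lambda}[z]$ is handled by ``exactly the genus-zero / $\mathbb{C}(z)$-type analysis,'' namely that $\phi-a$ is invertible modulo $\mathbb{C}[z]$ on $K_{\Lambda}[z]$ away from $a=q^{r}$, with cokernel spanned by $z^{r}$. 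That statement is true on $\mathbb{C}[z]$ or $\mathbb{C}((z))$, where $\phi$ acts diagonally on monomials (this is the content of Lemma \ref{lem: injectivity =00005Cphi-a}), but it is not true, and certainly not proved by you, for coefficients in $K_{\Lambda}$: writing $F=\sum F_{i}z^{i}$ with $F_{i}\in K_{\Lambda}$, your claim amounts to surjectivity of $\phi-a'$ on $K_{\Lambda}$ modulo constants, i.e.\ to solving the elliptic difference equation $F_{0}(qz)-a'F_{0}(z)=h(z)$ inside elliptic functions for arbitrary $h\in K_{\Lambda}$, which has genuine obstructions (compare the divisor/Abel--Jacobi analysis in Section 8 of the paper). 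Moreover the logic is inverted: what must be extracted from the hypothesis is that $g_{m}$ \emph{itself} is of the form $(\phi-a)(u)+\text{polynomial}$, and in the paper this deduction is where all the elliptic input enters --- one shows from $g'=(q\phi-a)(f)+\gamma$ that the relevant coefficient of $f$ has no residual points, hence admits a global meromorphic primitive; one identifies that primitive as $u+r\zeta+sz$ using Lemma \ref{lem: real any chi}; and one kills the degenerate cases (e.g.\ $a=q^{d}$ with constant leading coefficient) using the Legendre relation through Lemma \ref{lem:The eta lemma}. None of these ingredients (residues, periods, Legendre relation) appears in your sketch, and without them the ``scalar'' step you invoke is simply asserted.

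Two smaller points. First, the normalization of $\phi$ on $\zeta$: by homogeneity and the sublattice relation one gets $\phi(\zeta)=q\zeta+f_{\zeta}$ with $f_{\zeta}\in K_{\Lambda}$ (not a factor $q^{-1}$), and the lower-order terms lie in $K_{\Lambda}$, which matters for the bookkeeping you describe. Second, your ``pass to the associated graded and lift'' scheme ignores that $\delta$ couples adjacent degrees in a way that mixes the two gradings ($(\delta-c)(g_{j}\zeta^{j})$ produces $-jz\wp\,g_{j}\zeta^{j-1}$, raising $z$-degree while lowering $\zeta$-degree); the paper's proof is organized precisely to control this interaction, with the outer induction in $z$-degree and the inner induction (with the exceptional degree-dropping cases of Lemma \ref{lem:Lemma A.7}) in $\zeta$-degree. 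Your proposal could in principle be repaired, but only by proving an analogue of Lemma \ref{lem:A9} over $K_{\Lambda}[z]$, i.e.\ by doing the elliptic-function work you have omitted.
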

\begin{cor}
\label{cor:technical elliptic}Let $\mathscr{L}\in\mathbb{C}[\delta]$
be a monic polynomial in $\delta$, $a\in\mathbb{C}$ and $f,b\in S_{\Lambda}$ such
that
\[
\mathscr{L}(b)=(\phi-a)(f).
\]
Then $b=(\phi-a)(h)+p$ for some $h\in S_{\Lambda}$ and $p\in\mathbb{C}[z].$
Furthermore, if $a=q^{r}$ for some $r\ge0$ we can take $p=dz^{r},$
$d\in\mathbb{C},$ and otherwise we can take $p=0.$
\end{cor}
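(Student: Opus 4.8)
The plan is to deduce Corollary~\ref{cor:technical elliptic} from Proposition~\ref{prop:technical_elliptic} by induction on $\deg\mathscr{L}$. Write $\mathscr{L}(X)=(\delta-c)\mathscr{L}_{1}(X)$ for some $c\in\mathbb{C}$ (a root of $\mathscr{L}$, which is monic in $\delta$, so such a factorization exists over $\mathbb{C}$) and some monic $\mathscr{L}_{1}\in\mathbb{C}[\delta]$ of degree one less. Set $g=\mathscr{L}_{1}(b)\in S_{\Lambda}$, so the hypothesis becomes $(\delta-c)(g)=(\phi-a)(f)$. Proposition~\ref{prop:technical_elliptic} (applied with $p=0$) then gives $g=(\phi-a)(f_{1})+p_{1}$ for some $f_{1}\in S_{\Lambda}$, where $p_{1}=dz^{r}$ if $a=q^{r}$ for some $r\ge0$ and $p_{1}=0$ otherwise. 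In other words $\mathscr{L}_{1}(b)=(\phi-a)(f_{1})+p_{1}$, i.e.\ $\mathscr{L}_{1}(b)-p_{1}=(\phi-a)(f_{1})$.

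Now I would like to apply the induction hypothesis to the equation $\mathscr{L}_{1}(b)=(\phi-a)(f_{1})+p_{1}$, but this requires absorbing the inhomogeneous term $p_{1}\in\mathbb{C}[z]$. The key observation is that $\mathbb{C}[z]$ is stable under $\delta$, hence under any $\mathscr{L}'\in\mathbb{C}[\delta]$, and that a power $z^{r}$ with $a=q^{r}$ can be written as $(\phi-a)(\cdot)+p'$ with $p'$ still a monomial: indeed $(\phi-q^{r})(z^{r})=q^{r}z^{r}-q^{r}z^{r}=0$, so $z^{r}\notin(\phi-q^{r})(\mathbb{C}[z])$, but for $j\ne r$ we have $(\phi-q^{r})(z^{j})=(q^{j}-q^{r})z^{j}$, a nonzero multiple of $z^{j}$. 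Thus on $\mathbb{C}[z]$ the operator $\phi-a$ is invertible modulo the line $\mathbb{C}z^{r}$ (when $a=q^{r}$) or invertible outright (when $a\ne q^{j}$ for all $j$, equivalently $a\notin\{1,q,q^{2},\dots\}$, or when $a=q^{r}$ we still invert on the complement of $\mathbb{C}z^{r}$). Concretely, I would first solve the purely polynomial inhomogeneous problem: find $\pi\in\mathbb{C}[z]$ and a monomial $\rho$ (equal to $dz^{r}$ if $a=q^{r}$, and $0$ otherwise) with $p_{1}=(\phi-a)(\pi)+\rho$, which is possible by the elementary spectral analysis of $\phi-a$ on $\mathbb{C}[z]$ just described — note $p_{1}$ itself is already of the form $dz^{r}$ or $0$, so one simply checks that $dz^{r}$ is not of the form $(\phi-q^{r})(\pi)$ unless $d=0$, forcing $\pi=0$, $\rho=p_{1}$; hence in fact $p_{1}$ is already ``irreducible'' and there is nothing to do at this sub-step.

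Therefore the equation $\mathscr{L}_{1}(b)=(\phi-a)(f_{1})+p_{1}$ can be rewritten as $\mathscr{L}_{1}(b-\tilde{b})=(\phi-a)(f_{1})$ where $\tilde{b}\in\mathbb{C}[z]\subset S_{\Lambda}$ is any fixed solution of $\mathscr{L}_{1}(\tilde b)=p_{1}$ — such a $\tilde b$ exists because $\mathscr{L}_{1}$, being monic in $\delta$, acts surjectively on $\mathbb{C}[z]$ (on each monomial $z^{j}$ it acts as multiplication by $\mathscr{L}_{1}(j)$, and if some $\mathscr{L}_{1}(j)=0$ one enlarges slightly, using that $p_{1}$ is a single monomial $dz^{r}$ and adjusting; in the generic case $\mathscr{L}_{1}(r)\ne0$ and $\tilde b=\frac{d}{\mathscr{L}_{1}(r)}z^{r}$ works). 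Then the induction hypothesis applied to $b-\tilde b$, $f_{1}$ yields $b-\tilde b=(\phi-a)(h')+p'$ with $h'\in S_{\Lambda}$ and $p'\in\mathbb{C}[z]$ of the stated shape, and since $\tilde b\in\mathbb{C}[z]$ we get $b=(\phi-a)(h')+(p'+\tilde b)$ with $p'+\tilde b\in\mathbb{C}[z]$; a final cleanup using $(\phi-q^{r})(\mathbb{C}[z])\supseteq\bigoplus_{j\ne r}\mathbb{C}z^{j}$ reduces the polynomial part to a multiple of $z^{r}$ (resp.\ to $0$), giving the claim. The base case $\deg\mathscr{L}=1$ is exactly Proposition~\ref{prop:technical_elliptic} with $p=0$. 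The main obstacle is the bookkeeping around the degenerate resonances — when some root of $\mathscr{L}_{1}$ or of $\mathscr{L}$ equals an integer $j$ with $q^{j}$ interacting with $a$ — ensuring that the polynomial correction terms can always be chosen of the very restricted form $dz^{r}$; this is precisely where one must exploit that $p_{1}$ produced by Proposition~\ref{prop:technical_elliptic} is already a single monomial and that $\phi-a$ and any $\mathscr{L}'\in\mathbb{C}[\delta]$ act diagonally in the monomial basis of $\mathbb{C}[z]$, so their combined kernel and cokernel on $\mathbb{C}[z]$ are spanned by monomials.
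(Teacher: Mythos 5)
Your overall strategy is the paper's: factor $\mathscr{L}$ into linear factors $(\delta-c)$ and peel them off one at a time using Proposition~\ref{prop:technical_elliptic}. But the way you set up the induction leaves a genuine gap. After the first application of the Proposition you have $\mathscr{L}_{1}(b)=(\phi-a)(f_{1})+p_{1}$ with $p_{1}=dz^{r}$ (when $a=q^{r}$), and your induction hypothesis is the Corollary in its homogeneous form, so you must absorb $p_{1}$ by solving $\mathscr{L}_{1}(\tilde b)=p_{1}$ with $\tilde b\in\mathbb{C}[z]$. Since $\delta(z^{j})=jz^{j}$, the operator $\mathscr{L}_{1}$ acts on the monomial $z^{j}$ as multiplication by the scalar $\mathscr{L}_{1}(j)$; hence in the resonant case $\mathscr{L}_{1}(r)=0$ (and $d\ne 0$) the monomial $dz^{r}$ is \emph{not} in $\mathscr{L}_{1}(\mathbb{C}[z])$ — the solutions of $(\delta-r)y=dz^{r}$ involve $z^{r}\log z$, which lies neither in $\mathbb{C}[z]$ nor in $S_{\Lambda}$. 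Your parenthetical ``if some $\mathscr{L}_{1}(j)=0$ one enlarges slightly \dots and adjusting'' does not address this; as written the absorption step fails exactly there, and you have not shown that this resonant configuration cannot occur.

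The repair is to strengthen the statement you induct on so that it carries an arbitrary inhomogeneous polynomial, which is precisely what Proposition~\ref{prop:technical_elliptic} is built for: its hypothesis already allows any $p\in\mathbb{C}[z]$ on the right-hand side, while its conclusion returns a single monomial $dz^{r}$ (or $0$). This is the paper's proof: write $\mathscr{L}=(\delta-c_{k})\circ\cdots\circ(\delta-c_{1})$ and show by descending induction that $(\delta-c_{j})\circ\cdots\circ(\delta-c_{1})(b)=(\phi-a)(f_{j})+p_{j}$ with $p_{j}=d_{j}z^{r}$, each step being one application of the Proposition with the previous step's monomial fed into its $p$; at $j=0$ one reads off $h=f_{0}$, $p=p_{0}$. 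So keep your factorization and base case, but replace the attempt to return to the homogeneous Corollary (and the polynomial ``absorption'') by iterating the Proposition itself with its inhomogeneous term; then no separate analysis of resonances is needed.
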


\begin{proof}
(of corollary) Factor $\mathscr{L}=(\delta-c_{k})\circ\cdots\circ(\delta-c_{1})$
with $c_{i}\in\mathbb{C},$ and use descending induction on $0\le j\le k-1$
to find $f_{j}\in S_{\Lambda}$ and $p_{j}=d_{j}z^{r}\in\mathbb{C}[z]$
(with $d_{j}=0$ if $a\notin\{1,q,q^{2},...\}$) such that
\[
(\delta-c_{j})\circ\cdots\circ(\delta-c_{1})(b)=(\phi-a)(f_{j})+p_{j}.
\]
At the end of the induction, set $h=f_{0}$ and $p=p_{0}.$
\end{proof}

\section{\label{sec:equations and modules}Linear difference and differential
equations, and their associated modules}

The purpose of this section is to review some standard results and
set up notation.

\subsection{Systems of linear difference equations and difference modules}

References for the results mentioned below may be found in  \cite[Section 1.4]{S-vdP97}.
\subsubsection{Systems and modules}
Let $(K,\phi)$ be a field of characteristic 0, equipped with an automorphism
$\phi$, called a \emph{difference operator. }

To avoid trivialities
we assume that $\phi$ is of infinite order: no positive power of
$\phi$ is the identity. We denote by
\[
C=K^{\phi}=\{ f \in K  \, | \, \phi(f)=f \}
\]
the \emph{field of} $\phi$\emph{-constants}, and assume that it is
algebraically closed, although for most of what we do, at least in
the beginning, this is not essential.

A system of linear equations
\begin{equation}
\phi(y)=Ay,\label{eq:general diference system}
\end{equation}
where $A\in \GL_{n}(K),$ is called a linear system of $\phi$-difference
equations. One seeks solutions $y\in R^{n}$ where $(R,\phi)$ is
a $\phi$-ring extension of $(K,\phi)$, \emph{i.e.}, a ring  extension together with a compatible extension of $\phi$. 

A \emph{difference module} over $K$ (called also a $\phi$\emph{-module})
is a pair $(W,\Phi)$
where $W$ is a finite dimensional $K$-vector space and $\Phi$ a
bijective $\phi$-semilinear endomorphism of $W$, {\emph i.e.}, $\Phi:W \rightarrow W$ is a bijective map such that, for all $a \in K$ and $w,w' \in W$,
$$
\Phi(aw+w')=\phi(a)\Phi(w)+\Phi(w').
$$
To the system $(\ref{eq:general diference system})$
we attach the difference module\emph{ $(K^{n},\Phi)$ }with $\Phi(y)=A^{-1}\phi(y).$
Conversely, if $(W,\Phi)$ is a difference module, $e_{1},...,e_{n}$
is a basis of $W$ and
\[
\Phi(e_{j})=\sum_{i=1}^{n}a_{ij}e_{i},
\]
we attach to $(W,\Phi)$ the system $(\ref{eq:general diference system})$
with $A^{-1}=(a_{ij}).$ This process is not canonical, as it depends on the chosen basis $e_{1},...,e_{n}$ of $W$. If $\widetilde{e}_{1},...,\widetilde{e}_{n}$ is another basis of
the same module and
\[
e_{j}=\sum_{i=1}^{n}p_{ij}\widetilde{e}_{i},
\]
so that $P=(p_{ij})$ is the change-of-basis matrix, then we get the system $\phi(y)=\widetilde{A}y$ where 
\[
\widetilde{A}=\phi(P)AP^{-1}.
\]
Two matrices $A$ and $\widetilde{A}$ related by such a relation
with $P\in \GL_{n}(K)$ are said to be \emph{gauge equivalent} over $K$. The
above procedure establishes a bijection between systems $(\ref{eq:general diference system}),$
up to gauge equivalence over $K$, and difference modules $(W,\Phi),$ up to
isomorphism over $K$.

A difference module isomorphic to $(K^{n},\phi)$ is called \emph{trivial}.
The module associated to $(\ref{eq:general diference system})$ is
trivial if and only if $A$ is gauge equivalent to the identity matrix.

\subsubsection{Base change and solutions set}

If $(R,\phi)$ is a $\phi$-ring extension of $(K,\phi)$ and $(W,\Phi)$
is a difference module over $K$, then we may consider its base change
\[
(W_{R},\Phi_{R})=(R\otimes_{K}W,\phi\otimes\Phi).
\]
Note that $W_{R}$ is free of rank $n=\dim_{K}W$ over $R.$ Let $C_{R}=R^{\phi}$
be the ring of $\phi$-constants of $R$. If $R$ is a \emph{ $\phi$-simple}
$\phi$-ring (does not have any non-trivial $\phi$-ideal),
and in particular if it is a field, then $C_{R}$ is a field. Assume
from now on that $R$ is $\phi$-simple. We continue to denote $\Phi_{R}$
simply by $\Phi$.
The set  $W_{R}^{\Phi}$ of $\Phi$-fixed vectors   of $W_{R}$ is a vector
space over $C_{R}.$ The Casoratian Lemma asserts that
\begin{equation}
R\otimes_{C_{R}}W_{R}^{\Phi}\hookrightarrow R\otimes_{K}W\label{eq:Wronskian lemma embedding}
\end{equation}
is injective, and in particular $\dim_{C_{R}}W_{R}^{\Phi}\le\dim_{K}W,$
as can be seen by calculating the $R$-ranks of the two sides.

If $(W,\Phi)=(K^{n},A^{-1}\phi)$ is the difference module attached
to the system (\ref{eq:general diference system}), then the fixed
vectors $\mathcal{U}_{R}=W_{R}^{\Phi}$ are the solutions of the system
in $R.$ For this reason we sometimes call it the \emph{solution set}.
We say that the system $(\ref{eq:general diference system})$ attains
a \emph{full set of solutions} over a $\phi$-simple $\phi$-ring $R$ if
$\dim_{C_{R}}\mathcal{U}_{R}=n$. This is equivalent to the existence
of a matrix $U\in \GL_{n}(R)$ satisfying $\phi(U)=AU.$ Such a matrix
is called a \emph{fundamental matrix} over $R$, and its columns span
$\mathcal{U}_{R}=UC_{R}^{n}.$ If $U'$ is another fundamental matrix
over $R$ then
\[
U'=UT
\]
with $T\in \GL_{n}(C_{R}).$ Yet another way to say that $(\ref{eq:general diference system})$
attains a full set of solutions over $R$ is that $(W_{R},\Phi_{R})$
is trivial: the embedding $(\ref{eq:Wronskian lemma embedding})$
is an isomorphism.

Assume that $W$ attains a full set of solutions over the  $\phi$-simple
$\phi$-ring $R$ and $R'$ is a $\phi$-ring extension of $R$ (not
necessarily $\phi$-simple) such that $C_{R'}=C_{R}.$ If $U\in \GL_{n}(R)$
is a fundamental matrix and $v\in W_{R'}^{\Phi}$, then $U^{-1}v$
is fixed by $\phi,$ hence $v\in UC_{R}^{n}=\mathcal{U}_{R}.$ Thus
the space of solutions does not grow when $R$ grows, as long as the
ring of $\phi$-constants stays the same.

\subsubsection{An irreducibility criterion}

We shall need the following lemma.  Compare Lemma 4.6
of \cite{A-D-H-W}, where it is deduced from the cyclic vector lemma.
We give a more direct proof here. A $\phi$-module $W$ over $K$
is called irreducible if it does not admit any $\phi$-submodule other
than $0$ and $W$ itself.
\begin{lem}
\label{lem:Irreducibility lemma}Let $K'=K(z)$ be a transcendental
extension of $K$ and $W$ a $\phi$-module over $K$. Extend $\phi$
to $K'$ by the rule $\phi(z)=qz,$ where $q\in C^{\times},$ $C=K^{\phi}.$
Then $W$ is irreducible if and only if $W_{K'}$ is irreducible.
\end{lem}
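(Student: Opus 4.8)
The plan is to prove the contrapositive in both directions, the nontrivial one being: if $W_{K'}$ is reducible then $W$ is reducible. One direction is immediate: if $V\subsetneq W$ is a proper nonzero $\phi$-submodule of $W$ over $K$, then $V_{K'}$ is a proper nonzero $\phi$-submodule of $W_{K'}$, since $\dim_{K'}V_{K'}=\dim_K V$ and $0<\dim_K V<\dim_K W=\dim_{K'}W_{K'}$. So assume $W_{K'}$ admits a $\phi$-submodule $U'$ with $0<\dim_{K'}U'=d<n=\dim_K W$; the goal is to descend $U'$ to a $\phi$-submodule of $W$ defined already over $K$.

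The key idea is to exploit the $\mathbb{Z}$-grading of $K'=K(z)$ coming from the action of $\phi$ on $z$. Here I would first localize: every element of $K'$ can be written with denominator a product of factors $\phi^j$ applied to things, but more usefully, I would work inside $K[z,z^{-1}]_{\mathfrak{p}}$-type rings or, cleaner, observe that $K[z,z^{-1}]=\bigoplus_{m\in\mathbb{Z}}Kz^m$ is a $\phi$-stable graded subring of $K'$ on which $\phi$ acts on the degree-$m$ piece by $f z^m\mapsto q^m\phi(f)z^m$. The standard trick (as in the classification of $\phi$-modules over such rings, cf. the Laurent-polynomial part of $S$) is: pick a cyclic vector or simply a basis $e_1,\dots,e_n$ of $W$ over $K$, so that $W_{K'}$ has the same basis over $K'$ and $\Phi$ is given by a matrix $A\in\GL_n(K)$ with entries in $K$ (degree $0$). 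Then the submodule $U'$ is the $K'$-span of some vectors; clearing denominators we may assume it is generated by vectors in $W_{K[z,z^{-1}]}:=K[z,z^{-1}]\otimes_K W$. Now decompose each generator into its homogeneous components $w=\sum_m w_m$ with $w_m\in z^m\otimes W$. Because $\Phi$ preserves the grading up to the scalar $q^m$ on the degree-$m$ part and $A$ has degree $0$ entries, the $\phi$-module structure respects the direct sum decomposition $W_{K[z,z^{-1}]}=\bigoplus_m z^m\otimes W$, and each summand is isomorphic as a $\phi$-module to $W$ itself after the substitution scaling $\Phi$ by $q^m$ — but $q^m$ is a $\phi$-constant, so scaling $\Phi$ by a constant does not change which subspaces are $\Phi$-stable. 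Hence the intersection of $U'$ (or rather its saturation over $K[z,z^{-1}]$) with each graded piece is a $\phi$-submodule, and picking a nonzero such intersection and transporting it back to $W$ via the canonical isomorphism $z^m\otimes W\xrightarrow{\sim}W$ yields a $K$-subspace of $W$ stable under $\Phi$. One must check this subspace is proper and nonzero: nonzero because $U'\neq 0$ forces some homogeneous component to be nonzero, and proper because $\dim$ of the graded piece we extract is at most $\dim_{K'}U'=d<n$ (and a short argument that it is genuinely less than $n$, e.g. if it equalled $n$ the whole of $W$ would sit inside $U'$ after base change, contradicting $d<n$).

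The step I expect to be the main obstacle is making the descent precise when $U'$ is not already "graded", i.e. handling the passage from a general $K'$-submodule to a homogeneous one: one needs the saturation $\widetilde{U}=U'\cap W_{K[z,z^{-1}]}$ to be a $K[z,z^{-1}]$-submodule that is still $\phi$-stable and still proper/nonzero, and then argue that its homogeneous components (in the sense of: the $K$-spans of the degree-$m$ parts of all its elements) are $\phi$-stable $K$-subspaces of $W$, at least one of which is nonzero and at least one of which — in fact all of which — is proper. The cleanest route is probably: let $V_m\subseteq W$ be the $K$-span of $\{w : w z^m\text{-component of some element of }\widetilde U\}$; show directly from $\phi(w)=\phi(P)AP^{-1}w$-type relations that $\Phi(V_m)\subseteq V_m$ (using that $A$ is constant in $z$ and $q$ is a $\phi$-constant); then $\dim_K V_m$ cannot be $n$ for all $m$, for otherwise $\widetilde U\otimes K'=W_{K'}$, contradicting properness — actually one needs a slightly more careful rank count, comparing $\sum$-type bounds, but since $\widetilde U$ has generic rank $d<n$ over $K[z,z^{-1}]$, a dimension-at-the-generic-point argument shows $V_m\subsetneq W$ for every $m$. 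Thus any nonzero $V_m$ is the desired proper nonzero $\phi$-submodule of $W$, completing the proof.
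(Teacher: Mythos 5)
Your easy direction is fine, and the idea of exploiting the action of $\phi$ on powers of $z$ is in the right spirit, but the descent step has a genuine gap at exactly the place you flag. Work with $\widetilde U=U'\cap W_{K[z,z^{-1}]}$ as you propose. First, the "intersection with each graded piece" variant fails: $\widetilde U$ need contain no nonzero homogeneous element at all (e.g. for $n=2$ and $\widetilde U=K[z,z^{-1}]\cdot(e_1+ze_2)$, an element $f(z)(e_1+ze_2)$ is homogeneous only if $f=0$). Second, for the projection variant, note that $\widetilde U$ is stable under multiplication by $z^{\pm1}$, which shifts degrees by $\pm1$; hence your spaces $V_m$ are all equal to one another, and in the same example the degree-$m$ component of $\sum_j a_jz^j(e_1+ze_2)$ is $a_me_1+a_{m-1}e_2$, so $V_m=W$ for every $m$ even though the generic rank is $1<2$. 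Thus the claimed "dimension at the generic point" argument for properness is simply not valid: a rank-$d$ Laurent-polynomial submodule can have all of its coefficient projections equal to $W$. Since your proof needs some $V_m$ that is simultaneously nonzero and proper, it does not go through as written, and I do not see how to rescue it without breaking the $z\leftrightarrow z^{-1}$ symmetry.

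That symmetry-breaking is precisely what the paper's proof does: it embeds $K'$ into $k=K((z))$, intersects the (base-changed) submodule $V\subset W_k$ with the lattice $W_R$, $R=K[[z]]$, uses that $R$ is a PID to get $V_0=V\cap W_R$ free of rank $d=\dim_kV$, checks the saturation property $V_0\cap zW_R=zV_0$, and then reduces mod $z$: the image $V_0/zV_0\hookrightarrow W_R/zW_R\cong W$ is a $\phi$-submodule of dimension exactly $d$ (by freeness), hence nonzero and proper. In other words, one must look only at the "initial" (constant-term) data with respect to the ideal $(z)$ — using only nonnegative powers of $z$ — rather than at all graded components of Laurent polynomials; multiplication by $z^{-1}$ is exactly what lets higher coefficients leak into every degree and destroys the dimension count in your approach. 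A corrected version of your plan along these lines (replace $K[z,z^{-1}]$ by $K[z]$ or $K[[z]]$, use freeness/saturation at $(z)$, and take reduction mod $z$, noting that $\phi(z)=qz$ preserves $(z)$) essentially reproduces the paper's argument.
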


\begin{proof}
It is clear that if $W$ is reducible, so is $W_{K'}.$ To prove the
converse, let $k=K((z))$ and $R=K[[z]],$ and extend the action of
$\phi$ to them, so that $K\subset K'\subset k$ is a tower of $\phi$-fields.
If $W_{K'}$ is reducible, so is $W_{k},$ so it is enough to prove
that if $W_{k}$ is reducible, so is $W$.

Let $V\subset W_{k}$ be a $\phi$-submodule over $k$, with $0<\dim_{k}V<\dim_{K}W.$
Then $V_{0}=V\cap W_{R}$ is a $\phi$-submodule of $W_{R}$ over
$R$. Since $W_{R}$ is a free $R$-module and since $R$ is a principal ideal domain, $V_{0}$ is a free $R$-module as well. Using the fact that $k$ is the quotient ring of $R$, it is easily seen that the maximal number of $R$-linearly independent elements of $V_{0}$ is equal to the maximal number of $k$-linearly independent elements of $V$, so $\mathrm{rk}_{R}V_{0}=\dim_{k}V$.

The inclusion  $V_{0} \subset W_{R}$ of $\phi$-modules over $R$ induces the monomorphism 
$$
\overline{V_{0}}:=V_{0}/(V_{0} \cap z W_{R}) \hookrightarrow \overline{W_{R}}:=W_{R}/zW_{R}
$$ 
of $\phi$-modules over $K$. Note that $V_{0} \cap z W_{R}=V \cap zW_{R}=z(V \cap W_{R})=z V_{0}$, so $\overline{V_{0}}=V_{0}/zV_{0}$ and, hence, $\dim_{K} \overline{V_{0}}= \mathrm{rk}_{R}V_{0}=\dim_{k}V$. Moreover, $\overline{W_{R}}$ and  $W$ are clearly isomorphic as $\phi$-modules over $K$. Therefore, $W$ has a $\phi$-submodule over $K$ of dimension $\dim_{k}V$ and, hence, is reducible.

\end{proof}

\begin{rem}
With the notations of the previous proof, we stress that, in general, the base change of $\overline{V_{0}}$
to $k$ need not be $V$, and the submodule $V$ need not descend
to $K$.
\end{rem}

\subsection{Systems of linear differential equations and modules with connections}

This analogue of the difference equation set-up is even more classical,
see \cite{S-vdP03}. We give it only to fix notation.

\subsubsection{Systems and modules}

Let $(K,\partial)$ be a field of characteristic 0, equipped with
a non-trivial derivation. We denote by $C=K^{\partial}$ the kernel
of $\partial,$ and call it the field of $\partial$\emph{-constants}.
Although not essential for many of the arguments, we assume that $C$
is algebraically closed.

Assume we are given a \emph{linear system of differential equations}
\begin{equation}
\partial(y)=By,\label{eq:general differential system}
\end{equation}
where $B\in\mathfrak{gl}_{n}(K)$. One seeks solutions in $\partial$-ring extensions
$(R,\partial)$ of $K$.

A \emph{module with a connection} $(W,\nabla)$ over $K$ (called
also a $\partial$-module) is a finite dimensional vector space over
$K$ equipped with a $\partial$-connection, \emph{i.e.}, an additive map $\nabla :M \rightarrow M$ such that $\nabla( a w)=\partial(a)w + a \nabla(w)$ for all $a$ in $K$ and $w$ in $W$. To the system $(\ref{eq:general differential system})$
one attaches the module
\[
W=K^{n},\,\,\,\nabla(y)=\partial(y)-By.
\]
Conversely, if $(W,\nabla)$ is a module with a connection and $e_{1},...,e_{n}$
is a basis of $W$, then
\[
\nabla(e_{j})=\sum_{i=1}^{n}b_{ij}e_{i},
\]
and we associate to it the system $(\ref{eq:general differential system})$
with $B=-(b_{ij}).$ If $\widetilde{e}_{1},...,\widetilde{e}_{n}$
is another basis of the same module and
\[
e_{j}=\sum_{i=1}^{n}p_{ij}\widetilde{e}_{i},
\]
then the matrix giving $\nabla$ in the new basis is $-\widetilde{B}$
where
\[
\widetilde{B}=PBP^{-1}+\partial P\cdot P^{-1}.
\]
We call a pair of matrices $B,\widetilde{B}$ related as above \emph{gauge
equivalent}. The above procedure establishes a bijection between systems
$(\ref{eq:general differential system}),$ up to gauge equivalence,
and modules with a connection $(W,\nabla),$ up to isomorphism.

A \emph{trivial }$\partial$-module is a module isomorphic to $(K^{n},\partial)$.
The module associated to $(\ref{eq:general differential system})$
is trivial if and only if $B$ is gauge equivalent to the zero matrix.

\subsubsection{Base change and solutions set}

If $(R,\partial)$ is a $\partial$-ring extension of $(K,\partial)$
and $(W,\nabla)$ is a $\partial$-module over $K$, then we may consider
its base change
\[
(W_{R},\nabla_{R})=(R\otimes_{K}W,\partial\otimes1+1\otimes\nabla).
\]
Note that $W_{R}$ is free of rank $n=\dim_{K}W$ over $R.$ Let $C_{R}=R^{\partial}$
be the ring of $\partial$-constants of $R$. If $R$ is a  \emph{$\partial$-simple}
$\partial$-ring (does not have any non-trivial $\partial$-ideal), and in particular if it is a field, then $C_{R}$ is a field.
Assume from now on that $R$ is $\partial$-simple, and recall that
$\partial$-simple rings are domains (have no zero divisors). 

The  kernel of $\nabla$ in $W_{R}$, denoted $W_{R}^{\nabla},$ is
a vector space over $C_{R}.$ The Wronskian Lemma asserts that
\begin{equation}
R\otimes_{C_{R}}W_{R}^{\nabla}\hookrightarrow R\otimes_{K}W\label{eq:Wronskian lemma embedding-1}
\end{equation}
is injective, and in particular $\dim_{C_{R}}W_{R}^{\nabla}\le\dim_{K}W,$
as can be seen by calculating the $R$-ranks of the two sides.

If $(W,\nabla)=(K^{n},\partial-B)$ is the $\partial$-module attached
to the system $(\ref{eq:general differential system})$, then $\mathcal{U}_{R}=W_{R}^{\nabla}$
are the solutions of the system over $R.$ For this reason we sometimes
call it the \emph{solution set}. We say that the system $(\ref{eq:general differential system})$
attains a \emph{full set of solutions} over a $\partial$-simple $\partial$-ring
$R$ if $\dim_{C_{R}}\mathcal{U}_{R}=n$. This is equivalent to the
existence of a matrix $U\in \GL_{n}(R)$ satisfying $\partial(U)=BU.$
Such a matrix is called a \emph{fundamental matrix} over $R$, and
its columns span $\mathcal{U}_{R}=UC_{R}^{n}.$ If $U'$ is another
fundamental matrix over $R$ then
\[
U'=UT
\]
with $T\in \GL_{n}(C_{R}).$ Yet another way to say that $(\ref{eq:general differential system})$
attains a full set of solutions over $R$ is that $(W_{R},\nabla_{R})$
is trivial: the embedding $(\ref{eq:Wronskian lemma embedding-1})$
is an isomorphism.

Assume that $W$ attains a full set of solutions over the $\partial$-simple
$\partial$-ring $R$ and $R'$ is a $\partial$-ring extension of
$R$ (not necessarily $\partial$-simple) such that $C_{R'}=C_{R}.$
If $U\in \GL_{n}(R)$ is a fundamental matrix and $v\in W_{R'}^{\nabla}$,
then $U^{-1}v$ is killed by $\partial,$ hence $v\in UC_{R}^{n}=\mathcal{U}_{R}.$
Thus the space of solutions does not grow when $R$ grows, as long
as the ring of $\partial$-constants stays the same.

\subsection{Systems of linear $(\phi,\partial)$-equations and $(\phi,\partial)$-modules}

\subsubsection{$(\phi,\partial)$-fields}

Consider a triplet $(K,\phi,\partial)$ where $K$ is a field of characteristic
zero, $\phi$ is an automorphism, and $\partial$ a derivation satisfying
\[
\partial\circ\phi=q\phi\circ\partial.
\]

We assume that $q\in K^{\phi}\cap K^{\partial}.$ Such a triplet will
be called a $(\phi,\partial)$-field. The main examples considered
in this work are:
\begin{itemize}
\item $K,\phi$ and $\partial$ are as in \S~\ref{subsec:K phi and partial}.
\item The same example, where $K$ is replaced by the field $M$ of meromorphic
functions on $\mathbb{C}$, or by the field
$F=\mathbb{C}((z)).$
\item $K$ is replaced by $K'=K(z)\subset M$ or by $F$, $\phi$ is the
same, but $\partial$ is replaced by the derivation $\delta=z\partial.$
In this example $\delta\circ\phi=\phi\circ\delta,$ so the $q$ appearing
in the commutation relation is not the $q$ defining $\phi,$ but
1.
\end{itemize}
We let $C$ be the field of $\phi$-constants in $K$.
In the examples mentioned above $C=\mathbb{C}$.  It coincides
with the field $K^{\partial} =\{f  \, | \, \partial(f)=0\}$ of $\partial$-constants, so we simply
call it the field of constants. However, we shall have to consider
later on an extension $(\widetilde{K}',\phi,\delta)$ of $(K(z), \phi,\delta)$, in which the $\delta$-field$(\widetilde{K}')^{\phi}=\widetilde{C}$
is a \emph{differential closure} of $\mathbb{C}$. The differential closure $\widetilde{C}$  of $\mathbb{C}$ is
a $\delta$-field extension of $\mathbb{C}$, unique up to $\delta$-isomorphism,
characterized by the following two properties:
\begin{itemize}
\item  $\widetilde{C}$ is \emph{differentially closed}: Every finite system of polynomial differential equations in several variables in the operator
$\delta$, with coefficients
from $\widetilde{C}$, which has
a solution in some $\delta$-field extension of $\widetilde{C}$,
already has a solution in $\widetilde{C}.$
\item If $(\mathscr{C},\delta)$ is another differentially closed field
containing $\mathbb{C}$ then there exists a $\delta$-embedding of
$\widetilde{C}$ in $\mathscr{C}\text{ over \ensuremath{\mathbb{C}}}.$
\end{itemize}
See \cite{M-MTDF} for a survey, including proofs of the existence
and uniqueness of $\widetilde{C}.$ The existence is due to Kolchin
and (with a simpler characterization than the one given here) to Blum.
The uniqueness is due to Shelah. Blum and Shelah's approaches are proved by model theoretic
means; differentially closed fields do not show up ``naturally''
in algebra, and are considered ``huge''. Despite the analogy with
the algebraic closure of a field, caution must be exercised. For example,
$\widetilde{C}$ fails to satisfy \emph{minimality}: it has proper
subfields $\delta$-isomorphic to it over $\mathbb{C}.$

It is proven in Proposition 2.11 of loc.cit. that the $\delta$-constants
of a $\delta$-closure $\widetilde{C}$ of $\mathbb{C}$ are just
$\mathbb{C}$.  Proposition 1.1 of \cite{M-RDCF} proves that the group
$\mathrm{Aut}_{\delta}(\widetilde{C}/\mathbb{C})$ of $\delta$-automorphisms
of $\widetilde{C}/\mathbb{C}$ is rich: the only elements of $\widetilde{C}$
fixed by $\mathrm{Aut}_{\delta}(\widetilde{C}/\mathbb{C})$
are the elements of $\mathbb{C}.$ Both these facts will play a role
later on.

\subsubsection{Systems and modules }

Given a $(\phi,\delta)$-field $K$, we consider the double system of equations
\[
\begin{cases}
\begin{array}{c}
\phi(y)=Ay\\
\partial(y)=By
\end{array}\end{cases}
\]
where $A\in \GL_{n}(K)$ and $B\in\mathfrak{gl}_{n}(K).$ One seeks
solutions in $(\phi,\partial)$-ring extensions $R$ of $K$. It is
readily checked that a necessary condition for the existence of a
$U \in \GL_{n}(R),$ for some extension $R,$ satisfying both sets of
equations, is the \emph{consistency relation}
\begin{equation}
q\phi(B)=ABA^{-1}+\partial A\cdot A^{-1}.\label{eq:consistencey condition}
\end{equation}
If this relation holds we call the above system of equations \emph{consistent}.

A $(\phi,\partial)$\emph{-module} over $K$ is a triple $(W,\Phi,\nabla)$
such that $(W,\Phi)$ is a $\phi$-module, $(W,\nabla)$ is a $\partial$-module
and
\[
\nabla\circ\Phi=q\Phi\circ\nabla.
\]
The $(\phi,\partial)$-module associated to a consistent system as
above is $(K^{n},\Phi,\nabla)$ where $\Phi(y)=A^{-1}\phi(y)$ and
$\nabla(y)=\partial(y)-By.$ The consistency relation $(\ref{eq:consistencey condition})$
between $A$ and $B$ is equivalent to the commutation relation between
$\Phi$ and $\nabla.$ Conversely, if $(W,\Phi,\nabla)$ is a $(\phi,\partial)$-module,
$e_{1},...,e_{n}$ is a basis of $W$ over $K$ and $A$ and $B$
are constructed as in the previous sections, then we get a consistent
system of equations.

If we change the basis as before, the new pair of matrices that we
obtain, with respect to the new basis, is
\[
(\widetilde{A},\widetilde{B})=(\phi(P)AP^{-1},PBP^{-1}+\partial P\cdot P^{-1}).
\]
We call two pairs $(\widetilde{A},\widetilde{B})$ and $(A,B)$ related
in this way \emph{gauge equivalent}. As before, this establishes a
bijection between consistent systems as above, up to gauge equivalence,
and isomorphism classes of $(\phi,\partial)$-modules.

The module $(W,\Phi,\nabla)$ is called \emph{trivial} if it is isomorphic
to the module $(K^{n},\phi,\partial).$ In terms of the associated
systems, this means that $(A,B)$ is gauge equivalent to $(I,0).$
It may happen that $(W,\Phi)$ or $(W,\nabla)$ are trivial but $(W,\Phi,\nabla)$
is not. In such a case we use the terminology \emph{$\phi$-trivial
}or \emph{$\partial$-trivial}.

A particular case of gauge equivalence occurs when we take the transition
matrix between the bases to be $P=A.$ In this case we get, thanks
to the consistency relation, the pair
\[
(\widetilde{A},\widetilde{B})=(\phi(A),q\phi(B)).
\]

\subsubsection{Base change and solution sets}

It is tempting to look for a $(\phi,\partial)$-ring extension $R$
over which the \emph{$(\phi,\partial)$-}system above attains a full
set of solutions. While the definitions can be immitated, at this
point we take a different path. We shall be primarily interested in
the \emph{difference }Picard Vessiot theory,\emph{ i.e.}, the construction
of a ``minimal'' ring over which $(W,\Phi)$ alone is trivialized.
We shall show that \emph{if} $W$ carries a consistent $\partial$-structure,
\emph{i.e.}, can be extended to become a $(\phi,\partial)$-module, this imposes
serious restrictions on the \emph{difference Galois group} of the
system $\phi(y)=Ay.$ Similarly, we might be interested in a $\partial$-module
$(W,\nabla),$ its \emph{differential }Picard Vessiot theory, and
the restrictions that a consistent $\phi$-structure imposes on its
\emph{differential Galois group}. This leads us to the notions of
$\partial$-integrability and $\phi$-isomonodromy\footnote{Some authors use $\phi$-integrability, to stress the analogy with
$\partial$-integrability. However, as there are no differential equations
involved with $\phi,$ we prefer the term $\phi$-isomonodromy.} discussed below.

\subsection{$\partial$-integrable $\phi$-modules}

Let $K$ be a $(\phi,\partial)$-field. Let $(W,\Phi)$ be a $\phi$-module
over $K$ and $\phi(y)=Ay$ the associated system of difference equations,
in some basis of $W$. Concretely, we may take $(W,\Phi)=(K^{n},A^{-1}\phi).$ 

Let $K[\varepsilon]$ be the ring of dual numbers over $K$ ($\varepsilon^{2}=0$).
Extend the automorphism $\phi$ to $K[\varepsilon]$ as $\phi(a+b\varepsilon)=\phi(a)+q\phi(b)\varepsilon,$
and define a ring homomorphism $h_{\partial}:K\to K[\varepsilon]$
by $h_{\partial}(a)=a+\partial(a)\varepsilon.$ Both the inclusion
$K\hookrightarrow K[\varepsilon]$ and $h_{\partial}$ commute with
$\phi$. Consider the two $\phi$-modules
\[
W_{\varepsilon}=K[\varepsilon]\otimes_{K}W,\,\,\,\Phi_{\varepsilon}=\phi\otimes\Phi
\]
and
\[
W_{\varepsilon}^{(\partial)}=K[\varepsilon]\otimes_{h_{\partial},K}W,\,\,\,\Phi_{\varepsilon}^{(\partial)}=\phi\otimes\Phi.
\]

One can thereby construct an endofunctor of the category of $\phi$-modules over $K$ which attaches to any module $W$ the module $W_{\varepsilon}^{(\partial)}$. Such a functor is called the \emph{prolongation functor} and  has been introduced by Ovchinnikov~\cite{Ovdifftan} and Kamensky~\cite{Kamdifftan} when defining  differential Tannakian categories. The $\phi$-module  $W_{\varepsilon}^{(\partial)}$ is an extension of $W$ by itself in the category of $\phi$-modules over $K$. Moreover, if $\phi(y)=Ay$ is a system associated to $W$ then $\phi(y)=\begin{pmatrix}
A & \partial(A) \\
0 &A
\end{pmatrix} y$ is a system associated to $W_{\varepsilon}^{(\partial)}$.

\begin{defn}
The $\phi$-module $W$ is called $\partial$\emph{-integrable} if
there exists an isomorphism of $\phi$-modules $(W_{\varepsilon},\Phi_{\varepsilon})\simeq(W_{\varepsilon}^{(\partial)},\Phi_{\varepsilon}^{(\partial)})$
over $K[\varepsilon],$ reducing to the identity modulo $\varepsilon$.
\end{defn}
It is easily seen that a module $W$ is $\partial$\emph{-integrable} if and only if the extension of $W$ by itself corresponding to $W_{\varepsilon}^{(\partial)}$ splits. Moreover, we can interpret the notion of $\partial$-integrability in the following terms.
\begin{lem}\label{lem:partialintmoduldestructure}
Let $(W,\Phi)$ be a $\phi$-module over $K.$ Then $W$ is $\partial$-integrable
if and only if it carries a structure of a $(\phi,\partial)$-module
extending the given $\phi$-module structure. Explicitly, this means
that there exists a $\partial$-connection
\[
\nabla:W\to W
\]
such that $q\Phi\circ\nabla=\nabla\circ\Phi$. In terms of the matrix
$A,$ the module $W$ (or system $(\ref{eq:general diference system})$)
is $\partial$-integrable if and only if there exists a matrix $B\in\mathfrak{gl}_{n}(K)$
such that
\[
\partial(A)=q\phi(B)A-AB.
\]
\end{lem}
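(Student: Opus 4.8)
The plan is to prove Lemma \ref{lem:partialintmoduldestructure} by unwinding the definition of $\partial$-integrability in explicit matrix terms, and then translating back to the coordinate-free language of connections. First I would fix a basis $e_1,\dots,e_n$ of $W$ so that $\Phi(e_j)=\sum_i a_{ij}e_i$ with $A^{-1}=(a_{ij})$, i.e. the associated system is $\phi(y)=Ay$. In the basis $1\otimes e_j$ of $W_\varepsilon$ the operator $\Phi_\varepsilon$ is again given by the matrix $A$ (with the twisted $\phi$ on $K[\varepsilon]$), while in the basis $1\otimes e_j$ of $W_\varepsilon^{(\partial)}$ the computation recalled just above the lemma shows that $\Phi_\varepsilon^{(\partial)}$ is given by $\begin{pmatrix}A & \partial(A)\\ 0 & A\end{pmatrix}$ once we identify $K[\varepsilon]\otimes_{h_\partial,K}W$ with a free $K[\varepsilon]$-module in the natural way (the point being that $h_\partial(a)=a+\partial(a)\varepsilon$ introduces the off-diagonal block $\partial(A)$). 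An isomorphism $W_\varepsilon\simeq W_\varepsilon^{(\partial)}$ reducing to the identity modulo $\varepsilon$ is then given by a matrix $P=I+\varepsilon B$ with $B\in\mathfrak{gl}_n(K)$, and the gauge-equivalence formula for the $\phi$-action (namely $\widetilde A=\phi(P)AP^{-1}$, applied over $K[\varepsilon]$ with the twisted $\phi$) yields, after expanding to first order in $\varepsilon$ and using $\varepsilon^2=0$, exactly the identity $\partial(A)=q\phi(B)A-AB$. This establishes the "explicit" criterion in the statement.

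Next I would record that this matrix identity is precisely the consistency relation \eqref{eq:consistencey condition} rewritten: setting $B$ as above and defining $\nabla(y)=\partial(y)-By$ on $W=K^n$, the relation $\partial(A)=q\phi(B)A-AB$ is equivalent to $q\phi(B)=ABA^{-1}+\partial A\cdot A^{-1}$, which is \eqref{eq:consistencey condition}, and hence (by the discussion of $(\phi,\partial)$-modules in the previous subsection) to $\nabla\circ\Phi=q\Phi\circ\nabla$. So a choice of $B$ solving the displayed equation is the same thing as a $\partial$-connection $\nabla$ on $W$ compatible with $\Phi$, i.e. a $(\phi,\partial)$-module structure extending $(W,\Phi)$. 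Conversely, given such a $\nabla$, writing its matrix as $-B$ in the chosen basis and reversing the computation produces the required isomorphism $I+\varepsilon B:W_\varepsilon\to W_\varepsilon^{(\partial)}$. Finally I would note the earlier remark that $W$ is $\partial$-integrable iff the self-extension $0\to W\to W_\varepsilon^{(\partial)}\to W\to 0$ splits, and observe that a splitting is a $K[\varepsilon]$-linear $\phi$-equivariant section, which in matrix form is again encoded by a $B$ with $\partial(A)=q\phi(B)A-AB$; this gives the "if and only if it carries a $(\phi,\partial)$-module structure" phrasing without redoing the computation.

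I do not expect a serious obstacle here: the lemma is essentially a bookkeeping translation, and the only point requiring a little care is the sign and twist conventions. Concretely, one must be careful that $\phi$ on $K[\varepsilon]$ is the \emph{twisted} extension $\phi(a+b\varepsilon)=\phi(a)+q\phi(b)\varepsilon$ (this is where the factor $q$ in $q\phi(B)$ comes from), and that $h_\partial$ is a $\phi$-equivariant ring homomorphism precisely because of the commutation relation $\partial\circ\phi=q\phi\circ\partial$: indeed $h_\partial(\phi(a))=\phi(a)+\partial(\phi(a))\varepsilon=\phi(a)+q\phi(\partial a)\varepsilon=\phi(h_\partial(a))$ in $K[\varepsilon]$ with the twisted $\phi$. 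Getting these two compatibilities straight is the whole content; once they are in place, expanding $\phi(I+\varepsilon B)\,A\,(I+\varepsilon B)^{-1}=A+\varepsilon\bigl(q\phi(B)A-AB\bigr)$ and comparing with $\begin{pmatrix}A&\partial A\\0&A\end{pmatrix}$ (read as $A+\varepsilon\,\partial A$ on the free rank-$n$ module $K[\varepsilon]\otimes_{h_\partial,K}W$) finishes the argument. The coordinate-free reformulation in terms of $\nabla$ then follows by the dictionary already set up in the subsection on $(\phi,\partial)$-modules, since the matrix of a connection and the gauge-equivalence rules were spelled out there.
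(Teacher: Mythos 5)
Your proposal is correct and is essentially the paper's own argument: both proofs unwind an isomorphism between $(W_\varepsilon,\Phi_\varepsilon)$ and $(W_\varepsilon^{(\partial)},\Phi_\varepsilon^{(\partial)})$ reducing to the identity modulo $\varepsilon$ to first order in $\varepsilon$, with the two key compatibilities you flag (the twisted extension $\phi(a+b\varepsilon)=\phi(a)+q\phi(b)\varepsilon$ and the $\phi$-equivariance of $h_\partial$, which uses $\partial\circ\phi=q\phi\circ\partial$) carrying all the content. The only difference is presentational: the paper constructs $\nabla$ intrinsically as the $\varepsilon$-coefficient of $\iota(1\otimes w)$ and checks the Leibniz rule and $q\Phi\circ\nabla=\nabla\circ\Phi$ directly, whereas you run the same computation in a fixed basis as a gauge transformation $P=I+\varepsilon B$ yielding $\partial(A)=q\phi(B)A-AB$ and then pass to the connection formulation via the consistency-relation dictionary already set up for $(\phi,\partial)$-modules; both routes are valid and of the same depth.
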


\begin{proof}
Let $\iota:(W_{\varepsilon}^{(\partial)},\Phi_{\varepsilon}^{(\partial)})\simeq(W_{\varepsilon},\Phi_{\varepsilon})$
be an isomorphism of $\phi$-modules over $K[\varepsilon],$ reducing to the identity modulo $\varepsilon$. Write $W_{\varepsilon}^{(\partial)}=1\otimes W+\varepsilon\otimes W.$
Note that only the second summand is a $K$-subspace, as 
\[
\lambda\otimes w=1\otimes\lambda w-\partial(\lambda)\varepsilon\otimes w\,\,\,(\lambda\in K,\,\,w\in W)
\]
need not lie in $1\otimes W.$ Nevertheless, as $\iota$ is $K[\varepsilon]$-linear,
it is determined by the values $\iota(1\otimes w).$ Since $\iota$
reduces to the identity modulo $\varepsilon$ we can define $\nabla$
by
\[
\iota(1\otimes w)-1\otimes'w=\varepsilon\otimes'\nabla(w).
\]
To avoid confusion we used $\otimes'$ for the tensor product in $W_{\varepsilon}.$
We then have 
\[
\varepsilon\otimes'\nabla(\lambda w)=\iota(1\otimes\lambda w)-1\otimes'\lambda w=\iota(\lambda\otimes w+\partial(\lambda)\varepsilon\otimes w)-\lambda\otimes'w.
\]
This equals $\lambda\varepsilon\otimes'\nabla(w)+\partial(\lambda)\varepsilon\otimes'w.$
It follows that
\[
\nabla(\lambda w)=\lambda\nabla(w)+\partial(\lambda)w,
\]
so $\nabla$ is indeed a $\partial$-connection. That it satisfies
$q\Phi\circ\nabla=\nabla\circ\Phi$ follows from $\iota\circ\Phi_{\varepsilon}^{(\partial)}=\Phi_{\varepsilon}\circ\iota.$
These arguments can be reversed, proving the Lemma.
\end{proof}
Note that any difference system $\phi(y)=Ay$ with $A \in \GL_n(K^\partial)$ is $\partial$-integrable (take $B=0$).
\subsection{$\phi$-isomonodromic $\partial$-modules}

Still assuming that $K$ is a $(\phi,\partial)$-field, let $(W,\nabla)$
be a module with a connection, and $\partial(y)=By$ the associated
system of differential equations, in some basis of $W$. Concretely,
we may take $(W,\nabla)=(K^{n},\partial-B).$

The module $(W^{(\phi)},\nabla^{(\phi)})$ is defined as
\[
W^{(\phi)}=K\otimes_{\phi,K}W,\,\,\,\nabla^{(\phi)}(a\otimes w)=\partial a\otimes w+qa\otimes\nabla w.
\]
It is again a $\partial$-module. 
\begin{defn}
A $\partial$-module $W$ is called \emph{$\phi$-isomonodromic }(or
$\phi$\emph{-integrable}) if $W\simeq W^{(\phi)}.$
\end{defn}

\begin{lem}
\label{lem:TFAE for =00005Cphi-integrability-1}Let $(W,\nabla)$
be a $\partial$-module over $K$. Then $W$ is $\phi$-isomonodromic
if and only if it carries a structure of a $(\phi,\partial)$-module
extending the given $\partial$-module structure. In terms of the
matrix $B$ the module $W$ (or the system $(\ref{eq:general differential system})$)
is $\phi$-isomonodromic if and only if there exists an $A\in \GL_{n}(K)$
such that
\[
q\phi(B)=ABA^{-1}+\partial A\cdot A^{-1},
\]
\emph{i.e.}, if and only if $B$ and $q\phi(B)$ are gauge equivalent.
\end{lem}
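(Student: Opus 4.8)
The plan is to dualize the argument for Lemma~\ref{lem:partialintmoduldestructure}, working directly with the twist $W^{(\phi)}$ rather than with dual numbers. The first observation is that giving a bijective $\phi$-semilinear endomorphism $\Phi$ of $W$ is the same datum as giving a $K$-linear isomorphism $\lambda\colon W^{(\phi)}\to W$: by the universal property of $W^{(\phi)}=K\otimes_{\phi,K}W$, a $\phi$-semilinear map $\Phi$ corresponds to the $K$-linear map $\lambda(a\otimes w)=a\Phi(w)$, and conversely $\Phi(w)=\lambda(1\otimes w)$; bijectivity of one is equivalent to bijectivity of the other.

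Next I would check that, under this correspondence, $\lambda$ is a morphism of $\partial$-modules $(W^{(\phi)},\nabla^{(\phi)})\to(W,\nabla)$ if and only if $\nabla\circ\Phi=q\,\Phi\circ\nabla$. This is a one-line computation: for $a\in K$ and $w\in W$,
\[
\lambda\bigl(\nabla^{(\phi)}(a\otimes w)\bigr)=\lambda(\partial a\otimes w+qa\otimes\nabla w)=\partial a\cdot\Phi(w)+qa\,\Phi(\nabla w),
\]
while $\nabla\bigl(\lambda(a\otimes w)\bigr)=\nabla(a\Phi(w))=\partial a\cdot\Phi(w)+a\,\nabla(\Phi(w))$, and these agree for all $a,w$ exactly when $q\,\Phi(\nabla w)=\nabla(\Phi(w))$. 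Combining the two observations, an isomorphism $W\simeq W^{(\phi)}$ of $\partial$-modules is precisely a bijective $\phi$-semilinear $\Phi\colon W\to W$ with $\nabla\circ\Phi=q\,\Phi\circ\nabla$, \emph{i.e.}\ exactly a $(\phi,\partial)$-module structure $(W,\Phi,\nabla)$ extending the given connection. This establishes the first assertion of the lemma.

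Finally, the matrix reformulation follows by fixing a $K$-basis of $W$ in which $\nabla$ is given by $\partial-B$. A bijective $\phi$-semilinear $\Phi$ is then recorded by a matrix $A\in\GL_{n}(K)$, and --- as already noted in the discussion of $(\phi,\partial)$-modules --- the commutation relation $\nabla\circ\Phi=q\,\Phi\circ\nabla$ translates into the consistency relation $q\phi(B)=ABA^{-1}+\partial A\cdot A^{-1}$ of \eqref{eq:consistencey condition}. By the definition of gauge equivalence for $\partial$-modules, this last identity says exactly that $B$ and $q\phi(B)$ are gauge equivalent, with transition matrix $P=A$; conversely any $P$ realizing such a gauge equivalence provides the required $A$. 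There is no substantial obstacle here --- the content is purely formal --- the only point requiring care being to keep the $q$-twist in $\nabla^{(\phi)}$ and in the commutation relation consistently in place.
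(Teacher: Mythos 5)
Your proposal is correct and follows essentially the same route as the paper: the paper's proof also identifies an isomorphism $\iota\colon W^{(\phi)}\simeq W$ of $\partial$-modules with a bijective $\phi$-semilinear $\Phi(w)=\iota(1\otimes w)$ satisfying $\nabla\circ\Phi=q\Phi\circ\nabla$, with the converse given by $\iota(a\otimes w)=a\Phi(w)$, and the matrix statement then being the consistency relation in a chosen basis. Your write-up just makes the same verification slightly more explicit.
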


\begin{proof}
If $\iota:W^{(\phi)}\simeq W$ is an isomorphism of $\partial$-modules,
put $\Phi(w)=\iota(1\otimes w).$ Then $\Phi(aw)=\phi(a)\cdot\Phi(w)$
and the relation $\nabla\circ\Phi=q\Phi\circ\nabla$ follows from
$\iota\circ\nabla^{(\phi)}=\nabla\circ\iota$. Conversely, given $\Phi$
define $\iota(a\otimes w)=a\Phi(w).$
\end{proof}
\begin{example}
\label{exa:basic example-1}Let $K$ be as in \S~\ref{subsec:K phi and partial}.
If $f\in F=\mathbb{C}((z))$ satisfies a linear differential equation of degree $n$
with coefficients from $K,$ as well as a $\phi$-difference equation
of degree $m$ over $K,$ then $W=Span_{K}\{\phi^{i}\partial^{j}f|\,0\le i<m,\,\,0\le j<n\}\subset F$
is a $(\phi,\partial)$-module over $K$, when we let $\nabla=\partial$
and $\Phi=\phi$.

If $f$ only satisfied a differential equation and we take $N=Span_{K}\{\partial^{j}(f)\}$
then $(N,\nabla)=(N,\partial)$ is a $\partial$-module, and $\alpha:1\otimes g\mapsto\phi(g)$
yields $\alpha:(N^{(\phi)},\nabla^{(\phi)})\simeq(\phi(N),\partial)$
(as a subspace of $F$). Indeed, for $g\in N$
\[
\alpha\circ\nabla^{(\phi)}(1\otimes g)=\alpha(q\otimes\partial g)=q\phi(\partial(g))=\partial(\phi(g))=\partial\circ\alpha(1\otimes g).
\]

If $N=\phi(N)$ then clearly $N\simeq\phi(N)$ and $N=W$ has the
structure of a $(\phi,\partial)$-module.
\end{example}

\section{\label{sec:Picard-Vessiot-and-parametrized}Picard-Vessiot and parametrized
Picard-Vessiot theory}

\subsection{Picard Vessiot theory of difference equations}
A classical exposition of Picard-Vessiot theory for difference equations  is \cite{S-vdP97} whereas parametrized Picard-Vessiot theory is introduced in \cite{H-S08}. The articles \cite{O-W}, \cite{dV-H} and \cite{Wi12} contain some schematic approaches.
\subsubsection{The Picard Vessiot extension and the Picard Vessiot ring}\label{subsec:PVextPVring}

Let $(K,\phi)$ be a $\phi$-field of characteristic 0, with an algebraically
closed field of constants $C=K^{\phi}$. Consider the system of equations
$(\ref{eq:general diference system}).$ One is interested in constructing
a minimal $\phi$-extension $(L,\phi)$ in which it attains a full
set of solutions, much like the construction of a splitting field
of a polynomial in Galois theory. Easy examples show that $L$ can
not always be a field. The next best option works.
\begin{defn}
(i) A $\phi$\emph{-pseudofield} $L$ is a direct product of finitely
many fields
\[
L=L_{1}\times\cdots\times L_{r},
\]
together with an automorphism $\phi,$ permuting the $L_{i}$ cyclically:
$\phi(L_{i})=L_{i+1\mod r}.$

(ii) A \emph{Picard Vessiot (PV) extension} of $K$ for $(\ref{eq:general diference system})$
is a $\phi$-pseudofield $(L,\phi)$ containing $(K,\phi)$, satisfying:
\begin{itemize}
\item There exists a fundamental matrix $U\in \GL_{n}(L)$ for $(\ref{eq:general diference system}),$
and $L=K(U),$ \emph{i.e.}, $L$ is generated, as a pseudofield, by the entries of $U$.
\item $L^{\phi}=C$.
\end{itemize}
\end{defn}

Picard Vessiot extensions exist, and are unique up to $K$-$\phi$-isomorphism \cite[Prop.2.14 and Theorem 2.16]{O-W}.
 If $L$ is a PV extension  for \eqref{eq:general diference system} as above then  $L_{1}$ is a  PV extension
of the system
\[
\phi^{r}(y)=A^{(r)}y,\,\,\,A^{(r)}=\phi^{r-1}(A)\cdots\phi(A)A,
\]
 over the difference field  $(K, \phi^r)$. The projection of $U$ to $L_{1}$ is a fundamental matrix for
this new system. In many cases, depending on the desired conclusion,
it is harmless to replace the original system by this new one, $\phi$ by $\phi^r$, reducing
the situation to the case where $L$ is a field.

Since the fundamental matrix is unique up to multiplication on the
right by a matrix from $\GL_{n}(C),$ the subring
\[
R=K[U,\det(U)^{-1}]
\]
of $L$ is canonical, and is invariant under $\phi$ as well. It turns
out to be $\phi$-simple. A $\phi$-simple $K$-algebra generated
by a fundamental matrix $U$ and the inverse of its determinant (thus
making $U\in \GL_{n}(R)$) is called a PV \emph{ring }for $(\ref{eq:general diference system}).$
It is again unique up to isomorphism, and its total ring of fractions
(its localization in the set of non zero divisors) is a PV extension  \cite[Proposition 2.14]{O-W}.

If $(L,\phi)$ is a PV extension of $(K,\phi)$ for the system $(\ref{eq:general diference system})$
and $(K',\phi)$ is an intermediate $\phi$-extension, $K\subset K'\subset L,$
then $(L,\phi)$ is a PV extension of the same system over $K'$ as
well.
\begin{rem}
\label{rem:PV  ext containing a given solution}Let $(F,\phi)$ be
a $\phi$-field extension of $(K,\phi)$ containing the coordinates
of a solution $0\ne u\in F^{n}$ of $\phi(u)=Au$. Assume $F^{\phi}=C.$
Then we may assume, without loss of generality, that the PV extension
$L$ contains the subfield $K(u)$ of $F$. Indeed, replace $(K,\phi)$
by $(K(u),\phi)$ and look for a PV extension $L$ for the original
system over $K(u).$ Then $L$ will satisfy the requirements to become
a PV extension of the same system also over $K$.
\end{rem}

\subsubsection{The ring $S$ as a  Picard-Vessiot ring}

In the notation of \S~\ref{sec:Elliptic-functions}, the automorphism $\phi$ also extends to $S_{0}$ and $S$. The following holds.

\begin{lem}
\label{lem:phi simplicity of S}
The ring $S$ is a Picard-Vessiot ring over $K$ and thereby is $\phi$-simple.
\end{lem}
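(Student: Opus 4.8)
To prove that $S = K[z, z^{-1}, \zeta(z,\Lambda)]$ is a Picard-Vessiot ring over $K$, I would exhibit an explicit linear difference system over $K$ for which $S$ is generated by a fundamental matrix, and then verify the two defining conditions: that $S$ is $\phi$-simple, and that $S^\phi = \mathbb{C}$. First I would identify the right system. Under $\phi f(z) = f(qz)$ we have $\phi(z) = qz$, so $z$ satisfies the rank-one equation $\phi(y) = qy$. For the zeta function, recall $\zeta(qz, \Lambda) = \zeta(z, q^{-1}\Lambda)$ and, since $q^{-1}\Lambda \supset \Lambda$ with index $q^2$, part (i) of the lemma on $S_0$ (applied with the roles of the two lattices) gives $\zeta(z, q^{-1}\Lambda) = q^{-2}\zeta(z,\Lambda) + e(z)$ for some $e(z) \in K + \mathbb{C}z + \mathbb{C}$; hence $\phi(\zeta) = q^{-2}\zeta + e$, an inhomogeneous equation over $K$ (using that $z, 1 \in S$). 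Wait — more carefully, I want the version $\zeta(z,\Lambda) - [\,\Lambda: \Lambda'\,]\,\zeta(z,\Lambda') \in K + \mathbb{C}z + \mathbb{C}$ applied with $\Lambda' = q^{-1}\Lambda$... since $\Lambda \subset q^{-1}\Lambda$ one should instead use $\zeta(z, q^{-1}\Lambda) - q^{2}\zeta(z,\Lambda) \in \dots$ by the analogous statement; either way one gets $\phi(\zeta) \in \mathbb{C}^\times \zeta + K + \mathbb{C}z$, which is what matters.

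Assembling these, the column vector $(1, z, \zeta)^{t}$ — or rather a suitable fundamental matrix built from $1$, $z^{\pm 1}$, $\zeta$ — satisfies a block-triangular system $\phi(Y) = AY$ with $A \in \GL_3(K)$ (or a slightly larger size to accommodate $z^{-1}$, which satisfies $\phi(z^{-1}) = q^{-1}z^{-1}$). The entries of $A$ lie in $K$, and $S = K[\text{entries of } U, \det(U)^{-1}]$ by construction, since $S$ is generated over $K$ exactly by $z^{\pm 1}$ and $\zeta$. So the structural requirement for a PV ring is immediate once the system is written down.

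The two conditions to check are then: (a) $S^\phi = \mathbb{C}$, and (b) $S$ is $\phi$-simple. For (a): an element $f = \sum a_{i,j} z^i \zeta^j \in S$ with $a_{i,j} \in K$ (unique expression, by Lemma \ref{lem:alg indep of z and zeta}), if fixed by $\phi$, must — by comparing leading terms in the lexicographic order used in the proof of Lemma \ref{lem:partial simplicity of S}, together with the facts $\phi(z^i) = q^i z^i$ and $\phi(\zeta) = q^{-2}\zeta + (\text{lower order})$ — force all but the constant term to vanish, whence $f \in K^\phi = \mathbb{C}$. For (b): a nonzero $\phi$-ideal $I$, take $0 \neq f \in I$ of minimal lexicographic leading term $d(f)$; compute $\phi(f) - q^{\,i_0} q^{-2 j_0} f$ (to kill the leading coefficient, using again that $z^i, \zeta$ are eigen-like up to lower-order corrections) — this lies in $I$ and has strictly smaller $d$, forcing it to be zero, hence $f$ is (up to scalar) a monomial $z^{i_0}\zeta^{j_0}$ that is a $\phi$-eigenvector; but then $I$ contains a unit unless $i_0 = j_0 = 0$, contradiction. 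This is essentially the same descent argument as in the proof of $\partial$-simplicity, transported to $\phi$.

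**Main obstacle.** The one genuinely delicate point is the behaviour of $\zeta$ under $\phi$: one must correctly compute $\phi(\zeta(z,\Lambda)) = \zeta(z, q^{-1}\Lambda)$ and then invoke the relation between the zeta functions of commensurable lattices (the $S_0$-lemma) to land back inside $S$ with a nonzero $\mathbb{C}$-multiple of $\zeta$ plus a correction in $K + \mathbb{C}z$. Getting the index $[q^{-1}\Lambda : \Lambda] = q^2$ and hence the eigenvalue $q^{-2}$ right, and confirming that the correction term really lies in $S$ (so that the system has coefficients in $K$ and $S$ is closed under $\phi$), is where care is needed; everything else is a routine adaptation of the $\partial$-simplicity proof with $\partial$ replaced by $\phi$ and the eigenvalues $1, q^i$ replacing the derivatives. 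Once $\phi$-simplicity and $S^\phi = \mathbb{C}$ are in hand, "$S$ is a Picard-Vessiot ring, hence $\phi$-simple" follows — indeed $\phi$-simplicity is part of what we verified, and citing \cite[Proposition 2.14]{O-W} for the converse (a $K$-algebra generated by a fundamental matrix and $\det^{-1}$ with the constants condition is a PV ring, hence $\phi$-simple) closes the loop.
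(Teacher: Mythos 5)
Your overall architecture is the paper's: exhibit a triangular difference system over $K$ whose fundamental matrix is built from $z$ and $\zeta$, identify $S$ with $K[U,\det(U)^{-1}]$, and get $\phi$-simplicity from Picard--Vessiot generalities (\cite[Prop.\ 2.14]{O-W}); the paper does exactly this with the $2\times 2$ system $\phi(y)=\left(\begin{smallmatrix}q & f_{\zeta}\\ 0 & 1\end{smallmatrix}\right)y$, fundamental matrix $U=\left(\begin{smallmatrix}z & \zeta\\ 0 & 1\end{smallmatrix}\right)$, $\det U=z$ (so $z^{-1}$ needs no extra block), and checks constants by embedding $E=K(z,\zeta)$ into $F=\mathbb{C}((z))$. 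However, at the point you yourself flag as the crux, your computation is wrong and the hedge you fall back on is not enough. The homogeneity relation is $\zeta(qz,\Lambda)=q^{-1}\zeta(z,q^{-1}\Lambda)$, not $\zeta(z,q^{-1}\Lambda)$, and combining it with $[q^{-1}\Lambda:\Lambda]=q^{2}$ gives $\phi(\zeta)=q\,\zeta+(\text{correction})$, eigenvalue $q$ (not $q^{-2}$ or $q^{2}$). The statement ``either way $\phi(\zeta)\in\mathbb{C}^{\times}\zeta+K+\mathbb{C}z$, which is what matters'' is inadequate for the Picard--Vessiot claim: precisely because the eigenvalue is $q$, a nonzero $\mathbb{C}z$-component could not be gauged away, and the equation $(\phi-q)(u)=cz$ with $c\neq 0$ has no solution in $\mathbb{C}((z))\supset S$, so your triangular system would then fail to acquire a full set of solutions in $S$ and $S$ would not be exhibited as generated by a fundamental matrix. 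What is actually needed, and what the paper uses, is the sharper fact $f_{\zeta}:=\zeta(qz,\Lambda)-q\zeta(z,\Lambda)\in K_{\Lambda}$; it is easy (its quasi-periods vanish since $\eta(q\omega,\Lambda)=q\eta(\omega,\Lambda)$), but it must be proved, and your proposal neither states nor establishes it.

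Two smaller points. First, in your direct $\phi$-simplicity argument the conclusion ``$f$ is (up to scalar) a monomial $z^{i_0}\zeta^{j_0}$ that is a $\phi$-eigenvector'' is incorrect as stated: $z^{i}\zeta^{j}$ is not a $\phi$-eigenvector for $j>0$; the correct ending is that the minimal element satisfies $\phi(f)=\lambda f$ for some $\lambda\in\mathbb{C}^{\times}$, and inside $F$ this forces $f=cz^{r}$ (Lemma \ref{lem: injectivity =00005Cphi-a}), a unit of $S$, contradicting properness of $I$ for every $r$, not just $r\neq 0$. Second, for the constants condition you verify only $S^{\phi}=\mathbb{C}$; the definition of a PV extension requires the constants of the generated pseudofield, so you should argue as the paper does, via the embedding $K(z,\zeta)\hookrightarrow F$ and $F^{\phi}=\mathbb{C}$ (your leading-term argument can be adapted, but as written it does not cover the fraction field). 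With the relation $f_{\zeta}\in K_{\Lambda}$ supplied and these repairs made, your proof becomes the paper's proof.
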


\begin{proof}
One can give a proof along the lines of Lemma \ref{lem:partial simplicity of S}. However, we
give a slicker proof, that will be useful later. Consider the system
of $\phi$-difference equations
\[
\phi(y)=\left(\begin{array}{cc}
q & f_{\zeta}(z,\Lambda)\\
0 & 1
\end{array}\right)y
\]
where $f_{\zeta}(z,\Lambda)=\zeta(qz,\Lambda)-q\zeta(z,\Lambda)\in K_{\Lambda}.$
The matrix
\[
U=\left(\begin{array}{cc}
z & \zeta(z,\Lambda)\\
0 & 1
\end{array}\right)
\]
is a fundamental matrix, and its determinant is $z.$ The field $E=K(z,\zeta(z,\Lambda))$
is a Picard-Vessiot extension for the system over $K$ because it
is generated by the entries of $U$, and its field of $\phi$-constants
is $\mathbb{C}.$ The last point is most easily seen if we embed $E$
in $F=\mathbb{C}((z))$ and use the fact that $F^{\phi}=\mathbb{C}.$
The subring $S\subset E$ is generated by the entries of $U$ and
$\det(U)^{-1},$ hence is the Picard-Vessiot \emph{ring} for the system.
But Picard-Vessiot rings are known to be $\phi$-simple \cite[Prop. 2.14]{O-W}  by general
principles. 
\end{proof}
The ring $S_{0}$ is not $\phi$-simple. The proper ideal $(z)$ is
$\phi$-invariant. The $\phi$-simplicity of $S$ has the following
useful consequence.
\begin{cor}
\label{cor: Crterion for elts of E to be in S}Let $E$ be the field
of fractions of $S$. The following properties relative to an $f\in E$ are equivalent:
\begin{itemize}
 \item $f \in S$;
 \item the $K$-vector space
 \[
M_{f}=\mathrm{Span}_{K}\{\phi^{i}(f)|\,i\ge0\}\subset E
\]
is finite dimensional.
\end{itemize}
\end{cor}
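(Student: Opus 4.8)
The plan is to prove the equivalence via the two implications, the forward one ($f\in S\Rightarrow M_f$ finite dimensional) being essentially trivial and the reverse one being the substance. For the forward direction, write $f\in S=K[z,z^{-1},\zeta]$ as a polynomial in $z^{\pm1}$ and $\zeta=\zeta(z,\Lambda)$ with coefficients in $K$. Since $\phi(z)=qz$, $\phi(z^{-1})=q^{-1}z^{-1}$, and $\phi(\zeta)=q\zeta+f_\zeta$ with $f_\zeta\in K$ (as in the proof of Lemma \ref{lem:phi simplicity of S}), applying $\phi$ to such an expression does not increase the degree in $z^{\pm1}$ nor in $\zeta$; so all the $\phi^i(f)$ lie in the finite-dimensional $K$-subspace spanned by the monomials $z^a\zeta^b$ with $a$ ranging over the (finite) set of exponents of $z$ occurring in $f$ and its ``reflections'', and $b$ bounded by $\deg_\zeta f$. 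More precisely, if $f=\sum_{|a|\le A,\,b\le B} c_{a,b}z^a\zeta^b$ then every $\phi^i(f)$ lies in $\mathrm{Span}_K\{z^a\zeta^b:|a|\le A,\ b\le B\}$, which is finite dimensional; hence $M_f$ is finite dimensional.

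For the reverse direction, suppose $M_f=\mathrm{Span}_K\{\phi^i(f):i\ge0\}$ is finite dimensional over $K$. Then $M_f$ is $\phi$-stable (since $\phi(\phi^i f)=\phi^{i+1}f\in M_f$), so $(M_f,\phi)$ is a $\phi$-module over $K$ with $f\in M_f$. By the general theory recalled in \S\ref{sec:equations and modules}, choosing a basis of $M_f$ gives a system $\phi(y)=Ay$ with $A\in\GL_n(K)$ whose solution set contains the coordinate vector of $f$. Now $S$ is a Picard--Vessiot ring over $K$ by Lemma \ref{lem:phi simplicity of S}, with fraction field $E$ and $E^\phi=\mathbb C$; and $f\in E$ by hypothesis. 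The idea is to run the Picard--Vessiot machinery for the system $\phi(y)=Ay$ \emph{inside} $E$: since $E$ is a $\phi$-field with the right constants and contains a solution $f$, we may (enlarging $K$ to $K(f)$ if convenient, as in Remark \ref{rem:PV ext containing a given solution}) take a PV extension $L$ of $K$ for this system sitting inside $E$, with PV ring $R=K[U,\det(U)^{-1}]\subseteq E$ a $\phi$-simple subring. The key point then is that $f$, being a $K$-linear combination of coordinates of $U$, lies in $R$; and I want to conclude $R\subseteq S$, whence $f\in S$.

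The main obstacle — and the step needing care — is showing $R\subseteq S$, equivalently identifying the PV ring of $\phi(y)=Ay$ with a subring of $S$ rather than just of $E$. The cleanest route is: the coordinates of $f$ and all its $\phi$-transforms generate, over $K$, a finite-dimensional space $M_f\subseteq E$; a fundamental matrix $U$ for the associated system can be built from vectors in $E^n$ whose entries lie in the $K$-span of the $\phi^i(f)$, hence in $M_f\subseteq E$. One then argues that each such entry, being an element of $E$ that generates a finite-dimensional $\phi$-stable $K$-subspace, must already lie in $S$. This is a bootstrapping version of the statement being proved, so to avoid circularity I would instead argue directly: the $K$-subalgebra $R'$ of $E$ generated by $M_f$ and the inverse of the Casoratian determinant of a basis of $M_f$ is a finitely generated $K$-algebra, is $\phi$-stable, and is $\phi$-simple (being a PV ring — or contained in one with the same fraction field). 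Since $R'\subseteq E=\mathrm{Frac}(S)$ and $S$ is integrally closed in $E$ in the relevant sense coming from $S=K[z,z^{-1},\zeta]$ being a polynomial-type ring localized at $z$, and since $R'$ is generated by elements of $E$ whose $\phi$-orbits span finite-dimensional $K$-spaces, a direct computation in the coordinates $z^{\pm1},\zeta$ of $E$ shows any such element of $E$ lies in $S$: write it as $P/Q$ with $P,Q\in S_0$ coprime, apply $\phi$, and use that finite-dimensionality of the $K$-span of $\{\phi^i(P/Q)\}$ forces $Q$ to be a power of $z$ (up to a unit of $K$), since any other irreducible factor of $Q$ would have infinitely many $\phi$-translates with disjoint ``pole divisors''. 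This last elliptic computation is the real content; it mirrors the argument in the proof of Lemma \ref{lem:alg indep of z and zeta} that produces infinitely many poles from a would-be finite relation.
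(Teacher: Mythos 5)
Your forward direction is fine, and the first half of your ``direct'' argument for the converse can be made to work: $S_{0}=K[z,\zeta]$ is a UFD with unit group $K^{\times}$ on which $\phi$ acts as a ring automorphism, so writing $f=P/Q$ in lowest terms, finite dimensionality of $M_{f}$ gives a common denominator and hence, by pigeonhole, $\phi^{d}(\pi)=v\pi$ with $v\in K^{\times}$ for every irreducible factor $\pi$ of $Q$ and some $d\ge1$. The genuine gap is the step you yourself call ``the real content'': the claim that the only irreducible $\pi\in K[z,\zeta]$ whose $\phi$-orbit stays in one associate class is (an associate of) $z$ is merely asserted, with the phrase ``infinitely many $\phi$-translates with disjoint pole divisors'' standing in for a proof. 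That phrase does not do the work: associateness is modulo $K^{\times}$, the coefficients in $K$ have zeros and poles of their own, and one must genuinely exclude, for instance, an irreducible of the form $\zeta+\beta$ with $\beta\in K$ satisfying $\phi(\pi)=q\pi$; ruling this out requires the Legendre-relation/pole-accumulation arguments of the kind appearing in Lemma \ref{lem:The eta lemma} and Appendix A, i.e.\ it is of essentially the same depth as the statement being proved, not a routine mirror of Lemma \ref{lem:alg indep of z and zeta}. In addition, your first (Picard--Vessiot) route is abandoned as circular, and the auxiliary claims propping up the pivot ($R'$ being $\phi$-simple ``or contained in a PV ring with the same fraction field'', $S$ being ``integrally closed in the relevant sense'') are unsubstantiated; you should also justify $\phi(M_{f})=M_{f}$ (bijectivity of the semilinear operator) before calling $M_{f}$ a $\phi$-module, though that is minor (a dimension count, or the minimal-relation trick).

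For comparison, the paper's proof sidesteps the entire denominator analysis: it shows $\phi(M_{f})=M_{f}$, forms the conductor ideal $I=\{a\in S\mid aM_{f}\subset S\}$, observes that $I$ is nonzero and $\phi$-invariant, and concludes $1\in I$ from the $\phi$-simplicity of $S$ (Lemma \ref{lem:phi simplicity of S}), where the elliptic input is packaged once and for all by exhibiting $S$ as a PV ring. Your argument can be completed by the same input: if some irreducible factor $\pi$ of $Q$ is not an associate of $z$ and $\phi^{d}(\pi)=v\pi$, then $N=\prod_{i=0}^{d-1}\phi^{i}(\pi)$ satisfies $\phi(N)=vN$, so $NS$ is a nonzero proper $\phi$-invariant ideal of $S$, contradicting Lemma \ref{lem:phi simplicity of S}; with that substitution your route becomes a correct, if more laborious, alternative. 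As written, however, the crux of the reverse implication is left unproved.
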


\begin{proof}
Assume that $\dim_{K}M_{f}<\infty$. We first claim that $\phi(M_{f})=M_{f}$. Indeed, since $\phi(M_{f})=\mathrm{Span}_{K}\{\phi^{i}(f)|\,i\ge1\}$, it is sufficient to prove that $f$ belongs to $\phi(M_{f})$. Let
\[
\sum_{i=0}^{m}a_{i}\phi^{i}(f)=0
\]
($a_{i}\in K)$ be a nontrivial linear relation between the $\phi^{i}(f)$ with
smallest possible $m.$ Then $a_{0}\ne0,$ for otherwise applying
$\phi^{-1}$ to the relation would give a similar one with $m-1$
replacing $m$. It follows that $a_{0}f$, and hence $f$, belongs
to $\mathrm{Span}_{K}\{\phi^{i}(f)|\,i\ge1\}=\phi(M_{f})$. This concludes the proof of our claim. 

Let
\[
I=\{a\in S|\,aM_{f}\subset S\}.
\]
Since $M_{f}$ is spanned by $\phi^{i}(f)$ for $0\le i\le m-1,$
a common denominator of these $m$ generators belongs to $I,$ so
$I\ne0.$ The set $I$ is clearly an ideal in $S,$ and it is $\phi$-invariant
because if $a\in I$ then $\phi(a)M_{f}=\phi(a)\phi(M_{f})=\phi(aM_{f})\subset S,$ so
$\phi(a)\in I.$ By the $\phi$-simplicity of $S$ we must have $1\in I,$
and in particular $f\in S.$

For the converse, note that the set of $f$'s for which $\dim_{K}{M_f}$ is finite is a $K$-algebra. As it contains $z,z^{-1}$ and $\zeta(z,\Lambda)$, it contains the ring $S$.
\end{proof}

\subsubsection{The difference Galois group and its standard representation}

Let $(L,\phi)$ be a PV extension of $(\ref{eq:general diference system}).$
\begin{defn}
The group
\[
G=\mathrm{Aut}_{\phi}(L/K)
\]
of $\phi$-automorphisms of $L$ leaving $K$ pointwise fixed is called
the \emph{(difference) Galois group} of $(\ref{eq:general diference system}).$
\end{defn}

Fix a fundamental matrix $U$ of $(\ref{eq:general diference system})$ so that $L=K(U)$ and every $\sigma\in G$
is determined by its effect on $U$. Since $\sigma$ and $\phi$ commute,
$\sigma(U)$ is another fundamental matrix, so
\[
\sigma(U)=UV_{\sigma}
\]
for $V_{\sigma}\in \GL_{n}(C)$.  \cite[Theorem 1.13]{S-vdP97} shows that $\sigma\mapsto V_{\sigma}$
is an embedding of $G$ as a \emph{Zariski closed} subgroup of $\GL_{n}(C).$
A different choice of $U$ changes the embedding by conjugation.

Another look at $G$ is given by the $\phi$-module $(W,\Phi)=(K^{n},A^{-1}\phi)$
associated with $(\ref{eq:general diference system})$, and the solution
set
\[
\mathcal{U}=W_{L}^{\Phi}=UC^{n}.
\]
The $G$-action on $W_{L}=W\otimes_{K}L$ ($\sigma$ acting like $1\otimes\sigma$)
commutes with $\Phi,$ so induces an action on $\mathcal{U}.$ If
$\sigma\in G$ and $Uv$ ($v\in C^{n})$ is a vector in $\mathcal{U},$
then $\sigma(Uv)=UV_{\sigma}v.$ It follows that the representation
$\sigma\mapsto V_{\sigma}$ is nothing but the matrix representation
afforded by $\mathcal{U},$ in the basis consisting of the columns
of $U$. We call $\mathcal{U}$ the \emph{standard representation}
of $G$.

\subsubsection{The Galois correspondence theorem}

Since the characteristic is $0$, the  algebraic group $G$ must be reduced, but need not
be connected. \cite[Lemma 1.28]{S-vdP97} proves that
\[
L^{G}=K.
\]
The last fact is the basis for the Galois correspondence between $\phi$-pseudofields
$K\subset E\subset L$ and Zariski closed subgroups $\{e\}\subset H\subset G.$
With $E$ we associate
\[
H(E)=\mathrm{Aut}_{\phi}(L/E).
\]
With $H$ we associate its fixed field
\[
E(H)=L^{H}.
\]
The Galois correspondence theorem asserts that these two assignments
are inverse to each other, and set the family of all intermediate
$\phi$-pseudofields and the family of all Zariski closed subgroups
of $G$ in an order-reversing bijection  \cite[Theorem 1.29]{S-vdP97}.

An intermediate $\phi$-pseudofield $E$ is called \emph{normal} if
it is the PV extension of a system $\phi(y)=A_{1}y$ for some $A_{1}\in \GL_{m}(K)$
over $K$.  \cite[Th\'{e}or\`{e}me 3.5.2.2]{An} yields that an intermediate $\phi$-pseudofield  $E$ is normal if and only if the
corresponding group $H=H(E)$ is normal in $G$, and in this case
\[
\mathrm{Aut}_{\phi}(E/K)=G/H.
\]
As in classical Galois theory, when it comes to showing that normal
subextensions of a Galois extension are splitting fields of suitable
polynomials, the system $\phi(y)=A_{1}y$ corresponding to the fixed
field of a normal subgroup of $G$ is neither unique, nor related
in any canonical way to the original system used to define $L$.

An important normal subgroup of $G$ is its connected component $G^{0},$
the smallest Zariski closed subgroup $H$ of finite index in $G$.
Its fixed field $E(G^{0})$ is therefore the largest finite $\phi$-extension
of $K$ in $L.$ If $L$ is a field, this is the algebraic closure
of $K$ in $L$. Thus, if $L$ happens to be a field, $G$ is connected
if and only if $K$ is algebraically closed in $L.$ In this case,
if $K$ does not admit any finite field extension to which $\phi$
extends as an automorphism, $G$ will be connected. The $K$ of \S~\ref{subsec:K phi and partial}
is such an example, see Lemma \ref{lem:no finite phi extensions of K}.

\subsubsection{A special case of the Tannakian correspondence}

As $C$ is algebraically closed of characteristic 0, the algebraic
group $G$ may be identified, as we did, with its $C$-points $G(C).$
It can be defined also as a functor on the category of $C$-algebras
by
\[
G(D)=\mathrm{Aut}_{\phi}(D\otimes_{C}R/D\otimes_{C}K)
\]
where $\phi$ is extended to $D\otimes_{C}R$ as $1\otimes\phi$.
One proves then that this functor is representable by a linear algebraic
group over $C,$ and the standard Yoneda Lemma shows that it determines
$G$ uniquely. A more sophisticated approach is to consider the abelian
category of $\phi$-modules, the  tensor subcategory $\{W\}$ generated
by the $\phi$-module $W$ attached to our system, and use the Tannakian
formalism to obtain $G$ as its Tannakian fundamental group (see \cite[Th\'{e}or\`{e}me 3.4.2.3]{An} for a discussion on the notions of fiber functors on $\{W\}$ and Picard-Vessiot extensions for $W$).

We shall need the following special case of the Tannakian correspondence,
which can be proven directly. We assume that $C$ is algebraically
closed.
\begin{prop}
\label{prop:Tannakian correspondence}Let the notation be as above.
The applications
\[
\xymatrix{ 
\mathcal{V} \subset \mathcal{U}   \ar@{|->}[r] &   V:=(\mathcal{V}\otimes_{C}L)^{G}\subset W  \mbox { and }
 V \subset W     \ar@{|->}[r] & \mathcal{V}:=(V\otimes_{K}L)^{\Phi}\subset\mathcal{U}   }
\]
are bijections between $G$-submodules $\mathcal{V}$ of $\mathcal{U}$,
and $\phi$-submodules $V$ of $W$, which are inverse to each other.
We have $\dim_{K}V=\dim_{C}\mathcal{V}.$ In particular $W$ is an
irreducible $\phi$-module if and only if $\mathcal{U}$ is an irreducible
representation of $G$.
\end{prop}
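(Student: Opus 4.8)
The plan is to prove the two maps are mutually inverse bijections, and then deduce the irreducibility statement as a formal consequence. The key tool is that $L/K$ is a Picard--Vessiot extension, so $L^{G}=K$, and that base change between $K$ and $L$ turns $\phi$-invariants into $G$-invariants and vice versa once we can trivialize the relevant module over $L$. Throughout, $U\in\GL_{n}(L)$ is a fixed fundamental matrix, so that $\mathcal{U}=W_{L}^{\Phi}=UC^{n}$ and the $G$-action on $\mathcal{U}$ is through the faithful representation $\sigma\mapsto V_{\sigma}$.

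First I would check that both maps are well-defined. If $\mathcal{V}\subset\mathcal{U}$ is a $G$-submodule, then $\mathcal{V}\otimes_{C}L\subset\mathcal{U}\otimes_{C}L$ and, using $U$ to identify $\mathcal{U}\otimes_{C}L\xrightarrow{\sim}W_{L}$ (this is the isomorphism version of the Casoratian embedding, valid because $W$ attains a full set of solutions over $L$), we regard $\mathcal{V}\otimes_{C}L$ as an $L$-subspace of $W_{L}$ which is $\Phi$-stable (because $\mathcal{V}$ consists of $\Phi$-fixed vectors) and $G$-stable (because $\mathcal{V}$ is a $G$-submodule and $L$ is acted on by $G$). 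Hence $V:=(\mathcal{V}\otimes_{C}L)^{G}$ is a $K$-subspace of $W_{L}^{G}=W$ (here $W_{L}^{G}=W$ since $L^{G}=K$), and it is $\Phi$-stable since $\Phi$ commutes with the $G$-action; so $V$ is a $\phi$-submodule of $W$. Conversely, if $V\subset W$ is a $\phi$-submodule, then $V\otimes_{K}L\subset W_{L}$ is $\Phi$-stable, so $\mathcal{V}:=(V\otimes_{K}L)^{\Phi}$ is a $C$-subspace of $W_{L}^{\Phi}=\mathcal{U}$, and it is $G$-stable because $G$ fixes $V$ pointwise (as $V\subset W$) and commutes with $\Phi$; so $\mathcal{V}$ is a $G$-submodule of $\mathcal{U}$.

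Next I would prove the two round trips are the identity. Starting from a $\phi$-submodule $V\subset W$: the point is that $W$ restricted to $V$ is itself a $\phi$-module which attains a full set of solutions over $L$ (the solutions being $\mathcal{V}=(V\otimes_{K}L)^{\Phi}$), i.e.\ the Casoratian embedding $L\otimes_{C}\mathcal{V}\hookrightarrow V\otimes_{K}L$ is an isomorphism — this follows because $L$ is a PV extension for the full system and hence trivializes every subobject. Taking $G$-invariants of $V\otimes_{K}L=\mathcal{V}\otimes_{C}L$ and using $L^{G}=K$ gives back $V$. Starting from a $G$-submodule $\mathcal{V}\subset\mathcal{U}$: we have $\mathcal{V}\otimes_{C}L\subset W_{L}$, its $G$-invariants form $V$, and I must show $(V\otimes_{K}L)^{\Phi}=\mathcal{V}$. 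One inclusion is clear. For the reverse, note $\mathcal{V}\otimes_{C}L$ is a $G$-stable $L$-subspace of $W_{L}$; by faithful flatness / descent (the $G$-action on the finite-dimensional $L$-vector space $\mathcal{V}\otimes_{C}L$ is ``split'', so $V\otimes_{K}L=\mathcal{V}\otimes_{C}L$ — this is exactly the statement $L^{G}=K$ applied internally, i.e.\ any $G$-stable $L$-subspace of $W_{L}$ is defined over $K$ and recovers itself after $\otimes_{K}L$). Then $(V\otimes_{K}L)^{\Phi}=(\mathcal{V}\otimes_{C}L)^{\Phi}=\mathcal{V}$ since $\mathcal{V}$ already consists of $\Phi$-fixed vectors and $L^{\Phi}=C$. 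The dimension formula $\dim_{K}V=\dim_{C}\mathcal{V}$ drops out of either identification $V\otimes_{K}L\cong\mathcal{V}\otimes_{C}L$ by comparing $L$-dimensions.

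Finally, the irreducibility statement is immediate: $W$ is an irreducible $\phi$-module iff it has no $\phi$-submodule other than $0$ and $W$, which by the bijection happens iff $\mathcal{U}$ has no $G$-submodule other than $0$ and $\mathcal{U}$, i.e.\ iff $\mathcal{U}$ is an irreducible representation of $G$. The main obstacle I anticipate is the descent step in the second round trip — justifying that a $G$-stable $L$-subspace of $W_{L}$ is automatically of the form (something over $K$)$\otimes_{K}L$; the cleanest route is to reduce to the statement $L^{G}=K$ together with the fact that $G$ acts on $L$ by $\phi$-automorphisms fixing $K$ and that $L$ is a PV ring's fraction field (so the relevant torsor/faithful-flatness arguments of \cite[Chapter 1]{S-vdP97} apply), rather than re-proving Tannakian reconstruction from scratch. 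Everything else is formal manipulation of invariants and the Casoratian isomorphism over the trivializing extension $L$.
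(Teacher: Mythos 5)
The paper states this proposition without proof (it only remarks that it ``can be proven directly''), so there is no argument of the authors to compare against; your route is the natural direct one and it is correct in outline. The two load-bearing steps, however, are asserted or outsourced rather than argued: (i) that $L$ trivializes every $\phi$-submodule $V\subseteq W$, i.e.\ the Casoratian map $L\otimes_C (V\otimes_K L)^{\Phi}\to V\otimes_K L$ is onto, which you justify only by ``$L$ is a PV extension for the full system and hence trivializes every subobject''; and (ii) the descent statement that a $G$-stable $L$-subspace of $W_L$ is spanned by its $G$-invariants, which you propose to extract from the torsor theorem of \cite{S-vdP97}. Both statements are true, and in fact neither needs the torsor theorem: they follow from one elementary lemma. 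If $L$ is a field and $\Sigma$ is a set of automorphisms of $L$ acting on $L\otimes N$ through the first factor only, then any $\Sigma$-stable $L$-subspace has a unique basis in reduced row echelon form with respect to a basis of $N$; applying $\sigma\in\Sigma$ entrywise to this echelon matrix again produces the reduced echelon basis of the same subspace, so by uniqueness the matrix has entries in the fixed field of $\Sigma$, and the subspace is spanned by its fixed vectors. Applying this with $\Sigma=\{\Phi\}$ acting as $\phi\otimes 1$ on $W_L\simeq L\otimes_C\mathcal{U}$ (fixed field $C$) gives (i), and with $\Sigma=G$ acting on $L\otimes_K W$ (fixed field $K$, using $L^G=K$) gives (ii); the dimension formula $\dim_K V=\dim_C\mathcal{V}$ drops out of either identification, as you say. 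So your instinct that (ii) is ``exactly $L^G=K$ applied internally'' is right, but the proposal stops short of the two-line argument that makes it so. One further point you gloss over: $L$ may only be a $\phi$-pseudofield, so before using field-level facts (echelon form, $L^G=K$ applied to subspaces) one should either replace $\phi$ by $\phi^{r}$ to reduce to the field case, as the paper does elsewhere, or work in the $\phi$-simple PV ring. With these points made explicit your proof is complete; the well-definedness checks, the round trips, and the irreducibility equivalence are then formal, as you describe.
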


Here $G$ acts on $\mathcal{V}\otimes_{C}L$ diagonally, and the operator
$\Phi$ on $V$ is derived from $1\otimes\phi$. Similarly, $\Phi$
acts on $V\otimes_{K}L$ diagonally, and the $G$-action on $\mathcal{V}$
is derived from its action on $L$.

In the next section we review the $\delta$-parametrized Picard Vessiot
theory. There too, the three approaches (Picard-Vessiot, schematic and Tannakian)
coexist. The last two, however, have not been fully developed in the
literature\footnote{Despite partial results of Buium, Kamensky, Kovacic and Ovchinnikov.},
so we adopt the Picard-Vessiot approach. As we shall explain, the $\delta$-parametrized
Galois group is a \emph{linear differential algebraic group}, which
is not determined by its points in $C$, but rather in a \emph{differential
closure} $\widetilde{C}$ of $C.$ This forces us to extend scalars
from $C$ to $\widetilde{C}$, and then use descent arguments to go
back.

\subsection{$\delta$-parametrized Picard Vessiot theory of difference equations}

\subsubsection{LDAG's}\label{subsec:LDAG}

Linear differential algebraic groups (LDAG's) have been defined and
studied by Kolchin \cite{Kol} and Cassidy \cite{Cas72}. For a quick
introduction see, for example, the summary in \S2 of \cite{Mi-Ov}.

Let $C$ be an algebraically closed field of characteristic 0, equipped
with a derivation $\delta$, which may be trivial. We fix a differential
closure $(\widetilde{C},\delta)$ of $C$. Let
\[
\widetilde{C}\{X_{ij},\det(X)^{-1}\}_{\delta}
\]
denote the ring of differential polynomials in the variables $X_{ij}$ ($1\le i,j\le n$),
with $\det(X)$ inverted. A LDAG  $\widetilde{\mathcal{G}}$ is a subgroup of $\GL_{n}(\widetilde{C})$ that is the zero set  
of some radical $\delta$-ideal $\widetilde{\mathcal{I}}$ of $\widetilde{C}\{X_{ij},\det(X)^{-1}\}_{\delta}$.
We call $\widetilde{\mathcal{I}}$ the \emph{ideal
of definition of $\widetilde{\mathcal{G}}$}. It  is a  radical Hopf $\delta$-ideal   and by the Ritt-Raudenbusch theorem,  is the radical of a finitely generated $\delta$-ideal.

If  the generators of $\widetilde{\mathcal{I}}$ as a $\delta$-ideal
can be taken to be differential polynomials with coefficients from
$C$, we say that $\widetilde{\mathcal{G}}$ is \emph{defined over $C$}. For such a LDAG, we attach a \emph{differential group scheme} $\mathcal{G}$ over $C$ as follows. We define the ideal of definition of $\mathcal{G}$  as 
$
\mathcal{I}=\widetilde{\mathcal{I}}\cap C\{X_{ij},\det(X)^{-1}\}_{\delta}
$ and we  let
$C\{\mathcal{G}\}=C\{X_{ij},\det(X)^{-1}\}_{\delta}/\mathcal{I}$
be the \emph{differential coordinate ring of $\widetilde{\mathcal{G}}$ over $C$}. Then, we define
the points of $\mathcal{G}$ in any $\delta$-ring  extension $D$
of $C$ as
\[
\mathcal{G}(D)=\mathrm{Hom}_{\delta}(C\{\mathcal{G}\},D).
\]
As $\widetilde{\mathcal{I}}$ is $\delta$-generated by $\mathcal{I}$, we see that $\mathcal{G}(\widetilde{C})=\widetilde{\mathcal{G}}$, $\mathcal{I}$ is a radical Hopf $\delta$-ideal in $C\{X_{ij},\det(X)^{-1}\}_{\delta}$, and $\mathcal{G}(D)$ is a {\it subgroup} of $\GL_{n}(D)$.

Though $\mathcal{G}(\widetilde{C})=\widetilde{\mathcal{G}}$, we caution that the group of $C$-points $\mathcal{G}(C)$ tells us
little about the nature of $\mathcal{G}.$ For example, the single
equation
\[
 \delta(\frac{\delta X}{X})=0
\]
is easily seen to define a differential subgroup $\mathcal{G}$ of
$\GL_{1}(\widetilde{C}).$ If $C$ is the field of complex numbers,
equipped with the trivial derivation, then $\mathcal{G}(\mathbb{C})=\mathbb{C}^{\times}.$
If $\delta$ is extended to $\mathbb{C}(z)$ so that $\delta(z)=1$
we still have $\mathcal{G}(\mathbb{C}(z))=\mathbb{C}^{\times}.$ However,
over the field of meromorphic functions on $\mathbb{C}$ (with $\delta=d/dz$)
we find new points, namely the solutions $X=e^{az}$ for any $a\in\mathbb{C}.$

The \emph{Zariski closure} $\widetilde{G}$ of $\widetilde{\mathcal{G}}$  is a linear algebraic
group defined over $\widetilde{C}$. Its ideal of definition as a linear algebraic group over $\widetilde{C}$
is
\[
\widetilde{I}=\widetilde{\mathcal{I}}\cap \widetilde{C}[X_{ij},\det(X)^{-1}].
\]
Every linear algebraic group $\widetilde{G}\subset \GL_{n}(\widetilde{C})$ may
be considered also a LDAG, which we denote $[\delta]_{*}\widetilde{G}$. If $G$ is a linear algebraic group defined over $C$, then $G(\widetilde{C})$ is a LDAG defined over $C$. Abusing notation, we  denote by  $[\delta]_{*}G$ the differential group scheme over $C$ associated with $G(\widetilde{C})$.   The Zariski
closure $\widetilde{G}$ of $\widetilde{\mathcal{G}}$ is  characterized by the property
that $\widetilde{\mathcal{G}}\subset[\delta]_{*}\widetilde{G}$, and for no proper Zariski
closed subgroup $\widetilde{H}\subsetneq \widetilde{G}$ do we have $\widetilde{\mathcal{G}} \subset[\delta]_{*}\widetilde{H}.$

If $\mathcal{G}$ is a connected differential group scheme over $C$, \emph{i.e.},  $C\{\mathcal{G}\}$
is a domain, then its field of fractions $C\langle \mathcal{G} \rangle$ is a $\delta$-field.
Its $\delta$\emph{-transcendence degree} over $C$, denoted by $\delta\mathrm{tr.deg.}(C\langle \mathcal{G} \rangle/C)$, is the maximal
number of elements of $C\langle \mathcal{G} \rangle$ which are $\delta$-algebraically
independent over $C$, \emph{ i.e.}, do not satisfy any differential polynomial
with coefficients from $C$. It coincides with the $\delta$\emph{-dimension}
of $\mathcal{G}$, as defined by Kolchin (\cite{Kol}, IV.3, p.148):
\[
\delta\dim\mathcal{G}=\delta\mathrm{tr.deg.}(C\langle \mathcal{G} \rangle/C).
\]
In general $\delta\dim \mathcal{G}$ is equal to the 
 $\delta$-dimension
of its connected component (in the Kolchin topology). For example,
the $\delta$-dimension of the example given above is $0$.

A LDAG  $\widetilde{\mathcal{G}}$ is called \emph{$\delta$-constant} if its ideal of definition
$\widetilde{\mathcal{I}}$ contains $\delta(X_{ij})$, or equivalently $\widetilde{\mathcal{G}}\subset \GL_{n}(\widetilde{C}^{\delta}).$
The following theorem of Cassidy (\cite{Cas89}, Theorem 19) is instrumental
to our work.
\begin{thm}
\label{thm: Cassidy's theorem on simple Zariski closure}Suppose that
$\widetilde{\mathcal{G}}$ is a LDAG, Zariski dense in a \emph{simple }linear
algebraic group $\widetilde{G}\subset \GL_{n}(\widetilde{C}).$ If $\widetilde{\mathcal{G}}\varsubsetneq[\delta]_{*}\widetilde{G},$
then $\widetilde{\mathcal{G}}$ is conjugate, in $\GL_{n}(\widetilde{C}),$ to
a $\delta$-constant LDAG.
\end{thm}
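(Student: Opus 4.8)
This is Cassidy's structure theorem \cite{Cas89}, whose proof the plan merely organises; the natural route is to transfer the question to the Lie algebra, where being a \emph{proper} Zariski-dense LDAG is so rigid that it can be trivialised using the differential closedness of $\widetilde{C}$. First I would make two reductions. Since $\widetilde{G}$ is simple it is connected; and since any LDAG has finitely many Kolchin-connected components, $\widetilde{\mathcal{G}}^{\circ}$ has finite index in $\widetilde{\mathcal{G}}$, so its Zariski closure is a finite-index closed (hence open, hence full) subgroup of the connected group $\widetilde{G}$; thus $\widetilde{\mathcal{G}}^{\circ}$ is still Zariski dense in $\widetilde{G}$ and still a proper LDAG. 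I will first conjugate $\widetilde{\mathcal{G}}^{\circ}$ into the $\delta$-constants, and extend this to $\widetilde{\mathcal{G}}$ by a normaliser argument at the very end. Write $\mathfrak{g}=\mathrm{Lie}(\widetilde{G})$; in characteristic $0$ a simple algebraic group has simple Lie algebra, so $\mathfrak{g}$ is irreducible under $\mathrm{ad}(\mathfrak{g})$ and $\mathrm{Ad}(\widetilde{G})$, and $Z(\mathfrak{g})=0$. The invariant that governs everything is $\widetilde{\mathfrak{L}}:=\mathrm{Lie}(\widetilde{\mathcal{G}}^{\circ})\subseteq\mathfrak{g}$: a Kolchin-connected subgroup of the vector group $\mathfrak{g}$, hence the solution set of a system of linear homogeneous differential equations, stable under the bracket, Zariski dense in $\mathfrak{g}$ (because $\widetilde{\mathcal{G}}^{\circ}$ is), and equal to $\mathfrak{g}$ exactly when $\widetilde{\mathcal{G}}^{\circ}=[\delta]_{*}\widetilde{G}$.

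The key device is the logarithmic derivative $\ell\delta(g)=\delta(g)g^{-1}$, which lands in $\mathfrak{g}$ for $g\in\widetilde{G}(\widetilde{C})$ and satisfies the cocycle identity $\ell\delta(gh)=\ell\delta(g)+\mathrm{Ad}(g)\,\ell\delta(h)$. For any $\rho\in\mathfrak{g}$ the equation $\ell\delta(h)=\rho$ has a solution $h\in\widetilde{G}(\widetilde{C})$ because $\widetilde{C}$ is differentially closed, and then a direct computation shows that $\mathrm{Ad}(h)$ carries the $\delta$-constant subalgebra $\mathfrak{g}^{\delta}=\mathfrak{g}\cap\mathfrak{gl}_{n}(\widetilde{C}^{\delta})$ onto $\{X\in\mathfrak{g}:\delta X=[\rho,X]\}$. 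Hence the theorem is reduced to the Lie-algebra statement: \emph{a proper Zariski-dense $\delta$-Lie subalgebra $\widetilde{\mathfrak{L}}$ of the simple Lie algebra $\mathfrak{g}$ has the form $\{X\in\mathfrak{g}:\delta X=[\rho,X]\}$ for some $\rho\in\mathfrak{g}$}; equivalently, $\widetilde{\mathfrak{L}}$ is carried to $\mathfrak{g}^{\delta}$ by $\mathrm{Ad}(h)^{-1}$ for a suitable $h$.

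I expect the proof of this Lie-algebra statement to be the main obstacle; it is the technical core of \cite{Cas89}. The strategy there: among the linear homogeneous differential operators annihilating $\widetilde{\mathfrak{L}}$ but not all of $\mathfrak{g}$, take one of minimal order $m$, noting $m\ge1$ by Zariski density; bracket the corresponding relation with arbitrary elements of $\widetilde{\mathfrak{L}}$ and expand by the Leibniz rule to manufacture new relations on $\widetilde{\mathfrak{L}}$; because $\mathfrak{g}$ is $\mathrm{ad}$-irreducible and $\widetilde{\mathfrak{L}}$ is Zariski dense, no proper $\mathrm{ad}$-stable subspace can absorb these, which first forces $m=1$ and then forces the resulting order-one ``connection'' to be flat, i.e.\ of the shape $\delta X=[\rho,X]$ for a single $\rho\in\mathfrak{g}$. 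It is here that simplicity of $\widetilde{G}$ (not just semisimplicity) is essential, and the bookkeeping of propagating one differential relation through the bracket is the delicate point.

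Granting this, pick $h\in\widetilde{G}(\widetilde{C})$ with $\ell\delta(h)=\rho$. Then $\mathrm{Ad}(h)^{-1}$ sends $\widetilde{\mathfrak{L}}=\mathrm{Lie}(\widetilde{\mathcal{G}}^{\circ})$ to $\mathfrak{g}^{\delta}\subseteq\mathfrak{gl}_{n}(\widetilde{C}^{\delta})$, so the Kolchin-connected LDAG $N:=h^{-1}\widetilde{\mathcal{G}}^{\circ}h$ has Lie algebra consisting of constant matrices and is therefore $\delta$-constant: $N\subseteq\GL_{n}(\widetilde{C}^{\delta})$, and it is still Zariski dense in $\widetilde{G}$. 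Finally, each $g\in h^{-1}\widetilde{\mathcal{G}}h$ normalises $N$ (the identity component is normal), so for $x\in N$ we have $gxg^{-1}\in N$, hence $\delta(gxg^{-1})=0$; since $\delta x=0$ this gives $0=[\ell\delta(g),\,gxg^{-1}]$, so $\ell\delta(g)$ commutes with $gNg^{-1}=N$, and $N$ being Zariski dense, with all of $\widetilde{G}$; thus $\ell\delta(g)\in\mathfrak{g}^{\mathrm{Ad}(\widetilde{G})}=Z(\mathfrak{g})=0$, i.e.\ $\delta g=0$. Therefore $h^{-1}\widetilde{\mathcal{G}}h\subseteq\GL_{n}(\widetilde{C}^{\delta})$; since $h\in\widetilde{G}(\widetilde{C})\subseteq\GL_{n}(\widetilde{C})$, this exhibits $\widetilde{\mathcal{G}}$ as conjugate in $\GL_{n}(\widetilde{C})$ to a $\delta$-constant LDAG, as required.
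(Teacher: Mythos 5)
The paper does not prove this statement at all: it is quoted verbatim as Theorem 19 of Cassidy \cite{Cas89}, and the citation is the intended justification. So there is no internal argument to compare yours against; what you have written is an attempted reconstruction of Cassidy's own proof. As such it is a reasonable sketch of the shape of that argument (logarithmic derivative $g\mapsto\delta(g)g^{-1}$, differential closedness of $\widetilde{C}$ to integrate $\delta(h)h^{-1}=\rho$, reduction to the identity component, and the final normaliser argument, which is correct and nicely done), but it is not a proof.

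The genuine gaps are these. First, the ``Lie-algebra statement'' you isolate -- every proper Zariski-dense $\delta$-Lie subalgebra of the simple Lie algebra $\mathfrak{g}$ is of the form $\{X\in\mathfrak{g}:\delta X=[\rho,X]\}$ -- is precisely the technical content of Cassidy's theorem, and you only describe a strategy for it (``minimal order operator, bracket, Leibniz''), so the core of the result remains unproved. Second, two steps you treat as automatic are not. (a) For $\delta(g)g^{-1}$ to lie in $\mathfrak{g}$, and for $\mathfrak{g}$ and $\mathrm{Lie}(\widetilde{\mathcal{G}}^{\circ})$ to be $\delta$-stable objects at all, you need the Zariski closure $\widetilde{G}$ (equivalently its defining ideal) to be stable under $\delta$, i.e.\ essentially defined over the constants; this is not part of the hypothesis. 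It can be arranged by first conjugating the simple group and its given representation to a model defined over $\mathbb{Q}\subset\widetilde{C}^{\delta}$, but that reduction must be made explicitly. (b) The bridge back from Lie algebras to groups -- ``$N$ is Kolchin-connected with Lie algebra inside $\mathfrak{gl}_{n}(\widetilde{C}^{\delta})$, hence $N\subset\GL_{n}(\widetilde{C}^{\delta})$'' -- is asserted without justification. In the differential-algebraic category the group--Lie algebra correspondence is far weaker than for algebraic groups in characteristic $0$ (a differential algebraic subgroup is not determined by, nor in general recoverable from, its differential tangent space in any naive way), and in Cassidy's paper the passage from the Lie-algebra classification to the group statement is itself a substantive argument. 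Since the paper only invokes \cite{Cas89}, the appropriate fix is simply to cite Cassidy rather than to reprove the theorem; as a standalone proof, your text would need (a), (b) and the central classification filled in.
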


\subsubsection{The $\delta$-parametrized Picard Vessiot extension and Picard Vessiot
ring}\label{subsubsec:PPVext}

Let $(K,\phi,\delta)$ be a $(\phi,\delta)$-field of characteristic
0, where $\phi$ is an automorphism and $\delta$ a derivation commuting
with $\phi$: 
\[
\phi\circ\delta=\delta\circ\phi.
\]
We assume that the field $C=K^{\phi}$ of $\phi$-constants is algebraically
closed. Since $\delta$ and $\phi$ commute, it is a $\delta$-field,
and we denote by $\widetilde{C},$ as before, a differential closure,
on which we let $\phi$ act trivially.

Let $(\ref{eq:general diference system})$ be a linear system of difference
equations over $K$.
\begin{defn}
A $\delta$-\emph{parametrized Picard Vessiot (PPV) extension} of
$K$ is a $(\phi,\delta)$-pseudofield $\mathcal{L}$, \emph{i.e.},  \[\mathcal{L}=\mathcal{L}_{1}\times\cdots\times \mathcal{L}_{r},
\]
where the  $\mathcal{L}_i$ are $\delta$-field extensions of $K$ permuted cyclically by $\phi$, satisfying:

\begin{itemize}
\item $\mathcal{L}=K\left\langle U\right\rangle _{\delta}$ is $\delta$-generated
as a pseudofield by the entries of a fundamental matrix $U$ for the
system.
\item $\mathcal{L}^{\phi}=C.$
\end{itemize}
\end{defn}
By \cite{Wi12}, Corollary 10, a $\delta$-parametrized Picard Vessiot
extension exists. By Proposition 6.16 of \cite{H-S08}, if $C=\widetilde{C}$,
it is also unique up to $K$-$(\phi,\delta)$-isomorphism.

The subring $\mathcal{R}=K\{U,\det(U)^{-1}\}_{\delta}$ of $\mathcal{L}$ that is $\delta$-generated
over $K$ (as a $\delta$-ring) by $U$ and the inverse of its determinant,
does not depend on the choice of $U$ and is $\phi$-simple \cite[Prop.2.14]{O-W}. Such
a ring is called a $\delta$-\emph{parametrized Picard Vessiot ring}
for the system\footnote{In \cite{H-S08} one only asks that $\mathcal{R}$ be $(\phi,\delta)$-simple,
a weaker condition, but it is shown in Corollary 6.22 that if $C$
is differentially closed, $\mathcal{R}$ is then actually $\phi$-simple.
See the discussion in \cite{Wi12} why, when $C$ is only algebraically
closed, it makes better sense to impose the stronger condition of
being $\phi$-simple. }. The subring $R=K[U,\det(U)^{-1}]$ of $\mathcal{R}$ is an (ordinary)
PV ring then, and its total ring of fractions $L\subset\mathcal{L}$
an (ordinary) PV extension.

\subsubsection{The $\delta$-parametrized Galois group}\label{subsec:The--parametrized-Galois}
Let $(K,\phi,\delta)$ be a $(\phi,\delta)$-field of characteristic zero with algebraically closed field 
of $\phi$-constants $C$.

For $\mathcal{L}$ a PPV extension of \eqref{eq:general diference system} over $K$ and $\mathcal{R} \subset \mathcal{L}$ the PPV-ring, we define the \emph{$\delta$-parametrized Galois group scheme of $\mathcal{L}$ over $K$ } as the functor which associates to any  $\delta$-ring extension $D$ of $C$, the group
\[
\mathcal{G}(D)=\mathrm{Aut}_{\phi,\delta}(D\otimes_{C}\mathcal{R}/D\otimes_{C}K).
\]
This functor is indeed representable by a differential group scheme over $C$, which is unique up to isomorphism by the Yoneda Lemma.

If $\widetilde{C}$ is a differential closure of $C$, one can consider $\widetilde{K}=Frac(\widetilde{C}\otimes_{C}K)$, the base-changed
ground field ($\widetilde{C}\otimes_{C}K$ is a domain because $\widetilde{C}$
is $C$-regular), and $\mathfrak{S}=\widetilde{C}\otimes_{C}K-\{0\}$
(a multiplicative set). Let 
$\widetilde{\mathcal{R}}=\widetilde{C}\otimes_{C}\mathcal{R}[\mathfrak{S}^{-1}]=\widetilde{K}\otimes_{K}\mathcal{R}.$
We claim that $\widetilde{\mathcal{R}}$ is a PPV ring over $\widetilde{K}.$ 
For that we only have to show that it is $\phi$-simple, and as it
is obtained by localization from $\widetilde{C}\otimes_{C}\mathcal{R}$
it is enough to check that the latter is $\phi$-simple. But this
is clear, since the action of $\phi$ on $\widetilde{C}$ is trivial and $\mathcal{R}$ is $\phi$-simple \cite[Lemma 1.11]{S-vdP97}.
Let $\mathcal{\widetilde{L}}$ be the total ring of fractions of $\widetilde{\mathcal{R}}.$ It is a
a PPV extension over $\widetilde{K}$ with $\widetilde{\mathcal{L}}^{\phi}=\widetilde{C}$ \cite[Cor. 6.15]{H-S08}.

Since any $(\phi,\delta)$-automorphism $\sigma$ of $\mathcal{\widetilde{L}}$
over $\widetilde{K}$ is determined by its effect on $U$, and $\sigma(U)=UV_{\sigma}$
where $V_{\sigma}$ has entries in $\widetilde{\mathcal{L}}^{\phi}=\widetilde{C},$
such a $\sigma$ actually induces an automorphism of $\widetilde{C}\otimes_{C}\mathcal{R}$
over $\widetilde{C}\otimes_{C}K.$ The converse is equally clear. 
We define the \emph{$\delta$-parametrized Galois group } of $\widetilde{\mathcal{L}}$ over $\widetilde{K}$ as $\mathrm{Aut}_{\phi,\delta}(\widetilde{\mathcal{L}}/\widetilde{K})$. Since $\widetilde{C}$ is $\delta$-closed,  this group can be embedded as a LDAG $\widetilde{\mathcal{G}}$ in $\GL_n(\widetilde{C})$ via its action on a fundamental matrix in $\GL_n(\widetilde{R})$ and it is independent, up to conjugation in $\GL_n(\widetilde{C})$, of the choice of the PPV extension over $\widetilde{K}$ or of $U$ \cite[Proposition 6.18]{H-S08}. Finally, 
\cite[Proposition 1.20]{dV-H} yields
\[
\widetilde{\mathcal{G}}=\mathrm{Aut}_{\phi,\delta}(\widetilde{\mathcal{L}}/\widetilde{K})= \mathcal{G}(\widetilde{C}).
\]

Let $\widetilde{G}=\mathrm{Aut}_{\phi}(\widetilde{L}/\widetilde{K})$ be the (ordinary) difference Galois
group of \eqref{eq:general diference system} over $\widetilde{K}$. Then the group $\widetilde{\mathcal{G}}$ is Zariski
dense in $\widetilde{G}$,  see \cite{H-S08}, Proposition 6.21.

Finally,  the  torsor theorem yields the following equality
\[ \delta \dim \mathcal{G} = \delta \mathrm{tr.deg.}( \mathcal{L}/K),\]
where $\delta \mathrm{trdeg}(\mathcal{L}/K)$ is the differential transcendence degree of $\mathcal{L}_1$ over $K$ in the notation of \S \ref{subsubsec:PPVext}. This result was proved for the $\delta$-parametrized Galois group $\widetilde{\mathcal{G}}$ in \cite[Proposition 6.26]{H-S08}. For the $\delta$-parametrized Galois group scheme $\mathcal{G}$, the proof of the above equation is entirely analogous to the proof of \cite[Lemma 2.7]{DVHW} in the symmetric context of differential equations with a difference parameter.

\subsubsection{The $\delta$-parametrized Galois correspondence}

We shall need the following result. The proof of Lemma 6.19 in \cite{H-S08},
although set in a different context, applies here as well, without
any change.
\begin{prop} Let the notation be as in the previous  section.
For every $x\in\widetilde{\mathcal{L}}-\widetilde{K}$ there exists
a $\sigma\in\widetilde{\mathcal{G}}$ with $\sigma(x)\ne x,$ \emph{i.e.},
\[
\widetilde{\mathcal{L}}^{\widetilde{\mathcal{G}}}=\widetilde{K}.
\]
\end{prop}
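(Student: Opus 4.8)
The inclusion $\widetilde{K}\subseteq\widetilde{\mathcal{L}}^{\widetilde{\mathcal{G}}}$ is immediate from the definition of $\widetilde{\mathcal{G}}$, so the plan is to prove the reverse inclusion, following the classical torsor argument of difference Galois theory, which — as noted — transports verbatim from \cite[Lemma 6.19]{H-S08}. One first carries out the two routine reductions. First, reduce to the case where $\widetilde{\mathcal{R}}$ is a domain and $\widetilde{\mathcal{L}}$ a field by replacing $\phi$ with a suitable power $\phi^{r}$: the PPV ring is a product of $r$ conjugate $\delta$-domains cyclically permuted by $\phi$, and $\widetilde{\mathcal{G}}$ is recovered from the Galois group of the first factor over $(\widetilde{K},\phi^{r})$, exactly as in \cite[\S1.3]{S-vdP97}. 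Second, reduce to elements of the PPV ring, i.e. show that a $\widetilde{\mathcal{G}}$-fixed $x\in\widetilde{\mathcal{L}}$ already lies in $\widetilde{\mathcal{R}}$, using the $\phi$-simplicity of $\widetilde{\mathcal{R}}$. It then remains to prove: if $x\in\widetilde{\mathcal{R}}$ and $\sigma(x)=x$ for all $\sigma\in\widetilde{\mathcal{G}}$, then $x\in\widetilde{K}$.

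The engine is the torsor isomorphism. Let $\iota_{1},\iota_{2}\colon\widetilde{\mathcal{R}}\to\widetilde{\mathcal{R}}\otimes_{\widetilde{K}}\widetilde{\mathcal{R}}$ be the $(\phi,\delta)$-embeddings $r\mapsto r\otimes1$ and $r\mapsto1\otimes r$. Since $\iota_{1}(U)$ and $\iota_{2}(U)$ are both fundamental matrices of $\phi(y)=Ay$ over $\widetilde{\mathcal{R}}\otimes_{\widetilde{K}}\widetilde{\mathcal{R}}$, the matrix $Z:=\iota_{1}(U)^{-1}\iota_{2}(U)$ has entries in the ring of $\phi$-constants; one checks, as in \cite{H-S08} (using that $\widetilde{\mathcal{R}}$ is a $\phi$-simple PPV ring and that $\widetilde{C}$ is differentially closed), that the entries of $Z$ and $\det(Z)^{-1}$ $\delta$-generate a copy of the differential coordinate ring $\widetilde{C}\{\widetilde{\mathcal{G}}\}$ over $\iota_{1}(\widetilde{\mathcal{R}})$, giving a $(\phi,\delta)$-isomorphism of $\widetilde{\mathcal{R}}$-algebras
\[
\widetilde{\mathcal{R}}\otimes_{\widetilde{K}}\widetilde{\mathcal{R}}\;\cong\;\widetilde{\mathcal{R}}\otimes_{\widetilde{C}}\widetilde{C}\{\widetilde{\mathcal{G}}\}
\]
under which $\iota_{1}$ becomes $r\mapsto r\otimes1$ and, for each $g\in\widetilde{\mathcal{G}}=\mathrm{Hom}_{\delta}(\widetilde{C}\{\widetilde{\mathcal{G}}\},\widetilde{C})$, the composition $(\mathrm{id}\otimes g)\circ\iota_{2}$ is the automorphism $\sigma_{g}$ of $\widetilde{\mathcal{R}}$ attached to $g$, while $(\mathrm{id}\otimes g)\circ\iota_{1}=\mathrm{id}$.

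Granting this, the conclusion is a short computation. Given $x\in\widetilde{\mathcal{R}}$ fixed by all of $\widetilde{\mathcal{G}}$, set $\xi=\iota_{2}(x)-\iota_{1}(x)$ and write $\xi=\sum_{i}r_{i}\otimes c_{i}$ in $\widetilde{\mathcal{R}}\otimes_{\widetilde{C}}\widetilde{C}\{\widetilde{\mathcal{G}}\}$ with the $r_{i}\in\widetilde{\mathcal{R}}$ linearly independent over $\widetilde{C}$. For every $g\in\widetilde{\mathcal{G}}$ one gets $\sum_{i}g(c_{i})r_{i}=(\mathrm{id}\otimes g)(\xi)=\sigma_{g}(x)-x=0$, hence $g(c_{i})=0$ for all $i$ by the $\widetilde{C}$-linear independence of the $r_{i}$; as this holds for all $g$, and since $\widetilde{C}$ is differentially closed while $\widetilde{C}\{\widetilde{\mathcal{G}}\}$ is the quotient of a differential polynomial ring by the radical differential ideal cutting out $\widetilde{\mathcal{G}}$, the differential Nullstellensatz gives $\bigcap_{g\in\widetilde{\mathcal{G}}}\ker(g)=(0)$, so each $c_{i}=0$. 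Thus $\xi=0$, i.e. $x\otimes1=1\otimes x$ in $\widetilde{\mathcal{R}}\otimes_{\widetilde{K}}\widetilde{\mathcal{R}}$, and faithful flatness of $\widetilde{\mathcal{R}}$ over the field $\widetilde{K}$ forces $x\in\widetilde{K}$. The one genuinely substantial step is the torsor isomorphism — concretely, the computation of the $\phi$-constants of $\widetilde{\mathcal{R}}\otimes_{\widetilde{K}}\widetilde{\mathcal{R}}$ and the $\phi$-simplicity of the PPV ring over the differentially closed base $\widetilde{C}$ — but this, together with the two reductions above, is exactly the material of \cite{H-S08}, which is why the proof given there carries over without change.
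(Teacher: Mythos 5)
Your engine — the torsor isomorphism $\widetilde{\mathcal{R}}\otimes_{\widetilde{K}}\widetilde{\mathcal{R}}\cong\widetilde{\mathcal{R}}\otimes_{\widetilde{C}}\widetilde{C}\{\widetilde{\mathcal{G}}\}$ combined with the differential Nullstellensatz over the differentially closed field $\widetilde{C}$ — is exactly the argument of Lemma 6.19 of \cite{H-S08} that the paper invokes without further detail (and the same mechanism reappears in the paper's Appendix B), so the substance of your proof matches the intended one. There is, however, one step that does not work as you justify it: the reduction asserting that a $\widetilde{\mathcal{G}}$-fixed element $x\in\widetilde{\mathcal{L}}$ already lies in $\widetilde{\mathcal{R}}$ ``using the $\phi$-simplicity of $\widetilde{\mathcal{R}}$''. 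The natural ideal to consider, $I=\{b\in\widetilde{\mathcal{R}}\mid bx\in\widetilde{\mathcal{R}}\}$, is nonzero and $\widetilde{\mathcal{G}}$-stable (since $\sigma(b)x=\sigma(bx)$), but it is not visibly a $\phi$-ideal: from $bx\in\widetilde{\mathcal{R}}$ one only gets $\phi(b)\phi(x)\in\widetilde{\mathcal{R}}$, and $x$ is not assumed $\phi$-fixed, so $\phi$-simplicity does not apply; nor can you invoke any ``$\widetilde{\mathcal{G}}$-simplicity'', which is not among the established facts. Note also that the containment of the fixed ring in $\widetilde{\mathcal{R}}$ is a posteriori part of what is being proved, so it cannot be dispatched as a preliminary ``routine reduction''.

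The repair is the standard device, which bypasses that reduction entirely: write $x=a/b$ with $a,b\in\widetilde{\mathcal{R}}$ and $b$ a non-zero-divisor, and run your computation on $d=a\otimes b-b\otimes a\in\widetilde{\mathcal{R}}\otimes_{\widetilde{K}}\widetilde{\mathcal{R}}$ instead of on $\iota_{2}(x)-\iota_{1}(x)$. Under your identification, $(\mathrm{id}\otimes g)$ sends $d$ to $a\,\sigma_{g}(b)-b\,\sigma_{g}(a)$, which vanishes for every $g\in\widetilde{\mathcal{G}}$ when $x$ is fixed; your linear-independence plus Nullstellensatz argument then gives $d=0$, and since $\widetilde{K}$ is a field, $a\otimes b=b\otimes a$ forces $a$ and $b$ to be $\widetilde{K}$-proportional, i.e. $x\in\widetilde{K}$ (this replaces your faithful-flatness step, which was fine for $x\in\widetilde{\mathcal{R}}$ but was only reached through the flawed reduction). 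With this modification, and the routine passage to $\phi^{r}$ to make $\widetilde{\mathcal{L}}$ a field, your proof is complete and coincides with the cited one.
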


This is the key to the $\delta$-parametrized Galois correspondence,
analogous to what we described in the non-parametrized framework.
See \cite{H-S08} and note that since we do not work schematically,
we must extend scalars to $\widetilde{C}.$

\subsection{$\delta$-algebraic solutions\label{subsec:delta-algebraic-solutions}}

Notation as in section \ref{subsec:The--parametrized-Galois}, let $(W,\Phi)=(K^{n},A^{-1}\phi)$
be the $\phi$-module associated with the linear system $\phi(y)=Ay,$
$L$ a PV extension over $K$ and $\mathcal{L}$ a $\delta$-parametrized PV
extension over $K$ containing $L$. Let
\[
\mathcal{U}=W_{L}^{\Phi}=W_{\mathcal{L}}^{\Phi}=UC^{n}
\]
be the solution space, where $U\in \GL_{n}(L)$ is a fundamental matrix.
As above, we denote by $\widetilde{C}$ a differential closure of
$C$, and add a tilde to denote the same objects over $\widetilde{K}$.
Since $\widetilde{\mathcal{L}}^{\phi}=\widetilde{C},$ we have $\widetilde{\mathcal{U}}=\widetilde{C}\otimes_{C}\mathcal{U}.$

Let $\mathcal{L}_{a}\subset\mathcal{L}$ be the set of elements that
are $\delta$-algebraic over $K$. Since $x\in\mathcal{L}_{a}$ if
and only if
\[
\mathrm{tr.deg.}K(x,\delta x,\delta^{2}x,...)/K<\infty,
\]
it is clear that $\mathcal{L}_{a}$ is a $\delta$\emph{-invariant
subfield} of $\mathcal{L}$. Since $\phi$ and $\delta$ commute,
it is also $\phi$\emph{-invariant}. Let
\[
\mathcal{U}_{a}=\mathcal{U}\cap\mathcal{L}_{a}^{n}
\]
be the $C$-subspace of $\mathcal{U}$ consisting of solutions all
of whose coordinates are $\delta$-algebraic. Similarly, define $\mathcal{\widetilde{L}}_{a}\subset\mathcal{\widetilde{L}}$
to be the field of elements that are $\delta$-algebraic over $\widetilde{K}$,
and $\mathcal{\widetilde{U}}_{a}=\mathcal{\widetilde{U}}\cap\mathcal{\widetilde{L}}_{a}^{n}$.

If $\sigma\in\widetilde{\mathcal{G}}=\mathcal{G}(\widetilde{C})=\mathrm{Aut}_{\phi,\delta}(\widetilde{\mathcal{L}}/\widetilde{K})$
and $x\in\mathcal{\widetilde{L}}_{a}$, then $\sigma(x)\in\mathcal{\widetilde{L}}_{a}$
because $\sigma$ commutes with $\delta.$ Thus $\widetilde{\mathcal{G}}$
preserves $\mathcal{\widetilde{L}}_{a}$, and in its standard representation
on $\widetilde{\mathcal{U}},$ $\widetilde{\mathcal{U}}_{a}$ becomes
an invariant subspace.
\begin{lem}
\label{lem: base change of delta algebraic solutions}We have $\mathcal{\widetilde{U}}_{a}=\widetilde{C}\otimes_{C}\mathcal{U}_{a}.$
\end{lem}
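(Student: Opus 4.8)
We must show $\widetilde{\mathcal{U}}_{a}=\widetilde{C}\otimes_{C}\mathcal{U}_{a}$, where $\mathcal{U}_{a}$ (resp. $\widetilde{\mathcal{U}}_{a}$) is the space of solutions with $\delta$-algebraic coordinates over $K$ (resp. over $\widetilde{K}$).

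The plan is as follows. The inclusion $\widetilde{C}\otimes_{C}\mathcal{U}_{a}\subseteq\widetilde{\mathcal{U}}_{a}$ is the easy direction: if $v\in\mathcal{U}_{a}$ has coordinates in $\mathcal{L}_{a}$, then each coordinate is $\delta$-algebraic over $K$, hence \emph{a fortiori} over the larger field $\widetilde{K}$; since $\widetilde{\mathcal{U}}=\widetilde{C}\otimes_{C}\mathcal{U}$ and $\widetilde{C}\subset\widetilde{K}$, scalars from $\widetilde{C}$ times such a $v$ still have $\delta$-algebraic coordinates over $\widetilde{K}$, so they lie in $\widetilde{\mathcal{U}}_{a}$. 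I would record this in one line.

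For the reverse inclusion $\widetilde{\mathcal{U}}_{a}\subseteq\widetilde{C}\otimes_{C}\mathcal{U}_{a}$, the approach I would take is a Galois-descent argument using the richness of $\mathrm{Aut}_{\delta}(\widetilde{C}/\mathbb{C})$ highlighted earlier in the excerpt (Proposition 1.1 of \cite{M-RDCF}: the only elements of $\widetilde{C}$ fixed by all $\delta$-automorphisms over $\mathbb{C}$ are those of $\mathbb{C}$, and more generally, over an algebraically closed $C$, the fixed field of $\mathrm{Aut}_\delta(\widetilde C/C)$ is $C$). First fix a $C$-basis $e_1,\dots,e_m$ of $\mathcal{U}_a$ inside $\mathcal{U}$, and complete it to a $C$-basis $e_1,\dots,e_n$ of $\mathcal{U}$. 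Then $\widetilde{\mathcal{U}}=\bigoplus_i\widetilde{C}e_i$ and $\widetilde{C}\otimes_C\mathcal{U}_a=\bigoplus_{i\le m}\widetilde{C}e_i$. Take $v=\sum_i\lambda_i e_i\in\widetilde{\mathcal{U}}_a$ with $\lambda_i\in\widetilde{C}$; the goal is to show $\lambda_i=0$ for $i>m$. The key mechanism is that any $\tau\in\mathrm{Aut}_\delta(\widetilde C/C)$ extends (acting trivially on $K$, i.e. as $1\otimes\tau$ on $\widetilde K=\mathrm{Frac}(\widetilde C\otimes_C K)$, and then canonically to $\widetilde{\mathcal{L}}=\mathrm{Frac}(\widetilde C\otimes_C\mathcal{R})$) to a $(\phi,\delta)$-automorphism of $\widetilde{\mathcal{L}}$ fixing $\mathcal{L}$ pointwise and fixing each $e_i$ (as $e_i$ has coordinates in $\mathcal{L}\subset\widetilde{\mathcal{L}}$). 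Since $\tau$ commutes with $\delta$ and fixes $\mathcal{L}$, it maps $\widetilde{\mathcal{L}}_a$ to itself, hence $\tau(v)=\sum_i\tau(\lambda_i)e_i$ again lies in $\widetilde{\mathcal{U}}_a$, and $\tau(v)-v=\sum_i(\tau(\lambda_i)-\lambda_i)e_i\in\widetilde{\mathcal{U}}_a$.

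Now the finishing argument. I would show that the "bad" components can be killed. Consider the $\widetilde C$-span inside $\widetilde{\mathcal{U}}$ of the orbit $\{\tau(v):\tau\in\mathrm{Aut}_\delta(\widetilde C/C)\}$ together with $e_1,\dots,e_m$; call its projection to $\bigoplus_{i>m}\widetilde C e_i$ the subspace $P$. Every generator of $P$ has coordinates in $\widetilde{\mathcal{L}}_a$, and if $P\neq 0$ we could pick a nonzero vector in $P$, and by a standard minimal-support / Gaussian-elimination argument over $\widetilde C$ produce a nonzero solution $w=\sum_{i>m}\mu_i e_i\in\widetilde{\mathcal{U}}_a$ whose nonzero coordinate set is minimal and which, after normalizing one coordinate to $1$, is fixed by every $\tau$ (because $\tau(w)$ has the same support, is a solution, and $\tau(w)-w$ has strictly smaller support, forcing $\tau(w)=w$ by minimality). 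Then all $\mu_i\in\widetilde C^{\mathrm{Aut}_\delta(\widetilde C/C)}=C$, so $w\in\mathcal{U}$ with coordinates in $\mathcal{L}\cap\widetilde{\mathcal{L}}_a=\mathcal{L}_a$ (here $\widetilde{\mathcal{L}}_a\cap\mathcal L=\mathcal L_a$ because $\delta$-algebraicity over $\widetilde K$ of an element of $\mathcal L$ forces it over $K$ — an element of $\mathcal L$ $\delta$-algebraic over $\widetilde K=\widetilde C\cdot K$ is $\delta$-algebraic over $K$ since $\widetilde C$ has $\delta$-constants $\mathbb C\subset C\subset K$, a point I would spell out). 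Thus $w\in\mathcal{U}_a=\bigoplus_{i\le m}Ce_i$, contradicting that $w$ is supported in indices $>m$. Hence $P=0$; applying this to $v$ itself shows $\lambda_i=0$ for $i>m$, i.e. $v\in\widetilde C\otimes_C\mathcal{U}_a$.

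I expect the main obstacle to be the bookkeeping in the last step: correctly setting up the minimal-support descent so that the fixed-field principle for $\mathrm{Aut}_\delta(\widetilde C/C)$ applies cleanly, and carefully justifying $\mathcal{L}\cap\widetilde{\mathcal{L}}_a=\mathcal{L}_a$ and the extendability of $\delta$-automorphisms of $\widetilde C$ to $(\phi,\delta)$-automorphisms of $\widetilde{\mathcal{L}}$ trivial on $\mathcal{L}$. The differential-algebra facts needed (regularity of $\widetilde C/C$ so that $\widetilde C\otimes_C\mathcal{R}$ is a domain, $\phi$-simplicity preserved under this base change, $\widetilde{\mathcal{L}}^\phi=\widetilde C$) are all already established in \S\ref{subsec:The--parametrized-Galois}, so the core of the proof is really just this descent together with the two cited properties of the differential closure.
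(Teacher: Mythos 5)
Your argument is correct, and it rests on exactly the non-formal input the paper itself uses: the richness of $\mathrm{Aut}_{\delta}(\widetilde{C}/C)$ (fixed field equal to $C$), with each such automorphism extended as $\tau\otimes\mathrm{id}$ to a $(\phi,\delta)$-automorphism of $\widetilde{\mathcal{L}}$ fixing $\mathcal{L}$ pointwise. The difference is one of packaging: the paper reduces the statement to $(\widetilde{C}\otimes_{C}\mathcal{R})_{a}=\widetilde{C}\otimes_{C}\mathcal{R}_{a}$ and invokes Lemma A.15 of \cite{A-D-H-W} verbatim (with $\delta$ in place of the difference operator $\sigma$ used there), whereas you carry out the descent by hand on the finite-dimensional solution space $\widetilde{\mathcal{U}}=\widetilde{C}\otimes_{C}\mathcal{U}$ via a minimal-support argument adapted to a basis of $\mathcal{U}_{a}$; your version is self-contained where the paper's is a citation, and the two buy transparency versus brevity. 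One step you flag but under-justify: the identity $\mathcal{L}\cap\widetilde{\mathcal{L}}_{a}=\mathcal{L}_{a}$ does not follow from the fact that $\widetilde{C}^{\delta}=\mathbb{C}$, as your parenthesis suggests. It is true, but the correct reason is either that $\widetilde{C}$ is $\delta$-algebraic over $C$ (every element of a differential closure is constrained, hence differentially algebraic, so $\widetilde{K}$ is $\delta$-algebraic over $K$ and $\delta$-algebraicity over $\widetilde{K}$ descends to $K$), or, more elementarily, that $\widetilde{C}$ and $\mathcal{L}$ are linearly disjoint over $C$ inside $\widetilde{\mathcal{L}}$ (because $\widetilde{C}\otimes_{C}\mathcal{L}$ injects into the total fraction ring of $\widetilde{C}\otimes_{C}\mathcal{R}$), so a nonzero differential polynomial relation over $\widetilde{K}$ satisfied by an element of $\mathcal{L}$ splits along a $C$-basis of $\widetilde{C}$ into relations over $K$, at least one of which is nontrivial. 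With that point repaired, your proof goes through and matches the paper's mechanism.
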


\begin{proof}
Let $\mathcal{R}$ be the $\delta$-parametrized PV ring in $\mathcal{L}$.
As $\mathcal{U}\subset\mathcal{R}^{n},$ $\widetilde{\mathcal{U}}\subset\widetilde{C}\otimes_{C}\mathcal{R}^{n}$
and it is enough to prove that
\begin{equation}\label{tensor Ctilde commutes with a}
(\widetilde{C}\otimes_{C}\mathcal{R})_{a}=\widetilde{C}\otimes_{C}\mathcal{R}_{a}. 
\end{equation}

The proof of \eqref{tensor Ctilde commutes with a} is done precisely as in \cite{A-D-H-W}, Lemma A.15, where the
same statement is proved if the derivation $\delta$ is replaced by
a difference operator (\emph{i.e.} a field automorphism, denoted there $\sigma$).
The only non-formal fact used in the proof of that Lemma would be,
in our context, the statement that for any $x\in\widetilde{C}-C$
there exists a $\delta$-automorphism of $\widetilde{C}$ over $C$
not fixing $x.$ For this, see the \emph{proof }of Proposition 1.1
in \cite{M-RDCF}.
\end{proof}
\begin{cor}
\label{cor:Galois invariance of delta algebraic solutions}The $C$-subspace
$\mathcal{U}_{a}\subset\mathcal{U}$ is $G$-invariant.
\end{cor}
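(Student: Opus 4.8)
The plan is to deduce Corollary~\ref{cor:Galois invariance of delta algebraic solutions} from the base-change statement of Lemma~\ref{lem: base change of delta algebraic solutions} together with the $\widetilde{\mathcal{G}}$-invariance of $\widetilde{\mathcal{U}}_a$ already recorded just above, plus a descent argument using the action of $\mathrm{Aut}_\delta(\widetilde{C}/C)$ on $\widetilde{C}\otimes_C(-)$. First I would recall that $\widetilde{\mathcal{U}}_a$ is an invariant subspace for the standard representation of $\widetilde{\mathcal{G}}$, as explained in the paragraph preceding the statement. By Lemma~\ref{lem: base change of delta algebraic solutions} we have $\widetilde{\mathcal{U}}_a=\widetilde{C}\otimes_C\mathcal{U}_a$ inside $\widetilde{\mathcal{U}}=\widetilde{C}\otimes_C\mathcal{U}$, so $\widetilde{C}\otimes_C\mathcal{U}_a$ is a $\widetilde{\mathcal{G}}$-submodule of $\widetilde{C}\otimes_C\mathcal{U}$.

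Next I would pass from $\widetilde{\mathcal{G}}$-invariance over $\widetilde{C}$ to $G$-invariance over $C$. The point is that $G=\mathrm{Aut}_\phi(L/K)$ embeds into $\widetilde{G}=\mathrm{Aut}_\phi(\widetilde{L}/\widetilde{K})$ (each $\sigma\in G$ extends by $\widetilde{C}$-linearity), and $\widetilde{\mathcal{G}}$ is Zariski dense in $\widetilde{G}$ (stated in \S\ref{subsec:The--parametrized-Galois}). A Zariski-closed subspace of a representation that is invariant under a Zariski-dense subgroup is invariant under the whole group; hence $\widetilde{C}\otimes_C\mathcal{U}_a$, being a linear (hence Zariski-closed) subspace invariant under $\widetilde{\mathcal{G}}$, is invariant under all of $\widetilde{G}$, and in particular under $G\subset\widetilde{G}$. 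Finally, for $\sigma\in G$ and $v\in\mathcal{U}_a$, the vector $\sigma(v)$ lies in $\mathcal{U}$ (genuinely over $C$, since $\sigma$ acts on $L$ and fixes $C$) and also in $\widetilde{C}\otimes_C\mathcal{U}_a$; intersecting with $\mathcal{U}$ and using that $\widetilde{C}$ is faithfully flat over $C$ (so $(\widetilde{C}\otimes_C\mathcal{U}_a)\cap\mathcal{U}=\mathcal{U}_a$ inside $\widetilde{C}\otimes_C\mathcal{U}$) gives $\sigma(v)\in\mathcal{U}_a$. Thus $\mathcal{U}_a$ is $G$-invariant.

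An alternative, more self-contained route avoids mentioning $\widetilde{G}$: observe directly that $\mathcal{L}_a\subset\mathcal{L}$ is a $(\phi,\delta)$-subextension of $K$ (it is $\delta$-invariant because $\delta$-algebraicity is preserved under $\delta$, and $\phi$-invariant because $\phi$ and $\delta$ commute), so any $\sigma\in G=\mathrm{Aut}_\phi(L/K)$ which extends to $\widetilde{\mathcal{L}}$ compatibly with $\delta$ preserves $\mathcal{L}_a$; one then checks that the columns of $U$ lie in $\mathcal{R}\subset\mathcal{L}$ and that $\sigma$, acting on $\mathcal{U}=UC^n$ through $\sigma(U)=UV_\sigma$ with $V_\sigma\in\GL_n(C)$, carries $\mathcal{U}_a=\mathcal{U}\cap\mathcal{L}_a^n$ into itself. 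The only subtlety is that an arbitrary $\sigma\in G$ need not a priori commute with $\delta$ on $L$; this is precisely why the first route, going up to $\widetilde{C}$ where the $\delta$-parametrized Galois group is available and using density, is cleaner.

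The main obstacle, such as it is, is bookkeeping rather than mathematics: one must be careful that $\mathcal{U}_a$ is defined over $C$ (the solution space $\mathcal{U}=UC^n$ has $C$-coefficients, so $\mathcal{U}_a$ does too) while $\delta$-parametrized Galois theory is only available after extending scalars to the differentially closed $\widetilde{C}$; the reconciliation is exactly Lemma~\ref{lem: base change of delta algebraic solutions}, which says $\mathcal{U}_a$ and $\widetilde{\mathcal{U}}_a$ correspond under $\widetilde{C}\otimes_C(-)$, together with faithful flatness of $\widetilde{C}/C$ to descend the invariance. Everything else is formal.
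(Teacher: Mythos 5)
Your first route is correct and is essentially the paper's own proof: invariance of $\widetilde{\mathcal{U}}_{a}$ under $\widetilde{\mathcal{G}}$, Zariski density of $\widetilde{\mathcal{G}}$ in $\widetilde{G}$ to upgrade to $\widetilde{G}$-invariance, the identification $\widetilde{\mathcal{U}}_{a}=\widetilde{C}\otimes_{C}\mathcal{U}_{a}$ from Lemma \ref{lem: base change of delta algebraic solutions}, and descent to $C$ (the paper phrases the last step via $\widetilde{G}=G(\widetilde{C})$ and $\mathcal{U}_{a}$, $G$ being defined over $C$, which is the same as your intersection argument $(\widetilde{C}\otimes_{C}\mathcal{U}_{a})\cap\mathcal{U}=\mathcal{U}_{a}$). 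Nothing further is needed.
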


\begin{proof}
As $\mathcal{\widetilde{U}}_{a}$ is $\widetilde{\mathcal{G}}$-invariant
and $\widetilde{\mathcal{G}}$ is Zariski dense in $\widetilde{G}$,
$\mathcal{\widetilde{U}}_{a}$ is $\widetilde{G}$-invariant. But
the subspace $\mathcal{U}_{a}$ and the algebraic group $G\subset \GL(\mathcal{U})$
are defined over $C$, $\widetilde{\mathcal{U}}_{a}$ is nothing but
the $\widetilde{C}$-points of $\mathcal{U}_{a}$ (by the last lemma)
and $\widetilde{G}=G(\widetilde{C}).$ Thus $\mathcal{U}_{a}$ is
$G$-invariant.
\end{proof}

\section{\label{sec:A-Galoisian-criterion}A Galoisian criterion for $\delta$-integrability}

Let $(K,\phi,\delta)$ be as above ($\phi$ and $\delta$ commuting
with each other, $C=K^{\phi}$ algebraically closed). Recall that
$(\ref{eq:general diference system})$ is called $\delta$-integrable
if the associated $\phi$-module $(W,\Phi)$ carries a compatible
connection $\nabla$ making $(W,\Phi,\nabla)$ a $(\phi,\delta)$-module
over $K.$ Equivalently, integrability means that there exists a matrix
$B\in\mathfrak{gl}_{n}(K)$ such that
\begin{equation}
\delta(A)=\phi(B)A-AB.\label{eq:integrability relation-1}
\end{equation}

\begin{prop}
\label{prop:Galois criterion for integrability-1} The system $(\ref{eq:general diference system})$
is $\delta$-integrable if and only if there exists a matrix $D\in\mathfrak{gl}_{n}(C)$
such that
\begin{equation}\label{eq:integrabilitycondition group}
\delta(V_{\sigma})=V_{\sigma}D-DV_{\sigma}
\end{equation}
for every $\sigma\in\mathcal{G}.$
\end{prop}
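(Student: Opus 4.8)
I would prove the two implications by extracting the matrices $D$ and $B$ directly from a fundamental matrix. Fix a PPV extension $\mathcal{L}$ of \eqref{eq:general diference system} over $K$, the PPV ring $\mathcal{R}\subset\mathcal{L}$, and a fundamental matrix $U\in\GL_{n}(\mathcal{R})$ with $\phi(U)=AU$; to make the whole $\delta$-parametrized Galois group visible, pass to $\widetilde{K}$, $\widetilde{\mathcal{R}}=\widetilde{K}\otimes_{K}\mathcal{R}$ and $\widetilde{\mathcal{L}}$, keeping the same $U$, so that $\widetilde{\mathcal{G}}=\mathrm{Aut}_{\phi,\delta}(\widetilde{\mathcal{L}}/\widetilde{K})=\mathcal{G}(\widetilde{C})$ acts through $\sigma(U)=UV_{\sigma}$ with $V_{\sigma}\in\GL_{n}(\widetilde{C})$. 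The relation \eqref{eq:integrabilitycondition group} ``for every $\sigma\in\mathcal{G}$'' should be read as an identity in the differential coordinate ring $C\{\mathcal{G}\}$; since $\widetilde{C}$ is differentially closed and the defining ideal of $\mathcal{G}$ is radical, it holds there if and only if it holds for every $\sigma\in\widetilde{\mathcal{G}}$, which is what we verify. Both directions come down to manipulating the logarithmic derivative $\delta(U)U^{-1}$ and then reading off the consequence via the $\delta$-parametrized Galois correspondence $\widetilde{\mathcal{L}}^{\widetilde{\mathcal{G}}}=\widetilde{K}$.

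For the direct implication, suppose \eqref{eq:general diference system} is $\delta$-integrable, so by Lemma~\ref{lem:partialintmoduldestructure} there is $B\in\mathfrak{gl}_{n}(K)$ with $\delta(A)=\phi(B)A-AB$. Set $D:=U^{-1}\bigl(\delta(U)U^{-1}-B\bigr)U$, a matrix with entries a priori in $\mathcal{R}$. A one-line computation using $\phi(U)=AU$, $\phi(B)=B$ and the integrability relation shows that $U^{-1}\bigl(\delta(U)U^{-1}-B\bigr)U$ is fixed by $\phi$; since $\mathcal{R}$ is $\phi$-simple with $\mathcal{R}^{\phi}=\mathcal{L}^{\phi}=C$, this forces $D\in\mathfrak{gl}_{n}(C)$. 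Rewriting the definition of $D$ as $\delta(U)=BU+UD$ and applying $\sigma\in\widetilde{\mathcal{G}}$, which fixes $K$ pointwise and hence fixes both $B$ and $D$, and using $\delta(\sigma U)=\delta(U)V_{\sigma}+U\delta(V_{\sigma})$, one gets after cancelling the common left factor $U$ exactly $\delta(V_{\sigma})=V_{\sigma}D-DV_{\sigma}$.

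For the converse, suppose such a $D\in\mathfrak{gl}_{n}(C)$ exists. Set $B:=\delta(U)U^{-1}-UDU^{-1}$; since $U\in\GL_{n}(\mathcal{R})$ and $D$ has entries in $C\subset K$, the matrix $B$ has entries in $\mathcal{R}$. A direct computation with $\sigma(U)=UV_{\sigma}$, $\sigma|_{K}=\mathrm{id}$, and the consequence $\delta(V_{\sigma})V_{\sigma}^{-1}=V_{\sigma}DV_{\sigma}^{-1}-D$ of \eqref{eq:integrabilitycondition group} shows that $\sigma(B)=B$ for every $\sigma\in\widetilde{\mathcal{G}}$, whence $B\in\mathfrak{gl}_{n}(\widetilde{K})$ by the Galois correspondence. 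Combined with $B\in\mathfrak{gl}_{n}(\mathcal{R})$ and the elementary identity $\mathcal{R}\cap\widetilde{K}=K$ (seen by expanding in a $C$-basis of $\widetilde{C}$ inside $\widetilde{\mathcal{R}}$, the localization of $\widetilde{C}\otimes_{C}\mathcal{R}$ at $\widetilde{C}\otimes_{C}K\setminus\{0\}$), this gives $B\in\mathfrak{gl}_{n}(K)$. Finally, a computation with $\phi(U)=AU$ and $\phi(D)=D$ yields $\phi(B)=\delta(A)A^{-1}+ABA^{-1}$, i.e.\ $\delta(A)=\phi(B)A-AB$, so by Lemma~\ref{lem:partialintmoduldestructure} the system \eqref{eq:general diference system} is $\delta$-integrable.

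The one genuinely delicate point is the descent in the converse direction: the differential-algebraic-group structure of $\mathcal{G}$ is only visible over the differential closure $\widetilde{C}$, so the Galois-correspondence step naturally delivers $B\in\mathfrak{gl}_{n}(\widetilde{K})$, and one recovers the sharp conclusion $B\in\mathfrak{gl}_{n}(K)$ only by observing that this same $B$ already lies in the \emph{unextended} PPV ring $\mathcal{R}$ and invoking $\mathcal{R}\cap\widetilde{K}=K$. One should also bear in mind that $\mathcal{L}$ may be a $\phi$-pseudofield rather than a field, but this affects only the bookkeeping of the rings of $\phi$-constants and none of the computations.
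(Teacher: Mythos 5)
Your proposal is correct and follows essentially the paper's own proof: the same $D=U^{-1}(\delta(U)-BU)$ (fixed by $\phi$, hence in $\mathfrak{gl}_{n}(C)$ since $\mathcal{L}^{\phi}=C$) in the forward direction, and the same $B=\delta(U)U^{-1}-UDU^{-1}$ in the converse, with the same descent chain $\sigma(B)=B$ for all $\sigma\in\widetilde{\mathcal{G}}$, then $B\in\mathfrak{gl}_{n}(\widetilde{K})$ by $\widetilde{\mathcal{L}}^{\widetilde{\mathcal{G}}}=\widetilde{K}$, then $B\in\mathfrak{gl}_{n}(K)$ via $\mathcal{R}\cap\widetilde{K}=K$, ending with the computation $\phi(B)A-AB=\delta(A)$. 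The only blemish is your invocation of ``$\phi(B)=B$'' in the forward direction: this is false ($B\in\mathfrak{gl}_{n}(K)$ and $\phi$ is not the identity on $K$) but also unnecessary, since $\phi(\delta(U)-BU)=\delta(AU)-\phi(B)AU=A(\delta(U)-BU)$ follows from $\phi(U)=AU$ and the integrability relation $\delta(A)=\phi(B)A-AB$ alone, so the argument stands once that stray identity is deleted.
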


To be precise, the meaning of the criterion is the following. Fix a fundamental solution matrix $U$ with coefficients in $\mathcal{R}$. For
any $\delta$-ring extension $C\subset C'$, and for any $\sigma\in\mathcal{G}(C'),$ the matrix $V_{\sigma} \in \GL_n(C')$  such that $\sigma(U)=UV_{\sigma}$ satisfies 
 \eqref{eq:integrabilitycondition group} in $\mathfrak{gl}_{n}(C').$ Alternatively,
if $\mathcal{I}\subset C\{X_{ij},\det(X)^{-1}\}_{\delta}$ is the
ideal of definition of $\mathcal{G},$ then for any $1\le i,j\le n$
\[
\delta(X_{ij})-\sum_{\ell}X_{i\ell}D_{\ell j}+\sum_{\ell}D_{i\ell}X_{\ell j}\in\mathcal{I}.
\]

\begin{proof}
Suppose $A$ and $B$ satisfy $(\ref{eq:integrability relation-1}).$
Let $\mathcal{R}\subset\mathcal{L}$ be the $\delta$-parametrized
PV ring and extension, and $U\in \GL_{n}(\mathcal{R})$ a fundamental
matrix. Then
\[
\phi(\delta(U)-BU)=\delta(AU)-\phi(B)AU=A(\delta(U)-BU),
\]
so
\[
D:=U^{-1}(\delta(U)-BU)\in\mathfrak{gl}_{n}(C),
\]
as it is fixed by $\phi$ and $\mathcal{L}^{\phi}=C.$ Calculating
$\delta(\sigma U)=\sigma(\delta U)$ for $\sigma\in\mathcal{G}(C'),$
$C'$ as above, we find
\[
\delta(U)V_{\sigma}+U\delta(V_{\sigma})=\delta(UV_{\sigma})=\delta(\sigma U)=\sigma(\delta U)=\sigma(BU+UD)=BUV_{\sigma}+UV_{\sigma}D,
\]
or
\[
\delta(V_{\sigma})=V_{\sigma}D-DV_{\sigma}.
\]

Conversely, if $D\in\mathfrak{gl}_{n}(C)$ satisfies the last equation,
define
\[
B:=\delta(U)U^{-1}-UDU^{-1}\in\mathfrak{gl}_{n}(\mathcal{R})\subset\mathfrak{gl}_{n}(\mathcal{L})\subset\mathfrak{gl}_{n}(\mathcal{\widetilde{L}}).
\]
Then, for $\sigma\in\mathcal{G}(\widetilde{C})$
\[
\sigma(B)=\delta(UV_{\sigma})V_{\sigma}^{-1}U^{-1}-UV_{\sigma}DV_{\sigma}^{-1}U^{-1}=
\]
\[
=\delta(U)U^{-1}+U(V_{\sigma}D-DV_{\sigma})V_{\sigma}^{-1}U^{-1}-UV_{\sigma}DV_{\sigma}^{-1}U^{-1}=B,
\]
so $B\in\mathfrak{gl}_{n}(\widetilde{K})$ by the $\delta$-parametrized
Galois correspondence: $\mathcal{\widetilde{L}}^{\mathcal{G}}=\widetilde{K}.$
But $\mathcal{R}\cap\widetilde{K}=K$ so we can descend the field
of $\phi$-scalars and deduce that $B\in\mathfrak{gl}_{n}(K).$ We
compute
\[
\phi(B)A-AB=\delta(AU)U^{-1}-AUDU^{-1}-AB
\]
\[
=\delta(A)+A\delta(U)U^{-1}+A(B-\delta(U)U^{-1})-AB=\delta(A).
\]
Thus $A$ satisfies the condition for $\delta$-integrability.
\end{proof}
\begin{rem*}
\begin{enumerate}[i)]

\item The above Proposition does not require $K^{\phi}=C$ to be differentially
closed.

\item Compare \cite{H-S08}, Proposition 2.9. Assuming $C$ is differentially
closed, the relation $\delta(V_{\sigma})=[V_{\sigma},D]$ is \emph{integrated}
there to conclude that $\mathcal{G}$ is \emph{conjugate} to a $\delta$-constant
group (see Section \ref{subsec:LDAG}). If $C$ is not differentially
closed, such a conjugation exists only over a suitable non-trivial
$\delta$-extension of $C,$ as we need to find an invertible matrix
$E$ solving $\delta E=-DE,$ \emph{ i.e.}, a fundamental matrix for $\delta y=-Dy.$
Having such an $E$ at hand,
\[
\delta(E^{-1}V_{\sigma}E)=-E^{-1}\delta(E)E^{-1}V_{\sigma}E+E^{-1}\delta(V_{\sigma})E+E^{-1}V_{\sigma}\delta(E)
\]
\[
=E^{-1}(DV_{\sigma}-V_{\sigma}D+\delta(V_{\sigma}))E=0.
\]
\end{enumerate}

\end{rem*}

\section{$\partial$-modules over $M$ and monodromy}

\subsection{Triviality of $\partial$-modules over $M$}

From now on:
\begin{itemize}
\item $K=\bigcup_{\Lambda\in\mathfrak{L}}K_{\Lambda}$, $\phi f(z)=f(qz)$
and $\partial f(z)=f'(z)$ are as in \S~\ref{subsec:K phi and partial}.
\item $M=\mathscr{M}(\mathbb{C})$ is the field of meromorphic functions
on $\mathbb{C}$, with the same $\partial,$$\phi.$
\item $F=\mathbb{C}((z)),$ $\mathcal{O}_{F}=\mathbb{C}[[z]],$ same $\partial,\phi.$
\item $K\subset M\subset F,$ as $(\phi,\partial)$-fields.
\end{itemize}
Let $B\in\mathfrak{gl}_{n}(M).$ For any $\zeta\in\mathbb{C}$ let
$M_{\zeta}$ be the field of germs of meromorphic functions at $\zeta$,
and $\mathcal{O}_{\zeta}$ its valuation ring. Consider the system
\begin{equation}
\partial y=By\label{eq:differential system-1}
\end{equation}
and the associated $\partial$-module $W=M^{n},$ $\nabla(y)=\partial y-By.$
\begin{lem}
The following are equivalent:
\begin{enumerate}[i)]
\item  The system (\ref{eq:differential system-1}) has a full set of
solutions in $M_{\zeta}.$

\item  The $\partial$-module $W_{\zeta}=M_{\zeta}\otimes_{M}W$ is
trivial.

\item  $B$ has an apparent singularity at $\zeta,$ \emph{i.e.}, there exists
a gauge transformation
\[
\widetilde{B}=\partial P\cdot P^{-1}+PBP^{-1}
\]
with $P\in \GL_{n}(M_{\zeta})$ such that $\widetilde{B}$ is regular
(holomorphic) at $\zeta.$ \end{enumerate}
\end{lem}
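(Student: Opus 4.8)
The plan is to prove the three statements equivalent by running the cycle (i) $\Rightarrow$ (ii) $\Rightarrow$ (iii) $\Rightarrow$ (i). Two of these implications are purely formal, flowing directly from the generalities on modules with a connection recalled in Section~\ref{sec:equations and modules}; the only non-formal ingredient, which enters the step (iii) $\Rightarrow$ (i), is the classical Cauchy existence theorem for linear differential systems with holomorphic coefficients.

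First, the equivalence (i) $\Leftrightarrow$ (ii) requires no genuine argument. Since $M_\zeta$ is a field, it is a $\partial$-simple $\partial$-ring whose field of $\partial$-constants is $\mathbb{C}$ (a germ with vanishing derivative is constant). By the discussion of base change and solution sets for modules with a connection, the system \eqref{eq:differential system-1} attains a full set of solutions in $M_\zeta$ — i.e.\ there exists a fundamental matrix $U\in\GL_n(M_\zeta)$ with $\partial U=BU$ — if and only if $(W_\zeta,\nabla_\zeta)=(W_{M_\zeta},\nabla_{M_\zeta})$ is trivial. This is precisely (i) $\Leftrightarrow$ (ii).

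For (ii) $\Rightarrow$ (iii): starting from a fundamental matrix $U\in\GL_n(M_\zeta)$, I would simply take $P=U^{-1}\in\GL_n(M_\zeta)$. Using $\partial(U^{-1})=-U^{-1}(\partial U)U^{-1}=-U^{-1}B$, the gauge transform $\widetilde B=\partial P\cdot P^{-1}+PBP^{-1}$ is seen at once to vanish, and $\widetilde B=0$ is trivially holomorphic at $\zeta$; hence $B$ has an apparent singularity there. For (iii) $\Rightarrow$ (i): assume $\widetilde B=\partial P\cdot P^{-1}+PBP^{-1}$ is holomorphic at $\zeta$ for some $P\in\GL_n(M_\zeta)$. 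The Cauchy theorem applied to $\partial\widetilde y=\widetilde B\widetilde y$ produces a holomorphic fundamental matrix $\widetilde U$ in a neighbourhood of $\zeta$ with $\widetilde U(\zeta)=I$; Abel's identity $\det\widetilde U=\exp(\int \mathrm{tr}\,\widetilde B)$ shows that $\widetilde U\in\GL_n(\mathcal O_\zeta)\subset\GL_n(M_\zeta)$. Then $U:=P^{-1}\widetilde U\in\GL_n(M_\zeta)$, and a one-line gauge-transform computation gives $\partial U=BU$, so \eqref{eq:differential system-1} has a full set of solutions in $M_\zeta$, closing the cycle.

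There is essentially no obstacle here: the only substantive input is the invocation of the holomorphic local existence theorem, and the remainder is bookkeeping with the gauge-transformation formula — the one point to watch being the inverse/sign conventions, so that the change-of-basis matrix realizing the apparent singularity in (iii) is $P=U^{-1}$ and not $U$.
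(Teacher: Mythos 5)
Your proposal is correct and follows essentially the same route as the paper: (i)$\Leftrightarrow$(ii) via the Wronskian/base-change formalism, (ii)$\Rightarrow$(iii) by exhibiting a gauge transformation with $\widetilde B=0$ (your $P=U^{-1}$ is exactly the paper's observation that $P^{-1}$ is a fundamental matrix), and (iii)$\Rightarrow$(i) by reducing to a regular $B$ and invoking the local existence theorem. The extra details you supply (Abel's identity, the explicit transport $U=P^{-1}\widetilde U$) are harmless elaborations of the same argument.
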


\begin{proof}
(i) is equivalent to $\dim_{\mathbb{C}}W_{\zeta}^{\nabla}=n.$ If
this is the case then by the Wronskian Lemma $W_{\zeta}=W_{\zeta}^{\nabla}\otimes_{\mathbb{C}}M_{\zeta}.$
This proves that (i) implies (ii). If (ii) holds then there exists even a $P$ with
$\widetilde{B}=0,$ whence (iii). Note that in this case, since $\partial(P^{-1})=-P^{-1}\partial(P)P^{-1}$
we have $\partial(P^{-1})=BP^{-1}$ so $P^{-1}$ is a fundamental
matrix of solutions in $M_{\zeta}.$ Finally, if (iii) holds then
to get (i) we may assume, by a change of coordinates, that $B$ was
regular at $\zeta$ to begin with. By the basic existence and uniqueness
theorem for linear systems of ordinary differential equations, we
can find a full set of solutions in $M_{\zeta}.$
\end{proof}

According to the above lemma, we say that a $\partial$-module $W$ has apparent singularities if all the singularities of an associated differential system are apparent.
\begin{cor}
\label{cor:apparent singularities and =00005Cpartial triviality-1}
A $\partial$-module $W$ over $M$ has apparent singularities if and only if it is trivial.
\end{cor}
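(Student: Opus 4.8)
The plan is to bootstrap from the previous lemma, which gives local triviality of $W_\zeta$ at each point $\zeta \in \mathbb{C}$ under the apparent-singularity hypothesis, to global triviality of $W$ over $M$. First I would observe that one implication is trivial: if $W \cong (M^n,\partial)$ then every associated system is gauge equivalent to $\partial y = 0$, which has a full set of constant solutions everywhere, so all singularities (there are none) are apparent. For the substantive direction, fix an associated system $\partial y = By$ with $B \in \mathfrak{gl}_n(M)$ all of whose singularities are apparent. By the lemma, at each $\zeta \in \mathbb{C}$ the local solution space $W_\zeta^\nabla$ has dimension $n$ over $\mathbb{C}$. The aim is to produce a single fundamental matrix $U \in \GL_n(M)$ with $\partial U = BU$; then $U$ exhibits the gauge equivalence of $B$ with $0$, i.e. the triviality of $W$.

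The key step is a monodromy/analytic-continuation argument. Let $\Sigma \subset \mathbb{C}$ be the (discrete) set of poles of $B$, an apparent-singularity set by hypothesis. On the complement $\mathbb{C} \setminus \Sigma$, which is a connected open Riemann surface, the system $\partial y = By$ has, locally near any basepoint $z_0 \notin \Sigma$, a fundamental matrix $U_0$ holomorphic and invertible. Analytic continuation along loops produces the monodromy representation $\rho \colon \pi_1(\mathbb{C}\setminus\Sigma, z_0) \to \GL_n(\mathbb{C})$. The crucial point is that because each puncture $\zeta \in \Sigma$ is an \emph{apparent} singularity, the local monodromy around $\zeta$ is trivial: by the lemma there is $P \in \GL_n(M_\zeta)$ gauging $B$ to something holomorphic at $\zeta$, and a holomorphic system has trivial local monodromy, while $P$ being single-valued meromorphic near $\zeta$ contributes nothing to the monodromy. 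Hence $\rho$ factors through $\pi_1$ of the punctured plane modulo all the small loops, i.e. $\rho$ is trivial. Therefore $U_0$ continues to a single-valued holomorphic invertible matrix $U$ on all of $\mathbb{C} \setminus \Sigma$. Near each $\zeta \in \Sigma$, the apparent-singularity gauge transformation shows that $U$ is meromorphic (it differs from a holomorphic fundamental matrix by multiplication by $P^{-1} \in \GL_n(M_\zeta)$), so $U$ extends to a meromorphic matrix on $\mathbb{C}$ with $\det U$ meromorphic and not identically zero; thus $U \in \GL_n(M)$ and $\partial U = BU$ globally. This gives the gauge equivalence $0 = \partial U \cdot U^{-1} + U B U^{-1}$... more precisely $B$ is gauge equivalent to $0$ via $U^{-1}$, so $W$ is trivial.

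The main obstacle I anticipate is the careful bookkeeping at the singular points: one must verify not merely that the monodromy around each $\zeta \in \Sigma$ is trivial (which follows cleanly from the lemma) but that the resulting single-valued solution matrix is genuinely \emph{meromorphic}, with no essential singularities, at every point of $\Sigma$ — this is exactly what the equivalence (ii)$\Leftrightarrow$(iii) in the lemma supplies, since a holomorphic fundamental matrix times a meromorphic gauge matrix is meromorphic. A secondary point worth stating explicitly is that $\Sigma$ is discrete and closed in $\mathbb{C}$, so that $\pi_1(\mathbb{C}\setminus\Sigma)$ is the expected free group and the "triviality of all small loops implies triviality of $\rho$" step is legitimate even when $\Sigma$ is infinite; here one uses that any loop in $\mathbb{C}\setminus\Sigma$ is homotopic to a product of finitely many small loops, since $\mathbb{C}$ is simply connected. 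Everything else is routine.
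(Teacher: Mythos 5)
Your proof is correct and follows essentially the same route as the paper, which argues in two sentences that solutions continue meromorphically along all paths because the singularities are apparent, and that simple connectedness of $\mathbb{C}$ then forces single-valuedness. Your write-up merely spells out the details the paper leaves implicit (trivial local monodromy around each point of $\Sigma$, triviality of the representation of $\pi_1(\mathbb{C}\setminus\Sigma)$, and the meromorphic extension of the fundamental matrix via the gauge matrix $P$); the only blemish is the garbled displayed gauge formula near the end, which you immediately correct by saying $B$ is gauge equivalent to $0$ via $U^{-1}$.
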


\begin{proof}
Since all the singularities are apparent, one can continue solutions
meromorphically along paths indefinitely. Since $\mathbb{C}$ is simply
connected, this yields a single-valued solution for any initial conditions
at $0.$  
\end{proof}

\subsection{Periodicity and monodromy}

Suppose now that $B\in\mathfrak{gl}_{n}(K_{\Lambda})$ and all the
singularities of $(\ref{eq:differential system-1})$ are apparent.
Let $U$ be a fundamental matrix for $(\ref{eq:differential system-1})$
in $M$. If $\omega\in\Lambda$ then $U(z+\omega)$ also satisfies
(\ref{eq:differential system-1}) so there exists a matrix $M(\omega)\in \GL_{n}(\mathbb{C})$
such that
\begin{equation}
U(z+\omega)=U(z)M(\omega).\label{eq:monodromy-1}
\end{equation}
The map $\omega\mapsto M(\omega)$ is a homomorphism $\Lambda\to \GL_{n}(\mathbb{C}),$
the \emph{monodromy} \emph{representation}. Since $\Lambda$ is abelian,
its image, the \emph{monodromy group}, is an abelian subgroup of $\GL_{n}(\mathbb{C}).$

If we replace $U(z)$ by another fundamental matrix $U(z)T$, with
$T\in \GL_{n}(\mathbb{C})$, then the monodromy representation undergoes
conjugation: $\omega\mapsto T^{-1}M(\omega)T.$ Thus intrinsically,
the monodromy representation is well-defined only up to conjugation.
\begin{lem}
\label{lem:Field of definition of Y-1}Let $Z\in \GL_{n}(M)$ be another
matrix of meromorphic functions satisfying $(\ref{eq:monodromy-1})$.
Then $U(z)=Q(z)Z(z)$ for a matrix $Q(z)\in \GL_{n}(K_{\Lambda}).$
If $S$ is a $K_{\Lambda}$-subalgebra of $M$ containing the entries
of $Z$ then the entries of $U$ are in $S$ as well.
\end{lem}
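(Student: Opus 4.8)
The idea is to form the matrix $Q(z):=U(z)Z(z)^{-1}$ and check that it is $\Lambda$-periodic, using that $U$ and $Z$ have the \emph{same} monodromy $M(\omega)$. Concretely, since $Z\in\GL_n(M)$, the product $Q:=UZ^{-1}$ lies in $\GL_n(M)$, and for $\omega\in\Lambda$ I would compute
\[
Q(z+\omega)=U(z+\omega)Z(z+\omega)^{-1}=\bigl(U(z)M(\omega)\bigr)\bigl(Z(z)M(\omega)\bigr)^{-1}=U(z)M(\omega)M(\omega)^{-1}Z(z)^{-1}=Q(z),
\]
where I use $(\ref{eq:monodromy-1})$ for $U$ and the hypothesis that $Z$ satisfies the same relation. (One must keep track of the order of multiplication: it is crucial that $Q$ is formed as $UZ^{-1}$, so that the two copies of $M(\omega)$ appear adjacent and cancel.)

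Next I would observe that each entry of $Q$ is a $\Lambda$-periodic meromorphic function on $\mathbb{C}$, hence lies in $K_\Lambda$; moreover $\det Q=\det U/\det Z$ is a nonzero $\Lambda$-periodic meromorphic function, so $(\det Q)^{-1}\in K_\Lambda$ as well, and therefore $Q\in\GL_n(K_\Lambda)$ by Cramer's rule. This gives the desired factorization $U(z)=Q(z)Z(z)$ with $Q\in\GL_n(K_\Lambda)$.

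Finally, for the statement about the subalgebra $S$: writing the factorization entrywise, $U_{ij}=\sum_{k}Q_{ik}Z_{kj}$, which is a $K_\Lambda$-linear combination of the entries $Z_{kj}$ of $Z$. Since by hypothesis $S$ is a $K_\Lambda$-subalgebra of $M$ containing all the $Z_{kj}$, and $Q_{ik}\in K_\Lambda$, we get $U_{ij}\in S$ for all $i,j$, as claimed.

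I do not expect any genuine obstacle here: the argument is a direct verification. The only points requiring a little care are the noncommutativity bookkeeping in the periodicity computation, and recording that $K_\Lambda$, being a field, is closed under taking inverses of matrices with nonzero determinant, so that $Q$ (and not merely $\det Q\cdot Q^{-1}$) has entries in $K_\Lambda$.
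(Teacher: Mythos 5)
Your proposal is correct and is essentially the paper's own argument: the paper also sets $Q(z)=U(z)Z(z)^{-1}$, notes it is invariant under translation by $\Lambda$ because the monodromy factors $M(\omega)$ cancel, and concludes its entries are $\Lambda$-elliptic, whence the factorization and the claim about $S$. Your extra remarks about $\det Q$ and the entrywise expansion $U_{ij}=\sum_k Q_{ik}Z_{kj}$ are just the details the paper leaves implicit.
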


\begin{proof}
$Q(z)=U(z)Z(z)^{-1}$ is invariant under translation by $\Lambda$.
It follows that its entries are $\Lambda$-elliptic.
\end{proof}

\subsection{Consequences of $\phi$-isomonodromy}

Assume that $W$ is a $(\phi,\partial)$-module over $K$, so that
$W$ is a $\phi$-isomonodromic $\partial$-module. Fix a basis of
$W$ over $K$ and identify it with $K^{n},$ where $\Phi$ and $\nabla$
are given by matrices $A$ and $B$ as above.
\begin{lem}
All the singularities of $(W,\nabla)$ are apparent, hence $W$ becomes
$\partial$-trivial over $M$.
\end{lem}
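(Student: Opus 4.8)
The plan is to establish the first assertion — that every singularity of $(W,\nabla)$ is apparent — and then read off $\partial$-triviality over $M$ from Corollary~\ref{cor:apparent singularities and =00005Cpartial triviality-1}. Fix the basis of $W$ as in the statement, so $\nabla=\partial-B$; since $K=\bigcup_{\Lambda\in\mathfrak{L}}K_\Lambda$, we may assume $B\in\mathfrak{gl}_n(K_\Lambda)$ for one fixed lattice $\Lambda\in\mathfrak{L}$. Introduce the genuine singular locus
\[
\Sigma=\bigl\{\,\zeta\in\mathbb{C}\ :\ \partial y=By\ \text{has no full set of solutions in}\ M_\zeta\,\bigr\},
\]
which by the Lemma preceding Corollary~\ref{cor:apparent singularities and =00005Cpartial triviality-1} is exactly the set of $\zeta$ where $M_\zeta\otimes_K W$ is nontrivial, i.e. the set of non-apparent singularities. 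As $\Sigma$ is contained in the polar locus of $B$, it is a closed discrete subset of $\mathbb{C}$. The target ``all singularities of $(W,\nabla)$ are apparent'' is precisely $\Sigma=\emptyset$, and that is what I would prove.

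The proof rests on two symmetries of $\Sigma$. \emph{First}, $\Sigma$ is $\Lambda$-periodic: for $\omega\in\Lambda$ the substitution $y(z)\mapsto y(z+\omega)$ is a bijection between germs of solutions of $\partial y=By$ at $\zeta+\omega$ and at $\zeta$, because $B$ is $\Lambda$-elliptic, $B(z+\omega)=B(z)$; hence $\zeta\in\Sigma\iff\zeta+\omega\in\Sigma$. \emph{Second}, $q\Sigma=\Sigma$: since $W$ is a $(\phi,\partial)$-module, its underlying $\partial$-module is $\phi$-isomonodromic, $W\simeq W^{(\phi)}$ over $K$ (Lemma~\ref{lem:TFAE for =00005Cphi-integrability-1}), hence $M\otimes_K W\simeq M\otimes_K W^{(\phi)}$, so $W$ and $W^{(\phi)}$ have the same genuine singular locus over $M$; but $W^{(\phi)}$ is attached to the system $\partial y=q\phi(B)\,y$ with $q\phi(B)(z)=q\,B(qz)$ (cf. Lemma~\ref{lem:TFAE for =00005Cphi-integrability-1} and Example~\ref{exa:basic example-1}), and the substitution $y(z)\mapsto y(z/q)$ carries a solution germ of $\partial y=q\phi(B)y$ at $\zeta$ bijectively to a solution germ of $\partial y=By$ at $q\zeta$, so the genuine singular locus of $W^{(\phi)}$ equals $q^{-1}\Sigma$. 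Comparing the two descriptions gives $\Sigma=q^{-1}\Sigma$.

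To conclude: if $\Sigma$ contained some $\zeta\neq 0$, then by the second symmetry it would contain all $q^{-m}\zeta$ for $m\geq 0$, pairwise distinct (as $|q|\geq 2$) and accumulating at $0$, contradicting discreteness of $\Sigma$. Hence $\Sigma\subseteq\{0\}$; but by the first symmetry $\Sigma$ is $\Lambda$-periodic, and the only $\Lambda$-periodic subset of $\{0\}$ is $\emptyset$. Therefore $\Sigma=\emptyset$, every singularity of $\partial y=By$ is apparent, and Corollary~\ref{cor:apparent singularities and =00005Cpartial triviality-1} gives that $W$ is $\partial$-trivial over $M$.

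The only delicate point is the fixed point $z=0$ of $\phi$: the scaling symmetry $q\Sigma=\Sigma$ by itself cannot dislodge a singularity sitting at the origin, and a direct local classification of $\phi$-isomonodromic meromorphic connections at $0$ would be possible but laborious. What makes the argument clean is the realization that $\Sigma$ enjoys \emph{two} independent symmetries — multiplication by $q$ and translation by $\Lambda$ — whose combination is incompatible with any nonempty discrete set; I expect assembling this observation (rather than any single computation) to be the crux.
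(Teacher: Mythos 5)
Your argument is correct and is essentially the paper's own proof: both use that the non-apparent singularities form a discrete set contained in the poles of $B$, that the isomorphism $W\simeq W^{(\phi)}$ makes this set invariant under scaling by $q$ (the paper phrases it as ``$\zeta$ regular $\Rightarrow q\zeta$ regular'' starting from a small punctured disk, you phrase it as $q\Sigma=\Sigma$ plus accumulation at $0$), and that $\Lambda$-periodicity of $B$ disposes of the fixed point $z=0$. The two write-ups differ only in bookkeeping, not in substance.
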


\begin{proof}
$W$ is defined over some $K_{\Lambda}.$ Since $W^{(\phi)}$ and
$W$ are isomorphic, if $\zeta$ is a regular point of $W$ (\emph{i.e.}
an apparent singularity of the system $(\ref{eq:differential system-1})$),
so is $q\zeta.$ For some $\varepsilon>0$, every $\zeta$ in the
punctured disk $0<|\zeta|<\varepsilon$ is regular, so we deduce that
every $\zeta\ne0$ is regular. But then $\zeta=0$ is regular too,
because any $0\ne\omega\in\Lambda$ is regular and $B$ is $\Lambda$-periodic.
The Lemma follows from Corollary \ref{cor:apparent singularities and =00005Cpartial triviality-1}.
\end{proof}
By the discussion in the previous subsection we may associate with
$W$ a monodromy representation $\omega\mapsto M(\omega)$ of $\Lambda$,
well-defined up to conjugation by $\GL_{n}(\mathbb{C}).$
\begin{prop}
The monodromy representation $\omega\mapsto M(\omega)$ is potentially
unipotent: there exists a sublattice $\Lambda'\subset\Lambda$ such
that $M(\omega)$ is unipotent for every $\omega\in\Lambda'$.
\end{prop}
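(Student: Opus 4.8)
The plan is to use $\phi$-isomonodromy to produce a functional equation relating $M(q\omega)$ to a fixed conjugate of $M(\omega)$, and then to deduce potential unipotence of the monodromy by an elementary argument on eigenvalues. Fix a fundamental matrix $U\in \GL_n(M)$ with $\partial U=BU$, where $B\in\mathfrak{gl}_n(K_\Lambda)$ represents $\nabla$ in the chosen basis of $W$. By Lemma \ref{lem:TFAE for =00005Cphi-integrability-1}, $\phi$-isomonodromy provides $A\in \GL_n(K)$ with $q\phi(B)=ABA^{-1}+\partial A\cdot A^{-1}$; replacing $\Lambda$ by a sublattice over which $A$ is periodic (harmless, since in the statement we are allowed to pass to a sublattice, and $B$ still has entries in the smaller $K_\Lambda$), I may assume $A\in \GL_n(K_\Lambda)$. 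Differentiating shows that $z\mapsto U(qz)$ is a fundamental matrix of $\partial y=q\phi(B)y$, and the gauge relation shows $z\mapsto A(z)U(z)$ is another one; hence there is a constant matrix $C\in \GL_n(\mathbb{C})$ with
\[
U(qz)=A(z)U(z)C.
\]

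Next I would derive the monodromy identity. Let $\omega\in\Lambda$, so $q\omega\in\Lambda$ as well. Since $\omega\mapsto M(\omega)$ is a homomorphism, $U(qz+q\omega)=U(qz)M(q\omega)$ with $M(q\omega)=M(\omega)^{q}$. On the other hand, using the $\Lambda$-periodicity of $A$ and $U(z+\omega)=U(z)M(\omega)$,
\[
U\big(q(z+\omega)\big)=A(z+\omega)U(z+\omega)C=A(z)U(z)M(\omega)C .
\]
Comparing the two expressions and multiplying on the left by $(A(z)U(z))^{-1}\in \GL_n(M)$ gives the equality of constant matrices
\[
M(\omega)^{q}=M(q\omega)=C^{-1}M(\omega)C\qquad(\omega\in\Lambda).
\]
Passing to characteristic polynomials, the finite set of eigenvalues of each $M(\omega)$ is stable under $\lambda\mapsto\lambda^{q}$; being then a bijection of a finite set, this map has some finite order $k\ge 1$, so every eigenvalue satisfies $\lambda^{q^{k}}=\lambda$, hence $\lambda^{q^{k}-1}=1$ with $q^{k}-1\ge 1$. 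Thus all eigenvalues of all the $M(\omega)$ are roots of unity.

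Finally I would descend to a sublattice on which the monodromy is unipotent. The monodromy group $M(\Lambda)$ is abelian, so writing the Jordan decomposition $M(\omega)=s(\omega)u(\omega)$ and using its uniqueness for commuting matrices, $\omega\mapsto s(\omega)$ is a homomorphism $\Lambda\to \GL_n(\mathbb{C})$ taking values in semisimple matrices. Each $s(\omega)$ is semisimple with root-of-unity eigenvalues, hence of finite order, so $s(\Lambda)$ is a finitely generated abelian torsion group and therefore finite; consequently $\Lambda':=\ker s$ is a sublattice of $\Lambda$ of finite index, and for $\omega\in\Lambda'$ one has $M(\omega)=u(\omega)$, which is unipotent.

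The genuinely non-formal step, and the one I expect to require the most care, is the derivation of $M(\omega)^{q}=C^{-1}M(\omega)C$: it rests on choosing the gauge matrix $A$ to be $\Lambda$-periodic (which forces the passage to a sublattice) and on correctly bookkeeping the two ways of translating $U(qz)$ by $q\omega$. Once this identity is in hand, the eigenvalue computation and the Jordan-decomposition descent are routine.
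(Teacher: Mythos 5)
Your proposal is correct and follows essentially the same route as the paper: the paper derives the relation $A(z)^{-1}U(qz)=U(z)C$ from the isomorphism $\iota:W^{(\phi)}\simeq W$, substitutes $z+\omega$ to get $M(\omega)=CM(\omega)^{q}C^{-1}$, and concludes that the eigenvalues are roots of unity, exactly as you do. Your only deviations are cosmetic: the conjugacy makes the multiset of eigenvalues literally equal to its set of $q$-th powers (which is what justifies your ``bijection'' phrase; a plain orbit-repetition argument also suffices), and your Jordan-decomposition descent to $\Lambda'=\ker s$ replaces the paper's equivalent remark that the representation is determined by the commuting matrices $M(\omega_{1}),M(\omega_{2})$.
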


\begin{proof}
Let $U(z)\in \GL_{n}(M)$ be a fundamental matrix for $(W_{M},\nabla)$
and $U^{(\phi)}(z)=U(qz)$ the corresponding fundamental matrix for
$(W_{M}^{(\phi)},\nabla^{(\phi)}).$ Let $\iota:W^{(\phi)}\simeq W$
be an isomorphism of $\partial$-modules, explicitly
\[
\iota(y)=A^{-1}y
\]
where $A\in \GL_{n}(K)$ satisfies the consistency condition $q\phi(B)=ABA^{-1}+\partial(A)A^{-1}.$
Since $\iota\circ\nabla^{(\phi)}=\nabla\circ\iota$, the homomorphism
$\iota$ maps $U^{(\phi)}$ to a fundamental matrix for $W$, so for
some $C\in \GL_{n}(\mathbb{C})$ we have
\[
A(z)^{-1}U(qz)=U(z)C.
\]
Let $\Lambda$ be a lattice of periodicity for $A$ and $B$. Substituting
$z+\omega$ ($\omega\in\Lambda)$ for $z$ we get
\[
A(z)^{-1}U(qz)M(\omega)^{q}=U(z)M(\omega)C=A(z)^{-1}U(qz)C^{-1}M(\omega)C.
\]
It follows that
\[
M(\omega)=CM(\omega)^{q}C^{-1}
\]
and that the set of eigenvalues of $M(\omega)$ is invariant under
raising to power $q$. Since there are only finitely many eigenvalues,
these eigenvalues must be roots of unity, so $M(\omega)$ is potentially
unipotent. Since the monodromy representation is determined by the
(commuting) matrices $M(\omega_{1})$ and $M(\omega_{2}),$ the Proposition
follows.
\end{proof}

\section{$\partial$-integrability and solvability}

The last Proposition has a consequence for the solvability of the
\emph{difference Galois group} of a $\partial$-integrable system
\begin{equation}
\phi(y)=Ay.\label{eq:difference system equation-1}
\end{equation}

\begin{defn}
A $\phi$-module $(W,\Phi)$ over $K$ is \emph{solvable} if there
exists a filtration
\[
0=W_{0}\subset W_{1}\subset W_{2}\subset\cdots\subset W_{n}=W
\]
by $\phi$-submodules such that $\mathrm{dim}_{K}W_{i}=i.$
\end{defn}

If $(W,\Phi)$ is associated to the system $(\ref{eq:general diference system})$
then it is solvable if and only if $A$ is gauge equivalent to an
upper-triangular matrix.
\begin{thm}
If $(W,\Phi)$ is $\partial$-integrable, it is solvable. In fact,
if $\nabla$ is a compatible $\partial$-connection on $W$, a filtration
as above exists where each $W_{i}$ is a $(\phi,\partial)$-submodule.
\end{thm}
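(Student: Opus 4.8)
The plan is to induct on $n=\dim_{K}W$, the inductive step being reduced to the construction of a single one-dimensional $(\phi,\partial)$-submodule. Indeed, suppose we produce $u_{1}\in W$, $u_{1}\neq 0$, with $\Phi(u_{1})\in Ku_{1}$ and $\nabla(u_{1})\in Ku_{1}$; then $W_{1}:=Ku_{1}$ is a $(\phi,\partial)$-submodule, the quotient $W/W_{1}$ is again a $(\phi,\partial)$-module, hence $\partial$-integrable, of dimension $n-1$, so by the induction hypothesis it carries a complete flag of $(\phi,\partial)$-submodules, whose preimage in $W$ together with $W_{1}$ gives $0=W_{0}\subset W_{1}\subset\cdots\subset W_{n}=W$ with $\dim_{K}W_{i}=i$ and each $W_{i}$ a $(\phi,\partial)$-submodule; forgetting the connection on the $W_{i}$ yields solvability of $(W,\Phi)$. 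The case $n\le 1$ is trivial.

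To find $u_{1}$, fix a basis of $W$ in which $\Phi$ and $\nabla$ are given by $A\in \GL_{n}(K_{\Lambda_{0}})$ and $B\in\mathfrak{gl}_{n}(K_{\Lambda_{0}})$ for some $\Lambda_{0}\in\mathfrak{L}$. As a $(\phi,\partial)$-module, $W$ is a $\phi$-isomonodromic $\partial$-module; by the first Lemma of the previous section all its singularities are apparent and it becomes $\partial$-trivial over $M$, so we fix a fundamental matrix $U\in \GL_{n}(M)$ with $\partial U=BU$ and its monodromy representation $\omega\mapsto M(\omega)$, $U(z+\omega)=U(z)M(\omega)$, of $\Lambda_{0}$. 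The computation already carried out in the proof of the last Proposition provides $N\in \GL_{n}(\mathbb{C})$ with $A^{-1}U(qz)=U(z)N$ and, substituting $z+\omega$ for $z$, $NM(q\omega)=M(\omega)N$ for all $\omega\in\Lambda_{0}$. By the same Proposition the monodromy is potentially unipotent, so $\Lambda':=\{\omega\in\Lambda_{0}:M(\omega)\text{ is unipotent}\}$ is a subgroup of $\Lambda_{0}$ of finite index, hence a sublattice; it is $\phi$-stable, $q\Lambda'\subseteq\Lambda'$, because $M(q\omega)=M(\omega)^{q}$.

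Everything now takes place inside $\mathbb{C}^{n}$. The matrices $M(\omega)$, $\omega\in\Lambda'$, form a commuting family of unipotent matrices, so their common fixed space $V:=\{v\in\mathbb{C}^{n}:M(\omega)v=v\text{ for all }\omega\in\Lambda'\}$ is nonzero. The relation $M(\omega)N=NM(q\omega)$ together with $q\Lambda'\subseteq\Lambda'$ shows $N(V)\subseteq V$; since $N$ is invertible it restricts to an automorphism of the finite-dimensional space $V$, so we may pick an eigenvector $v_{1}\in V$, $v_{1}\neq 0$, $Nv_{1}=\lambda v_{1}$ with $\lambda\in\mathbb{C}^{\times}$. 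Set $u_{1}:=Uv_{1}\in M^{n}$. For $\omega\in\Lambda'$ we get $u_{1}(z+\omega)=U(z)M(\omega)v_{1}=U(z)v_{1}=u_{1}(z)$, so $u_{1}$ is $\Lambda'$-periodic, that is $u_{1}\in(K_{\Lambda'})^{n}\subseteq K^{n}=W$, and $u_{1}\neq 0$ since $v_{1}\neq 0$ and $U\in \GL_{n}(M)$; moreover $\nabla(u_{1})=0$ because $u_{1}$ is a $\mathbb{C}$-combination of the columns of $U$, while $\Phi(u_{1})=A^{-1}\phi(u_{1})=A^{-1}U(qz)v_{1}=U(z)Nv_{1}=\lambda u_{1}$. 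Thus $W_{1}=Ku_{1}$ is the desired one-dimensional $(\phi,\partial)$-submodule, and the induction closes. I expect the only point requiring care --- rather than any genuine difficulty --- to be the bookkeeping with lattices: that $\Lambda'$ may be taken $\phi$-stable, so that $u_{1}$, a priori merely a meromorphic $\Lambda'$-periodic vector, does land in $K$; everything else is linear algebra over $\mathbb{C}$ fed by the monodromy results of the previous section.
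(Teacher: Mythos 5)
Your proof is correct and follows essentially the same route as the paper: both use the previous section's results (apparent singularities, a fundamental matrix $U\in\GL_{n}(M)$, potentially unipotent monodromy, and the relation $A^{-1}U(qz)=U(z)N$) to produce a nonzero elliptic flat section that is a $\Phi$-eigenvector, giving a rank-one $(\phi,\partial)$-submodule, and then induct on $W/W_{1}$. The only cosmetic difference is that you extract the eigenvector by letting $N$ act on the common fixed space of the unipotent monodromy matrices inside $\mathbb{C}^{n}$, whereas the paper triangularizes the monodromy to see that $W^{\nabla}\ne0$ and then takes a $\Phi$-eigenvector in $W^{\nabla}$ --- the two devices are interchangeable.
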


\begin{proof}
It is enough to find $W_{1},$ because then we can apply induction
on the dimension to $W/W_{1}$ and lift the filtration found there
to complete the filtration of $W$.

Let $(W,\Phi,\nabla)$ be a  $(\phi,\partial)$-module structure on $W$ as in Lemma~\ref{lem:partialintmoduldestructure}. 
Fix matrices $A$ and $B$ as above, with respect to some basis of
$W$ over $K$. Let $\Lambda$ be a lattice so that all the entries
of $A$ and $B$ lie in $K_{\Lambda},$ \emph{i.e.}, are $\Lambda$-elliptic.
Regarding $(W,\nabla)$ as a $\phi$-isomonodromic $\partial$-module,
and replacing $\Lambda$ by a smaller lattice if necessary, we conclude,
by the previous section, that the monodromy representation
\[ \Lambda \rightarrow\GL_n(\mathbb{C}), \quad \omega \mapsto M(\omega),\]
attached to $(W,\nabla)$ is unipotent, and that $(W,\nabla)$ has
a fundamental matrix $U\in \GL_{n}(M).$ Let $T\in \GL_{n}(\mathbb{C})$
be such that $TM(\omega)T^{-1}$ are all upper-triangular with $1$'s
along the diagonal. Replacing the fundamental matrix $U\in \GL_{n}(M)$
by $UT^{-1}$, $M(\omega)$ is replaced by $TM(\omega)T^{-1}.$ We
may therefore assume, without loss of generality, that $M(\omega)$
are already upper-triangular unipotent. In particular, the first column
of $U$ is a column vector 
\[
u\in K_{\Lambda}^{n}\subset M^{n},
\]
because it satisfies $u(z+\omega)=u(z)$ for all $\omega\in\Lambda.$
But the column vectors of $U$ form a basis of $W_{M}^{\nabla},$
and the first column is, as we have just seen, in $W.$ It follows
that $W^{\nabla}$ is non-zero.

The relation $q\Phi\circ\nabla=\nabla\circ\Phi$ implies that the
$\mathbb{C}$-space $W^{\nabla}$ is $\Phi$-invariant, hence there
exists an eigenvector $e_{1}\in W^{\nabla}$ for $\Phi$. The 1-dimensional
subspace $W_{1}=Ke_{1}$ is the desired $(\Phi,\nabla)$-submodule.
\end{proof}
\begin{cor}
\label{cor: integrability implies solvability-1}Let $G$ be the difference
Galois group of the system
\[
\phi(y)=Ay
\]
over $K$. Assume that $(W,\Phi)$ is $\partial$-integrable. Then
$G$ is solvable.
\end{cor}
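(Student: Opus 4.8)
The plan is to deduce the corollary from the theorem just proved together with standard facts about the difference Galois correspondence and the relation between the structure of the $\phi$-module $W$ and the Galois group $G$.

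\medskip

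\emph{Step 1: Reduce to a statement about the module.} By the theorem, since $(W,\Phi)$ is $\partial$-integrable, it is solvable: there is a filtration
\[
0=W_{0}\subset W_{1}\subset\cdots\subset W_{n}=W
\]
by $\phi$-submodules with $\dim_{K}W_{i}=i.$ Equivalently, in a suitable basis the matrix $A$ is gauge equivalent to an upper-triangular matrix $A_{1}\in\GL_{n}(K).$ So I may assume from the start that $A$ itself is upper-triangular.

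\medskip

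\emph{Step 2: Translate solvability of $W$ into solvability of $G$ via the standard representation.} Let $L$ be a PV extension for $\phi(y)=Ay$ and $G=\mathrm{Aut}_{\phi}(L/K)$, embedded in $\GL_{n}(C)$ via a fundamental matrix $U$, so that $\mathcal{U}=W_{L}^{\Phi}=UC^{n}$ is the standard representation. By Proposition~\ref{prop:Tannakian correspondence}, the chain of $\phi$-submodules $W_{1}\subset\cdots\subset W_{n}=W$ corresponds to a chain of $G$-submodules
\[
0=\mathcal{U}_{0}\subset\mathcal{U}_{1}\subset\cdots\subset\mathcal{U}_{n}=\mathcal{U}
\]
with $\dim_{C}\mathcal{U}_{i}=i.$ Thus $G$ acts on $\mathcal{U}$ preserving a complete flag; choosing a basis adapted to this flag realizes $G$ as a Zariski closed subgroup of the group $B_{n}(C)$ of upper-triangular invertible matrices. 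Since $B_{n}(C)$ is a solvable (linear algebraic) group and a closed subgroup of a solvable group is solvable, $G$ is solvable. (One may alternatively bypass the Tannakian statement and argue directly: with $A$ upper-triangular, the PV ring, and hence $L$, is generated over $K$ by the entries of an upper-triangular fundamental matrix $U$; for $\sigma\in G$ one has $\sigma(U)=UV_{\sigma}$ with $V_{\sigma}\in\GL_{n}(C)$, and since $U$ and $\sigma(U)$ are both upper-triangular, $V_{\sigma}=U^{-1}\sigma(U)$ is upper-triangular, so $G\hookrightarrow B_{n}(C)$.)

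\medskip

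\emph{Main obstacle.} There is essentially no deep obstacle here; the content is entirely in the theorem preceding the corollary. The only point requiring a little care is making the passage from "$A$ gauge equivalent to upper-triangular" to "$G\subseteq B_{n}$" precise: one must fix the fundamental matrix $U$ \emph{after} the gauge transformation that triangularizes $A$, and check that the triangular shape is preserved by the right $\GL_{n}(C)$-ambiguity in $U$ only up to the choice of flag — which is why it is cleaner to phrase the conclusion as "$G$ stabilizes a complete flag in its standard representation," an intrinsic and conjugation-invariant statement, and then invoke solvability of $B_{n}$. No differential-algebraic input (no $\delta$, no PPV theory) is needed for this corollary: it is a purely $\phi$-theoretic consequence of the structural theorem above.
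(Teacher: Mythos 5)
Your proposal is correct and follows essentially the same route as the paper: the paper also deduces the corollary directly from the preceding theorem together with Proposition \ref{prop:Tannakian correspondence}, noting that a complete flag of $\phi$-submodules of $W$ gives a complete $G$-stable flag in the standard representation $\mathcal{U}$, so $G$ lies in a Borel of upper-triangular matrices and is therefore solvable. Your extra care about fixing the fundamental matrix after triangularizing $A$ is a fair elaboration but adds nothing beyond the paper's one-line argument.
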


\begin{proof}
This is a direct consequence of the Tannakian correspondence, Proposition
\ref{prop:Tannakian correspondence}, see also \cite{S-vdP97}, Theorem
I.1.21. Since $W$ is a solvable $\phi$-module, the standard representation
of $G$ on the solution space $\mathcal{U}$ is solvable too, and
$G$ is contained in the Borel subgroup of upper triangular matrices.
\end{proof}

\section{$\partial$-Triviality over the ring $S_{0}$}

Let $A\in \GL_{n}(K_{\Lambda}),$ $B\in\mathfrak{gl}_{n}(K_{\Lambda})$
be a consistent pair of matrices, and $W$ the corresponding $(\phi,\partial)$-module.
As we have seen, the system 
\[
\partial(y)=By
\]
has only apparent singularities and, hence, it
has a complete set of solutions over $M.$ Let $U\in \GL_{n}(M)$ be
a fundamental matrix of solutions, and $\omega\mapsto M(\omega)$
its monodromy representation. As we have also seen, replacing $\Lambda$
by a sublattice, we may assume that $M(\omega)$ are unipotent for
all $\omega.$ Let $\omega_{1},\omega_{2}$ be an oriented basis of
$\Lambda$ and $M_{i}=M(\omega_{i}).$
\begin{thm}
\label{thm:Entries of U are in S_0}The entries of $U(z)$ are in
the ring $S_{0}=K[z,\zeta(z,\Lambda)].$
\end{thm}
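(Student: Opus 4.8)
The plan is to exhibit, explicitly, an invertible matrix $E(z)$ whose entries lie in $\mathbb{C}[z,\zeta(z,\Lambda)]\subseteq S_{0}$ and which has \emph{exactly the same} monodromy representation as $U$, and then to compare $U$ and $E$ by means of Lemma~\ref{lem:Field of definition of Y-1}. Recall that (after the shrinking of $\Lambda$ carried out above) $M_{1}=M(\omega_{1})$ and $M_{2}=M(\omega_{2})$ are \emph{commuting} unipotent matrices in $\GL_{n}(\mathbb{C})$ --- commuting because the monodromy group is abelian, $\Lambda$ being abelian --- and $M(a\omega_{1}+b\omega_{2})=M_{1}^{a}M_{2}^{b}$ for $a,b\in\mathbb{Z}$. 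Since $M_{i}-I$ is nilpotent, $N_{i}:=\log M_{i}=\sum_{k\ge 1}\tfrac{(-1)^{k+1}}{k}(M_{i}-I)^{k}$ is a finite sum; the matrices $N_{1},N_{2}$ are nilpotent, commute with one another (each $N_{i}$ being a polynomial in $M_{i}$), and satisfy $\exp N_{i}=M_{i}$.

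I would then construct the ``elliptic potentials''. By Lemma~\ref{lem: real any chi}, applied to the two coordinate homomorphisms $\Lambda\to\mathbb{C}$ dual to the basis $(\omega_{1},\omega_{2})$, there are functions $\alpha_{1},\alpha_{2}\in\mathbb{C}z+\mathbb{C}\zeta(z,\Lambda)$ such that, for $i,j\in\{1,2\}$, the $\omega_{j}$-period $\alpha_{i}(z+\omega_{j})-\alpha_{i}(z)$ equals $1$ if $i=j$ and $0$ otherwise (these are the functions $u,v$ appearing in Lemma~\ref{lem:alg indep of z and zeta}). Set $L(z)=\alpha_{1}(z)N_{1}+\alpha_{2}(z)N_{2}$. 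Choosing a basis of $\mathbb{C}^{n}$ in which the commuting nilpotents $N_{1},N_{2}$ are simultaneously strictly upper triangular, $L(z)$ becomes strictly upper triangular as well, so $L(z)^{n}=0$ and
\[
E(z):=\exp L(z)=\sum_{k=0}^{n-1}\frac{1}{k!}L(z)^{k}
\]
is a unipotent matrix whose entries are polynomials in $\alpha_{1}(z),\alpha_{2}(z)$; in particular the entries of $E$ lie in $\mathbb{C}[z,\zeta(z,\Lambda)]\subseteq S_{0}$, and $E\in\GL_{n}(M)$.

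It remains to check that $E$ has the same monodromy as $U$, and to conclude. For $\omega=a\omega_{1}+b\omega_{2}\in\Lambda$ one has $L(z+\omega)-L(z)=aN_{1}+bN_{2}=:N(\omega)$, and since $L(z)$ and $N(\omega)$ both lie in the commutative subalgebra of $\mathfrak{gl}_{n}(\mathbb{C})$ generated by $N_{1}$ and $N_{2}$,
\[
E(z+\omega)=\exp\!\bigl(L(z)+N(\omega)\bigr)=\exp L(z)\cdot\exp N(\omega)=E(z)\,M_{1}^{a}M_{2}^{b}=E(z)M(\omega),
\]
so $E$ satisfies \eqref{eq:monodromy-1}. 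Now $U$ and $E$ are two matrices in $\GL_{n}(M)$ satisfying \eqref{eq:monodromy-1} with the same representation $\omega\mapsto M(\omega)$; hence Lemma~\ref{lem:Field of definition of Y-1} yields $U=QE$ with $Q\in\GL_{n}(K_{\Lambda})$, and, since $S_{0}$ is a $K_{\Lambda}$-subalgebra of $M$ containing the entries of $E$, the same lemma gives that the entries of $U$ lie in $S_{0}$. (That $S_{0}$ is independent of the choice of lattice in $\mathfrak{L}$ makes the initial shrinking of $\Lambda$ harmless.)

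The substance of the argument --- and the only place where a genuinely elliptic input is needed --- is the construction of $E$: realizing the prescribed unipotent monodromy by a \emph{single} matrix exponential with entries polynomial in $z$ and $\zeta(z,\Lambda)$ rests precisely on Lemma~\ref{lem: real any chi}. Everything else is formal; the only mild care is required at the two places where the commutativity of $N_{1}$ and $N_{2}$ is used, namely to know that $L(z)$ is nilpotent and to split $\exp(L(z)+N(\omega))$ as a product.
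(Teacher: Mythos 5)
Your proposal is correct and follows essentially the same route as the paper: both exhibit an explicit matrix in $\GL_{n}(S_{0})$ with the prescribed unipotent monodromy, built from exponentials of the nilpotent logarithms of the $M_{i}$ multiplied by functions in $\mathbb{C}z+\mathbb{C}\zeta(z,\Lambda)$ realizing the period homomorphisms (Lemma \ref{lem: real any chi}), and then conclude via Lemma \ref{lem:Field of definition of Y-1}. The only difference is cosmetic: you use the single symmetric exponential $\exp\bigl(\alpha_{1}(z)N_{1}+\alpha_{2}(z)N_{2}\bigr)$, whereas the paper normalizes $\omega_{2}=1$ and takes the product $\exp(\ell(z)\log V)\exp(zN_{2})$; both constructions rest on the same commutativity of the monodromy matrices.
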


\begin{proof}
According to Lemma \ref{lem:Field of definition of Y-1} we only have
to exhibit a matrix $Z(z)\in \GL_{n}(S_{0})$ with the same monodromy
as $U(z),$ \emph{i.e.}, satisfying
\begin{equation}
Z(z+\omega_{i})=Z(z)M_{i}.\label{eq:monodromy of Z-1}
\end{equation}

Without loss of generality we may assume that $\omega_{2}=1$ and
$\omega_{1}=\tau\in\mathfrak{H}$ (the upper half plane). Since $M_{2}$
is unipotent, 
\[
N_{2}=\log(M_{2})=\sum_{k=1}^{n}\frac{(-1)^{k-1}}{k}(M_{2}-1)^{k}
\]
is a nilpotent matrix satisfying $\exp(N_{2})=M_{2}.$ Note that $N_{2}$
commutes with $M_{1}$, because $M_{1}$ and $M_{2}$ commute. The
matrix $Z(z)$ satisfies $(\ref{eq:monodromy of Z-1})$ if and only
if $\widetilde{Z}(z)=Z(z)\exp(-zN_{2})$ satisfies
\begin{equation}
\widetilde{Z}(z+1)=\widetilde{Z}(z),\,\,\,\widetilde{Z}(z+\tau)=\widetilde{Z}(z)V\label{eq:monodromy of tilde Z-1}
\end{equation}
with $V=M_{1}\exp(-\tau N_{2}).$ Since the entries of $\exp(-zN_{2})$
are polynomials, hence lie in $S_{0},$ it is enough to find $\widetilde{Z}(z)\in \GL_{n}(S_{0})$
satisfying $(\ref{eq:monodromy of tilde Z-1}).$ Note that $V$ is also
unipotent, because $M_{1}$ and $N_{2}$ commute, so $\log(V)$ is
nilpotent.

Let $\ell\in S_{0}$ satisfy $\ell(z+1)=\ell(z)$ and $\ell(z+\tau)=\ell(z)+1.$
By Lemma \ref{lem: real any chi}, such an $\ell$ can be taken to be a $\mathbb{C}$-linear combination of $z$ and
$\zeta(z,\Lambda).$ We now set
\[
\widetilde{Z}(z)=\exp(\ell(z)\log(V)).
\]
We have $\widetilde{Z}\in \GL_{n}(S_{0}),$ because its entries are
polynomials in $\ell(z).$ In addition, $(\ref{eq:monodromy of tilde Z-1})$
is satisfied. This concludes the proof.
\end{proof}
\begin{cor}
\label{cor:Little theorem-1}Suppose $f\in F$ satisfies a linear
homogeneous diferential equation with coefficients from $K,$ as well as a linear homogeneous
$\phi$-difference equation over $K.$ Then $f\in S_{0}$.
\end{cor}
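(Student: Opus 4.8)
The plan is to reduce the statement about a single power series $f$ to the module-theoretic statement of Theorem~\ref{thm:Entries of U are in S_0}, by packaging the differential and difference equations satisfied by $f$ into a $(\phi,\partial)$-module over $K$. First I would invoke Example~\ref{exa:basic example-1}: since $f\in F$ satisfies a linear homogeneous differential equation of some order $n$ with coefficients in $K$ and a $\phi$-difference equation of some order $m$ over $K$, the $K$-vector space $W=\mathrm{Span}_K\{\phi^i\partial^j f \mid 0\le i<m,\ 0\le j<n\}\subset F$ is finite-dimensional and carries the structure of a $(\phi,\partial)$-module, with $\nabla=\partial$ and $\Phi=\phi$. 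The catch is that $W$ need not be a $\nabla$-stable ``cyclic'' module generated by $f$ alone; but the subspace $N=\mathrm{Span}_K\{\partial^j f \mid j\ge 0\}$ is a $\partial$-submodule (it is $\partial$-stable by construction, finite-dimensional by the differential equation), and $f\in N$.

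Next I would pick a lattice $\Lambda$ in the commensurability class $\mathfrak{L}$ such that all the coefficients of both equations — equivalently, the matrices $A$ and $B$ representing $\Phi$ and $\nabla$ on $W$ in some $K_\Lambda$-rational basis — lie in $K_\Lambda$; this is possible because $K=\bigcup_\Lambda K_\Lambda$ and we use only finitely many elements of $K$. Shrinking $\Lambda$ further if necessary, the results of \S5 (specifically the Proposition on potential unipotence of the monodromy, applied to the $\phi$-isomonodromic $\partial$-module $W$) let us assume the monodromy representation $\omega\mapsto M(\omega)$ attached to a fundamental matrix $U\in\GL_n(M)$ of $\partial y=By$ is unipotent. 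Theorem~\ref{thm:Entries of U are in S_0} then gives $U\in\GL_n(S_0)$, where $S_0=K[z,\zeta(z,\Lambda)]$, which by Lemma (ii) of \S2 is independent of the choice of $\Lambda$ in $\mathfrak{L}$.

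Now I would connect $f$ to the fundamental matrix. The components of $f$ in the chosen basis of $N$ form a solution vector in $M^n$ of $\partial y = B' y$, where $B'$ is the connection matrix of the $\partial$-submodule $N$; since $f\in F=\mathbb{C}((z))$ is a genuine power series (no monodromy), it corresponds to a monodromy-invariant solution, hence is a $\mathbb{C}$-linear combination of the columns of the relevant fundamental matrix — which, by the previous paragraph applied to $N$ rather than $W$, has entries in $S_0$. Concretely: $f$ is a coordinate of a vector $v=Uc$ with $c\in\mathbb{C}^n$ and $U\in\GL_n(S_0)$, so $f\in S_0$. I would have to be slightly careful that the fundamental matrix with $S_0$-entries I produce actually has $f$ (or its image) among its $\mathbb{C}$-span of columns; the clean way is to note $f\in N\subset W$, so $f$ lies in the $K$-span of the entries of $U$, and since $f$ is $\phi$-fixed-free in the relevant sense it is in fact a constant linear combination of columns — alternatively, invoke that the space of meromorphic solutions on $\mathbb{C}$ of a $\partial$-module with apparent singularities is exactly $W^\nabla\otimes_\mathbb{C} M$ restricted appropriately, and $f$ being single-valued forces $f\in W^\nabla$, whose coordinates relative to the $S_0$-rational fundamental matrix are constants.

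The main obstacle I anticipate is not any single hard estimate — all the real analytic and module-theoretic work is already done in \S5 and Theorem~\ref{thm:Entries of U are in S_0} — but rather the bookkeeping needed to pass cleanly from ``$f$ satisfies two scalar equations'' to ``$f$ is a constant-coefficient combination of columns of an $S_0$-valued fundamental matrix.'' Specifically, one must make sure that the $\partial$-module $N$ one extracts is genuinely a sub-$(\phi,\partial)$-module, or at least that applying $\phi$ does not take us outside a fixed finite-dimensional $S_0$-rational picture; Example~\ref{exa:basic example-1} handles exactly this by passing to the larger $W$ which \emph{is} a $(\phi,\partial)$-module, at the cost of $f$ no longer generating it — but that is harmless for the conclusion, since we only need $f$ to lie in the $K$-span of the entries of a fundamental matrix whose entries are in $S_0$, and membership in a $K_\Lambda$-linear span of $S_0$-elements keeps us inside $S_0$ (as $K\cdot S_0 = S_0$). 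Thus the proof is essentially: build $W$, shrink $\Lambda$, apply Theorem~\ref{thm:Entries of U are in S_0}, and read off $f$.
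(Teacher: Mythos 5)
Your overall strategy coincides with the paper's: invoke Example \ref{exa:basic example-1} to package the two scalar equations into the $(\phi,\partial)$-module $W$, choose $\Lambda$ so that the associated matrices are $\Lambda$-elliptic, and then quote Theorem \ref{thm:Entries of U are in S_0}. The genuine gap is in the step that actually finishes the proof: passing from ``the system attached to $W$ has a fundamental matrix $U\in\GL_n(S_{0})$'' to ``$f\in S_{0}$''. Both justifications you sketch for this step are false as stated. The coordinates of the fixed element $f$ with respect to a basis of $N$ (or of $W$) are elements of $K$ --- for instance $(1,0,\dots,0)$ if $f$ is taken as the first basis vector --- and such a coordinate vector does not satisfy $\partial y=B'y$; and ``$f$ single-valued forces $f\in W^{\nabla}$'' cannot be right, because $\nabla=\partial$ on $W\subset F$, so $W^{\nabla}=W\cap\mathbb{C}$ consists only of constants. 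Note also that the ``monodromy'' of Section 5 is the $\Lambda$-periodicity representation $U(z+\omega)=U(z)M(\omega)$, not a multivaluedness obstruction; it does not act on $F$ and does not single out $f$ among solutions. So the assertion you ultimately need --- that $f$ lies in the $\mathbb{C}$- (or $K$-) span of the entries of an $S_{0}$-valued fundamental matrix --- is exactly what remains unproved in your write-up.

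The missing observation, which is how the paper closes the argument, is a transposition/duality: writing $e_{1}=f,e_{2},\dots,e_{n}$ for a basis of $W$ and defining $A^{*}=(a_{ij})$, $B^{*}=(b_{ij})$ by $\phi e_{i}=\sum_{j}a_{ij}e_{j}$ and $\partial e_{i}=\sum_{j}b_{ij}e_{j}$, the pair $(A^{*},B^{*})$ (the matrices of the dual module, namely $^{t}A^{-1}$ and $-\,^{t}B$) is again consistent, and the vector $u={}^{t}(e_{1},\dots,e_{n})\in F^{n}$ of actual functions is literally a solution of $\partial y=B^{*}y$. It is the basis functions, not the coordinates of $f$, that solve a system, and it is the transposed system they solve. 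Applying Theorem \ref{thm:Entries of U are in S_0} to the consistent pair $(A^{*},B^{*})$ yields a fundamental matrix $U\in\GL_{n}(S_{0})$ for that system; since $S_{0}\subset F$ and $F^{\partial}=\mathbb{C}$, the solution space does not grow in $F$, so $u=Uc$ for some $c\in\mathbb{C}^{n}$, whence every $e_{i}$, and in particular $f=e_{1}$, lies in $S_{0}$. No appeal to single-valuedness, to $W^{\nabla}$, or to monodromy-invariance of $f$ is needed --- only the constancy of constants. With that one correction (apply the theorem to the dual pair and use the vector of basis functions as the solution in $F^{n}$), your argument becomes the paper's proof.
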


\begin{proof}
Let $W\subset F$ be the $(\phi,\partial)$-module spanned by $f,$
as in Example~\ref{exa:basic example-1}, and let $e_{1},\dots,e_{n}$
be a basis of $W$ over $K$, with $e_{1}=f.$ Let $A^{*}=(a_{ij})\in \GL_{n}(K)$
and $B^{*}=(b_{ij})\in\mathfrak{gl}_{n}(K)$ be defined by
\[
\phi e_{i}=\sum_{j}a_{ij}e_{j},\,\,\,\partial e_{i}=\sum_{j}b_{ij}e_{j}.
\]
These are \emph{not} the matrices $A$ and $B$ that were associated
to $W$ before, but rather $^{t}A^{-1}$ and $-\,^{t}B$, and they
define the \emph{dual }$(\phi,\partial)$-module $W^{*}.$ It is easily
checked that $A^{*}$ and $B^{*}$ are consistent. This follows formally
from the consistency of $(A,B),$ or by duality if we think of the
pair $(A^{*},B^{*})$ as the pair associated with $W^{*}.$ The last
Theorem, applied to $(A^{*},B^{*})$, shows that the equation $\partial y=B^{*}y$
has a full set of solutions in $S_{0},$ \emph{i.e.}, there exists a matrix
$U\in \GL_{n}(S_{0})$ satisfying $\partial U=B^{*}U.$ In \emph{any
$\partial$-field extension }of $S_{0}$\emph{, whose constants are
still $\mathbb{C},$} we find the same solution set for the last system,
namely the $\mathbb{C}$-linear combinations of the columns of $U$.
Since $u=\,{}^{t}(e_{1},\dots,e_{n})\in F^{n}$ is such a solution,
the $e_{i},$ and in particular $f=e_{1}$, lie in $S_{0}$.
\end{proof}

\section{Galois theory and algebraic independence, rank one}

\subsection{The desired theorems}

Our ultimate goal is to prove the following stengthening of Corollary
\ref{cor:Little theorem-1}. Recall that $f\in F$ is called $\partial$\emph{-algebraic}
over $K$ if it satisfies an equation
\[
P(f,\partial(f),\partial^{2}(f),...,\partial^{r}(f))=0
\]
for some non-zero $ P\in K[X_{0},X_{1},...,X_{r}]$. If $f$ is not $\partial$-algebraic,
it is called $\partial$-transcendental, or \emph{hypertranscendental}.
\begin{thm}[Main Theorem]
\label{thm:Main theorem} Assume that $f\in F$ satisfies a linear
homogeneous $\phi$-difference equation over $K$. If $f$ is $\partial$-algebraic
over $K$, then $f\in S$.
\end{thm}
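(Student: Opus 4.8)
The plan is to reduce the Main Theorem to the already-established Corollary \ref{cor:Little theorem-1} and the rank one case via $\delta$-parametrized Picard--Vessiot theory, closely following the strategy of \cite{A-D-H} but inserting the elliptic-specific ingredients. First I would pass from $\partial$ to $\delta = z\partial$, which forces a base change from $K$ to $K' = K(z)$; by Lemma \ref{lem:Irreducibility lemma}, irreducibility of the $\phi$-module associated to the minimal difference equation of $f$ is insensitive to this base change, so nothing is lost. Let $(W,\Phi)$ be the $\phi$-module over $K'$ generated by $f$, let $\mathcal{L}$ be a $\delta$-parametrized PV extension, and let $\widetilde{\mathcal{G}}$ be the $\delta$-parametrized Galois group over $\widetilde{K}'$, with Zariski closure $\widetilde{G}$ the ordinary difference Galois group. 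The hypothesis that $f$ is $\partial$-algebraic over $K$ (equivalently $\delta$-algebraic over $K'$) says precisely that the solution line $\mathbb{C}\cdot u \subset \mathcal{U}$ lies in $\mathcal{U}_a$; by Corollary \ref{cor:Galois invariance of delta algebraic solutions}, $\mathcal{U}_a$ is a $G$-invariant subspace, and we must leverage this.

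The key dichotomy is on the structure of $\widetilde{G}$. If $\widetilde{G}$ is \emph{simple} as a linear algebraic group, then Cassidy's Theorem \ref{thm: Cassidy's theorem on simple Zariski closure} forces either $\widetilde{\mathcal{G}} = [\delta]_*\widetilde{G}$ — in which case $\delta\mathrm{tr.deg.}(\mathcal{L}/K')$ is maximal and every nonzero solution coordinate, in particular $f$, is hypertranscendental, contradicting our hypothesis unless $f \in K'$ — or else $\widetilde{\mathcal{G}}$ is conjugate to a $\delta$-constant group, which by Proposition \ref{prop:Galois criterion for integrability-1} means the system is $\delta$-integrable, hence by Corollary \ref{cor: integrability implies solvability-1} that $\widetilde{G}$ is solvable, contradicting simplicity (in the non-abelian case). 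This settles the irreducible case: an irreducible $\phi$-module whose coordinates are $\partial$-algebraic must already be rank one or have $\widetilde G$ not simple and not irreducible, so we reduce to rank one or to reducible modules. For the rank one case, a $\phi$-difference equation $\phi(f) = a f$ with $a \in K$ is analyzed directly using elliptic function theory (divisor/degree considerations on the tower of elliptic curves), showing its $\partial$-algebraic solutions lie in $S$; this I take as a separate proposition proved earlier in Sections 9--10.

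For the general reducible case I would argue by induction on $\dim_K W$ using the filtration structure. Choosing a $\phi$-submodule $W_1 \subset W$ with $1$-dimensional quotient or sub, one gets a short exact sequence, $f$ projects to a solution of the quotient equation and differs from a lift by a solution of the sub-equation; applying the inductive hypothesis to both ends places the pieces in $S$, and one must then control the extension. This is where Proposition \ref{prop:technical_elliptic} and its Corollary \ref{cor:technical elliptic} enter: they say that if $\mathscr{L}(b) = (\phi - a)(f)$ for $\mathscr{L} \in \mathbb{C}[\delta]$ monic and $f, b \in S_\Lambda$, then $b$ is itself essentially in the image of $\phi - a$ up to a monomial $dz^r$ — exactly the cocycle-untwisting needed to show that a $\partial$-algebraic solution of the full equation, whose graded pieces are in $S$, is itself in $S$. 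The main obstacle is precisely this last step: controlling the interplay between the $\delta$-parametrized Galois group of the extension (a subgroup of a vector group, so a priori arbitrary $C$-linear or $\delta$-linear conditions) and membership in $S$ rather than in the larger field $E = \mathrm{Frac}(S)$. The bridge is Corollary \ref{cor: Crterion for elts of E to be in S}, characterizing $S$ inside $E$ by finite-dimensionality of $\mathrm{Span}_K\{\phi^i(f)\}$, combined with the technical Proposition; assembling these into the final ``Galois acrobatics'' — showing the relevant PPV group is $\delta$-constant or $\delta$-unipotent of the right shape so that the solution is forced into $S_0[z^{-1}] = S$ — is the genuinely hard and elliptic-specific part of the argument.
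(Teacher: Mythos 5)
Your outline follows the paper's strategy (base change to $K'=K(z)$, Cassidy's theorem plus the integrability--solvability chain for the simple/irreducible case, the rank-one result via elliptic divisor theory, induction in the reducible case, and Proposition \ref{prop:technical_elliptic} together with Corollary \ref{cor: Crterion for elts of E to be in S} at the end), but as written it has a genuine gap exactly at the step you yourself flag as ``the hard part.'' Corollary \ref{cor:technical elliptic} can only be applied once one has \emph{produced} a monic $\mathscr{L}\in\mathbb{C}[\delta]$ and an $f\in S_{0}$ with $\mathscr{L}(b)=(\phi-a)(f)$, where $\phi(w)=aw+b$ is the residual inhomogeneous rank-one equation with $b\in S$. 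This is the content of the paper's Lemma \ref{lem:Key Lemma-1-1}, and it is not a formality: after reducing to $a\in\mathbb{C}^{\times}$ (via Proposition \ref{prop: rank 1 proposition} applied to $\tau(w)-w$, a reduction your sketch omits), one works over $E=\mathrm{Frac}(S)$ with the $2\times2$ system for $(v,w)$, shows $\delta(v)=cv$, uses that the unipotent part of the parametrized group scheme has $\delta$-dimension $0$ so that Cassidy's Proposition 11 kills it by some $\mathscr{L}_{1}\in\mathbb{C}[\delta]$, deduces $\mathscr{L}_{1}(w/v)\in E_{1}=E(v)$ by the parametrized Galois correspondence, converts this to $\mathscr{L}(w)\in E_{1}$ using $\delta(v)=cv$, descends from $E_{1}$ to $E$ by Lemma 4.7 of \cite{A-D-H-W}, lands in $S$ by Corollary \ref{cor: Crterion for elts of E to be in S}, and finally twists by $z^{r}$ to get into $S_{0}$. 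None of this appears in your proposal, so the ``cocycle-untwisting'' you invoke has no input to act on.

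Two further points where your description diverges from what actually works. First, in the reducible induction you cannot ``choose a $\phi$-submodule $W_{1}\subset W$ with $1$-dimensional quotient or sub'': a priori the irreducible submodule $W_{1}$ has some dimension $n_{1}$, and the paper must \emph{prove} $n_{1}=1$. It does so by noting that if some coordinate of $u'$ lies outside $E$, then $v=\tau(u')-u'$ for a suitable $\tau\in\mathrm{Aut}_{\phi}(L_{E}/E)$ is a nonzero $\delta$-algebraic solution of the homogeneous irreducible system $\phi(v)=A_{1}v$, whence $n_{1}=1$ by the irreducible-case proposition (and if all coordinates lie in $E$, one concludes directly by Corollary \ref{cor: Crterion for elts of E to be in S}); this twisting argument is missing from your sketch. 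Second, in the simple-group dichotomy, Corollary \ref{cor: integrability implies solvability-1} is a statement about $\partial$-integrability over $K$, whereas Cassidy plus Proposition \ref{prop:Galois criterion for integrability-1} only give $\delta$-integrability over $\widetilde{K}'$; the paper needs the two descent steps (from $\widetilde{C}\otimes_{\mathbb{C}}K'$ to $K'$ using a $\mathbb{C}$-basis of $\widetilde{C}$ and Appendix B, then from $\delta$ over $K'$ to $\partial$ over $K$ via the $K((z))$-expansion) before solvability can be invoked. These are the elliptic-specific mechanisms your proposal gestures at but does not supply.
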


In the rank one case, we can be more precise.
\begin{thm}
\label{thm:rank one main theorem}Assume that $f\in F$ satisfies
\[
\phi(f)=af
\]
with $a\in K.$ If $f$ is $\partial$-algebraic over $K$, then $z^{-r}f\in K$
for some $r\in\mathbb{Z}$.
\end{thm}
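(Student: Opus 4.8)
The plan is to reduce the rank one equation $\phi(f)=af$ to an analysis of the divisor of $a$, exploiting that $f$ is $\partial$-algebraic. First I would pass to the field $K'=K(z)$ (or rather work analytically over $M$) and use the commutation $\partial\circ\phi=q\phi\circ\partial$: differentiating $\phi(f)=af$ logarithmically gives $\phi(f'/f)=q^{-1}(a'/a)+q^{-1}(f'/f)$, so the logarithmic derivative $g=f'/f$ satisfies an \emph{inhomogeneous} rank one $\phi$-equation $\phi(g)=q^{-1}g+q^{-1}(a'/a)$. More generally, the $\partial$-algebraicity of $f$ is equivalent to saying that the $K$-vector space spanned by $1,g,\partial g,\partial^2 g,\dots$ is finite dimensional, hence that $g=f'/f$ is itself $\partial$-algebraic and, by a standard Kolchin-type argument (the solution space of the induced linear ODE for $g$ being preserved), one gets that $g$ satisfies a linear inhomogeneous ODE over $K$. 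So $g\in F$ satisfies both a linear ODE over $K$ and the linear $\phi$-difference equation $\phi(g)=q^{-1}g+q^{-1}(a'/a)$; after homogenizing (pass to $(g,1)$) this is a linear $\phi$-difference system, so Corollary~\ref{cor:Little theorem-1} applies and yields $g=f'/f\in S_0=K[z,\zeta(z,\Lambda)]$.

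The next step is to extract the consequences of $f'/f\in S_0$ together with $\phi(f)=af$. Write $f'/f=\sum_{i,j}c_{ij}z^i\zeta^j$ with $c_{ij}\in K$; since $f'/f$ is a logarithmic derivative of a meromorphic-germ-like object it has at worst simple poles with integer residues at its poles, and the relation $\phi(f'/f)=q^{-1}(f'/f)+q^{-1}(a'/a)$ constrains its growth under $z\mapsto qz$. The key point is that $\zeta(z,\Lambda)$ grows \emph{linearly} (it is quasi-periodic, $\zeta(z+\omega)=\zeta(z)+\eta(\omega)$), while $a'/a\in K$ is elliptic, so matching the $\phi$-functional equation forces the part of $f'/f$ genuinely involving $z$ and $\zeta$ to be very restricted — essentially $f'/f=r/z+($elliptic$)$ for some $r\in\mathbb{Z}$ (the $r/z$ accounting for a zero or pole of $f$ of order $r$ at the origin, where $z$ is a local coordinate). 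One then checks that $\phi(z^{-r}f)=(q^{-r}a)(z^{-r}f)$ and $(z^{-r}f)'/(z^{-r}f)=f'/f-r/z$ is elliptic, i.e.\ lies in $K$.

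Finally I would show that an $h=z^{-r}f\in F$ with $h'/h\in K$ and satisfying a rank one $\phi$-equation $\phi(h)=bh$ (with $b=q^{-r}a\in K$) must lie in $K$. Here the plan is: $h'/h\in K$ means $h$ is, locally, an exponential of an elliptic integral, but globally $h$ is a germ in $F=\mathbb{C}((z))$; the condition $h'/h\in K$ says the differential $dh/h$ is a global meromorphic differential on the base elliptic curve $E_\Lambda$ with integer residues (forced by the fact that $h$ is an honest power series solution, meromorphically continued via the $\phi$-equation along the orbit $q^{-n}z$), so by Abel's theorem / the classical theory of the third kind differentials, $dh/h=d\log(\text{rational function})$ up to a holomorphic piece; the $\phi$-equation $\phi(h)=bh$ then kills any holomorphic (hence quasi-period) contribution, exactly as in the Picard–Vessiot rank one analysis over $\mathbb{C}(z)$ in \cite{A-D-H}, forcing $h\in K_\Lambda^{\times}\subset K$. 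I expect the main obstacle to be this last elliptic-function bookkeeping: showing that a solution of $\phi(h)=bh$ whose logarithmic derivative is elliptic is actually \emph{itself} elliptic, which requires controlling the quasi-periods $\eta_i$ against the multiplier under $z\mapsto qz$ and invoking the Legendre relation \eqref{eq:legendre period relation}; this is precisely the ``elliptic function theory needed at the beginning'' alluded to in the introduction, and is the genus-one replacement for the elementary partial-fractions argument available over $\mathbb{P}^1$.
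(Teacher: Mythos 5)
Your first step contains the decisive gap. From the $\partial$-algebraicity of $f$ you claim that $g=f'/f$ ``satisfies a linear inhomogeneous ODE over $K$'' (via a ``standard Kolchin-type argument''), so that Corollary \ref{cor:Little theorem-1} applies and gives $g\in S_0$. Differential algebraicity only says that $\mathrm{tr.deg.}\,K(f,\partial f,\partial^2 f,\dots)/K<\infty$; it does not make the $K$-span of $g,\partial g,\partial^2 g,\dots$ finite dimensional, and no such linear equation exists in general. A concrete counterexample sits inside the theorem itself: take $f=z$, which satisfies $\phi(f)=qf$ with $a=q\in K$ and is certainly $\partial$-algebraic. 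Then $g=1/z$, and since $\partial^i(1/z)=(-1)^i i!\,z^{-i-1}$ while $1,z^{-1},z^{-2},\dots$ are $K$-linearly independent ($z$ is transcendental over $K$), $g$ satisfies no linear differential equation, homogeneous or inhomogeneous, with coefficients in $K$; moreover $1/z\notin S_0$, so the intermediate conclusion $g\in S_0$ is itself false whenever $f$ has a zero or pole at the origin. Thus the route through Corollary \ref{cor:Little theorem-1} cannot work, and the later steps (reading off $f'/f=r/z+\text{elliptic}$, the Abel/Legendre bookkeeping) inherit this failure and are in any case only sketched.

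Bridging exactly this gap is where the paper has to invoke $\delta$-parametrized Picard--Vessiot theory rather than linear-ODE integrability. In the paper's proof one sets $u=\delta f/f$, which satisfies the inhomogeneous equation $\phi(u)=u+\delta a/a$; $\delta$-algebraicity of $u$ over $K'=K(z)$ then yields, by the Galois-theoretic telescoping criterion (Theorem C.8 of \cite{D-H21}), a monic $\mathscr{L}\in\mathbb{C}[\delta]$ and $v\in K'$ with $\mathscr{L}(\delta a/a)=\phi(v)-v$. Expanding $v$ in $K((z))$ and comparing coefficients gives $\partial^{\ell}(\partial a/a)=q^{\ell+1}\phi(v_{\ell+1})-v_{\ell+1}$, and then a residue/divisor computation on the elliptic curve (Lemma \ref{lem:divisors lemma}) shows $\mathrm{div}(a)=(\phi-1)(D)$ with $D$ principal for a sublattice, i.e.\ $a=c\,\phi(b)/b$ with $c\in\mathbb{C}^{\times}$, $b\in K^{\times}$. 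The theorem then follows from the elementary observation that a Laurent series solving $\phi(g)=cg$ must be a monomial $Cz^{r}$ (with $c=q^{r}$), so $z^{-r}f=Cb\in K$. If you want to salvage your approach you would need to replace your linear-ODE claim by this parametrized-Galois input (or an equivalent telescoping criterion for the inhomogeneous rank-one equation satisfied by $\delta f/f$); there is no elementary shortcut from $\partial$-algebraicity to a linear differential equation over $K$.
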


We shall deduce the rank one case from the following Proposition, whose proof is postponed to Section \ref{subsec:proofrankone}, and which holds for an arbitrary $(\phi,\delta)$-field $\mathcal{F}$ with algebraically closed field of $\phi$-constants, in lieu of $F$.
\begin{prop}
\label{prop: rank 1 proposition}Let $\mathcal{F}$ be a $(\phi,\delta)$-field
containing $K'=K(z)$ with $\phi\circ\delta=\delta\circ\phi$ and
$\mathcal{F}^{\phi}=\mathbb{C}.$ Let $f\in\mathcal{F}^{\times}$ satisfy
$\phi(f)=af$ with $a\in K^{\times}.$ Then $f$ is $\delta$-algebraic
over $K'$ if and only if
\[
a=c\frac{\phi(b)}{b}
\]
for some $c\in\mathbb{C}^{\times}$ and $b\in K^{\times}.$
\end{prop}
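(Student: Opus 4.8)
The plan is to study the rank one difference equation $\phi(f)=af$ by means of the associated rank one $\phi$-module, its $\delta$-parametrized Picard--Vessiot theory over $K'=K(z)$, and a Galois-theoretic criterion for $\delta$-algebraicity. The ``easy'' direction is immediate: if $a=c\,\phi(b)/b$ with $c\in\mathbb{C}^{\times}$, $b\in K^{\times}$, then $g:=f/b$ satisfies $\phi(g)=cg$, so $g$ lies in a rank one $\phi$-module with constant coefficients. Such a module is $\delta$-integrable (take $B=0$ in the integrability relation of Lemma~\ref{lem:partialintmoduldestructure}, applied with $\delta$ in place of $\partial$), hence its $\delta$-parametrized Galois group is $\delta$-constant, and by the torsor theorem $\delta\mathrm{tr.deg.}(\mathcal{L}/K')=\delta\dim\mathcal{G}=0$; thus $g$, and so $f$, is $\delta$-algebraic over $K'$. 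Concretely one can also just observe $\delta(g)/g=\delta(c)/c=0$, so $g$ satisfies the first order differential equation $\delta(g)=0$ over $\mathbb{C}\subset K'$.

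For the hard direction, assume $f$ is $\delta$-algebraic over $K'$. Consider the rank one system $\phi(y)=ay$ and its $\delta$-parametrized PPV extension $\mathcal{L}$ over $K'$, which (after base change to $\widetilde{K}'=Frac(\widetilde{C}\otimes_{\mathbb{C}}K')$) has a $\delta$-parametrized Galois group $\widetilde{\mathcal{G}}\subset\GL_1(\widetilde{C})=\widetilde{C}^{\times}$. By Remark~\ref{rem:PV  ext containing a given solution} we may assume $f\in\mathcal{L}$, so the hypothesis that $f$ is $\delta$-algebraic over $K'$ forces, via the torsor-theorem identity $\delta\dim\mathcal{G}=\delta\mathrm{tr.deg.}(\mathcal{L}/K')$, that $\delta\dim\mathcal{G}=0$ (the solution space being one-dimensional, $\mathcal{L}$ is $\delta$-generated over $K'$ by $f$). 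A LDAG in $\GL_1$ of $\delta$-dimension zero is, up to the finite part, contained in a proper differential subgroup; by the classification of subgroups of $\GL_1$ (Cassidy), a connected differential subgroup of $\GL_1$ of $\delta$-dimension $0$ is either trivial or defined by an equation of the form $\delta(\delta(X)/X)=0$ — but in any case, by Proposition~\ref{prop:Galois criterion for integrability-1} applied to $\delta$, the vanishing of $\delta\dim$ together with the commutativity of $\GL_1$ shows that the system is $\delta$-integrable: there exists $\beta\in K'$ with $\delta(a)/a=\phi(\beta)-\beta$ (the integrability relation in rank one, using $\delta\phi=\phi\delta$). Equivalently, the logarithmic derivative $\delta(a)/a$ is a $\phi$-coboundary in $K'$.

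The remaining, and genuinely elliptic, step is to descend this coboundary condition from $K'=K(z)$ back to $K$ and to integrate it. First one shows $\beta$ can be taken in $K$ rather than $K'$: writing $\beta$ as a rational function of $z$ over $K$ and comparing the $z$-adic expansions (or the partial fraction decompositions in $z$) on both sides of $\delta(a)/a=\phi(\beta)-\beta$, using that $a\in K$ has $\delta(a)/a\in K$ and that $\phi(z)=qz$, forces $\beta\in K$ up to an additive constant, which lies in $\ker(\phi-1)$ on $\mathbb{C}$, i.e.\ is harmless. Then from $\delta(a)/a=\phi(\beta)-\beta$ with $\beta\in K$ we must produce $b\in K^{\times}$ with $\delta(b)/b=\beta$, i.e.\ integrate $\beta$ within $K$; here the obstruction is exactly that $\beta$ must have all residues (with respect to $\delta=z\,d/dz$, equivalently $z$ times the residues of $\partial$) in $\mathbb{Z}$ on each $E_\Lambda$, and one checks this from the fact that $a\in K^{\times}$ has integer orders of vanishing at every point, so that $\delta(a)/a$ has integer residues and its $\phi$-``primitive'' $\beta$ inherits the needed integrality after absorbing the factor coming from $q$. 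Having such $b$, set $c:=a\,b/\phi(b)\in K$; then $\delta(c)/c=\delta(a)/a-(\phi(\delta b/\delta b\cdot)\ldots)=0$ wait --- more cleanly, $\delta(c/?)$ --- one computes $\delta(\log c)=\delta(\log a)-\phi(\delta\log b)+\delta\log b = (\phi(\beta)-\beta)-(\phi(\beta)-\beta)=0$ using $\delta\phi=\phi\delta$ and $\delta\log b=\beta$, so $c\in K^{\delta}$. But $K^{\delta}=\mathbb{C}$ since the only $z$-derivative-free elements of a field of elliptic functions are constants (an elliptic function with zero derivative is constant). Hence $a=c\,\phi(b)/b$ with $c\in\mathbb{C}^{\times}$ and $b\in K^{\times}$, as desired.

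\textbf{Main obstacle.} I expect the last paragraph — specifically the two integrality/descent points: (i) that the cocycle $\beta$ produced by the Galois criterion descends from $K(z)$ to $K$, and (ii) that $\beta$ is then a logarithmic $\delta$-derivative \emph{within $K$}, which requires a residue computation showing the relevant residues lie in $\mathbb{Z}$ — to be the delicate part, since this is where elliptic function theory (the structure of $K_\Lambda^{\times}$, divisors on $E_\Lambda$, and the interplay with the isogeny $z\mapsto qz$) enters in an essential way, with no analogue of a simple ``partial fractions over $\mathbb{C}(z)$'' argument available. The role of the factor $q$ in $\delta(a)/a = \phi(\beta)-\beta$ versus the $\partial$-commutation $\partial\phi=q\phi\partial$ also needs care; working consistently with $\delta=z\partial$, for which $\delta\phi=\phi\delta$, is what makes the bookkeeping go through.
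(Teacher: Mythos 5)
There is a genuine gap at the heart of your hard direction. From $\delta$-algebraicity of $f$ you correctly get $\delta\mathrm{tr.deg.}(\mathcal{L}/K')=\delta\dim\mathcal{G}=0$, but you then conclude that the system is $\delta$-integrable, i.e.\ that $\delta(a)/a=\phi(\beta)-\beta$ for some $\beta\in K'$. That does not follow: by Proposition \ref{prop:Galois criterion for integrability-1} in rank one (where the bracket $V_{\sigma}D-DV_{\sigma}$ vanishes), integrability is equivalent to the parametrized group being $\delta$-\emph{constant}, whereas $\delta\dim\mathcal{G}=0$ only says $\mathcal{G}$ lies in some proper differential subgroup of $\mathbb{G}_m$, e.g.\ the group $\delta(\delta X/X)=0$, which is not $\delta$-constant. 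The correct consequence of $\delta\dim\mathcal{G}=0$ (via Cassidy's description of subgroups of $\mathbb{G}_m$, or as the paper does by invoking Theorem C.8 of \cite{D-H21} applied to $u=\delta f/f$, which satisfies $\phi(u)=u+\delta a/a$) is only the weaker telescoping relation $\mathscr{L}(\delta a/a)=\phi(v)-v$ for some monic $\mathscr{L}\in\mathbb{C}[\delta]$ and $v\in K'$; your argument needs, but does not supply, the passage from this to the coboundary statement for $a$ itself.

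The subsequent descent-and-integration sketch also does not work as stated, and it is precisely where the elliptic content lives. First, $\delta(a)/a=z\,\partial(a)/a$ does not lie in $K$, so one cannot "force $\beta\in K$" by comparing expansions; the paper instead expands $v$ in $K((z))$ and extracts the identity $\partial^{\ell}(\partial a/a)=q^{\ell+1}\phi(v_{\ell+1})-v_{\ell+1}$ in $K$. Second, producing $b\in K^{\times}$ with prescribed logarithmic derivative is not merely a matter of integer residues: on an elliptic curve a divisor with integer coefficients and degree zero is principal only if the Abel--Jacobi sum condition holds, and this is exactly what Lemma \ref{lem:divisors lemma}(iii) handles, at the cost of passing to the sublattice $\Lambda'=q(q-1)\Lambda$ (so $b\in K_{\Lambda'}^{\times}$, not in the original $K_{\Lambda}$). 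The paper's route is: read off, from the coefficient identity above, the polar-divisor relation $\mathrm{div}(a)=(\phi-1)(D)$ with $D$ built from $v_{\ell+1}$, then apply Lemma \ref{lem:divisors lemma} to write $D=\mathrm{div}(b)$ and conclude $a=c\,\phi(b)/b$. None of this divisor analysis appears in your proposal, and the residue heuristic you offer cannot replace it. A minor slip in the easy direction: from $\phi(g)=cg$ you cannot conclude $\delta(g)/g=\delta(c)/c=0$; the correct (and shorter) argument is that $\delta(g)/g$ is $\phi$-invariant, hence lies in $\mathcal{F}^{\phi}=\mathbb{C}$, so $g$ satisfies $\delta(g)=\lambda g$ for some constant $\lambda$.
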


\begin{proof}
(That Proposition \ref{prop: rank 1 proposition} implies Theorem
\ref{thm:rank one main theorem}). First, note that
\begin{itemize}
\item $f$ is $\partial$-algebraic over $K$ if and only if
\item $\mathrm{tr.deg.}K(f,\partial f,\partial^{2}f,...)/K<\infty,$ if
and only if
\item $\mathrm{tr.deg.}K(z,f,\delta f,\delta^{2}f,...)/K'<\infty,$ if and
only if
\item $f$ is $\delta$-algebraic over $K'.$
\end{itemize}
Assume that we have proved the Proposition and $f$ is as in Theorem
\ref{thm:rank one main theorem}. Assume that $f$ is $\partial$-algebraic
over $K,$ hence $\delta$-algebraic over $K'.$ By the Proposition
$a=c\phi(b)/b$ for some $c\in\mathbb{C}^{\times}$ and $b\in K^{\times}.$
It follows that $g=f/b\in F$ satisfies
\[
\phi(g)=cg.
\]
Since $g$ is a power series in $z$ with constant coefficients, this
forces $g=Cz^{r}$ for $C\in\mathbb{C}$ and $r\in\mathbb{Z}$ (and
$c=q^{r}).$ Thus $z^{-r}f=Cb\in K$ as desired. Note that the fact
that $F=\mathbb{C}((z))$ enters the proof only at the last step.
\end{proof}

\subsection{Some properties of divisors}

For an abelian group $R$ let
\[
\mathcal{D}_{\Lambda}(R)=\mathrm{Div}(\mathbb{C}/\Lambda;R)
\]
be the group of $\Lambda$-periodic divisors with values in $R$.
We write $D\in\mathcal{D}_{\Lambda}(R)$ either as a finite linear
combination
\[
D=\sum_{\xi\in\mathbb{C}/\Lambda}r_{\xi}[\xi] 
\]
or as a $\Lambda$-periodic, discretely supported function $D:\mathbb{C}\to R,$
$D(\xi)=r_{\xi}.$ The support $\mathrm{supp}(D)$ is the set
of $\xi\in\mathbb{C}$ where $D(\xi)\ne0.$

We let $\mathcal{D}_{\Lambda}^{0}(R)$ be the subgroup of divisors
of degree 0, where
\[
\mathrm{deg}_{\Lambda}:\mathcal{D}_{\Lambda}(R)\to R,\,\,\,\mathrm{deg}_{\Lambda}(D)=\sum_{\xi\in\mathbb{C}/\Lambda}r_{\xi}.
\]
We let $\mathcal{P}_{\Lambda}\subset\mathcal{D}_{\Lambda}^{0}(\mathbb{Z})$
be the subgroup of principal divisors, \emph{i.e.}, of divisors of the shape
$\mathrm{div}(f)$ for $f\in K_{\Lambda}^{\times}.$ By the Abel-Jacobi
theorem a $\mathbb{Z}$-valued divisor $D=\sum_{\xi\in\mathbb{C}/\Lambda}r_{\xi}[\xi]$
is principal if and only if $\mathrm{deg}_{\Lambda}(D)=0$ and
\[
s_{\Lambda}(D)=\sum_{\xi\in\mathbb{C}/\Lambda}r_{\xi}\xi=0\in\mathbb{C}/\Lambda.
\]

We let $\phi\in End(\mathcal{D}_{\Lambda})$ be defined as $\phi(D)(\xi)=D(q\xi)$;
alternatively, if $D=\sum_{\xi\in\mathbb{C}/\Lambda}r_{\xi}[\xi],$
then
\[
\phi(D)=\sum_{\xi\in\mathbb{C}/\Lambda}r_{q\xi}[\xi].
\]
Note:
\begin{itemize}
\item $\mathrm{div}(\phi(f))=\phi(\mathrm{div}(f))$ for $f\in K_{\Lambda}^{\times}.$
\item $\mathrm{deg}_{\Lambda}(\phi(D))=q^{2}\mathrm{deg}_{\Lambda}(D).$
\item $qs_{\Lambda}(\phi(D))=q^{2}s_{\Lambda}(D).$
\end{itemize}
To prove the last point, let $D=\sum_{\xi\in\mathbb{C}/\Lambda}r_{\xi}[\xi]$.
Then
\[
qs_{\Lambda}(\phi(D))=q\sum_{\xi\in\mathbb{C}/\Lambda}r_{q\xi}\xi=\sum_{\xi\in\mathbb{C}/\Lambda}r_{q\xi}q\xi=q^{2}\sum_{\eta\in\mathbb{C}/\Lambda}r_{\eta}\eta=q^{2}s_{\Lambda}(D)
\]
because for every $\eta$ there are $q^{2}$ values of $\xi$ with
$q\xi=\eta.$
\begin{lem}
\label{lem:divisors lemma}(i) For any abelian group $R,$ $\phi-1\in End(\mathcal{D}_{\Lambda}(R))$
is injective.

(ii) If $D\in\mathcal{D}_{\Lambda}(\mathbb{R})$ and $(\phi-1)(D)\in\mathcal{D}_{\Lambda}(\mathbb{Z})$,
then $D\in\mathcal{D}_{\Lambda}(\mathbb{Z}).$

(iii) If $D\in\mathcal{D}_{\Lambda}(\mathbb{R})$ and $(\phi-1)(D)\in\mathcal{P}_{\Lambda},$
then $D\in\mathcal{P}_{\Lambda'}$ for $\Lambda'=q(q-1)\Lambda.$
\end{lem}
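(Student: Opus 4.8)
The plan is to handle the three parts in order, reducing each to elementary properties of the multiplication-by-$q$ isogeny on $\mathbb{C}/\Lambda$ together with the Abel--Jacobi criterion for principality recalled above. For part (i) I would argue by contradiction: if $(\phi-1)(D)=0$ then $D(q\xi)=D(\xi)$ for all $\xi\in\mathbb{C}/\Lambda$, so the value of $D$ is constant along every backward orbit of $[q]$ on $E=\mathbb{C}/\Lambda$. In characteristic $0$ the isogeny $[q]$ is separable of degree $q^{2}$, hence $[q^{n}]^{-1}(\xi_{0})$ has exactly $q^{2n}$ points; if $D(\xi_{0})\neq 0$ this forces all of these points into $\mathrm{supp}(D)$ for every $n$, contradicting the finiteness of the support of a divisor. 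Nothing about $R$ is used beyond the existence of a point where $D\neq 0$, so this yields injectivity for arbitrary $R$. Part (ii) should then follow formally: reduction of coefficients modulo $\mathbb{Z}$ is a $\phi$-equivariant homomorphism $\mathcal{D}_{\Lambda}(\mathbb{R})\to\mathcal{D}_{\Lambda}(\mathbb{R}/\mathbb{Z})$, and if $(\phi-1)(D)$ is $\mathbb{Z}$-valued then the image $\bar D$ of $D$ satisfies $(\phi-1)(\bar D)=0$; applying (i) with $R=\mathbb{R}/\mathbb{Z}$ gives $\bar D=0$, i.e.\ $D\in\mathcal{D}_{\Lambda}(\mathbb{Z})$.

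For part (iii), write $(\phi-1)(D)=\mathrm{div}(f)$ with $f\in K_{\Lambda}^{\times}$. First, (ii) makes $D$ a $\mathbb{Z}$-divisor, and taking degrees gives $(q^{2}-1)\mathrm{deg}_{\Lambda}(D)=0$, so $\mathrm{deg}_{\Lambda}(D)=0$; being $\Lambda$-periodic, $D$ is also $\Lambda'$-periodic for $\Lambda'=q(q-1)\Lambda$, with $\mathrm{deg}_{\Lambda'}(D)=[\Lambda:\Lambda']\,\mathrm{deg}_{\Lambda}(D)=0$. By Abel--Jacobi over $\Lambda'$ it remains to check $s_{\Lambda'}(D)=0$ in $\mathbb{C}/\Lambda'$. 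I would extract this from the identities listed before the lemma: since $\mathrm{div}(f)$ is principal over $\Lambda$ we get $s_{\Lambda}(\phi(D))=s_{\Lambda}(D)$, and combining this with $q\,s_{\Lambda}(\phi(D))=q^{2}s_{\Lambda}(D)$ yields $q(q-1)\,s_{\Lambda}(D)=0$ in $\mathbb{C}/\Lambda$. Now choose lifts $\tilde\xi\in\mathbb{C}$ of the points of $\mathrm{supp}(D)$ and set $w=\sum_{\xi}D(\xi)\tilde\xi\in\mathbb{C}$, a representative of $s_{\Lambda}(D)$; using $\mathrm{deg}_{\Lambda}(D)=0$ one checks that $s_{\Lambda'}(D)$ is represented by $[\Lambda:\Lambda']\,w=(q(q-1))^{2}w$ (the contribution of the coset representatives of $\Lambda/\Lambda'$ cancels precisely because the degree is zero). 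Since $q(q-1)w\in\Lambda$ by the previous line, $(q(q-1))^{2}w=q(q-1)\cdot(q(q-1)w)\in q(q-1)\Lambda=\Lambda'$, so $s_{\Lambda'}(D)=0$ and $D\in\mathcal{P}_{\Lambda'}$.

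The routine parts are (i) and (ii); the step needing care is the divisor bookkeeping in (iii) — passing to the specific sublattice $\Lambda'=q(q-1)\Lambda$, tracking how the functional $s_{\Lambda}$ transforms under this base change (which is where the hypothesis $\mathrm{deg}_{\Lambda}(D)=0$ is used a second time), and verifying that the exponent $q(q-1)$ appearing in $q(q-1)s_{\Lambda}(D)=0$ is exactly what the index $[\Lambda:\Lambda']=(q(q-1))^{2}$ absorbs, so that $s_{\Lambda'}(D)$ actually vanishes rather than merely being torsion.
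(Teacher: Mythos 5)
Your proposal is correct and follows essentially the same route as the paper: (i) via the infinitude of backward orbits of multiplication by $q$ (the paper phrases this as the points $\xi/q^{n}$ accumulating at $0$, you count the $q^{2n}$ preimages under $[q^{n}]$ — same idea), (ii) by reducing coefficients modulo $\mathbb{Z}$ and applying (i) with $R=\mathbb{R}/\mathbb{Z}$, and (iii) by the degree count giving $\deg_{\Lambda}(D)=0$, the sum relation $q(q-1)s_{\Lambda}(D)=0$ in $\mathbb{C}/\Lambda$, and the coset bookkeeping showing $s_{\Lambda'}(D)\equiv (q(q-1))^{2}w\in\Lambda'$. The bookkeeping in your last step, including the use of $\deg_{\Lambda}(D)=0$ to cancel the coset-representative contribution, matches the paper's fundamental-parallelogram computation exactly.
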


\begin{proof}
(i) If $D$ is $\Lambda$-periodic and non-zero, choose $0\ne\xi\in\mathrm{supp}(D).$
If $D=\phi(D)=\phi^{2}(D)=\cdots$ then $\xi,\xi/q,\xi/q^{2},...$
are all in the support of $D,$ contradicting the fact that the support
is discrete.

(ii) If the claim is false, the image of $D$ in $\mathcal{D}_{\Lambda}(\mathbb{R}/\mathbb{Z})$
is annihilated by $\phi-1$ and non-zero, contradicting (i).

(iii) Assume that $(\phi-1)(D)\in\mathcal{P}_{\Lambda}.$ By (ii),
$D\in\mathcal{D}_{\Lambda}(\mathbb{Z}).$ Since 
\[
0=\mathrm{deg}_{\Lambda}((\phi-1)(D))=(q^{2}-1)\mathrm{deg}_{\Lambda}(D)
\]
we must have $D\in\mathcal{D}_{\Lambda}^{0}(\mathbb{Z})$. Now
\[
q(q-1)s_{\Lambda}(D)=qs_{\Lambda}((\phi-1)(D))=0.
\]
Let $\Pi$ be a fundamental parallelogram for $\Lambda,$ and $m=q(q-1).$
If $D=\sum_{\xi\in\Pi}n_{\xi}[\xi{}_{\Lambda}]$ where $\xi{}_{\Lambda}=\xi\mod\Lambda$,
then
\[
s_{\Lambda'}(D)=\sum_{i=0}^{m-1}\sum_{j=0}^{m-1}\sum_{\xi\in\Pi}n_{\xi}(\xi+i\omega_{1}+j\omega_{2})\mod\Lambda'=m^{2}\sum_{\xi\in\Pi}n_{\xi}\xi\mod\Lambda',
\]
where $\omega_{1}$ and $\omega_{2}$ span $\Pi$ (recall $\sum n_{\xi}=0$).
But $m\sum_{\xi\in\Pi}n_{\xi}\xi\in\Lambda,$ so 
\[
m^{2}\sum_{\xi\in\Pi}n_{\xi}\xi\in m\Lambda=\Lambda'.
\]
\end{proof}

\subsection{Proof of Proposition \ref{prop: rank 1 proposition}}\label{subsec:proofrankone}

\subsubsection{First steps}

Assume that $f\in\mathcal{F}$ satisfies
\[
\phi(f)=af.
\]
If $a=c\phi(b)/b$ for $c\in\mathbb{C}^{\times}$ and $b\in K^{\times},$
then replacing $f$ by $f/b$ we may assume that $a\in\mathbb{C}^{\times}.$
Then
\[
\phi(\frac{\delta f}{f})=\frac{\delta(\phi(f))}{\phi(f)}=\frac{\delta f}{f},
\]
so $\delta f/f\in\mathcal{F}^{\phi}=\mathbb{C}$ and $f$ is $\delta$-algebraic
over $K'.$

Conversely, assume that $f$ is $\delta$-algebraic over $K'.$ Let
$u=\frac{\delta f}{f}\in\mathcal{F},$ so that
\[
\phi(u)=u+\frac{\delta a}{a}.
\]
Since $u$, like $f,$ is also $\delta$-algebraic over $K',$ Theorem
C.8 of \cite{D-H21}\footnote{See also the proof of Lemma \ref{lem:Key Lemma-1-1} below, where
the same argument appears again.} shows that there exists a monic operator
$
\mathscr{L}\in\mathbb{C}[\delta]
$
and a $v\in K'$ such that
\[
\mathscr{L}(\frac{\delta a}{a})=\phi(v)-v.
\]
Embed $K(z)$ in $K((z))$ via the completion at 0, where we regard
$K$ as the field of constants. Extend $\phi$ to $K((z))$ so that
$\phi(z)=qz.$ Then the embedding is $\phi$-compatible. (Warning:
$K((z))$ can not be regarded as a subfield of $F=\mathbb{C}((z))$,
despite the fact that $K\subset F.$) Write
\[
v=\sum_{i\ge s}v_{i}z^{i}\in K((z))
\]
with $v_{i}\in K.$ Since $a\in K,$ $\mathscr{L}(\frac{\delta a}{a})$
in fact lies in $K[z]\subset K(z)$ and as a polynomial in $z$ with
coefficients from $K$ it looks like
\[
\mathscr{L}(\frac{\delta a}{a})=\partial^{\ell}(\frac{\partial a}{a})z^{\ell+1}+\mathrm{lower\,\,terms.}
\]
Here $\ell=\mathrm{deg}(\mathscr{L}).$ Comparing the expansions we
see that
\begin{equation}
\partial^{\ell}(\frac{\partial a}{a})=q^{\ell+1}\phi(v_{\ell+1})-v_{\ell+1}.\label{eq:basic relation rank 1}
\end{equation}

\subsubsection{Completion of the proof}
We shall deduce Proposition \ref{prop: rank 1 proposition} from equation
$(\ref{eq:basic relation rank 1}).$ Quite generally, if $h\in K_{\Lambda}$
and $z_0\in\mathbb{C},$ and if
\[
h(z)=\sum_{n>>-\infty}a_{n}(h,z_0)(z-z_0)^{n}
\]
is the Laurent expansion of $h$ at $z_0$, we let for $\ell\ge1,$
\[
a_{-\ell}(h)=\sum_{z_0\in\mathbb{C}/\Lambda}a_{-\ell}(h,z_0)[z_0]\in\mathcal{D}_{\Lambda}(\mathbb{C}).
\]
We call it the degree $-\ell$ polar divisor of $h$. Note that $a_{n}(h,z_0)$
are $\Lambda$-periodic in $z_0$, so this divisor is well-defined. 
Since for $z$ near $z_0$ we have
\[
\phi h(z)=h(qz)=\sum_{n>>-\infty}a_{n}(h,qz_0)(qz-qz_0)^{n}=\sum_{n>>-\infty}q^{n}a_{n}(h,qz_0)(z-z_0)^{n},
\]
we get
\[
a_{n}(\phi h,z_0)=q^{n}a_{n}(h,qz_0),
\]
hence
\[
a_{-\ell}(\phi h)=q^{-\ell}\sum_{z_0 \in\mathbb{C}/\Lambda}a_{-\ell}(h,qz_0)[z_0]=q^{-\ell}\phi(a_{-\ell}(h)).
\]

Now, $(\ref{eq:basic relation rank 1})$ yields
\[
a_{-\ell-1}(\partial^{\ell}(\frac{\partial a}{a}))=q^{\ell+1}a_{-\ell-1}(\phi(v_{\ell+1}))-a_{-\ell-1}(v_{\ell+1})=(\phi-1)(a_{-\ell-1}(v_{\ell+1})).
\]
On the other hand
\[
a_{-\ell-1}(\partial^{\ell}(\frac{\partial a}{a}))=(-1)^{\ell}\ell!a_{-1}(\frac{\partial a}{a})=(-1)^{\ell}\ell!\mathrm{div}(a).
\]
It follows that
\[
\mathrm{div}(a)=(\phi-1)(D)
\]
where $D=(-1)^{\ell}a_{-\ell-1}(v_{\ell+1})/\ell!$. By part (iii)
of Lemma \ref{lem:divisors lemma} we deduce that there exists $b\in K_{\Lambda'}^{\times}$,
$\Lambda'=q(q-1)\Lambda,$ such that $D=\mathrm{div}(b).$ In particular
\[
\mathrm{div}(a)=\mathrm{div}\frac{\phi(b)}{b},
\]
so
\[
a=c\frac{\phi(b)}{b}
\]
for some $c\in\mathbb{C}^{\times},$ as desired.

\section{Simple Galois group}

\subsection{Comparison of the Galois groups over $K$ and $K(z)$\label{subsec:Comparison of Galois groups}}

Consider the system $(\ref{eq:general diference system}),$ with $A\in \GL_{n}(K)$, and its Galois group $G$.
We want to compare $G$ with the Galois group $H$ of the same system over $K'$.
 
 In order to compare $H$ and $G$, we adapt the proof of \cite[Proposition 1.6]{DHR} which was written for Mahler difference systems. Let $R'$ be a Picard-Vessiot ring for $\phi(y)=Ay$ over $K'$ and let $L'$ be the associated PV-extension.  For $U \in \GL_n(R')$  a fundamental solution matrix, it is easily seen that $R=K[U,\det(U)^{-1}]$ is a PV-ring for $\phi(y)=Ay$ over $K$ and we denote by $L$ the associated PV-extension. The inclusion $R \subset R'$ yields a closed immersion $\iota: H \hookrightarrow G$. Identifying $H$ with its image in $G$, we can consider $L^H ={L'}^H \cap L=K' \cap L$. Since $L^H/K$ is a sub-$\phi$-extension of $K'/K$, it is easily seen that there exists an integer $N$ such that $L^H=K(z^N)$. Now the Galois correspondence yields that $H$ is normal in $G$ and that $G/H$ is isomorphic to $\mathrm{Aut}_{\phi}(L^H/K)$ which is trivial if $N$ equals zero and $\mathbb{G}_m$ otherwise. 
 
\begin{lem}
\label{lem:going up and down}Assume (by replacing $\phi$ by some
$\phi^{r}$) that $L$ is a field. Then:
\begin{enumerate}[i)]

\item  $G$ is connected.

\item  If $G$ is simple\footnote{By ``$G$ is simple'', we mean that it is noncommutative and has no proper nontrivial normal closed subgroup.}, then $H=G$.

\item  $G$ is solvable if and only if $H$ is solvable.
\end{enumerate}

\end{lem}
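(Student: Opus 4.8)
The plan is to leverage the structural facts assembled in the paragraph preceding the lemma, namely that $H$ is a closed normal subgroup of $G$ with $G/H\cong \mathrm{Aut}_\phi(L^H/K)$, where $L^H=K(z^N)$ is either $K$ (if $N=0$) or a purely transcendental extension with Galois group $\mathbb{G}_m$. First I would dispose of (i): by the Galois correspondence, $G^0$, the identity component, corresponds to the largest finite $\phi$-extension of $K$ inside $L$, which (since $L$ is assumed to be a field) is the algebraic closure of $K$ in $L$. By Lemma \ref{lem:no finite phi extensions of K}, $K$ admits no nontrivial finite field extension to which $\phi$ extends as an automorphism, so this algebraic closure is $K$ itself, whence $G=G^0$ is connected. (This is exactly the remark already made after the Galois correspondence theorem in \S\ref{sec:Picard-Vessiot-and-parametrized}.)

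For (ii), suppose $G$ is simple (noncommutative, no proper nontrivial closed normal subgroup). Since $H\trianglelefteq G$ is closed and normal, either $H=\{e\}$ or $H=G$. If $H=\{e\}$ then $L'=K'$, i.e. the system $\phi(y)=Ay$ already has a full set of solutions over $K'=K(z)$; then $L=K(U)\subseteq K'$, so $G=\mathrm{Aut}_\phi(L/K)$ is a subgroup of $\mathrm{Aut}_\phi(K'/K)\cong\mathbb{G}_m$, hence abelian, contradicting the noncommutativity of $G$. Therefore $H=G$. (Note that one could also argue directly from $G/H\cong \mathrm{Aut}_\phi(L^H/K)$: if $H\neq G$ then $G/H$ is a nontrivial quotient of the simple group $G$, forcing $H=\{e\}$, and then the same abelianity contradiction applies.)

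For (iii), one implication is immediate: if $G$ is solvable, so is its closed subgroup $H$. Conversely, suppose $H$ is solvable. From $G/H\cong\mathrm{Aut}_\phi(L^H/K)$ and the fact that $L^H=K(z^N)$ gives $G/H$ either trivial or isomorphic to $\mathbb{G}_m$, we see that $G/H$ is abelian, hence solvable. An extension of a solvable group by a solvable group is solvable, so $G$ is solvable. I do not expect any of the three parts to present a genuine obstacle; the only point requiring a little care is making sure, in (ii), that the case $H=\{e\}$ is correctly excluded using the noncommutativity hypothesis built into the word "simple" — this is why the footnote's definition of simplicity matters, and it is the one place where I would be careful not to conflate "simple as an abstract group" with "simple as an algebraic group." Everything else is a direct application of the Galois correspondence and the connectedness statement from (i) together with Lemma \ref{lem:no finite phi extensions of K}.
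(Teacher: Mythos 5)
Your proposal is correct and follows essentially the same route as the paper: part (i) via Lemma \ref{lem:no finite phi extensions of K} and the Galois correspondence, and parts (ii) and (iii) via the facts, established just before the lemma, that $H\trianglelefteq G$ and $G/H\cong\mathrm{Aut}_{\phi}(L^{H}/K)$ is either trivial or $\mathbb{G}_{m}$. The one caveat is that your primary argument for (ii) asserts $G\subseteq\mathrm{Aut}_{\phi}(K'/K)$, which would require extending $\phi$-automorphisms of $L$ to $K'$ and is not justified as stated; your parenthetical alternative (if $H=\{e\}$ then $G\cong G/H$ is trivial or $\mathbb{G}_{m}$, hence abelian, contradicting simplicity) is exactly the paper's argument and should be taken as the main one.
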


\begin{proof}
\begin{enumerate}[i)]
\item  If $G^{0}$ is the connected component of $G,$ then it is normal,
and the finite group $G/G^{0}$ is, by the Galois correspondence,
the Galois group of a finite extension of $K$ contained in $L$ to
which $\phi$ extends as an automorphism. But such an extension must
be $K$ itself by Lemma $\ref{lem:no finite phi extensions of K}$,
so $G=G^{0}.$

\item If the normal subgroup $H$ is trivial, $G$ would be $\mathbb{G}_{m}$, so by the simplicity of $G$ we must have
 $H=G$.

\item This follows
from the fact that $G/H$ is either trivial or $\mathbb{G}_{m}$.
\end{enumerate}

\end{proof}

Note that  the group $H$ need not be connected. For example,
let $(M,N)=1,$ $y=x^{M}$, $z=x^{N}$ and $q=q_{1}^{N}.$ If $A=(q_{1}^{M})$
and $A'=(q_{1}^{M})\oplus(q_{1}^{N})$ then $L=K(y)$ with $\phi(y)=q_{1}^{M}y,$
$K'=K(z)$ with $\phi(z)=q_{1}^{N}x$ and $L'=K(x)$ with $\phi(x)=q_{1}x.$
We would then have $H\simeq\mu_{N}\subset G=\mathbb{G}_{m}.$

\subsection{The $G$ simple case}

Consider now the $\delta$-parametrized PV extension
\[
\mathcal{L}'=K'\left\langle U\right\rangle _{\delta}
\]
and the $\delta$-parametrized Galois group scheme $\mathcal{H}$ of $\mathcal{L}'$ over $K'$. 

As explained before, $\mathcal{H}$ is a Zariski-dense differential subgroup scheme of $H=\mathrm{Aut}_{\phi}(L'/K')$, in the sense that its
points $\mathcal{H}(\widetilde{C})$ in a $\delta$-closure $\widetilde{C}$
of $\mathbb{C}$ form a Zariski dense subset of $H(\widetilde{C}).$
We continue to assume that $L,L'$ and $\mathcal{L}'$ are fields.

\begin{prop}
\label{prop:Main in simple case}Under the assumption that $G=\mathrm{Aut}_{\phi}(L/K)$
is simple, we have 
\[
\mathcal{H}=[\delta]_{*}H.
\]
Consequently, $\mathrm{tr.deg.}(L/K)=\mathrm{tr.deg.}(L'/K')=\delta\mathrm{tr.deg.}(\mathcal{L}'/K').$
\end{prop}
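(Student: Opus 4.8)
The plan is to combine Lemma~\ref{lem:going up and down}, Cassidy's theorem (Theorem~\ref{thm: Cassidy's theorem on simple Zariski closure}), the Galoisian criterion of Proposition~\ref{prop:Galois criterion for integrability-1}, and the solvability obstruction of Corollary~\ref{cor: integrability implies solvability-1}. Since $G$ is simple, Lemma~\ref{lem:going up and down}(i)--(ii) gives that $H=G$ is a connected simple linear algebraic group over $\mathbb{C}$, hence not solvable. Pass to a $\delta$-closure $\widetilde{C}$ of $\mathbb{C}$, set $\widetilde{K}'=\mathrm{Frac}(\widetilde{C}\otimes_{\mathbb{C}}K')$, and put $\widetilde{\mathcal H}=\mathcal H(\widetilde C)$, an LDAG Zariski-dense in $\widetilde H=H(\widetilde C)$, which is again simple. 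By Cassidy's theorem, either $\widetilde{\mathcal H}=[\delta]_{*}\widetilde H$, or $\widetilde{\mathcal H}$ is conjugate in $\GL_n(\widetilde C)$ to a $\delta$-constant LDAG.

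I would rule out the second alternative. Suppose $E^{-1}\widetilde{\mathcal H}E$ is $\delta$-constant for some $E\in\GL_n(\widetilde C)$; fixing the fundamental matrix $U$ and writing $\sigma(U)=UV_\sigma$, the matrices $E^{-1}V_\sigma E$ are then killed by $\delta$, and a direct computation (compare the remark following Proposition~\ref{prop:Galois criterion for integrability-1}) gives $\delta(V_\sigma)=V_\sigma D-DV_\sigma$ with $D=-\delta(E)E^{-1}\in\mathfrak{gl}_n(\widetilde C)$. By Proposition~\ref{prop:Galois criterion for integrability-1} applied over $\widetilde K'$, the system $\phi(y)=Ay$ is $\delta$-integrable over $\widetilde K'$. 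I now descend this to $K$. First, by that same Proposition --- which does not require the field of $\phi$-constants to be $\delta$-closed --- $\delta$-integrability over $\widetilde K'$ (resp.\ over $K'$) is equivalent to the existence of $D\in\mathfrak{gl}_n(\widetilde C)$ (resp.\ $D\in\mathfrak{gl}_n(\mathbb{C})$) with $\delta(X_{ij})-\sum_{\ell}X_{i\ell}D_{\ell j}+\sum_{\ell}D_{i\ell}X_{\ell j}\in\mathcal I$ for $1\le i,j\le n$, where $\mathcal I\subset\mathbb{C}\{X_{ij},\det(X)^{-1}\}_\delta$ is the ideal of definition of $\mathcal H$ and one uses $\widetilde{\mathcal I}=\widetilde C\otimes_{\mathbb{C}}\mathcal I$; as $D$ varies these amount to finitely many affine-linear conditions defined over $\mathbb{C}$, so solvability over $\widetilde C$ forces solvability over $\mathbb{C}$. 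Thus $\phi(y)=Ay$ is $\delta$-integrable already over $K'=K(z)$, say $z\,\partial(A)=\delta(A)=\phi(B)A-AB$ with $B\in\mathfrak{gl}_n(K')$. Second, embed $K(z)$ into $K((z))$ $\phi$-compatibly (with $\phi(z)=qz$), exactly as in the proof of Proposition~\ref{prop: rank 1 proposition}, write $B=\sum_{i}B_iz^i$ with $B_i\in\mathfrak{gl}_n(K)$, and compare the coefficients of $z^1$ in $z\,\partial(A)=\phi(B)A-AB$: using $\partial(A)\in\mathfrak{gl}_n(K)$ and $\phi(B_iz^i)=q^i\phi(B_i)z^i$ one gets $\partial(A)=q\,\phi(B_1)A-AB_1$ with $B_1\in\mathfrak{gl}_n(K)$, i.e.\ $\phi(y)=Ay$ is $\partial$-integrable over $K$. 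By Corollary~\ref{cor: integrability implies solvability-1} the group $G$ would then be solvable, contradicting its simplicity. Hence $\widetilde{\mathcal H}=[\delta]_{*}\widetilde H$, and since $\mathcal I=\widetilde{\mathcal I}\cap\mathbb{C}\{X_{ij},\det(X)^{-1}\}_\delta$ while $\widetilde{\mathcal I}=\widetilde C\otimes_{\mathbb{C}}\mathcal I$, this descends to $\mathcal H=[\delta]_{*}H$.

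It remains to read off the transcendence degrees. As $H=G$ is connected, $[\delta]_{*}H$ is a connected differential group scheme and $\delta\dim\mathcal H=\delta\dim[\delta]_{*}H=\dim H$: choosing a transcendence basis $t_1,\dots,t_d$ of $\mathbb{C}(H)/\mathbb{C}$ with $d=\dim H$, every entry $X_{ij}$ is algebraic over $\mathbb{C}(t_1,\dots,t_d)$, hence $\delta$-algebraic over $\mathbb{C}\langle t_1,\dots,t_d\rangle$, so $\delta\dim\le d$, while no differential relation among the $t_i$ is imposed in $\mathbb{C}\{H\}_\delta$, so $\delta\dim\ge d$. The parametrized torsor theorem recalled in \S\ref{subsec:The--parametrized-Galois} gives $\delta\mathrm{tr.deg.}(\mathcal L'/K')=\delta\dim\mathcal H=\dim H$, the classical torsor theorem gives $\mathrm{tr.deg.}(L'/K')=\dim H$ and $\mathrm{tr.deg.}(L/K)=\dim G$, and $\dim G=\dim H$ since $G=H$. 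Therefore $\mathrm{tr.deg.}(L/K)=\mathrm{tr.deg.}(L'/K')=\delta\mathrm{tr.deg.}(\mathcal L'/K')$.

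The step I expect to be the crux is the passage, in the excluded case, from $\delta$-integrability over $\widetilde K'$ down to $\partial$-integrability over $K$: one must check that the two descents --- erasing the differential closure of the constants, and erasing the Picard--Vessiot parameter $z$ --- are both harmless. The first rests on Proposition~\ref{prop:Galois criterion for integrability-1} being insensitive to $\delta$-closedness of the constants, together with elementary linear algebra over the extension $\widetilde C/\mathbb{C}$; the second is the coefficient-comparison device already used in the rank one case. Everything else is bookkeeping once Cassidy's dichotomy and the torsor theorems are in place.
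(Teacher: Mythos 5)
Your proposal is correct and follows the paper's overall strategy (Cassidy's dichotomy, then ruling out the $\delta$-constant-conjugate branch by producing integrability and contradicting Corollary \ref{cor: integrability implies solvability-1}, then the torsor theorems), but you execute the crucial descent of constants differently. The paper descends the matrix $B=\delta(U)U^{-1}-UDU^{-1}$: it shows $B$ is $\widetilde{\mathcal{H}}$-invariant, invokes the refinement of the parametrized Galois correspondence proved in Appendix B (Proposition \ref{prop:refinmentparamgalcorrespondence}) to place $B$ in $\mathfrak{gl}_n(\widetilde{C}\otimes_{\mathbb{C}}K')$ rather than merely $\mathfrak{gl}_n(\widetilde{K}')$, and then extracts the component along $1$ in a $\mathbb{C}$-basis of $\widetilde{C}$. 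You instead descend the matrix $D$ on the Galois side: you pass to the ideal-membership formulation of Proposition \ref{prop:Galois criterion for integrability-1}, use $\widetilde{\mathcal{I}}=\widetilde{C}\otimes_{\mathbb{C}}\mathcal{I}$ to turn the existence of $D\in\mathfrak{gl}_n(\widetilde{C})$ into a finite affine-linear system over $\mathbb{C}$, solve it over $\mathbb{C}$ by rank considerations, and then apply the criterion over $K'$ (whose constants $\mathbb{C}$ are algebraically closed, which suffices) to get $B\in\mathfrak{gl}_n(K')$. This buys you independence from Appendix B, at the price of the identification $\widetilde{\mathcal{I}}=\widetilde{C}\otimes_{\mathbb{C}}\mathcal{I}$ --- equivalently, that vanishing of the commutator relation at all $\widetilde{C}$-points of $\mathcal{H}$ forces membership in the base-changed ideal --- which you assert but do not prove; it holds by reducedness of $\widetilde{C}\otimes_{\mathbb{C}}\mathbb{C}\{\mathcal{H}\}$ (char.\ $0$, $\mathbb{C}$ algebraically closed) together with the differential Nullstellensatz over $\widetilde{C}$, i.e.\ exactly the tools the paper's Appendix B deploys for its own descent, so the two routes are of comparable depth. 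Your subsequent descent from $K'$ to $K$ (comparing the coefficient of $z^1$ in $z\,\partial(A)=\phi(B)A-AB$) is the paper's computation up to reindexing ($B_1$ playing the role of the paper's $B_0$ for $z^{-1}B$), and the transcendence-degree bookkeeping is the same, except that where the paper cites Kolchin for $\delta\dim[\delta]_{*}H=\dim H$ your sketch of the lower bound (``no differential relation among the $t_i$ is imposed'') is thin and would be better replaced by the citation.
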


This Proposition seems identical to Lemma 5.1 of \cite{Arr-S} and
similar to Proposition 4.11 of \cite{A-D-H-W}. However, because of
the need to descend, first from a differentially closed field of $\phi$-constants
to $\mathbb{C},$ and then from $K'$ to $K,$ we are forced to go
through some of the arguments with care.
\begin{proof}
As we have seen in Lemma \ref{lem:going up and down}, when $G$ is
simple, so is $H$, and $H\simeq G.$ 

Let $\widetilde{K'}$ be a $(\phi,\delta)$-extension of $K'$ whose
field of $\phi$-constants, $\widetilde{C}=(\widetilde{K'})^{\phi}$
is a differential closure of $\mathbb{C}$. It is known that $\widetilde{C}^{\delta}$
is just $\mathbb{C}$ (\cite{M-MTDF}, Lemma 2.11). Let $\widetilde{L'}$
be the PV extension over $\widetilde{K'}$, and $\widetilde{\mathcal{L}'}$
the $\delta$-parametrized PV extension. See \S~\ref{subsec:The--parametrized-Galois}
for the construction of $\widetilde{K'},\widetilde{L'}$ and $\widetilde{\mathcal{L}'}$,
starting from a differential closure $\widetilde{C}$ of $\mathbb{C},$
and their relation to $K',L'$ and $\mathcal{L}'$.

Let $\widetilde{H}$ and $\mathcal{\widetilde{H}}$ be the difference
Galois group and the $\delta$-parametrized difference Galois group
over $\widetilde{K}'$. We regard them as subgroups of $\GL_{n}(\widetilde{C}),$
having fixed a fundamental matrix $U$. Since $\widetilde{H}=H(\widetilde{C})$
and $\widetilde{\mathcal{H}}=\mathcal{H}(\widetilde{C})$ (see \S~\ref{subsec:The--parametrized-Galois})
we only need to show that $\widetilde{\mathcal{H}}=[\delta]_*\widetilde{H}.$

Since the field of $\phi$-constants $\widetilde{C}$ is differentially
closed and $\widetilde{H}$ is simple, Theorem \ref{thm: Cassidy's theorem on simple Zariski closure}
implies that if  $\widetilde{\mathcal{H}} \subsetneq [\delta]_*\widetilde{H}$, there exists a matrix
$E\in \GL_{n}(\widetilde{C})$ such that
\[
\widetilde{\mathcal{H}}\subset E\cdot \GL_{n}(\mathbb{C})\cdot E^{-1}.
\]

Letting $D=\delta(E)E^{-1}\in\mathfrak{gl}_{n}(\widetilde{C})$ this
implies that for any $\sigma\in\mathcal{\widetilde{H}},$
\[
\delta(V_{\sigma})=DV_{\sigma}-V_{\sigma}D.
\]
As in Proposition \ref{prop:Galois criterion for integrability-1}
this implies that the system $\phi(y)=Ay$ is $\delta$-integrable
over $\widetilde{K}'$. In fact, the matrix
\[
B=\delta(U)U^{-1}-UDU^{-1}
\]
which gives the $\delta$-integrability relation
\begin{equation}
\delta(A)=\phi(B)A-AB\label{eq:delta integrability again}
\end{equation}
lies in $\mathfrak{gl}_{n}(\widetilde{C}\otimes_{\mathbb{C}}\mathcal{R}')$
by construction, as $U,U^{-1}$ and $\delta(U)$ have entries in $\mathcal{R}'\subset\mathcal{L}'$
and $D$ has entries in $\widetilde{C}.$ It is shown, in Proposition
\ref{prop:Galois criterion for integrability-1}, to be invariant
under every 
\[
\sigma\in \widetilde{\mathcal{H}}=\mathcal{H}(\widetilde{C})=\mathrm{Aut}_{\phi,\delta}(\widetilde{C}\otimes_{\mathbb{C}}\mathcal{R}'/\widetilde{C}\otimes_{\mathbb{C}}K').
\]
By a refinement of the parametrized Galois correspondence, the matrix $B$ therefore lies in $\mathfrak{gl}_{n}(\widetilde{C}\otimes_{\mathbb{C}}K')$
 and not just in $\mathfrak{gl}_{n}(\widetilde{K}')$ (see Proposition~\ref{prop:refinmentparamgalcorrespondence} in Appendix~\ref{subsec:Galois-invariants}).  
Now let $\{\omega_{\alpha}\}$
be a basis of $\widetilde{C}$ as a vector space over $\mathbb{C}$
and write $B=\sum_{\alpha}\omega_{\alpha}\otimes B_{\alpha}$ with
$B_{\alpha}\in\mathfrak{gl}_{n}(K')$. We may assume that $\omega_{\alpha_{0}}=1.$
Since $\phi(\omega_{\alpha})=\omega_{\alpha},$ and since $A$ has
entries in $K',$ decomposing the $\delta$-integrability relation
to its $\alpha$-components we get
\[
\delta(A)=\phi(B_{\alpha_{0}})A-AB_{\alpha_{0}}.
\]
Thus without loss of generality we may assume that $B\in\mathfrak{gl}_{n}(K').$
In other words, we have descended the integrability of the system
over $\widetilde{K}'$ to the integrability of the same system over
$K'$.

Equivalently, $(\ref{eq:delta integrability again})$ may be written
\begin{equation}
\partial A=q\phi(z^{-1}B)A-A(z^{-1}B).\label{eq:partialAequation}
\end{equation}
Consider the field $K((z)),$ to which $\phi$ is extended as an automorphism
so that $\phi(z)=qz.$ Embedding $K(z)$ in $K((z))$ via the completion
at 0, we expand
\[
z^{-1}B=\sum_{-\infty<<\ell}B_{\ell}z^{\ell},\,\,\,\phi(z^{-1}B)=\sum_{-\infty<<\ell}q^{\ell}\phi(B_{\ell})z^{\ell},
\]
where $B_{\ell}\in\mathfrak{gl}_{n}(K).$ Substituting into $(\ref{eq:partialAequation})$
and comparing the $\ell=0$ terms we get (as $A\in \GL_{n}(K))$
\[
\partial A=q\phi(B_{0})A-AB_{0}.
\]
It follows that the system $(\ref{eq:general diference system})$,
proven to be $\delta$-integrable over $K',$ is also $\partial$-integrable
over $K$.

We may now apply Corollary \ref{cor: integrability implies solvability-1}
to conclude that $G$ is solvable. This contradicts the assumption
that $G$ is simple.

The assertion on the transcendence degrees follows from
\[
\mathrm{tr.deg.}(L/K)=\dim G,\,\,\,\mathrm{tr.deg.}(L'/K')=\dim H,\,\,\,\delta\mathrm{tr.deg.}(\mathcal{L}'/K')=\delta\dim\mathcal{H},
\]
(the first two equalities are well-known consequences of the \emph{torsor
theorem}; for the last one, a consequence of the same theorem in the
$\delta$-parametrized setup, see \cite{H-S08}, Proposition 6.26),
and $
\delta\dim[\delta]_{*}H=\dim H$ by \cite[Prop. 10 p.200]{Kol}.
\end{proof}

\section{The Main Theorem in the irreducible case}

\subsection{Reduction steps}

To prove Theorem \ref{thm:Main theorem} it is enough to prove the
following theorem.
\begin{thm}
\label{thm:Main thm, system version-1} Let $A\in \GL_{n}(K)$ and
consider the system $(\ref{eq:general diference system}).$ Let $u=\,^{t}(u_{1},...,u_{n})\in F^{n}$
be a solution of
\[
\phi(u)=Au,
\]
such that the $u_{i}$ are $\partial$-algebraic over $K$. Then every
$u_{i}\in S=K[z,z^{-1},\zeta(z,\Lambda)].$
\end{thm}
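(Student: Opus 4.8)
The plan is to reduce Theorem~\ref{thm:Main thm, system version-1} to the two already-established extremes --- the rank one case (Theorem~\ref{thm:rank one main theorem}) and the simple-Galois-group case (Proposition~\ref{prop:Main in simple case}) --- by an induction on $n=\dim_K W$, where $W$ is the $\phi$-module associated with the system. First I would attach to the solution $u$ the $\phi$-module $W=K^n$ with $\Phi=A^{-1}\phi$, pass to a PV extension $L$ containing $K(u)$ (Remark~\ref{rem:PV ext containing a given solution}), and consider the Galois group $G=\mathrm{Aut}_\phi(L/K)$. By Lemma~\ref{lem:going up and down}(i), $G$ is connected, so after the harmless replacement of $\phi$ by a power we may assume $L$ is a field. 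The solution space $\mathcal U=W_L^\Phi$ carries the standard representation of $G$, and by Corollary~\ref{cor:Galois invariance of delta algebraic solutions} the subspace $\mathcal U_a$ of solutions with $\partial$-algebraic (equivalently $\delta$-algebraic over $K'$) coordinates is $G$-invariant. Our hypothesis says $u\in\mathcal U_a$.

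\textbf{The irreducible case.} If $W$ is irreducible, then by Proposition~\ref{prop:Tannakian correspondence} the standard representation $\mathcal U$ is irreducible, so the $G$-invariant subspace $\mathcal U_a$ is either $0$ or all of $\mathcal U$; since $u\neq 0$ lies in it, $\mathcal U_a=\mathcal U$, i.e. \emph{every} entry of a fundamental matrix is $\delta$-algebraic over $K'$. Hence $\delta\mathrm{tr.deg.}(\mathcal L'/K')=0$, where $\mathcal L'$ is the $\delta$-parametrized PV extension over $K'=K(z)$. Now I invoke the dichotomy coming from Cassidy's theorem: if $G$ were simple, Proposition~\ref{prop:Main in simple case} would give $\delta\mathrm{tr.deg.}(\mathcal L'/K')=\mathrm{tr.deg.}(L/K)=\dim G>0$, a contradiction; so $G$ is not simple. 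One then argues that an irreducible $W$ whose Galois group is not simple, combined with $\mathcal U_a=\mathcal U$, forces $G$ to be solvable --- indeed $\delta$-constancy of the parametrized group (via the integrability criterion of Proposition~\ref{prop:Galois criterion for integrability-1}) together with the argument descending integrability from $\widetilde K'$ through $K'$ to $K$ (exactly as in the proof of Proposition~\ref{prop:Main in simple case}) shows $W$ is $\partial$-integrable over $K$, whence by Corollary~\ref{cor: integrability implies solvability-1} $G$ is solvable. But a solvable $G$ with irreducible standard representation must act by scalars on a line, contradicting $\dim W=n>1$ unless $n=1$ --- reducing the irreducible case to rank one, handled by Theorem~\ref{thm:rank one main theorem}, which gives $z^{-r}u_1\in K$, so $u_1\in S$.

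\textbf{The reducible case: induction.} If $W$ is reducible, pick a proper nonzero $\phi-$submodule $0\subsetneq V\subsetneq W$. By Proposition~\ref{prop:Tannakian correspondence} this corresponds to a proper nonzero $G$-submodule $\mathcal V\subsetneq\mathcal U$. I would split into two sub-cases according to whether the image $\bar u$ of $u$ in $(W/V)_L$ lies in the $\delta$-algebraic part: the quotient module $W/V$ has smaller dimension and its associated solution $\bar u$ still has $\partial$-algebraic coordinates, so by induction the coordinates of $\bar u$ lie in $S$. The genuinely new phenomenon --- compared to the $\mathbb C(z)$ setting of \cite{A-D-H} --- is lifting this back: one obtains that $u$ differs from a vector with entries in $S$ by a solution of the sub-system attached to $V$, and the obstruction to this difference living in $S$ is governed by an inhomogeneous equation of the shape $(\phi-a)(f)=\mathscr L(b)$ or $(\delta-c)(g)=(\phi-a)(f)+p$ over $S_\Lambda$. \textbf{This is the step I expect to be the main obstacle}, and it is precisely where the technical Proposition~\ref{prop:technical_elliptic} (via Corollary~\ref{cor:technical elliptic}) is indispensable: it guarantees that such an $f$ can be chosen in $S_\Lambda$ up to a polynomial-in-$z$ correction, so that after descent the lifted solution $u$ itself has entries in $S$. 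Throughout, the rank one base case contributes the $1$-dimensional pieces of the filtration, Corollary~\ref{cor: Crterion for elts of E to be in S} (finite-dimensionality of $\mathrm{Span}_K\{\phi^i f\}$ characterizes membership in $S$) is the tool that certifies an element built from sub- and quotient-solutions actually lies in $S$, and Lemma~\ref{lem:Irreducibility lemma} is used to move freely between irreducibility over $K$ and over $K'$. Assembling the filtration-compatible statements gives every $u_i\in S$, completing the proof of Theorem~\ref{thm:Main thm, system version-1} and hence of the Main Theorem.
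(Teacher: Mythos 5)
Your overall architecture (induction on $n$, an irreducible case resting on Proposition \ref{prop:Main in simple case} and Theorem \ref{thm:rank one main theorem}, a reducible case resting on Corollary \ref{cor: Crterion for elts of E to be in S} and Proposition \ref{prop:technical_elliptic}) matches the paper, but two steps as you argue them do not go through. In the irreducible case, after excluding that $G$ itself is simple you claim that ``$G$ not simple together with $\mathcal U_a=\mathcal U$'' yields $\delta$-constancy of the parametrized group, hence $\delta$-integrability and solvability of $G$. This is unjustified: conjugating a Zariski-dense LDAG into a $\delta$-constant group is exactly the content of Cassidy's Theorem \ref{thm: Cassidy's theorem on simple Zariski closure}, which requires the Zariski closure to be \emph{simple}; having $\delta\mathrm{tr.deg.}(\mathcal L'/K')=0$ (i.e.\ $\delta$-dimension zero) does not imply conjugacy to a $\delta$-constant group for non-simple closures (the subgroup of $\mathbb G_m$ cut out by $\delta(\delta X/X)=0$ is Zariski dense, of $\delta$-dimension $0$, and not $\delta$-constant), and Proposition \ref{prop:Galois criterion for integrability-1} only converts such a conjugation into integrability, it does not produce it. The paper's route is different and cannot be bypassed this way: assuming $H$ (equivalently $G$) non-solvable, it uses connectedness of $G$ to extract a \emph{simple quotient} $G_1$ of positive dimension, identifies via the Galois correspondence the normal subextension $M_1\subset L$ with its PPV field $\mathcal M_1'\subset\mathcal L'$, and applies Proposition \ref{prop:Main in simple case} to \emph{that} auxiliary system to get the contradiction $0<\dim G_1=\delta\mathrm{tr.deg.}(\mathcal M_1'/K')=0$; solvability of $H$ then contradicts irreducibility of $\mathcal U$ unless $n=1$.

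In the reducible case your induction correctly handles the quotient solution, but the heart of the argument is missing. You assert that ``$u$ differs from a vector with entries in $S$ by a solution of the sub-system'' and that the obstruction ``is governed by'' an equation of the shape $\mathscr L(b)=(\phi-a)(f)$ over $S_\Lambda$; the first assertion is essentially the statement to be proved, and the second is the conclusion of the paper's Key Lemma \ref{lem:Key Lemma-1-1}, which requires a genuine argument, not the technical Proposition \ref{prop:technical_elliptic} alone. Concretely, three steps are absent: (i) the proof that the irreducible submodule has dimension $n_1=1$, obtained in the paper by working over $E=Frac(S)$, twisting $u'$ by some $\tau\in\mathrm{Aut}_\phi(L_E/E)$ to produce a nonzero $\delta$-algebraic solution $v=\tau(u')-u'$ of the \emph{homogeneous} subsystem $\phi(v)=A_1v$, and invoking Proposition \ref{prop:Main Theorem irreducible case} over $K'$; (ii) the reduction of $\phi(w)=aw+b$ to the case $a\in\mathbb{C}^\times$ via another Galois twist and Proposition \ref{prop: rank 1 proposition}; (iii) the production of $\mathscr L\in\mathbb C[\delta]$ and $f\in S_0$ with $\mathscr L(b)=(\phi-a)(f)$, which uses the PPV group of the $2\times2$ system $\phi(U')=A'U'$, Cassidy's description of differential subgroups of $\mathbb G_a$ (to kill the unipotent part by a linear operator $\mathscr L_1$), the relation $\delta(v)=cv$, and the descent Lemma 4.7 of \cite{A-D-H-W} to move $f$ from $E_1=E(v)$ into $E$ (then into $S_0$ after twisting by $z^r$). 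Only after these steps does Corollary \ref{cor:technical elliptic} apply and give $(\phi-a)(w-h)=dz^r$, forcing $w\in S$. As written, your proposal identifies the right tools but leaves the decisive Galois-theoretic constructions unproved.
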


Indeed, suppose this is proven, and $f\in F$ is a solution of the
linear homogenous $\phi$-difference equation 
\[
\phi^{n}(y)+a_{1}\phi^{n-1}(y)+\cdots+a_{n-1}\phi(y)+a_{n}y=0
\]
($a_{i}\in K$). We may assume that $a_{n}\ne0,$ otherwise $f$ satisfies
a similar equation of order $n-1.$ The \emph{companion matrix}
\[
A=\left(\begin{array}{ccccc}
0 & 1\\
 & 0 & 1\\
 &  &  & \ddots\\
 &  &  & 0 & 1\\
-a_{n} & -a_{n-1} & \cdots & -a_{2} & -a_{1}
\end{array}\right),
\]
therefore lies in $\GL_{n}(K)$, and
\[
u=\,^{t}(f,\phi(f),...,\phi^{n-1}(f))
\]
is a solution of $\phi(u)=Au.$ If $f$ is $\partial$-algebraic over
$K,$ then so is every $\phi^{i}(f),$ and we may conclude that the
entries of $u,$ and in particular $f,$ lie in $S.$

We recall that the $u_{i}$ are $\partial$-algebraic over $K$ if
and only if they are $\delta$-algebraic over $K'=K(z).$
\begin{lem}[Reduction Lemma]
\label{lem:Reduction Lemma-1} Without loss of generality, we may
assume in Theorem \ref{thm:Main thm, system version-1}:
\begin{enumerate}
\item The PV extension $L$ (respectively $L'$) of \eqref{eq:general diference system} over $K$ (sesp. $K'$), and the $\delta$-parametrized PV extension $\mathcal{L}'$ are
fields and we have $L \subset L' \subset \mathcal{L}'$.
\item The field $K'\left\langle u\right\rangle _{\delta}=K(z,u_{i},\delta(u_{i}),\delta^{2}(u_{i}),...)\subset F$
is embedded as a subfield of $\mathcal{L}'$.
\item The difference Galois group $G=\mathrm{Aut}_{\phi}(L/K)$ is connected.
\end{enumerate}
\end{lem}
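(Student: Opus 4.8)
The plan is to secure the three normalisations in turn, with (1) and (2) obtained together by one base change followed by a passage to a power of $\phi$, and (3) then immediate.

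\emph{Reductions (1) and (2).} Since the $u_i$ are $\partial$-algebraic over $K$, equivalently $\delta$-algebraic over $K'=K(z)$, the field $\mathcal{F}:=K'\langle u\rangle_{\delta}=K(z,u_i,\delta u_i,\delta^2 u_i,\dots)\subset F$ is a $(\phi,\delta)$-subfield of $F$ with $\mathcal{F}^{\phi}\subset F^{\phi}=\mathbb{C}$, hence $\mathcal{F}^{\phi}=\mathbb{C}$. I would choose a $\delta$-parametrized PV extension $\mathcal{M}$ of $\phi(y)=Ay$ over $\mathcal{F}$ (it exists by \cite{Wi12}) and observe, exactly as in Remark \ref{rem:PV ext containing a given solution} — whose proof transposes verbatim, using that any solution of the system in $\mathcal{M}$ is a $\mathbb{C}$-linear combination of the columns of a fixed fundamental matrix, so that $u$ already lies in $K'\langle U\rangle_\delta$ — that $\mathcal{M}$ is simultaneously a $\delta$-parametrized PV extension of the system over $K'$ and contains $\mathcal{F}$, hence $u$, as a $(\phi,\delta)$-subfield. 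Now write $\mathcal{M}=\mathcal{M}_1\times\dots\times\mathcal{M}_r$ with $\phi$ permuting the factors cyclically; since $\phi^r$ and $\delta$ act componentwise on the product and $\mathcal{F}$ is a field, the first projection $\pi_1$ restricts to a $(\phi^r,\delta)$-embedding of $\mathcal{F}$ into the field $\mathcal{M}_1$. Replacing $\phi$ by $\phi^r$ and $A$ by $A^{(r)}=\phi^{r-1}(A)\cdots\phi(A)A\in\GL_n(K)$, the triple $(K,\phi^r,\partial)$ is again of the type considered in \S\ref{subsec:K phi and partial} (with $q^r$ in place of $q$), the factor $\mathcal{M}_1$ is a $\delta$-parametrized PV extension of $\phi^r(y)=A^{(r)}y$ over $(K',\phi^r)$ which is a field and contains $K'\langle u\rangle_\delta$, and both the hypotheses and the conclusion ``$u_i\in S$'' of Theorem \ref{thm:Main thm, system version-1} are unchanged, since $S$, $z$, $z^{-1}$, $\zeta(z,\Lambda)$ and the notion of $\partial$-algebraicity do not depend on the chosen power of $\phi$. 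Relabelling, set $\mathcal{L}':=\mathcal{M}_1$ and fix a fundamental matrix $U\in\GL_n(\mathcal{R}')$, $\mathcal{R}'\subset\mathcal{L}'$ the PPV-ring. Then $R:=K[U,\det(U)^{-1}]\subseteq R':=K'[U,\det(U)^{-1}]\subseteq\mathcal{R}'$ are domains (subrings of the field $\mathcal{L}'$), their fraction fields $L\subseteq L'$ are PV extensions of the system over $K$ and over $K'$ respectively, and $L\subseteq L'\subseteq\mathcal{L}'$; this gives (1), and (2) holds by construction.

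\emph{Reduction (3).} With $L$ now a field, $G=\mathrm{Aut}_{\phi}(L/K)$ is connected by Lemma \ref{lem:going up and down}(i) (which rests on Lemma \ref{lem:no finite phi extensions of K}, equally valid for $\phi^r$, this being induced by the multiplication-by-$q^r$ isogeny).

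\emph{Main obstacle.} The delicate point is to reconcile the two devices used for (1) and (2): enlarging the base to $\mathcal{F}$ so that the extensions contain $u$, versus passing to one factor of a product — that is, to a power of $\phi$ — so that they become honest fields. The mechanism that makes these compatible, and which must be written out with some care, is that a base field, being a field, is carried faithfully by the projection onto any factor, so that no relevant information about $u$ is lost (neither the $\delta$-algebraicity of its coordinates nor the validity of the desired conclusion); the remaining verifications — that $(K,\phi^r)$ retains $\phi^r$-constants $\mathbb{C}$, that $A^{(r)}\in\GL_n(K)$, and that replacing $\phi$ by $\phi^r$ leaves the statement of Theorem \ref{thm:Main thm, system version-1} intact — are routine.
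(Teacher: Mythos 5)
Your proposal is correct and follows essentially the same route as the paper: pass to $\phi^{r}$ via the product decomposition of the PPV pseudofield to make $\mathcal{L}'$ (hence $L'$, $L$) a field, obtain the embedding of $K'\langle u\rangle_{\delta}$ by the base-change-to-$K'\langle u\rangle_{\delta}$ device (which the paper delegates to \cite{A-D-H}, Proposition 4.10, and Remark \ref{rem:PV  ext containing a given solution}, and which you spell out directly, merely performing the base change before rather than after projecting to one factor), and deduce connectedness of $G$ from Lemma \ref{lem:no finite phi extensions of K} via Lemma \ref{lem:going up and down}. The only substantive difference is this ordering of the two reductions, and your version handles it correctly by noting that the projection is injective on the field $K'\langle u\rangle_{\delta}$.
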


\begin{proof}

A-priori, a $PPV$ extension $\mathcal{L}'$ has the form
\[
\mathcal{L}'=\mathcal{L}_{1}'\times\cdots\times\mathcal{L}_{r}'
\]
where the $\mathcal{L}_{i}'$ are fields, $\phi(\mathcal{L}_{i}')=\mathcal{L}'_{i+1\mod r}$
and $\delta(\mathcal{L}')\subset\mathcal{L}_{i}'$. Replacing $\phi$
by $\phi^{r}$ and $A$ by $A^{(r)}=\phi^{r-1}(A) \dots \phi(A)A$ , keeping $u$ unchanged, $\mathcal{L}'$
gets replaced by $\mathcal{L}_{1}'$(see \S~\ref{subsubsec:PPVext}). We may therefore assume that
$\mathcal{L}'$  is a field, and as $L'\subset\mathcal{L}'$, the ordinary PV extension $L'$ is a field too. Moreover, for $U  \in \GL_n(L')$ a fundamental solution matrix, the subfield $L=K(U)$ of $L'$ satisfies $L^{\phi} =L'^{\phi}=\mathbb{C}$ and is thereby a PV extension  for \eqref{eq:general diference system}  over $K$.  This settles (1). Point (2), the
embedding of $K'(u)$ in $L',$ and of $K'\left\langle u\right\rangle _{\delta}$
in $\mathcal{L}'$, is proven in \cite{A-D-H}, Proposition 4.10. See
also Remark \ref{rem:PV  ext containing a given solution}. The vector
$u$ becomes then a linear combination, with constant coefficients,
of the columns of the fundamental matrix $U.$ Point (3) has already
been observed before, as a consequence of the fact that $K$ does
not admit non-trivial finite $\phi$-field extensions.
\end{proof}

\subsection{The Main Theorem in the irreducible case}
\begin{prop}
\label{prop:Main Theorem irreducible case}Assume that the $\phi$-module
$W$ associated with $(\ref{eq:general diference system})$ over $K$ is irreducible.
If $\mathcal{U}_{a}\ne0,$ \emph{i.e.}, if there exists a non-zero solution
$u$ all of whose coordinates are $\partial$-algebraic over $K$,
then the rank $n$ must be equal to $1$.

Furthermore, if the given $\partial$-algebraic solution $u$ lies
in $F^{n},$ then Theorem \ref{thm:Main thm, system version-1} holds
true.
\end{prop}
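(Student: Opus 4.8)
\textbf{Proof plan for Proposition \ref{prop:Main Theorem irreducible case}.}
The plan is to combine the Galois-theoretic machinery set up in the previous sections with the rank-one result and Cassidy's theorem. Assume $W$ is irreducible and $\mathcal{U}_a\neq 0$. By the Reduction Lemma \ref{lem:Reduction Lemma-1} we may pass to $\phi^r$ so that $L\subset L'\subset\mathcal{L}'$ are fields, $G$ is connected, and $K'\langle u\rangle_\delta\subset\mathcal{L}'$. By Proposition \ref{prop:Tannakian correspondence}, the irreducibility of $W$ means the standard representation of $G$ on $\mathcal{U}$ is irreducible; by Corollary \ref{cor:Galois invariance of delta algebraic solutions} the nonzero subspace $\mathcal{U}_a$ is $G$-invariant, hence $\mathcal{U}_a=\mathcal{U}$: \emph{every} solution in $\mathcal{U}$ has all coordinates $\delta$-algebraic over $K'$. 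In particular all entries of a fundamental matrix $U$ are $\delta$-algebraic over $K'$, so $\mathrm{tr.deg.}(\mathcal{L}'/K')=\delta\mathrm{tr.deg.}(\mathcal{L}'/K')$ — wait, more precisely $\delta\mathrm{tr.deg.}(\mathcal{L}'/K')=0$, since $\mathcal{L}'$ is $\delta$-generated over $K'$ by the entries of $U$ which are each $\delta$-algebraic. By the torsor theorem (\S\ref{subsec:The--parametrized-Galois}), $\delta\dim\mathcal{H}=0$, where $\mathcal{H}$ is the $\delta$-parametrized Galois group over $K'$.

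Next I would analyze $G$ using that $\delta\dim\mathcal{H}=0$ while $\mathcal{H}$ is Zariski-dense in $H=\mathrm{Aut}_\phi(L'/K')$. The dichotomy is: either $G$ (equivalently its standard representation) is reducible — impossible by irreducibility of $W$ — or $G$ is irreducible as a linear group. If $G$ were noncommutative simple, then by Lemma \ref{lem:going up and down}(ii) $H=G$, and Proposition \ref{prop:Main in simple case} would give $\delta\mathrm{tr.deg.}(\mathcal{L}'/K')=\mathrm{tr.deg.}(L'/K')=\dim G>0$, contradicting $\delta\dim\mathcal{H}=0$; alternatively, directly: $\mathcal{H}$ Zariski-dense in the simple group $H$ with $\delta\dim\mathcal{H}<\dim H$ forces, by Cassidy's Theorem \ref{thm: Cassidy's theorem on simple Zariski closure}, that $\mathcal{H}$ is conjugate to a $\delta$-constant LDAG, whence $\phi(y)=Ay$ is $\delta$-integrable over $\widetilde{K}'$, which descends (as in the proof of Proposition \ref{prop:Main in simple case}) to $\partial$-integrability over $K$, making $G$ solvable by Corollary \ref{cor: integrability implies solvability-1} — contradiction. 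So $G$ is not simple. For a connected irreducible linear algebraic group over an algebraically closed field of characteristic zero with irreducible standard representation, the remaining possibilities all reduce (via the structure theory of reductive groups: $G$ reductive since $\mathcal{U}$ irreducible, $G=Z(G)^0\cdot(G,G)$ with $(G,G)$ semisimple) to: either $\dim G=1$, forcing $G=\mathbb{G}_m$ acting by a character, hence $n=1$ since the representation is irreducible; or $(G,G)$ is nontrivial semisimple. In the latter case I would pass to a simple factor and, via the Tannakian/Galois correspondence, extract a quotient or sub-situation to which the simple case applies, again producing a contradiction with $\delta\dim\mathcal{H}=0$. The conclusion is $n=1$.

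Finally, for the second assertion: once $n=1$, the system is $\phi(f)=af$ with $a\in K^\times$, $f\in F^\times$ (if $f=0$ there is nothing to prove, and if $u=0$ it is excluded), and $f$ is $\partial$-algebraic over $K$. Theorem \ref{thm:rank one main theorem}, whose proof was completed above via Proposition \ref{prop: rank 1 proposition}, then gives $z^{-r}f\in K$ for some $r\in\mathbb{Z}$, so $f=z^r\cdot(z^{-r}f)\in K[z,z^{-1}]\subset S=K[z,z^{-1},\zeta(z,\Lambda)]$. This establishes Theorem \ref{thm:Main thm, system version-1} in the irreducible case.

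\textbf{Main obstacle.} The delicate point is handling the semisimple (non-simple, non-abelian) case cleanly: the passage from "$(G,G)$ semisimple nontrivial" to a genuine contradiction requires working with the Galois correspondence to produce a subquotient $\phi$-module whose Galois group is simple and still inherits $\delta\mathrm{tr.deg.}=0$ (so that the descent-to-$\partial$-integrability argument of Proposition \ref{prop:Main in simple case} applies), keeping track of the base changes $K\rightsquigarrow K'\rightsquigarrow\widetilde{K}'$ and the fact that $\delta\dim$ of a subquotient is at most that of the whole. I expect this bookkeeping — rather than any single hard new idea — to be the crux of the argument, mirroring the "Galois acrobatics" the introduction promises.
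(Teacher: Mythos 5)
Your outline follows the paper's strategy step for step (reduction, $\mathcal{U}_a=\mathcal{U}$, vanishing $\delta$-transcendence degree, contradiction via the simple case, rank-one endgame via Theorem \ref{thm:rank one main theorem}), but the step you yourself flag as the ``main obstacle'' --- what to do when $G$ is neither commutative nor simple --- is precisely where the paper's proof does its work, and you have not supplied it, so as it stands the argument has a genuine gap at its crux. The paper closes it without any reductive decomposition of $G$: if $H$ is not solvable then neither is $G$ (Lemma \ref{lem:going up and down}); since $G$ is connected, its quotient by its radical is a nontrivial connected semisimple group and hence admits a connected \emph{simple} quotient $G_{1}$ with $\dim G_{1}>0$. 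By the normality part of the Galois correspondence, $G_{1}=\mathrm{Aut}_{\phi}(M_{1}/K)$ where $M_{1}\subset L$ is a normal subextension which is itself a PV extension of an auxiliary system $\phi(y)=A_{1}y$ over $K$; this is the point of normality, and it is what lets one apply Proposition \ref{prop:Main in simple case} \emph{to the auxiliary system} rather than to the original one. Its PPV extension $\mathcal{M}'_{1}$ over $K'$ embeds in $\mathcal{L}'$, and since $\mathcal{L}'=\mathcal{L}'_{a}$ one gets $\delta\mathrm{tr.deg.}(\mathcal{M}'_{1}/K')=0$; Proposition \ref{prop:Main in simple case} then yields $0<\dim G_{1}=\mathrm{tr.deg.}(M_{1}/K)=\delta\mathrm{tr.deg.}(\mathcal{M}'_{1}/K')=0$, a contradiction. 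Hence $H$ is solvable, and the Lie--Kolchin theorem applied to the irreducible $H$-module $\mathcal{U}$ forces $n=1$. So the ``bookkeeping'' you defer is exactly this: simple \emph{quotient} (not factor), normality to realize it as the Galois group of a genuine system over $K$, and inheritance of $\delta\mathrm{tr.deg.}=0$ from $\mathcal{M}'_{1}\subset\mathcal{L}'_{a}$.

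A secondary slip: Corollary \ref{cor:Galois invariance of delta algebraic solutions} lives over the base $K'$, where $\delta$ and the PPV theory exist, so it gives invariance of $\mathcal{U}_{a}$ under $H=\mathrm{Aut}_{\phi}(L'/K')$, not under $G$; to conclude $\mathcal{U}_{a}=\mathcal{U}$ you therefore need $\mathcal{U}$ to be irreducible as an $H$-module, i.e. irreducibility of $W_{K'}$, which is supplied by Lemma \ref{lem:Irreducibility lemma} and is missing from your sketch. Also, your dichotomy ``$\dim G=1$ or $(G,G)$ nontrivial semisimple'' omits higher-dimensional tori (harmless, since commutativity alone gives $n=1$), but the paper's route via solvability avoids this case analysis altogether. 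Your treatment of the simple-$G$ subcase and the final passage from $n=1$ to $u\in S$ via Theorem \ref{thm:rank one main theorem} agree with the paper and are correct.
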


\begin{proof}
Assume that $W$ is irreducible. By Lemma \ref{lem:Irreducibility lemma},
$W_{K'}$ is an irreducible $(K',\phi)$-module. By the Tannakian
correspondence, Proposition \ref{prop:Tannakian correspondence},
the solution space
\[
\mathcal{U}=U\mathbb{C}^{n}=W_{L'}^{\Phi}=W_\mathcal{L}'^{\Phi}\subset W_{\mathcal{L}'}=\mathcal{L}'^{n}
\]
is an irreducible representation of $H=\mathrm{Aut}_{\phi}(L'/K')$.

By Corollary \ref{cor:Galois invariance of delta algebraic solutions}, the vector space $\mathcal{U}_{a}$ of differentially algebraic  over $K$ solutions
 is $H$-invariant. If $\mathcal{U}_{a}\ne0,$ we
must have $\mathcal{U}=\mathcal{U}_{a},$ all the entries of $U$
are $\delta$-algebraic, and the field ${\mathcal{L}'}_{a}$  formed by the differentially algebraic over $K$ elements coincides with $\mathcal{L}'.$

We claim that this forces $H$ to be solvable, contradicting the irreducibility
of $\mathcal{U}$, unless $n=1.$

Suppose, therefore, that $H$ is not solvable. Then $G$ is not solvable
either by Lemma~\ref{lem:going up and down} and as it is connected, it must have a simple quotient $G_{1}$
with $0<\dim(G_{1})$ (indeed, the quotient of $G$ by its radical is non-trivial, connected and semi-simple, so admits a simple connected quotient). By the Galois correspondence theorem this quotient
is the difference Galois group of a normal subextension $M_{1}\subset L,$
a PV extension of another system $\phi(y)=A_{1}y$ over $K$. Since $M_1 \subset L \subset L' \subset \mathcal{L}'$,  the 
$\delta$-parametrized PV extension of $\phi(y)=A_{1}y$ over $K'$ is a subfield $\mathcal{M}'_{1}\subset\mathcal{L}'.$
It follows from the equality $\mathcal{L}_{a}'=\mathcal{L}'$ that
\[
\delta\mathrm{tr.deg.}(\mathcal{M}'_{1}/K')=0.
\]
Proposition \ref{prop:Main in simple case}, applied to the system
$\phi(y)=A_{1}y$ and the simple Galois group $G_{1},$ yields the
contradiction
\[
0<\dim(G_{1})=\mathrm{tr.deg.}(M_{1}/K)=\delta\mathrm{tr.deg.}(\mathcal{M}'_{1}/K')=0.
\]
This shows that $H$ must be solvable, and concludes the proof that
$n=1.$

Finally, if in addition $u\in F,$ by Theorem \ref{thm:rank one main theorem}
it belongs to $S$.
\end{proof}

\section{Conclusion of the proof}

\subsection{Reduction to the case of an inhomogeneous rank 1 equation}

Keeping the notation as above, we can now complete the proof of Theorem
\ref{thm:Main thm, system version-1}, and with it the proof of Theorem
\ref{thm:Main theorem}.

If $W$ is irreducible, we have just proved the theorem. Assume that
it is reducible, and let $W_{1}\subset W$ be an\emph{ irreducible}
$\phi$-submodule of dimension $1\le n_{1}<n.$ In an appropriate
basis, our system of equations and the solution $u$ look like
\[
\phi\left(\begin{array}{c}
u'\\
u''
\end{array}\right)=\left(\begin{array}{cc}
A_{1} & A_{12}\\
0 & A_{2}
\end{array}\right)\left(\begin{array}{c}
u'\\
u''
\end{array}\right),
\]
where $A_{1}\in \GL_{n_{1}}(K),$ $u'$ is a vector of length $n_{1}$
and $u''$ is a vector of length $n-n_{1}.$ By induction on $n,$
and since the coordinates of $u''$ are all $\partial$-algebraic
over $K$ and lie in $F,$ we deduce that $u''\in S^{n-n_{1}}.$

As in \cite{A-D-H}, \S5.3 (see also \cite{A-D-H-W}, \S4.1), we are
going to show that $n_{1}=1.$ Let 
\[
E=Frac(S)=K(z,\zeta(z,\Lambda))\subset F
\]
be the field of fractions of $S.$ The equation $(\ref{eq:general diference system})$
shows that the coordinates of $u$ span a finite dimensional $\phi$-invariant
$K$-subspace of $F$. If they belong to $E,$ then by Corollary \ref{cor: Crterion for elts of E to be in S},
they must lie in $S,$ and we are done.

Assume therefore that one of the coordinates of $u'$ does not belong
to $E$. Consider the system of equations $(\ref{eq:general diference system})$
over $E,$ and let $L_{E}=E(U)\subset\mathcal{L}_{E}$ be its PV and
PPV extensions over $E$. As in Lemma~\ref{lem:Reduction Lemma-1},
we can consider an iterate of the system $\phi(y)=Ay$ in order to  assume that $L_{E}$ is a field and $E(u')$ is a subfield of $L_{E}$. Since the $\phi$-module $W_1$ is irreducible, its Galois group $G_1$ is irreducible. It is also connected as a quotient of the connected group $\mathrm{Aut}_{\phi}(L/K)$. Thus, for any positive integer $r$,  the $\phi^r$-module $(W_1,\Phi^r)$ is still irreducible. Indeed, its Galois group is $G_1$ by \cite[Cor. 1.17]{S-vdP97} and it acts irreducibly on the space of solution of $\phi^r(y)=A_1^{(r)}y$, which coincides with the space of solution of  $\phi(y)=A_1y$. The Tannakian equivalence yields the irreducibility of $(W_1,\Phi^r)$ as $\phi^r$-module for any $r$. Finally, one can easily consider the PV extension $L'$ over $K'$ as a subfield of  $L_{E}$ and   the PPV extension $\mathcal{L}'$ as  a subfield of $\mathcal{L}_E$. Since
$u'\notin E^{n_{1}},$ there exists a
\[
\tau\in\mathrm{Aut}_{\phi}(L_{E}/E)\subset\mathrm{Aut}_{\phi}(L'/K')=H
\]
with $v=\tau(u')-u'\ne0.$ Furthermore, by Corollary \ref{cor:Galois invariance of delta algebraic solutions},
and the assumption that the coordinates of $u$ are $\delta$-algebraic
over $K',$ the coordinates of $v$ are $\delta$-algebraic over $K'$
as well. By Lemma \ref{lem:Irreducibility lemma}, the $\phi$-module  $W_{1,K'}$ is
irreducible over $K'$. As $\tau$ fixes $A_{12}u'',$ our $v$ satisfies
\[
\phi(v)=A_{1}v,
\]
so the system corresponding to $W_{1,K'}$ admits a non-zero $\delta$-algebraic
solution. It follows now from Proposition \ref{prop:Main Theorem irreducible case}
that $n_{1}=1$.

\subsection{Rank 1 inhomogeneous equations}

\subsubsection{Reduction to the case $a\in\mathbb{C}^{\times}$}

We have arrived at the equation
\begin{equation}\label{eq:inhomogrankone}
\phi(w)=aw+b
\end{equation}
where $a=a_{11}\in K,$ $b=a_{12}u_{2}+\cdots+a_{1n}u_{n}\in S.$
We assume that $w=u_{1}\in F$ satisfies it, and is $\partial$-algebraic
over $K.$ To conclude the proof of the Main Theorem we must show
that $w\in S.$ By Corollary \ref{cor: Crterion for elts of E to be in S},
it is enough to show that $w\in E.$

We continue to work over $E$ as a ground field. Let $L_E$ and $\mathcal{L}_E$ be respectively the PV and the PPV extensions of \eqref{eq:inhomogrankone} over $E$ and let us consider  the associated  difference Galois
group $G_{E}=\mathrm{Aut}_{\phi}(L_{E}/E)$ and the $\delta$-parametrized Galois group scheme $\mathcal{G}_{E}$ of ${\mathcal{L}}_{E}$ over $E$.
If $w\in E,$ we are done. Thus we can assume that  this is not the case.
 Our first goal is
to show that we may assume $a\in\mathbb{C}^{\times}.$ Since $w \notin E$, 
by the Galois correspondence theorem there exists a $\tau\in G_{E}$ such that $\tau(w)\ne w$. Letting
$v=\tau(w)-w$ we arrive at
\[
\phi(v)=av.
\]
Since $w$ is $\delta$-algebraic over $K',$ Corollary \ref{cor:Galois invariance of delta algebraic solutions}
shows that $\tau(w)$ is $\delta$-algebraic as well, hence $v$ is
$\delta$-algebraic over $K'$. Proposition \ref{prop: rank 1 proposition}
(with $\mathcal{F}=\mathcal{L}_{E}$) implies that
\[
a=c\frac{\phi(\beta)}{\beta}
\]
for some $c\in\mathbb{C}^{\times}$ and $\beta\in K^{\times}.$ The
original equation becomes equivalent to
\[
\phi(\frac{w}{\beta})=c\frac{w}{\beta}+\frac{b}{\phi(\beta)}
\]
and $b/\phi(\beta)$ still lies in $S$. If we show $w/\beta\in S,$
then $w\in S$ as desired. This allows us to assume, from the beginning,
that $a\in\mathbb{C}^{\times}.$

\subsubsection{The proof when $a\in\mathbb{C}^{\times}$}

Recall that $v\in L_{E}$ is the fundamental matrix of the rank-1
equation $\phi(v)=av.$ Contrary to 
\[
E_{2}:=E(w)\subset F,
\]
and unless $a$ is a power of $q$, the field $E_{1}:=E(v)\subset L_{E}$
can not be embedded $\phi$-equivariantly in the Laurent power series
field $F$. A-priori, we only know that $\delta(v)\in \mathcal{L}_{E}$.
\begin{lem}
We have $\delta(v)=cv$ for some $c\in\mathbb{C}.$ Thus $E_{1}$
is a $(\phi,\delta)$-field.
\end{lem}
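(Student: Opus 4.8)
The plan is to exploit the fact that $v$ satisfies the rank-$1$ equation $\phi(v)=av$ with $a\in\mathbb{C}^\times$, together with the commutation relation $\phi\circ\delta=\delta\circ\phi$, exactly as in the easy direction of Proposition~\ref{prop: rank 1 proposition}. First I would set $u=\delta(v)/v\in\mathcal{L}_E$ (note $v\neq 0$ since $v$ is a fundamental matrix of a rank-$1$ system, hence invertible in $L_E$). Applying $\delta$ to $\phi(v)=av$ and using $\delta(a)=0$ gives $\phi(\delta(v))=a\,\delta(v)$, so
\[
\phi(u)=\frac{\phi(\delta(v))}{\phi(v)}=\frac{a\,\delta(v)}{av}=\frac{\delta(v)}{v}=u.
\]
Thus $u\in\mathcal{L}_E^{\phi}$. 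The key point is then to identify this ring of $\phi$-constants: since $\mathcal{L}_E$ is a PPV extension of $E$ and $E^{\phi}=\mathbb{C}$ (because $E=K(z,\zeta(z,\Lambda))\subset F$ and $F^\phi=\mathbb{C}$), the defining property of a PPV extension gives $\mathcal{L}_E^{\phi}=\mathbb{C}$. Hence $u=c$ for some $c\in\mathbb{C}$, i.e.\ $\delta(v)=cv$.

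From $\delta(v)=cv$ it is immediate that $E_1=E(v)$ is stable under $\delta$: indeed $\delta$ already preserves $E$, and $\delta(v)=cv\in E_1$, so by the Leibniz and quotient rules $\delta$ maps $E(v)$ into itself. Since $E_1$ is also stable under $\phi$ (as $\phi(v)=av\in E_1$ and $\phi$ preserves $E$), and $\phi\circ\delta=\delta\circ\phi$ holds on all of $\mathcal{L}_E\supset E_1$, the field $E_1$ is a $(\phi,\delta)$-field, as claimed.

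The only subtlety worth double-checking is that $v$ is genuinely a nonzero element of a \emph{field} (so that $1/v$ makes sense and $\delta(v)/v$ is well-defined in $\mathcal{L}_E$); this is guaranteed by our running reduction (Lemma~\ref{lem:Reduction Lemma-1} and its analogue applied over the ground field $E$) to the case where $L_E$, and a fortiori $\mathcal{L}_E$, is a field. So there is essentially no obstacle here: the statement is a direct consequence of the $\phi$-constancy of $\delta(v)/v$ together with $\mathcal{L}_E^{\phi}=\mathbb{C}$, and the proof is short.
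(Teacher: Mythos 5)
Your argument is correct and is essentially the paper's own proof: both show that $\delta(v)/v$ is fixed by $\phi$ (using $\phi\circ\delta=\delta\circ\phi$ and $a\in\mathbb{C}$) and then invoke $\mathcal{L}_E^{\phi}=\mathbb{C}$. The only cosmetic difference is that you divide by $v$ directly (avoiding the paper's separate treatment of the case $\delta(v)=0$), which is fine since $v$ is invertible.
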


\begin{proof}
If $\delta(v)=0$ this is clear. Otherwise, one has
\[
\frac{\phi(\delta v)}{\delta v}=\frac{\delta(\phi v)}{\delta v}=\frac{\delta(av)}{\delta v}=a
\]
since $a\in\mathbb{C}.$ But $\frac{\phi(v)}{v} = a$ as well, so $\delta(v)/v$,
being fixed by $\phi,$ must belong to $\mathcal{L}_{E}^{\phi}=\mathbb{C}.$
\end{proof}
Consider $U'=\left(\begin{array}{cc}
v & w\\
0 & 1
\end{array}\right)\in \GL_{2}(L_{E})$ and $A'=\left(\begin{array}{cc}
a & b\\
0 & 1
\end{array}\right)\in \GL_{2}(E)$, so that
\[
\phi(U')=A'U',
\]
and $E(v,w)=E_{1}E_{2}\subset L_{E}$ is the PV extension of the linear
system $\phi(y)=A'y$ over $E.$ Let $\mathcal{E}=E\left\langle v,w\right\rangle _{\delta}\subset \mathcal{L}_{E}$
be the $\delta$-parametrized PV extension of this last system over
$E$.

The proof of the following Key Lemma resembles the proof of \cite{H-S08},
Proposition 3.8. See also the proof, in the case of two difference
operators, in \cite{dS23}, \S5.4.
\begin{lem}
\label{lem:Key Lemma-1-1}After possibly replacing $w$ by $z^{r}w,$
$a$ by $q^{r}a$ and $b$ by $q^{r}z^{r}b$ for some positive integer $r,$ there
exists a monic operator $\mathscr{L}\in\mathbb{C}[\delta]$, and a
function $f\in S_{0}$, such that
\begin{equation}
\mathscr{L}(b)=(\phi-a)(f).\label{eq:key equation-1-1}
\end{equation}
\end{lem}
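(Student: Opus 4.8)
The plan is to exploit that $w$ is $\delta$-algebraic over $K'$, hence over $E$ and over $\mathcal{E}$, and to translate this into a statement about the $\delta$-parametrized Galois group $\mathcal{G}_{\mathcal{E}}$ of the rank-$2$ inhomogeneous system $\phi(y)=A'y$ over $E$, where $A'=\left(\begin{smallmatrix} a & b\\ 0 & 1\end{smallmatrix}\right)$ with $a\in\mathbb{C}^{\times}$. First I would fix the fundamental matrix $U'=\left(\begin{smallmatrix} v & w\\ 0 & 1\end{smallmatrix}\right)$ and describe $\mathcal{G}_{\mathcal{E}}\subset\GL_2(\widetilde{C})$ concretely: every $\sigma$ acts by $\sigma(U')=U'V_\sigma$ with $V_\sigma=\left(\begin{smallmatrix} \lambda_\sigma & \mu_\sigma\\ 0 & 1\end{smallmatrix}\right)$, the block structure being forced by $\sigma$ fixing the last row. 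So $\mathcal{G}_{\mathcal{E}}$ sits inside the group of such upper-triangular matrices, a semidirect product $\mathbb{G}_m\ltimes\mathbb{G}_a$. The hypothesis that $w=u_1$ is $\delta$-algebraic translates, via Corollary~\ref{cor:Galois invariance of delta algebraic solutions} and the description of the standard representation, into the statement that the unipotent coordinate $\mu_\sigma$ is $\delta$-algebraic over $\mathbb{C}$ as a function on $\mathcal{G}_{\mathcal{E}}$; equivalently, the projection of $\mathcal{G}_{\mathcal{E}}$ onto the $\mathbb{G}_a$-factor is a \emph{proper} differential algebraic subgroup of $\mathbb{G}_a$.

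Next I would invoke the classification of differential algebraic subgroups of $\mathbb{G}_a$ over a differentially closed field (Cassidy): every proper one is the kernel of a nonzero linear differential operator $\mathscr{L}_0\in\mathbb{C}[\delta]$ (after scaling, monic). Thus there is a monic $\mathscr{L}\in\mathbb{C}[\delta]$ with $\mathscr{L}(\mu_\sigma)=0$ for all $\sigma\in\mathcal{G}_{\mathcal{E}}$. Now apply $\mathscr{L}$ to $w$ itself: from $\phi(w)=aw+b$ we get $\phi(\mathscr{L}(w))=a\,\mathscr{L}(w)+\mathscr{L}(b)$ because $\mathscr{L}$ is a constant-coefficient operator in $\delta$ and $\delta$ commutes with $\phi$ (here the constant $a\in\mathbb{C}$ is essential: $\delta(a w)=a\,\delta(w)$). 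On the other hand, for $\sigma\in\mathcal{G}_{\mathcal{E}}$ one computes $\sigma(\mathscr{L}(w))=\mathscr{L}(\sigma(w))=\mathscr{L}(\lambda_\sigma v+\mu_\sigma w)$; using $\delta(v)=cv$ with $c\in\mathbb{C}$ (the Lemma just above), $\lambda_\sigma\in\widetilde{C}$ a $\delta$-constant in the relevant sense and $\mathscr{L}(\mu_\sigma)=0$, this should collapse to show that $f:=\mathscr{L}(w)$ is fixed by all of $\mathcal{G}_{\mathcal{E}}$, hence $f\in E$ by the $\delta$-parametrized Galois correspondence. Then $\mathscr{L}(b)=(\phi-a)(f)$ with $f\in E=K(z,\zeta(z,\Lambda))$; a pole-order/denominator argument at $z=0$ and at the $\Lambda$-lattice points, combined with the $\phi$-equivariant embedding $E\hookrightarrow F$ and an induction controlling the denominators, forces $f\in S_0$ after replacing $w$ by $z^rw$ (and correspondingly $a$ by $q^ra$, $b$ by $q^rz^rb$) to clear a possible pole at the origin — this is exactly the role of the normalization in the statement.

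The main obstacle, I expect, is the last step: promoting $f\in E$ to $f\in S_0=K[z,\zeta(z,\Lambda)]$, i.e.\ ruling out genuine poles. One has to use that $\mathscr{L}(b)\in S$ (since $b\in S$ and $\delta$ preserves $S$) and analyze the equation $(\phi-a)(f)=\mathscr{L}(b)$ locally; the operator $\phi-a$ with $a\in\mathbb{C}^{\times}$ is "contracting" toward $z=0$ under $z\mapsto qz$, so a pole of $f$ away from $0$ would propagate to an infinite orbit of poles, contradicting meromorphy, while a pole at $0$ is removed by the substitution $w\mapsto z^rw$. A secondary subtlety is the careful bookkeeping of which scalars are $\delta$-constants: $\lambda_\sigma$ need not be a $\delta$-constant a priori, but the relation $\delta(v)=cv$ with $c\in\mathbb{C}$ constrains how $\delta$ interacts with the $\mathbb{G}_m$-part, and one must check $\mathscr{L}(\lambda_\sigma v)$ contributes nothing new to the Galois-invariance computation. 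Apart from these points the argument is a direct parametrized analogue of \cite[Proposition 3.8]{H-S08}, adapted to the elliptic ground field $E$.
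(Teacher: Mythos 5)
There is a genuine gap at the Galois-invariance step. Your plan needs a single monic operator $\mathscr{L}\in\mathbb{C}[\delta]$ annihilating the unipotent coordinates $\mu_\sigma$ for \emph{all} $\sigma$ in the parametrized Galois group over $E$, but Cassidy's linearity theorem applies to differential algebraic \emph{subgroups} of $\mathbb{G}_a$, and the map $\sigma\mapsto\mu_\sigma$ on the full group is only a cocycle ($\mu_{\sigma\tau}=\mu_\sigma+\lambda_\sigma\mu_\tau$), not a homomorphism; its image is a union of translated copies of the unipotent part and need not be killed by any single linear operator. (Also, the correct action is $\sigma(v)=\lambda_\sigma v$, $\sigma(w)=w+\mu_\sigma v$, not $\sigma(w)=\lambda_\sigma v+\mu_\sigma w$, and from $\delta(v)=cv$ one in fact gets $\lambda_\sigma\in\mathbb{C}$.) Consequently your conclusion that $f=\mathscr{L}(w)$ is fixed by the whole group, hence lies in $E$, is not established. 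The paper avoids this by working over $E_1=E(v)$: there the parametrized Galois group \emph{is} the unipotent radical, acting on $w/v$ by $w/v\mapsto w/v+\beta_\tau$, Cassidy (via the torsor theorem and $\delta\mathrm{tr.deg.}(\mathcal{E}/E_1)=0$) gives $\mathscr{L}_1(\beta_\tau)=0$, hence $\mathscr{L}_1(w/v)\in E_1$, and the Leibniz trick with $\delta(v)=cv$ converts this to $\mathscr{L}(w)\in E_1$. Descending from $E_1$ to $E$ is then a genuinely separate step, done by invoking Lemma 4.7 of \cite{A-D-H-W}, which produces a possibly \emph{different} $f\in E$ satisfying $\mathscr{L}(b)=(\phi-a)(f)$; your outline has no substitute for this descent.

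Your final step is also not quite right as sketched: to pass from $f\in E$ to $f\in S$ the paper does not run a pole-propagation analysis (which, as stated, is vague -- elements of $E=K(z,\zeta(z,\Lambda))$ can have lattice-periodic pole configurations compatible with $\phi$), but applies Corollary \ref{cor: Crterion for elts of E to be in S}: since $\phi(f)=af+\mathscr{L}(b)$ with $\mathscr{L}(b)\in S$, the $K$-span of $\{\phi^i(f)\}$ is finite dimensional, so $f\in S$ by the $\phi$-simplicity of $S$. The passage from $S$ to $S_0$ is then the algebraic twist $w\mapsto z^rw$, using $Z\circ(\phi-a)=q^{-1}(\phi-qa)\circ Z$ and $Z\circ\mathscr{L}(\delta)=\mathscr{L}(\delta-1)\circ Z$, which is where the normalization in the statement enters -- this part of your plan is essentially the right idea, but it cannot rescue the missing invariance/descent argument above.
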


\begin{rem*}
We do not rule out $f=\mathscr{L}(b)=0.$ In fact, this may well be
the solution if $b$ is annihilated by some operator from $\mathbb{C}[\delta],$
a condition which is easily verified to hold if and only if $b$ is
a Laurent polynomial in $z$.
\end{rem*}
\begin{proof}
The $\delta$-parametrized Galois group scheme $\mathcal{G}'$ of the system $\phi(y)=A'y$
over $E$ is a linear differential subgroup scheme
$
\left\{ \left(\begin{array}{cc}
\alpha & \beta\\
0 & 1
\end{array}\right) \in \GL_2 \right\} ,
$ defined over $\mathbb{C}$. 
Its intersection with the unipotent radical $\left\{ \left(\begin{array}{cc}
1 & \beta\\
0 & 1
\end{array}\right) \in \GL_2 \right\}$, denoted
$\mathcal{G}'_{u},$ is the $\delta$-parametrized\index{-parametrized}
Galois group scheme  of $\mathcal{L}_E$ over the $(\phi,\delta)$-field
\[
E_{1}=E(v).
\]
This $\mathcal{G}'_{u}$ is a linear differential subgroup scheme of the
additive group $[\delta]_{*}\mathbb{G}_{a}$ over $\mathbb{C}$. Since $w$ is $\delta$-algebraic, the torsor theorem for the parametrized Galois correspondence yields that 
$ \delta \rm{tr.deg.} (\mathcal{E} /E_1)=0= \delta\dim_{\mathbb{C}}(\mathcal{G}'_{u}),$ so, by \cite{Cas72},
Proposition 11, there must be a non-trivial linear operator $\mathscr{L}_{1}\in\mathbb{C}[\delta]$
such that
\[
\mathscr{L}_{1}(\beta_{\tau})=0
\]
for every $\tau=\left(\begin{array}{cc}
1 & \beta_{\tau}\\
0 & 1
\end{array}\right)\in\mathcal{G}'_{u}(C),$ for any $\delta$-ring extension $C$ of $\mathbb{C}$.

As
\[
\phi(\frac{w}{v})=\frac{w}{v}+\frac{b}{av}
\]
and $b/av\in E_{1},$ $\mathcal{E}$ is a PPV extension for the equation
\[
\phi(y)=y+b/av
\]
over $E_{1}.$ The action of $\tau\in\mathcal{G}'_{u}(C)$ is given
by
\[
\tau(\frac{w}{v})=\frac{w}{v}+\beta_{\tau}.
\]
It follows that for any $\tau\in\mathcal{G}'_{u}(C)$ we have, in
the base-changed PPV ring 
\[
E_{1}[w,\delta(w),...,w^{-1}]\otimes_{\mathbb{C}}C
\]
the equation

\[
\tau(\mathscr{L}_{1}(\frac{w}{v}))=\mathscr{L}_{1}(\frac{w}{v}+\beta_{\tau})=\mathscr{L}_{1}(\frac{w}{v}).
\]
By the Galois correspondence in the $\delta$-parametrized framework,
$\mathscr{L}_{1}(\frac{w}{v})\in E_{1}.$ Leibnitz' formula and the
equation $\delta(v)=cv$ imply
\[
v^{-1}\delta^{k}(w)=\sum_{j=0}^{k}\binom{k}{j}c^{k-j}\delta^{j}(\frac{w}{v}).
\]
Since a linear combination, with constant coefficients, of the $\delta^{j}(\frac{w}{v}),$
lies in $E_{1},$ so does a linear combination, with constant coefficients,
of $v^{-1}\delta^{k}(w)$. As $v\in E_{1},$ we find that for some
non-trivial $\mathscr{L}\in\mathbb{C}[\delta]$,
\[
f=\mathscr{L}(w)\in E_{1}.
\]
Since $\mathscr{L}$ commutes with $\phi,$ this $f$ satisfies $(\ref{eq:key equation-1-1}),$
but may not be in $E.$

By Lemma 4.7 of \cite{A-D-H-W} (with $E$ in the role of $L$ \emph{and}
$K,$ $v$ in the role of $x,$ and $L_{E}$ in the role of $L_{A}$)
we conclude that there exists an $f\in E$ (possibly different than
$\mathscr{L}(w)$) satisfying $(\ref{eq:key equation-1-1}).$ By Corollary
\ref{cor: Crterion for elts of E to be in S}, $f\in S.$

We have arrived at the two equations
\begin{equation}
\begin{cases}
\begin{array}{c}
(\phi-a)(w)=b\\
(\phi-a)(f)=\mathscr{L}(b).
\end{array}\end{cases}\label{eq:pair of equations-1-1}
\end{equation}
Let $Z$ be the operator of multiplication by $z.$ We have the relations
\[
Z\circ(\phi-a)=q^{-1}(\phi-qa)\circ Z,\,\,\,Z\circ\mathscr{L}(\delta)=\mathscr{L}(\delta-1)\circ Z.
\]
Multiplying the two equations by $z^{r},$ replacing $a$ by $q^{r}a,$
$b$ by $q^{r}z^{r}b$, $\mathscr{L}(\delta)$ by $\mathscr{L}(\delta-r),$
$w$ by $z^{r}w$ and $f$ by $z^{r}f,$ we get a similar pair of
equations, but we may assume now that $f\in S_{0},$ not only in $S$.
\end{proof}
Corollary \ref{cor:technical elliptic} implies now that for some
$h\in S$ and $d\in\mathbb{C}$ we have
\[
(\phi-a)(w-h)=dz^{r},
\]
with $d=0$ unless $a=q^{r}.$ As $w-h\in F,$ it is easily verified
that $w-h$ must be of the form $ez^{m}$ (with $e\in\mathbb{C}$
and $e=0$ unless $a=q^{m}$). It follows that $w\in S$, as desired.

\section{Appendix A: Proof of Proposition \ref{prop:technical_elliptic}}

We recall the proposition whose proof we give here, with a slight
change in notation.
\begin{prop*}
Let $f,g\in S_{\Lambda}=K_{\Lambda}[z,\zeta(z,\Lambda)],$ $a,c\in\mathbb{C}$
and $p\in\mathbb{C}[z]$ be such that
\begin{equation}
(\delta-c)(g)=(\phi-a)(f)+p.\label{eq:delta-phi}
\end{equation}
Then $g=(\phi-a)(u)+\widetilde{p}$ for some $u\in S_{\Lambda}$ and
$\widetilde{p}\in\mathbb{C}[z].$ Furthermore, if $a=q^{r}$ for some
$r\ge0$ we can take $\widetilde{p}=dz^{r},$ $d\in\mathbb{C},$ and
otherwise we can take $\widetilde{p}=0.$
\end{prop*}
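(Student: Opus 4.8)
The plan is to analyze equation \eqref{eq:delta-phi} by looking separately at the three "directions" in which elements of $S_\Lambda = K_\Lambda[z,\zeta]$ can grow: the polynomial-in-$z$ part, the $\zeta$-part, and the "genuinely elliptic" part in $K_\Lambda$. Write $g = \sum_{i,j} g_{ij} z^i \zeta^j$ with $g_{ij} \in K_\Lambda$ (this is unique by Lemma~\ref{lem:alg indep of z and zeta}), and similarly for $f$. The operators $\delta = z\partial$ and $\phi$ act on such expansions in a controlled way: $\phi(\zeta) = \zeta(qz) = q\zeta + f_\zeta$ with $f_\zeta \in K_\Lambda$, and $\delta(\zeta) = z\partial\zeta = -z\wp$, which lands in $zK_\Lambda \subset K_\Lambda[z]$, so $\delta$ does not raise the $\zeta$-degree while $\phi$ preserves it. This suggests an induction on the $\zeta$-degree of $g$ (and of $f$), peeling off the top-degree coefficient first.

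**Key steps.** First I would reduce to the case where $g$ and $f$ are "pure", i.e. handle the top $\zeta$-degree: comparing the coefficients of the highest power $\zeta^N$ on both sides of \eqref{eq:delta-phi} gives an equation of the same shape but over $K_\Lambda[z]$ (no $\zeta$), relating the leading coefficients; solving that and subtracting, one lowers $N$. So the heart of the matter is the case $g,f \in K_\Lambda[z]$, i.e. proving: if $(\delta - c)(g) = (\phi - a)(f) + p$ with $g,f \in K_\Lambda[z]$, $p \in \mathbb{C}[z]$, then $g = (\phi-a)(u) + \tilde p$ with $u \in K_\Lambda[z]$ and $\tilde p$ of the asserted shape. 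Here I would expand in powers of $z$: writing $g = \sum g_i z^i$, $f = \sum f_i z^i$ with $g_i, f_i \in K_\Lambda$, the operator $\delta - c$ acts on the $z^i$-coefficient as multiplication by $(i-c)$ plus the contribution of $\partial$ on $K_\Lambda$ via $z\partial$ shifting $i \mapsto i$... wait, more carefully $\delta(g_i z^i) = (i g_i + z\partial g_i)z^{i}$, hmm $z \partial(g_i z^i) = i g_i z^i + z^{i+1}\partial g_i$, so there is an upper-triangular coupling. Similarly $\phi(f_i z^i) = \phi(f_i) q^i z^i$. The plan is to solve for the $f_i$ (equivalently $u_i$) recursively in $i$, using at each level that an equation $g_i = q^i \phi(u_i) - a u_i + (\text{known})$ over $K_\Lambda$ can be solved for $u_i \in K_\Lambda$ precisely when $q^i \phi - a$ is invertible on $K_\Lambda$, i.e. when $a \ne q^i$ (since $\phi$ acting on $K_\Lambda$ has only $\mathbb{C}$ as fixed points and more generally $q^i\phi - a$ is injective on $K_\Lambda$ for $a/q^i$ not... this needs the fact that $K_\Lambda^\phi = \mathbb{C}$ and a little argument that $q^i\phi - a$ is surjective for $a \notin q^i\mathbb{C}^\times$-type obstruction — actually the relevant obstruction is whether $a = q^i$, where the kernel/cokernel is $\mathbb{C}$). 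This is exactly where the special value $a = q^r$ enters: at the one level $i = r$ we cannot kill the $z^r$-coefficient and are forced to leave a residual term $dz^r$, which is the claimed $\tilde p$.

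**Main obstacle.** The hard part will be the bookkeeping when $a = q^r$: one must show that the obstruction at level $i = r$ is exactly one-dimensional (a constant $d$), that it does not propagate to create obstructions at other levels, and that the $\delta - c$ coupling (the $z^{i+1}\partial g_i$ terms) interacts correctly — in particular one needs that after choosing the $u_i$ appropriately, the leftover is a single monomial $dz^r$ and not something worse. A secondary subtlety is that solving $q^i\phi(u_i) - a u_i = (\text{given in } K_\Lambda)$ may require enlarging the lattice $\Lambda$ (as in Lemma~\ref{lem:divisors lemma}(iii)), so one has to check the statement is stable under $\Lambda \mapsto \Lambda'$ — but since $S_\Lambda \subset S_{\Lambda'}$ and the final ring $S_0$ is lattice-independent, this is harmless. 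I expect the $\zeta$-degree induction to be essentially formal once the $K_\Lambda[z]$ case is done, so the entire difficulty concentrates in the resonant case $a = q^r$ of that base case, which is presumably why the proof is "lengthy" and deferred to the appendix.
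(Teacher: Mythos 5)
Your two-step plan (first eliminate $\zeta$ by peeling top $\zeta$-coefficients, then run a recursion in powers of $z$ over $K_\Lambda$) has two concrete gaps, and each is fatal as written. First, the engine of your $z$-recursion — that $q^{i}\phi-a$ is invertible on $K_\Lambda$ whenever $a\ne q^{i}$, the only obstruction being the resonance $a=q^{i}$ — is false. These operators are injective away from the resonance, but they are very far from surjective on $K_\Lambda$: solving $q^{i}\phi(u)-au=h$ forces the poles of $u$ to spread along $q$-orbits in $\mathbb{C}/\Lambda$, which for a generic pole of $h$ is impossible for an elliptic $u$ (this is the same phenomenon behind the divisor constraints of Lemma \ref{lem:divisors lemma}); the cokernel is infinite dimensional, so one cannot ``solve for $u_i$ level by level'' by inverting $q^{i}\phi-a$. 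Second, your claimed reduction to the statement over $K_\Lambda[z]$ with $u\in K_\Lambda[z]$ is false. Take $a=q$, $c=0$, $p=0$, $g=f_\zeta=\zeta(qz,\Lambda)-q\zeta(z,\Lambda)\in K_\Lambda$ and $f=-z\wp(z,\Lambda)\in K_\Lambda[z]$: then $\delta(g)=qz\bigl(\wp(z,\Lambda)-\wp(qz,\Lambda)\bigr)=(\phi-q)(f)$, so the hypothesis holds entirely inside $K_\Lambda[z]$, and $g=(\phi-q)(\zeta)$; but no $u'\in K_\Lambda[z]$, $\widetilde p\in\mathbb{C}[z]$ satisfy $g=(\phi-q)(u')+\widetilde p$. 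Indeed that would give $(\phi-q)(\zeta-u')\in\mathbb{C}[z]$; writing $u'=\sum_k a_kz^k$ with $a_k\in K_\Lambda$ and comparing the $z^{0}$-coefficients (legitimate since $z$ is transcendental over $K_\Lambda$), one gets $(\phi-q)(\zeta-a_0)=\kappa\in\mathbb{C}$; differentiating and using $K_\Lambda^{\phi}=\mathbb{C}$ yields $\zeta-a_0\in\mathbb{C}z+\mathbb{C}$, contradicting Lemma \ref{lem:alg indep of z and zeta}. So in resonant cases the solution $u$ must acquire genuine $\zeta$-terms even when $g$ and $f$ have none: the $\zeta$-variable cannot be eliminated first, and your ``formal'' peeling step also leaves residual terms $\widetilde p_N\zeta^{N}$ that are not of the target shape and are never dealt with.

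This also means your estimate of where the difficulty lies is inverted relative to the actual proof. The paper nests the inductions the other way: the outer recursion is over powers of $z$, with coefficients in $K_\Lambda[\zeta]$, and all the weight is carried by Lemma \ref{lem:A9}, an induction on the $\zeta$-degree. The solvability mechanism there is not inversion of $\phi-a$ but integration: the recursion produces relations $g_{i-1}'=(q^{i}\phi-a)(\widetilde f)+\gamma$ whose left side is a \emph{derivative}; one shows the relevant function has no residues, takes a meromorphic primitive $w$, realizes its period homomorphism as $r\zeta+sz$ (Lemma \ref{lem: real any chi}), and then pins down $r,s$ using ellipticity, the algebraic independence of $z$ and $\zeta$, and the Legendre relation (Lemma \ref{lem:The eta lemma}); the resonances $a=q^{\ell},q^{\ell+1},q^{\ell+2}$ and the constant/non-constant dichotomy for leading coefficients are what generate the lengthy case analysis. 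Your sketch contains no substitute for this integration-and-monodromy step, which is the actual heart of the proposition.
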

We start with some lemmas. To ease notation, write from now on $\zeta:=\zeta(z,\Lambda)$.
Note that
\[
\phi(\zeta)=q\zeta+f_{\zeta}
\]
where $f_{\zeta}=\zeta(qz,\Lambda)-q\zeta(z,\Lambda)\in K_{\Lambda}$
is elliptic. 

Recall that $\zeta$ is transcendental over $K_{\Lambda}$, consider
the polynomial ring $K_{\Lambda}[\zeta]$, and write $K_{\Lambda}[\zeta]_{<d}$
for the polynomials of degree $<d$ in $\zeta$.
\begin{lem}
\label{lem: injectivity =00005Cphi-a}Let $a\in\mathbb{C}.$ If $a\ne q^{r},$
the operator $(\phi-a)$ is injective on $F=\mathbb{C}((z))$, while
$\ker(\phi-q^{r})=\mathbb{C}z^{r}.$ A-fortiori, the same applies
to $S_{\Lambda}$. If $a\ne1$, the operator $(\phi-a)$ is injective
on $K_{\Lambda}[\zeta]$, while $\ker(\phi-1)=\mathbb{C}.$
\end{lem}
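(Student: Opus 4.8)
The plan is to reduce everything to the field $F=\mathbb{C}((z))$, where $\phi$ acts transparently, diagonally, on the monomial basis.

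\textbf{Step 1: the case of $F$.} Write $g=\sum_{n}g_{n}z^{n}\in F$ with $g_{n}\in\mathbb{C}$ (finitely many negative exponents). Since $\phi(g)(z)=g(qz)=\sum_{n}q^{n}g_{n}z^{n}$, we get $(\phi-a)(g)=\sum_{n}(q^{n}-a)g_{n}z^{n}$. Hence $(\phi-a)(g)=0$ forces $g_{n}=0$ for every $n$ with $q^{n}\neq a$. Because $q\ge 2$ is an integer, $n\mapsto q^{n}$ is strictly increasing on $\mathbb{Z}$, so there is at most one integer $r$ with $q^{r}=a$. If there is none, then $g=0$ and $\phi-a$ is injective; if $a=q^{r}$ then $g=g_{r}z^{r}$, and conversely $(\phi-q^{r})(z^{r})=0$, so $\ker(\phi-q^{r})=\mathbb{C}z^{r}$. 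This proves the first assertion, with $r$ ranging over $\mathbb{Z}$ (only $r\ge 0$ enters the later applications).

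\textbf{Step 2: transfer to $S_{\Lambda}$ and $K_{\Lambda}[\zeta]$.} Both $S_{\Lambda}=K_{\Lambda}[z,\zeta(z,\Lambda)]$ and $K_{\Lambda}[\zeta(z,\Lambda)]$ embed into $F$ via the Laurent expansion at $0$, and this embedding is $\phi$-equivariant since $\phi(z)=qz$ (see \S\ref{sec:Elliptic-functions}). Consequently, for any subring $R$ with $K_{\Lambda}[\zeta]\subseteq R\subseteq F$ one has $\ker\big((\phi-a)|_{R}\big)=R\cap\ker\big((\phi-a)|_{F}\big)$, which by Step 1 is $0$ unless $a=q^{r}$ for some $r\in\mathbb{Z}$, in which case it is $R\cap\mathbb{C}z^{r}$. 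Taking $R=S_{\Lambda}$ gives the ``a-fortiori'' claim at once: $z^{r}\in S_{\Lambda}$ for every $r\ge 0$, so $\ker(\phi-q^{r})=\mathbb{C}z^{r}$, and $\phi-a$ is injective as soon as $a\ne q^{r}$ for all $r\ge 0$. Taking $R=K_{\Lambda}[\zeta]$, it remains only to observe that $z^{r}\notin K_{\Lambda}[\zeta]$ for every $r\neq 0$: otherwise $z^{r}=P(\zeta)$ (or $z^{|r|}P(\zeta)=1$ when $r<0$) for some $P\in K_{\Lambda}[X]$, a nontrivial polynomial relation between $z$ and $\zeta$ over $K_{\Lambda}$, contradicting Lemma~\ref{lem:alg indep of z and zeta}. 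Since $\mathbb{C}z^{0}=\mathbb{C}\subseteq K_{\Lambda}[\zeta]$, we conclude that $\phi-a$ is injective on $K_{\Lambda}[\zeta]$ for every $a\ne 1$, while $\ker(\phi-1)=\mathbb{C}$.

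\textbf{Main obstacle.} There is no substantial difficulty here; the only thing to watch is the bookkeeping of which monomials $z^{r}$ belong to which ring (settled by Lemma~\ref{lem:alg indep of z and zeta}) and the harmless discrepancy between $r\in\mathbb{Z}$, natural for $F$, and $r\ge 0$, which is what the applications require. If one wished to argue inside $K_{\Lambda}[\zeta]$ without invoking $F$, one could induct on the $\zeta$-degree: for $P=\sum_{j\le d}c_{j}\zeta^{j}$ with $c_{d}\neq 0$ the top coefficient of $(\phi-a)(P)$ is $q^{d}\phi(c_{d})-ac_{d}$, and if it vanishes then $\phi(c_{d})=(a/q^{d})c_{d}$ makes $\mathrm{div}(c_{d})$ $\phi$-invariant, hence zero by Lemma~\ref{lem:divisors lemma}(i), so $c_{d}\in\mathbb{C}^{\times}$ and $a=q^{d}$, after which one descends to the next coefficient; this works but is heavier, so I would present the $F$-based proof above.
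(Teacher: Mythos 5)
Your proof is correct and follows essentially the same route as the paper: injectivity on $F$ by the diagonal action of $\phi$ on monomials, then the statements for $S_{\Lambda}$ and $K_{\Lambda}[\zeta]$ by intersecting with the kernel in $F$ and noting that $z^{r}\notin K_{\Lambda}[\zeta]$ for $r\ne 0$ via Lemma~\ref{lem:alg indep of z and zeta}. The only difference is that you spell out the $\phi$-equivariant embedding and the negative-exponent case explicitly, which the paper leaves implicit.
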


\begin{proof}
The first statement is clear, since $\phi(\sum a_{n}z^{n})=\sum a_{n}q^{n}z^{n}.$
The second follows from the fact that for $r\ne0,$ $z^{r}\notin K_{\Lambda}[\zeta]$.
See Lemma~\ref{lem:alg indep of z and zeta}.
\end{proof}
\begin{lem}
\label{lem:Lemma A.7}Let $f=\sum_{i=0}^{d}f_{i}\zeta^{i}\in K_{\Lambda}[\zeta]$
($f_{i}\in K_{\Lambda},$ $f_{d}\ne0$). Let $a\in\mathbb{C}^{\times}.$
Then:
\begin{enumerate}[i)]
\item  If $a\ne q^{d},$ or $a=q^{d}$ and $f_{d}\notin\mathbb{C}$,
then $(\phi-a)(f)$ has degree $d$ as a polynomial in $\zeta$ and
the coefficient of $\zeta^{d}$ is $q^{d}\phi(f_{d})-af_{d}.$

\item  If $d=0$, $a=1$ and $f_{0}\in\mathbb{C}$ then $(\phi-1)(f)=0.$

\item  If $d\ge1,$ $a=q^{d}$ and $f_{d}\in\mathbb{C}$ then $(\phi-q^{d})(f)$
has degree $d-1$ in $\zeta$ and the coefficient of $\zeta^{d-1}$
is
\[
df_{d}q^{d-1}f_{\zeta}+q^{d-1}(\phi-q)(f_{d-1}).
\]
\end{enumerate}

\end{lem}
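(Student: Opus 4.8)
The plan is to compute $(\phi-a)(f)$ directly as a polynomial in $\zeta$, tracking only its top two coefficients, and then to read off the three cases from that computation together with the injectivity statements of Lemma~\ref{lem: injectivity =00005Cphi-a}. Using $\phi(\zeta)=q\zeta+f_{\zeta}$ with $f_{\zeta}\in K_{\Lambda}$ and the binomial expansion $(q\zeta+f_{\zeta})^{i}=q^{i}\zeta^{i}+i\,q^{i-1}f_{\zeta}\,\zeta^{i-1}+(\text{lower-order terms in }\zeta)$, one gets
\[
\phi(f)=\sum_{i=0}^{d}\phi(f_{i})(q\zeta+f_{\zeta})^{i}=q^{d}\phi(f_{d})\,\zeta^{d}+\bigl(q^{d-1}\phi(f_{d-1})+d\,q^{d-1}f_{\zeta}\phi(f_{d})\bigr)\zeta^{d-1}+(\text{lower order}).
\]
Since $\phi$ preserves the subspace of elements of $\zeta$-degree $\le d$, so does $\phi-a$; subtracting $af=\sum_{i}af_{i}\zeta^{i}$ shows that the coefficient of $\zeta^{d}$ in $(\phi-a)(f)$ is $q^{d}\phi(f_{d})-af_{d}$ and the coefficient of $\zeta^{d-1}$ is $q^{d-1}\phi(f_{d-1})+d\,q^{d-1}f_{\zeta}\phi(f_{d})-af_{d-1}$. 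The whole lemma is then a matter of deciding when these vanish.

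For (i), rewrite the $\zeta^{d}$-coefficient as $q^{d}(\phi-aq^{-d})(f_{d})$. If $a\ne q^{d}$ then $aq^{-d}\ne1$, so $\phi-aq^{-d}$ is injective on $K_{\Lambda}[\zeta]$ by Lemma~\ref{lem: injectivity =00005Cphi-a} and hence does not kill $f_{d}\ne0$; if instead $a=q^{d}$ with $f_{d}\notin\mathbb{C}$, then $f_{d}\notin\ker(\phi-1)=\mathbb{C}$, again by Lemma~\ref{lem: injectivity =00005Cphi-a}. In both cases the $\zeta^{d}$-coefficient $q^{d}\phi(f_{d})-af_{d}$ is nonzero, so $(\phi-a)(f)$ has degree exactly $d$. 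Part (ii) is immediate, since $f=f_{0}\in\mathbb{C}$ is a $\phi$-constant. For (iii), when $a=q^{d}$ and $f_{d}\in\mathbb{C}$ we have $\phi(f_{d})=f_{d}$, so the $\zeta^{d}$-coefficient vanishes and the $\zeta^{d-1}$-coefficient equals $d\,q^{d-1}f_{d}f_{\zeta}+q^{d-1}(\phi(f_{d-1})-qf_{d-1})=d f_{d}q^{d-1}f_{\zeta}+q^{d-1}(\phi-q)(f_{d-1})$, exactly as claimed.

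The one genuinely non-formal point — and the main obstacle — is to show that in case (iii) this $\zeta^{d-1}$-coefficient is nonzero, i.e. that $(\phi-q^{d})(f)$ has degree \emph{exactly} $d-1$. Dividing by $d f_{d}q^{d-1}\ne0$, this reduces to the assertion that $f_{\zeta}\notin(\phi-q)(K_{\Lambda})$. I would prove this by contradiction. Suppose $f_{\zeta}=(\phi-q)(g)$ for some $g\in K_{\Lambda}$; since also $f_{\zeta}=(\phi-q)(\zeta)$ as an identity in $M$, the meromorphic function $\zeta-g$ satisfies $\phi(\zeta-g)=q(\zeta-g)$. Applying $\partial$ and using $\partial\circ\phi=q\,\phi\circ\partial$ together with $\partial\zeta=-\wp$ and $\partial g\in K_{\Lambda}$, we obtain $\phi(\wp+\partial g)=\wp+\partial g$, so $\wp+\partial g$ is a $\phi$-invariant element of $K$ and hence lies in $K^{\phi}=\mathbb{C}$. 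Then $\partial(g-\zeta)$ is constant, so $\zeta$ lies in $K_{\Lambda}+\mathbb{C}z$, contradicting the algebraic independence of $z$ and $\zeta$ over $K_{\Lambda}$ (Lemma~\ref{lem:alg indep of z and zeta}). This rules out $f_{\zeta}\in(\phi-q)(K_{\Lambda})$ and completes the proof of (iii).
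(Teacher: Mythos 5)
Your proposal is correct, and its skeleton is the same as the paper's: expand $\phi(f)=\sum_i\phi(f_i)(q\zeta+f_\zeta)^i$, use the transcendence of $\zeta$ over $K_\Lambda$ to read off the coefficients of $\zeta^{d}$ and $\zeta^{d-1}$, and then show the relevant coefficient is nonzero in cases (i) and (iii). Where you differ is in how those two non-vanishing claims are justified. For (i), the paper argues directly with poles: if $f_d$ is a non-constant elliptic function, pick a nonzero pole $z_0$ of minimal modulus, so $q^{d}\phi(f_d)-af_d$ has a pole at $z_0/q$ and cannot vanish; you instead rewrite the coefficient as $q^{d}(\phi-aq^{-d})(f_d)$ and quote Lemma~\ref{lem: injectivity =00005Cphi-a}, which is arguably slicker since that lemma is stated precisely for this purpose. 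For (iii), the paper observes that vanishing of the $\zeta^{d-1}$-coefficient would give $f_\zeta=(\phi-q)(h)$ with $h\in K_\Lambda$, and since $f_\zeta=(\phi-q)(\zeta)$ this contradicts the injectivity of $\phi-q$ on $K_\Lambda[\zeta]$ in one line; you reach the same contradiction by a self-contained analytic detour (differentiate $\phi(\zeta-g)=q(\zeta-g)$, use $\partial\circ\phi=q\phi\circ\partial$ and $K^{\phi}=\mathbb{C}$ to get $\wp+\partial g$ constant, integrate, and contradict Lemma~\ref{lem:alg indep of z and zeta}). Your argument is sound — in effect you re-prove the case of the injectivity lemma you need — it is just longer than necessary given that Lemma~\ref{lem: injectivity =00005Cphi-a} already covers it; you could have closed case (iii) the same way you closed case (i).
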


\begin{proof}
We have $\phi(f)=\sum_{i=0}^{d}\phi(f_{i})(q\zeta+f_{\zeta})^{i}$,
so the coefficient of $\zeta^{d}$ in $(\phi-a)(f)$ is $q^{d}\phi(f_{d})-af_{d}.$
If $f_{d}$ is non-constant, it must have a pole at some $z_{0}\ne0.$
If we take $z_{0}$ to be a non-zero pole with minimal absolute value,
then $q^{d}\phi(f_{d})-af_{d}$ has a pole at $z_{0}/q,$ and in particular
can not vanish. If $f_{d}$ is constant, the coefficient of $\zeta^{d}$
vanishes only if $a=q^{d}.$ This proves i), and ii) is obvious.
In case  iii) the coefficient of $\zeta^{d}$ vanishes and the next
coefficient, of $\zeta^{d-1}$, comes out as stated from the same
computation. If it vanished, there would be an elliptic function $h$
such that
\[
(\phi-q)(\zeta)=f_{\zeta}=(\phi-q)(h).
\]
This contradicts the injectivity of $(\phi-q)$ on $K_{\Lambda}[\zeta]$.
\end{proof}
The next Lemma says that a function of the form $u+r\zeta$ where
$u$ is elliptic, can not have a global meromorphic primitive, unless
$r=0$.
\begin{lem}
\label{lem:The eta lemma}If $u\in K_{\Lambda}$, $w$ is globally
meromorphic, $r\in\mathbb{C}$ and $u+r\zeta=w'$ then $r=0.$
\end{lem}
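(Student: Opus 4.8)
\textbf{Proof strategy for Lemma \ref{lem:The eta lemma}.} The plan is to argue by periods. Suppose $u \in K_\Lambda$ and $w$ is globally meromorphic on $\mathbb{C}$ with $w' = u + r\zeta$. For any $\omega \in \Lambda$, consider the function $w(z+\omega) - w(z)$. Differentiating and using that $u$ is $\Lambda$-periodic, we get
\[
\frac{d}{dz}\bigl(w(z+\omega) - w(z)\bigr) = u(z+\omega) + r\zeta(z+\omega) - u(z) - r\zeta(z) = r\bigl(\zeta(z+\omega) - \zeta(z)\bigr) = r\,\eta(\omega,\Lambda),
\]
a constant. Hence $w(z+\omega) - w(z) = r\,\eta(\omega,\Lambda)\, z + c_\omega$ for some constant $c_\omega$ depending on $\omega$.

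\textbf{Key step: force $r=0$ from the period relation.} Now I would iterate the period shift. Apply the relation twice, once with $\omega_1$ and once with $\omega_2$ (an oriented basis of $\Lambda$), and compare $w(z+\omega_1+\omega_2)$ computed in the two possible orders. Going via $z+\omega_1$ first gives
\[
w(z+\omega_1+\omega_2) = w(z) + r\eta_1 z + c_{\omega_1} + r\eta_2(z+\omega_1) + c_{\omega_2},
\]
and going via $z+\omega_2$ first gives
\[
w(z+\omega_1+\omega_2) = w(z) + r\eta_2 z + c_{\omega_2} + r\eta_1(z+\omega_2) + c_{\omega_1}.
\]
Subtracting, the constant and the $z$-linear terms cancel, leaving $r\eta_2\omega_1 = r\eta_1\omega_2$, i.e. $r(\omega_1\eta_2 - \omega_2\eta_1) = 0$. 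By the Legendre period relation \eqref{eq:legendre period relation}, $\omega_1\eta_2 - \omega_2\eta_1 = 2\pi i \neq 0$, so $r = 0$.

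\textbf{Expected main obstacle.} There is essentially no analytic obstacle here; the only care needed is the bookkeeping of the additive cocycle $\omega \mapsto (r\eta(\omega,\Lambda), c_\omega)$ and checking that the comparison of the two orders of shifting is legitimate (it is, since $w$ is single-valued on all of $\mathbb{C}$, so $w(z+\omega_1+\omega_2)$ is unambiguous). One could alternatively phrase the argument more conceptually: $w' - r\zeta = u$ is elliptic, so $w - r\,(\text{antiderivative of }\zeta)$ would be a primitive of an elliptic function; but any elliptic function with zero residues has an elliptic primitive after adjusting by a $\zeta$-term, and matching the $\zeta$-coefficients forces $r$ to make the would-be primitive well-defined on $\mathbb{C}/\Lambda$, which by the Legendre relation is impossible unless $r=0$. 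The direct period computation above is the cleanest route and I would present that one.
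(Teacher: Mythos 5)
Your proof is correct and is essentially the paper's own argument: both derive $w(z+\omega)-w(z)=r\eta(\omega)z+c_\omega$ and then exploit the failure of symmetry in $\omega_1,\omega_2$ (via the Legendre relation $\omega_1\eta_2-\omega_2\eta_1=2\pi i$) to force $r=0$. Your direct comparison of the two orders of shifting is just a rephrasing of the paper's cocycle identity $\mu(\omega_1+\omega_2)=r\eta(\omega_2)\omega_1+\mu(\omega_1)+\mu(\omega_2)$, so there is nothing further to add.
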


\begin{proof}
For $\omega\in\Lambda$ write $\chi(z,\omega)=w(z+\omega)-w(z)$.
Differentiating with respect to $z$ we get that $\chi'(z,\omega)=r\eta(\omega)$
where $\eta$ is the Legendre $\eta$-function of the lattice $\Lambda.$
Thus $\chi(z,\omega)=r\eta(\omega)z+\mu(\omega)$ for some $\mu(\omega)\in\mathbb{C}.$
We get
\[
\chi(z,\omega_{1}+\omega_{2})=\chi(z+\omega_{1},\omega_{2})+\chi(z,\omega_{1})=r\eta(\omega_{2})(z+\omega_{1})+r\eta(\omega_{1})z+\mu(\omega_{1})+\mu(\omega_{2}),
\]
so
\[
\mu(\omega_{1}+\omega_{2})=r\eta(\omega_{2})\omega_{1}+\mu(\omega_{1})+\mu(\omega_{2}).
\]
If $r\ne0$ this is absurd, since the left hand side is symmetric
in $\omega_{1}$ and $\omega_{2}$, but for an oriented basis $(\omega_{1},\omega_{2})$
of $\Lambda$ the Legendre relation gives $\eta(\omega_{2})\omega_{1}-\eta(\omega_{1})\omega_{2}=2\pi i,$
so the right hand side is not symmetric.
\end{proof}
\begin{lem}
\label{lem:A9}Consider $g,f\in K_{\Lambda}[\zeta]$ and $a,\gamma\in\mathbb{C}$
such that
\[
g'=(q\phi-a)(f)+\gamma.
\]
Then there exists a $u\in K_{\Lambda}[\zeta]$ and $\beta\in\mathbb{C}$
such that
\[
g=(\phi-a)(u)+\beta.
\]
Furthermore, if $a\ne1,$ we may take $\beta=0.$
\end{lem}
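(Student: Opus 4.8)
The plan is to \emph{integrate} the relation. The key formal identity is $\partial\circ(\phi-a)=(q\phi-a)\circ\partial$, which follows from $\partial\circ\phi=q\phi\circ\partial$. Hence, if $F$ is any primitive of $f$, then $\partial\bigl((\phi-a)(F)\bigr)=(q\phi-a)(f)=g'-\gamma$, so $g-(\phi-a)(F)$ is a primitive of the constant $\gamma$, i.e.\ $g=(\phi-a)(F)+\gamma z+c$ for some $c\in\mathbb{C}$. Thus everything reduces to producing a primitive of $f$ inside $\widehat S_\Lambda:=K_\Lambda[\zeta]+\mathbb{C}z$: writing it $F=F_0+sz$ with $F_0\in K_\Lambda[\zeta]$, $s\in\mathbb{C}$, we get $g=(\phi-a)(F_0)+\bigl(s(q-a)+\gamma\bigr)z+c$, and since $g,(\phi-a)(F_0),c\in K_\Lambda[\zeta]$ while $z\notin K_\Lambda[\zeta]$ (algebraic independence of $z$ and $\zeta$ over $K_\Lambda$, Lemma~\ref{lem:alg indep of z and zeta}), the coefficient of $z$ vanishes, giving $g=(\phi-a)(F_0)+c$; finally, if $a\neq1$ then $\phi-a$ acts on $\mathbb{C}$ as multiplication by $1-a\neq0$, so $c=(\phi-a)\bigl(c/(1-a)\bigr)$ and we may take $u=F_0+c/(1-a)$, $\beta=0$. (It is in fact enough to find a primitive in $S_\Lambda=K_\Lambda[z,\zeta]$: expanding such a primitive as $\sum_m F_mz^m$ and comparing $z$-degrees in $\partial F=f\in K_\Lambda[\zeta]$ gives $\partial F_N=0$ and, if $N\geq2$, $\partial F_{N-1}\in\mathbb{C}$; since a nonzero constant is never $\partial$ of an element of $K_\Lambda[\zeta]$ — by the argument proving Lemma~\ref{lem:partial simplicity of S} — this forces $F_N=0$, so $N\le1$ and $F_1\in\mathbb{C}$, i.e.\ $F\in\widehat S_\Lambda$.)

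The first thing to establish is that $f$ is \emph{residue-free} at every point of $\mathbb{C}$. Since $g\in K_\Lambda[\zeta]\subset\mathscr{M}(\mathbb{C})$, its derivative $g'$ has no residues; neither does $\gamma$; hence $(q\phi-a)(f)=g'-\gamma$ has no residues. From $\mathrm{res}_w(\phi h)=q^{-1}\mathrm{res}_{qw}(h)$ one gets that the residue divisor of $(q\phi-a)(f)$ is $([q]^{*}-a)\,\mathrm{Res}(f)$, where $([q]^{*}D)(w)=D(qw)$; so $([q]^{*}-a)\,\mathrm{Res}(f)=0$. If $\mathrm{Res}(f)\neq0$ and $a\neq0$, choosing $w_0\neq0$ in its support and iterating forces $\mathrm{res}_{w_0/q^{k}}(f)\neq0$ for all $k$, contradicting discreteness of the support (the mechanism of Lemma~\ref{lem:divisors lemma}(i)); hence $\mathrm{Res}(f)=\rho[0]$ for some $\rho\in\mathbb{C}$. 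To see $\rho=0$: since each $f_i\in K_\Lambda$ is $\Lambda$-periodic, near $\omega\in\Lambda$ (with $t=z-\omega$) one has $f=\sum_i f_i(t)\bigl(\zeta(t)+\eta(\omega)\bigr)^i$, so $\mathrm{res}_\omega(f)=P(\eta(\omega))$ where $P(X)=\bigl[\text{coeff.\ of }t^{-1}\text{ in }\sum_i f_i(t)(\zeta(t)+X)^i\bigr]\in\mathbb{C}[X]$ is independent of $\omega$ and $P(0)=\mathrm{res}_0(f)=\rho$. As $\mathrm{Res}(f)$ is supported at $0$, $P(\eta(\omega))=0$ for all $\omega\in\Lambda\setminus\{0\}$; the set $\{\eta(\omega):\omega\in\Lambda\}$ is infinite (by the Legendre relation~\eqref{eq:legendre period relation}, $\eta_1,\eta_2$ are $\mathbb{R}$-linearly independent), so $P\equiv0$ and $\rho=0$.

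It then remains to prove the structural claim: \emph{every residue-free $f\in K_\Lambda[\zeta]$ has a primitive in $\widehat S_\Lambda$ (equivalently, in $S_\Lambda$)}. Being residue-free, $f$ has a single-valued meromorphic primitive since $\mathbb{C}$ is simply connected; the issue is to locate it. I would induct on $d=\deg_\zeta f$. For $d\le0$, $f$ is elliptic, and the partial-fraction decomposition writes $f=c_0+\sum_{j,k\ge1}c_{j,k}\,\wp^{(k-1)}(z-z_j)$ with \emph{no} residue ($\zeta$-type) terms; integrating term by term and using $\zeta(z-z_j)=\bigl(\zeta(z-z_j)-\zeta(z)\bigr)+\zeta(z)\in K_\Lambda+\mathbb{C}\zeta$ produces a primitive in $K_\Lambda+\mathbb{C}\zeta+\mathbb{C}z\subset\widehat S_\Lambda$. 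For the inductive step one subtracts from $f$ a well-chosen element of $\partial(S_\Lambda)$ so as to lower $\deg_\zeta$ while preserving residue-freeness, then applies the inductive hypothesis to the residual; the basic moves are $\partial(\psi\zeta^{d})=\psi'\zeta^{d}-d\psi\wp\zeta^{d-1}$ and $\partial(\zeta^{d+1})=-(d+1)\wp\zeta^{d}$, with Lemma~\ref{lem:The eta lemma} and the injectivity of $\phi-q$ on $K_\Lambda[\zeta]$ (used in the proof of Lemma~\ref{lem:Lemma A.7}) serving to exclude the obstructions that would otherwise force spurious $\zeta$- or $z$-terms into the primitive. \textbf{This inductive step is the main obstacle}: it is where the genuinely elliptic input enters and where the residues at lattice and at non-lattice points must be tracked carefully; the special values $a\in\{1,q,q^{2},\dots\}$ — where $\phi-a$ has nonzero kernel $\mathbb{C}z^{r}$ on $\mathscr{M}(\mathbb{C})$ and $\gamma$ cannot be absorbed into the $z$-term — are accommodated precisely by the freedom in choosing the primitive and by the normalization done in the first paragraph.
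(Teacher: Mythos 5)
Your first two paragraphs are fine and give a genuinely attractive reduction: the identity $\partial\circ(\phi-a)=(q\phi-a)\circ\partial$, the observation that a primitive of $f$ lying in $S_\Lambda$ must in fact lie in $K_\Lambda[\zeta]+\mathbb{C}z$ (by the $z$-degree argument and Lemma~\ref{lem:alg indep of z and zeta}), and the residue-freeness of $f$ (your $P(\eta(\omega))$ argument is correct, although the parenthetical claim that Legendre gives $\mathbb{R}$-linear independence of $\eta_1,\eta_2$ is not what you need nor what \eqref{eq:legendre period relation} gives; what it gives is $(\eta_1,\eta_2)\neq(0,0)$, hence the subgroup $\{\eta(\omega)\}$ is infinite, which suffices). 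But the proof is not complete: you have reduced the lemma to the structural claim that every residue-free element of $K_\Lambda[\zeta]$ lies in $\partial\bigl(K_\Lambda[\zeta]\bigr)+\mathbb{C}$, and that claim is exactly where all the difficulty of the statement is concentrated. You yourself flag its inductive step as ``the main obstacle'' and only list ``basic moves''; so what is offered is a reformulation, not a proof.

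The reason the sketched induction is not routine is that lowering $\deg_\zeta$ requires more than residue-freeness at each stage. Writing $f=\sum_{i\le d}f_i\zeta^i$ and a candidate primitive $G=\sum_j G_j\zeta^j+sz$, the triangular system forces $f_d=G_d'-(d+1)c\,\wp$ with $G_d\in K_\Lambda$, $c\in\mathbb{C}$; since derivatives of elliptic functions have zero periods and the periods of $\wp$ are $(-\eta_1,-\eta_2)$, this needs not only that $f_d$ be residue-free (which does follow from residue-freeness of $f$, by evaluating your residue polynomial in $\eta(\omega)$ at all translates of each pole) but also that the periods $\int_{z_0}^{z_0+\omega_i}f_d\,dz$ be proportional to $(\eta_1,\eta_2)$ --- a global condition that local residue data do not give. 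For $d=1$ one can extract it by integrating $f$ around a fundamental parallelogram and invoking \eqref{eq:legendre period relation}; for $d\ge2$ the analogous identities mix the lower coefficients $f_i$ with quasi-period terms, and after the first correction one must use the additive-constant freedom in $G_d$ to kill the next period obstruction, and so on. This bookkeeping, together with ruling out the degenerate configurations, is precisely what the paper's proof does by a different organization (induction on $\deg_\zeta g$ with cases according to whether $a\in\{1,q,q^2,\dots\}$ and whether the leading coefficient is constant, using Lemma~\ref{lem:Lemma A.7}, residual-point arguments, and Lemma~\ref{lem:The eta lemma} to show certain cases ``never occur''). Note also that your route deliberately forgets the hypothesis down to residue-freeness of $f$, so you must actually prove the structural claim in that generality (or reinstate the relation $g'=(q\phi-a)(f)+\gamma$ in the induction); until one of these is carried out, the lemma is not established.
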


\begin{proof}
The last statement is clear, because if $a\ne1,$ the operator $(\phi-a)$
is surjective on the constants, so $\beta=(\phi-a)(u_{0})$ for some
$u_{0}\in\mathbb{C},$ and replacing $u$ by $u+u_{0}$ we may assume
$\beta=0$.

We shall prove the lemma by induction on $\ell,$ the degree of $g$
as a polynomial in $\zeta$. Write
\[
g=\sum g_{i}\zeta^{i},\,\,\,f=\sum f_{i}\zeta^{i}\,\,\,(g_{i},f_{i}\in K_{\Lambda}).
\]

\bigskip{}

\textbf{Case $\ell=0$. }Assume that $g\in K_{\Lambda}$ and $g'=(q\phi-a)(f)+\gamma$
for $f\in K_{\Lambda}[\zeta],$$\gamma\in\mathbb{C}.$ By Lemma \ref{lem:Lemma A.7}
we are in one of the following two sub-cases:
\begin{itemize}
\item $f=f_{0}\in K_{\Lambda}$ and $g'=(q\phi-a)(f_{0})+\gamma,$
\item $a=q^{2},$ $f=f_{0}+f_{1}\zeta$ with $0\ne f_{1}\in\mathbb{C}$
and $g'=q(\phi-q)(f_{0})+qf_{1}f_{\zeta}+\gamma.$
\end{itemize}
Quite generally, if $\xi\in\mathbb{C}$ and $h(z)$ is meromorphic
at $\xi,$ let us say that $\xi$ is a \emph{residual point} of $h(z)$
if
\[
Res_{\xi}h(z)dz\ne0.
\]
If $\xi$ is a residual point of $h(z)$, then $\xi/q$ is a residual
point of $h(qz)$. A globally meromorphic function $h$ admits a primitive,
\emph{i.e.},. $h=w'$ for some globally meromorphic $w$, if and only if $h$
has no residual points.

In the first sub-case, we claim that $f_{0}$ has no residual points.
Otherwise, thanks to the periodicity of $f_{0},$ there would be a
residual point $\xi\ne0$, and we can take it to be of minimal absolute
value. The function $(q\phi-a)(f_{0})+\gamma$ then has $\xi/q$ as
a residual point, conradicting the fact that it has a meromorphic
primitive $g$.

It follows that there exists a globally meromorphic primitive $w$
with $w'=f_{0}$. Since $f_{0}$ is $\Lambda$-periodic,
\[
\chi(\omega)=w(z+\omega)-w(z)
\]
($\omega\in\Lambda)$ does not depend on $z,$ and is a homomorphism
$\Lambda\to\mathbb{C}.$ But every such homomorphism is supplied by
a linear combination of $z$ and $\zeta$ as well. It follows that
for some $r,s\in\mathbb{C}$ and $u\in K_{\Lambda}$ we have
\[
w=u+r\zeta+sz.
\]
Integrating the given expression for $g'$ we find that $g=(\phi-a)(w)+\gamma z+\beta$
for some $\beta\in\mathbb{C},$ or
\[
g=(\phi-a)(u+r\zeta)+(sq-sa+\gamma)z+\beta.
\]
As $g$ is elliptic, the term with $z$ must vanish (and in fact $r=0$
unless $a=q$), proving the Lemma in this case.

We claim that the second sub-case \emph{never occurs}. We can assume,
dividing $g$ by $qf_{1}$ to ease notation, that
\[
g'=(\phi-q)(f_{0}+\zeta)+\gamma.
\]
Now, $f_{0}+\zeta$ is not $\Lambda$-periodic, but its polar part
is, so had there been a    residual point  $\xi$ for it, so would be
$\xi+\omega$ for any $\omega\in\Lambda,$ and we could assume that
$\xi\ne0.$ The same argument as before shows that $f_{0}+\zeta$
has no residual points at all, hence
\[
f_{0}+\zeta=w'
\]
for some meromorphic $w.$ This contradicts  Lemma~\ref{lem:The eta lemma}.

\bigskip{}

\textbf{Induction step.} Let $\ell\ge1$, and assume that the lemma
had been proved up to degree $\ell-1$.

\textbf{Case $g_{\ell}\in\mathbb{C}$ and $a\ne q^{\ell}.$ }In this
case\textbf{
\[
g=(\phi-a)(v)+\widetilde{g}
\]
}where $v=g_{\ell}(q^{\ell}-a)^{-1}\zeta^{\ell}\in K_{\Lambda}[\zeta]$
and $\widetilde{g}\in K_{\Lambda}[\zeta]_{<\ell}$ (has degree smaller
than $\ell$ in $\zeta$). Clearly $\widetilde{g}$ satisfies the
assumption of the lemma on $\widetilde{g}'$, so by the induction
hypothesis is of the form $\widetilde{g}=(\phi-a)(\widetilde{u})+\gamma,$
with $\widetilde{u}\in K_{\Lambda}[\zeta]$ and $\gamma\in\mathbb{C}$.
It follows that so is $g,$ with $u=\widetilde{u}+v.$

\bigskip{}

\textbf{Case $g_{\ell}\in\mathbb{C}$ and $a=q^{\ell}.$ }We claim
that this case \emph{does not occur}. We have
\[
g'=(g_{\ell}\ell\zeta'+g_{\ell-1}')\zeta^{\ell-1}\mod K_{\Lambda}[\zeta]_{<\ell-1}
\]
and $f$ must be, according to Lemma~\ref{lem:Lemma A.7}, of degree
$\ell-1.$ Comparing coefficients of $\zeta^{\ell-1}$ we arrive at
\[
g_{\ell}\ell\zeta'+g_{\ell-1}'=q^{\ell}(\phi-1)(f_{\ell-1})+\gamma_{1}
\]
where $\gamma_{1}=\gamma$ if $\ell=1$ and $\gamma_{1}=0$ otherwise.
By the same argument as above, on   residual points, $f_{\ell-1}=w'$
for some meromorphic function $w$, and $w=u+r\zeta+sz$ for $u\in K_{\Lambda},\,\,\,r,s\in\mathbb{C}.$
Integrating we get that
\[
g_{\ell}\ell\zeta+g_{\ell-1}=q^{\ell-1}(\phi-q)(w)+\gamma_{1}z+\beta=q^{\ell-1}(\phi-q)(u)+rq^{\ell-1}f_{\zeta}+\gamma_{1}z+\beta
\]
($\beta\in\mathbb{C}).$ This contradicts Lemma \ref{lem:alg indep of z and zeta},
as $\zeta$ does not show up on the right hand side.

\bigskip{}

\textbf{Case $g_{\ell}\notin\mathbb{C}$ and $a\ne q^{\ell+1},q^{\ell+2}.$
}In this case $g'\equiv g_{\ell}'\zeta^{\ell}\mod K_{\Lambda}[\zeta]_{<\ell}.$
Since $a\ne q^{\ell+2},$ Lemma~\ref{lem:Lemma A.7} shows that $f$
must be of degree $\ell$ in $\zeta$ and, comparing coefficients
of $\zeta^{\ell}$,
\[
g_{\ell}'=(q^{\ell+1}\phi-a)(f_{\ell}).
\]
As before, this implies that $f_{\ell}=w'$ for some meromorphic $w$,
and that $w=v+r\zeta+sz$ for $v\in K_{\Lambda},$ $r,s\in\mathbb{C}.$
Integrating this gives
\[
g_{\ell}=(q^{\ell}\phi-a)(v+r\zeta+sz)+\alpha
\]
($\alpha\in\mathbb{C})$. As $a\ne q^{\ell+1}$ we must have $r=s=0,$
since the left hand side is elliptic, $\phi(z)=qz$ and $\phi(\zeta)=q\zeta\mod K_{\Lambda}.$
Note $(\phi-a)(v\zeta^{\ell})\equiv(q^{\ell}\phi-a)(v)\zeta^{\ell}\mod K_{\Lambda}[\zeta]_{<\ell}$.
Letting 
\[
\widetilde{g}=g-(\phi-a)(v\zeta^{\ell})=\sum_{i=0}^{\ell}\widetilde{g}_{i}\zeta^{i}
\]
we see that (a) $\widetilde{g}'$ satisfies the hypothesis of the
Lemma, with $\widetilde{f}=f-(v\zeta^{\ell})'$ (b) $\widetilde{g}_{\ell}=\alpha\in\mathbb{C}$,
hence we are in a case studied before. Thus $\widetilde{g}=(\phi-a)(\widetilde{u})+\beta$
(for some $\widetilde{u}\in K_{\Lambda}[\zeta],$ $\beta\in\mathbb{C})$,
and $u=\widetilde{u}+v\zeta^{\ell}$ solves our problem.

\bigskip{}

\textbf{Case $g_{\ell}\notin\mathbb{C}$ and $a=q^{\ell+1}.$ }Up
to the point where
\[
g_{\ell}=(q^{\ell}\phi-a)(v+r\zeta+sz)+\alpha
\]
the proof is as in the previous case. Furthermore, we may assume that
$s=0$ since $(q^{\ell}\phi-a)(z)=0,$ and that $\alpha=0$ because
it is of the form $(q^{\ell}\phi-a)(\alpha_{0})$ for $\alpha_{0}\in\mathbb{C}$
so can be ``swallowed'' in $v.$ Dividing by $q^{\ell}$ we may
write
\[
g_{\ell}=(\phi-q)(h+e\zeta)
\]
with $h\in K_{\Lambda}$ and $e\in\mathbb{C}.$ A direct computation
shows that
\[
g_{\ell}\zeta^{\ell}\equiv q^{-\ell}(\phi-q^{\ell+1})(h\zeta^{\ell}+\frac{e\zeta^{\ell+1}}{\ell+1})\mod K_{\Lambda}[\zeta]_{<\ell}.
\]
As before, letting $\widetilde{g}=g-q^{-\ell}(\phi-q^{\ell+1})(h\zeta^{\ell}+\frac{e\zeta^{\ell+1}}{\ell+1})\in K_{\Lambda}[\zeta]_{<\ell}$,
this $\widetilde{g}$ satisfies the hypothesis of the Lemma, so by
the induction hypothesis is of the form $(\phi-q^{\ell+1})(\widetilde{u})$,
with some $\widetilde{u}\in K_{\Lambda}[\zeta].$ It follows that
$g=(\phi-q^{\ell+1})(u)$ for an appropriate $u\in K_{\Lambda}[\zeta].$

\bigskip{}

\textbf{Case $g_{\ell}\notin\mathbb{C}$ and $a=q^{\ell+2}.$ }By
Lemma \ref{lem:Lemma A.7} $f$ has degree $\ell$ or $\ell+1$ in
$\zeta,$ and in the second case $f_{\ell+1}\in\mathbb{C}.$ Assume
first that we are in this second case, so $0\ne f_{\ell+1}\in\mathbb{C}.$
Comparing coefficients of $\zeta^{\ell}$ in $g'$ we get
\[
g'_{\ell}=q^{\ell+1}(\phi-q)\{f_{\ell+1}(\ell+1)\zeta+f_{\ell}\}.
\]
As argued before, $h=f_{\ell+1}(\ell+1)\zeta+f_{\ell}$, though not
periodic, has a  periodic polar part, so if $\xi$ is a  residual point
for it, so is $\xi+\omega$ for $\omega\in\Lambda.$ Had there been
a residual point for $h$, we could therefore take such a point $0\ne\xi$
of minimal absolute value, and then $\xi/q$ would be a residual point
for the right hand side. We conclude that $h$ has no residual points
at all, hence has a meromorphic primitive. This contradicts, however,
Lemma \ref{lem:The eta lemma}. Thus we can not have $f_{\ell+1}\ne0,$
and $f$ has degree $\ell.$

It follows that
\[
g'_{\ell}=q^{\ell+1}(\phi-q)(f_{\ell}).
\]
As before, this implies that $f_{\ell}$ has no residual points, hence
admits a primitive: $f_{\ell}=w'$, and $w=v+r\zeta+sz$ with $v\in K_{\Lambda},$
$r,s\in\mathbb{C}.$ Thus
\[
g_{\ell}=q^{\ell}(\phi-q^{2})(v+r\zeta+sz)+\beta,
\]
($\beta\in\mathbb{C})$, but we can ``swallow'' $\beta$ in $v$,
so we may assume $\beta=0.$ As the left hand side is elliptic, this
forces $r=s=0,$ and $g_{\ell}=q^{\ell}(\phi-q^{2})(v).$ This yields
\[
g_{\ell}\zeta^{\ell}\equiv(\phi-q^{\ell+2})(v\zeta^{\ell})\mod K_{\Lambda}[\zeta]_{<\ell}.
\]
Replacing $g$ by $\widetilde{g}=g-(\phi-q^{\ell+2})(v\zeta^{\ell})\in K_{\Lambda}[\zeta]_{<\ell}$
we may apply induction to conclude that $\widetilde{g}=(\phi-q^{\ell+2})(\widetilde{u}),$
for $\widetilde{u}\in K_{\Lambda}[\zeta],$ hence $g=(\phi-q^{\ell+2})(u)$
with $u=\widetilde{u}+v\zeta^{\ell}.$ This concludes the proof of
the Lemma.
\end{proof}
We can now finish the proof of  Proposition~\ref{prop:technical_elliptic}.
\begin{proof}
Set
\[
g=\sum_{i=0}^{M}g_{i}z^{i},\,\,\,f=\sum_{i=0}^{M}f_{i}z^{i},\,\,\,p=\sum_{i=0}^{M}p_{i}z^{i}
\]
where $g_{i},f_{i}\in K_{\Lambda}[\zeta]$ and $p_{i}\in\mathbb{C}.$
For $i\notin[0,M]$ let $g_{i}=f_{i}=p_{i}=0.$ Equating the coefficients
of $z^{i}$ in $(\ref{eq:delta-phi})$ we get
\begin{equation}
g_{i-1}'+(i-c)g_{i}=(q^{i}\phi-a)(f_{i})+p_{i}.\label{eq:recursion}
\end{equation}
For $i=M+1$ this gives $g_{M}\in\mathbb{C},$ so
\[
g_{M}=(q^{M}\phi-a)(u_{M})+\beta_{M}
\]
with $u_{M}\in K_{\Lambda}[\zeta]$ and $\beta_{M}\in\mathbb{C}$
trivially (e.g. $u_{M}=0,$ $\beta_{M}=g_{M}).$ 

For $i=M,$ substituting the latter expression for $g_{M}$ in $(\ref{eq:recursion})$
we find that
\[
g_{M-1}'=(q^{M}\phi-a)(\widetilde{f}_{M-1})+\gamma_{M-1}
\]
for some $\widetilde{f}_{M-1}\in K_{\Lambda}[\zeta]$ and $\gamma_{M-1}\in\mathbb{C}.$
Lemma \ref{lem:A9} ensures that
\[
g_{M-1}=(q^{M-1}\phi-a)(u_{M-1})+\beta_{M-1}
\]
with $u_{M-1}\in K_{\Lambda}[\zeta]$ and $\beta_{M-1}\in\mathbb{C}.$
Iterating, using the recursion formula $(\ref{eq:recursion})$ and
Lemma \ref{lem:A9}, we solve succesively for
\[
g_{i}=(q^{i}\phi-a)(u_{i})+\beta_{i},
\]
with $u_{i}\in K_{\Lambda}[\zeta]$ and $\beta_{i}\in\mathbb{C},$
giving the desired equation
\[
g=(\phi-a)\left(\sum_{i=0}^{M}u_{i}z^{i}\right)+\sum_{i=0}^{M}\beta_{i}z^{i}.
\]
The final reduction of $\widetilde{p}=\sum_{i=0}^{M}\beta_{i}z^{i}$
to a monomial $dz^{r}$ (if $a=q^{r})$ or 0 (otherwise), by a suitable
modification of the $u_{i}$, is obvious, as ($\phi-a)(z^{i})=(q^{i}-a)z^{i}$. 
\end{proof}

\section{Appendix B: Galois invariants\label{subsec:Galois-invariants}}

We want to prove a technical result, for which we could not find a
reference.

Let $K$ be a ($\phi,\delta)$-field with $K^{\phi}=C$ algebraically
closed. Let $\widetilde{C}$ be a differential closure of $C$. It
is unique up to isomorphism. Let
\[
K^{\dagger}=\widetilde{C}\otimes_{C}K
\]
(a domain, but in general not a field).

Let $\phi(y)=Ay$
be a  difference system with  $A\in \GL_{n}(K)$ and let $\mathcal{L}=K\langle U \rangle_{\delta}$ be a PPV extension for this system over $K$. We   assume that $\mathcal{L}$ is a field. We consider 
\[
R=K[U,\det(U)^{-1}],\,\,\,\mathcal{R}=K\{U,\det(U)^{-1}\}_{\delta}
\]
the PV ring and the $\delta$-parametrized PV ring inside $\mathcal{L}$.  Note that $R^{\phi}=\mathcal{R}^{\phi}=C$. 
Let
\[
R^{\dagger}=\widetilde{C}\otimes_{C}R=K^{\dagger}[U,\det(U)^{-1}],\,\,\,\mathcal{R}^{\dagger}=\widetilde{C}\otimes_{C}\mathcal{R}=K^{\dagger}\{U,\det(U)^{-1}\}_{\delta}.
\]

Let $
\mathcal{\widetilde{G}}=\mathrm{Aut}_{\phi,\delta}(\mathcal{R}^{\dagger}/K^{\dagger}).
$
This is the $\delta$-parametrized Galois group over $\widetilde{C}$
and we view it as a subgroup of $\GL_{n}(\widetilde{C})$. If $\mathcal{G}$
is the $\delta$-parametrized Galois group scheme attached  to $\mathcal{R}$, then $\widetilde{\mathcal{G}}=\mathcal{G}(\widetilde{C}).$
\begin{prop}\label{prop:refinmentparamgalcorrespondence}
If $a\in\mathcal{R}^{\dagger}-K^{\dagger},$ then there exists a $g\in\widetilde{\mathcal{G}}$
with $g(a)\ne a.$
\end{prop}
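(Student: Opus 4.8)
The statement is a relative version of the parametrized Galois correspondence $\widetilde{\mathcal{L}}^{\widetilde{\mathcal{G}}} = \widetilde{K}$: we want the action of $\widetilde{\mathcal{G}}$ to already separate points of the \emph{ring} $\mathcal{R}^\dagger$ from $K^\dagger$, not just of the fraction field $\widetilde{\mathcal{L}}$ from $\widetilde{K} = \operatorname{Frac}(K^\dagger)$. The plan is to reduce to the known statement over the fraction fields and then descend back to the rings. First I would set $\widetilde{K} = \operatorname{Frac}(K^\dagger)$ and $\widetilde{\mathcal{L}} = \operatorname{Frac}(\mathcal{R}^\dagger)$, and recall from \S\ref{subsec:The--parametrized-Galois} that $\mathcal{R}^\dagger$ is a PPV ring over $K^\dagger$ (it is $\phi$-simple since $\mathcal{R}$ is and $\phi$ acts trivially on $\widetilde{C}$), $\widetilde{\mathcal{L}}$ is the associated PPV extension with $\widetilde{\mathcal{L}}^\phi = \widetilde{C}$, and every $(\phi,\delta)$-automorphism of $\widetilde{\mathcal{L}}$ over $\widetilde{K}$ restricts to a $(\phi,\delta)$-automorphism of $\mathcal{R}^\dagger$ over $K^\dagger$ (because $\sigma(U) = U V_\sigma$ with $V_\sigma \in \GL_n(\widetilde{\mathcal{L}}^\phi) = \GL_n(\widetilde{C})$, so $\sigma$ preserves $K^\dagger[U,\det U^{-1}]$ and commutes with $\delta$). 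Hence $\widetilde{\mathcal{G}} = \operatorname{Aut}_{\phi,\delta}(\mathcal{R}^\dagger/K^\dagger) = \operatorname{Aut}_{\phi,\delta}(\widetilde{\mathcal{L}}/\widetilde{K})$, and by the parametrized Galois correspondence $\widetilde{\mathcal{L}}^{\widetilde{\mathcal{G}}} = \widetilde{K}$.

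Now take $a \in \mathcal{R}^\dagger - K^\dagger$. If $a \notin \widetilde{K}$, then directly $a \notin \widetilde{\mathcal{L}}^{\widetilde{\mathcal{G}}}$, so some $g \in \widetilde{\mathcal{G}}$ moves $a$, and we are done. The substantive case is $a \in \mathcal{R}^\dagger \cap \widetilde{K}$ but $a \notin K^\dagger$; here I must rule this case out, i.e. show $\mathcal{R}^\dagger \cap \widetilde{K} = K^\dagger$, or equivalently $\mathcal{R} \cap \widetilde{K} = K^\dagger$ intersected appropriately — more precisely, that $\mathcal{R}^\dagger$ is \emph{algebraically closed} in $\widetilde{\mathcal{L}}$ in the weak sense that elements of $\mathcal{R}^\dagger$ that lie in the fraction field of $K^\dagger$ already lie in $K^\dagger$. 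The cleanest route: write $a = b/c$ with $b, c \in K^\dagger$, $c \neq 0$; then $ac = b$ with $a \in \mathcal{R}^\dagger$, so the ideal $\mathfrak{a} = \{ x \in K^\dagger : xa \in K^\dagger \}$ of $K^\dagger$ is nonzero (it contains $c$) and, since $a$ is $\phi$-fixed up to the scalar action and $\phi$ is trivial on $\widetilde{C}$, one checks $\mathfrak{a}$ is a $\phi$-ideal of $K^\dagger$. But $K^\dagger = \widetilde{C} \otimes_C K$ is $\phi$-simple (again because $\phi$ is trivial on $\widetilde{C}$ and $K$ is $\phi$-simple, being a field), hence $\mathfrak{a} = K^\dagger$, so $1 \cdot a = a \in K^\dagger$. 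This contradiction shows the second case never arises.

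The main obstacle — and the only place genuine care is needed — is establishing that restriction gives an \emph{isomorphism} (not merely a homomorphism) $\operatorname{Aut}_{\phi,\delta}(\widetilde{\mathcal{L}}/\widetilde{K}) \xrightarrow{\sim} \operatorname{Aut}_{\phi,\delta}(\mathcal{R}^\dagger/K^\dagger)$, i.e. that every $(\phi,\delta)$-automorphism of $\mathcal{R}^\dagger/K^\dagger$ extends (uniquely) to $\widetilde{\mathcal{L}}/\widetilde{K}$, and that $\widetilde{K}$ is exactly the full field of $\widetilde{\mathcal{G}}$-invariants in $\widetilde{\mathcal{L}}$ — this last is Lemma 6.19 of \cite{H-S08}, whose proof applies verbatim as noted there. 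Surjectivity of the restriction map follows because $\mathcal{R}^\dagger$ $\delta$-generates $\widetilde{\mathcal{L}}$ as a pseudofield and a $(\phi,\delta)$-automorphism of $\mathcal{R}^\dagger$ extends to its total ring of fractions by the universal property of localization; injectivity is immediate since $\widetilde{\mathcal{L}} = \operatorname{Frac}(\mathcal{R}^\dagger)$. Everything else is the $\phi$-simplicity argument above, which is routine. Thus in all cases there is $g \in \widetilde{\mathcal{G}}$ with $g(a) \neq a$, as claimed.
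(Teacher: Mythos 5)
The first half of your argument (identifying $\widetilde{\mathcal{G}}=\mathrm{Aut}_{\phi,\delta}(\mathcal{R}^{\dagger}/K^{\dagger})$ with $\mathrm{Aut}_{\phi,\delta}(\widetilde{\mathcal{L}}/\widetilde{K})$ and invoking $\widetilde{\mathcal{L}}^{\widetilde{\mathcal{G}}}=\widetilde{K}$) is fine and is already in the paper. The genuine gap is in the only case that carries content, namely $a\in\mathcal{R}^{\dagger}\cap\widetilde{K}$, $a\notin K^{\dagger}$. Your proposed way of ruling it out rests on the claim that $\mathfrak{a}=\{x\in K^{\dagger}:xa\in K^{\dagger}\}$ is a $\phi$-ideal, justified by ``$a$ is $\phi$-fixed up to the scalar action.'' This is false for a general element $a$ of $\mathcal{R}^{\dagger}$: the generators satisfy $\phi(U)=AU$ with $A\in\GL_{n}(K)$, so $\phi(a)$ is an essentially unrelated element of $\mathcal{R}^{\dagger}$. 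From $xa\in K^{\dagger}$ you only get $\phi(x)\phi(a)=\phi(xa)\in K^{\dagger}$, which says $\phi(x)$ lies in the denominator ideal of $\phi(a)$, not of $a$; so $\phi(\mathfrak{a})$ need not be contained in $\mathfrak{a}$ and the $\phi$-simplicity of $K^{\dagger}$ cannot be applied. The obvious repairs do not work either: the ideal attached to all the $\phi^{n}(a)$ (or to the whole $\phi$-stable module $\mathcal{R}^{\dagger}\cap\widetilde{K}$) is $\phi$-stable, but there is no finiteness hypothesis guaranteeing it is nonzero. Note that the equality $\mathcal{R}^{\dagger}\cap\mathrm{Frac}(K^{\dagger})=K^{\dagger}$ you are trying to prove is exactly what the paper flags as ``not a-priori clear'' and is, given the fraction-field correspondence, equivalent to the Proposition itself; so without an independent argument for it your proof is essentially circular at this point.

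The paper's proof avoids this entirely: it considers $d=a\otimes1-1\otimes a\in\mathcal{R}^{\dagger}\otimes_{K^{\dagger}}\mathcal{R}^{\dagger}=\widetilde{C}\otimes_{C}(\mathcal{R}\otimes_{K}\mathcal{R})$, uses the torsor theorem to identify $\mathcal{R}\otimes_{K}\mathcal{R}$ with $C\{\mathcal{G}\}\otimes_{C}\mathcal{R}$, and then applies the differential Nullstellensatz over the differentially closed field $\widetilde{C}$ to show that the intersection of the prime $\delta$-ideals $\mathfrak{m}_{g}$, over all $g\in\mathcal{G}(\widetilde{C})$, is zero in $C\{\mathcal{G}\}$. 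Hence invariance of $a$ under all of $\widetilde{\mathcal{G}}$ forces $a\otimes1=1\otimes a$, i.e.\ $a\in K^{\dagger}$. Some argument of this Galois-theoretic/geometric kind (or another genuine proof that $\mathcal{R}^{\dagger}\cap\widetilde{K}=K^{\dagger}$) is needed; as written, your second case is not closed.
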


The Proposition is usually phrased when $K^{\dagger}$ is replaced
by $Frac(K^{\dagger})$ and $\mathcal{R}^{\dagger}$ by $Frac(\mathcal{R}^{\dagger}),$
the $\delta$-parametrized PV extension. However, it is not a-priori
clear that $\mathcal{R}^{\dagger}\cap Frac(K^{\dagger})=K^{\dagger}.$
\begin{proof}
We have adapted  slightly the arguments of \cite[Lemma 3.1]{DVHW} to our context.  Consider
\[
d=a\otimes1-1\otimes a\in\mathcal{R}^{\dagger}\otimes_{K^{\dagger}}\mathcal{R}^{\dagger}=\widetilde{C}\otimes_{C}(\mathcal{R}\otimes_{K}\mathcal{R}).
\]
Since $a\notin K^{\dagger},$ $d\ne0.$ This is clear if $\mathcal{R}^{\dagger}$
and $K^{\dagger}$ are replaced by $\mathcal{R}$ and $K$, since
$K$ is a field. Expanding $a=\sum\omega_{\alpha}\otimes a_{\alpha}$
with $\{\omega_{\alpha}\}$ a basis of $\widetilde{C}$ over $C$,
we see that $d=\sum\omega_{\alpha}\otimes(a_{\alpha}\otimes1-1\otimes a_{\alpha}).$
Since $a \notin K^{\dagger}$, there is some   $a_{\alpha_0}\in\mathcal{R}-K$ so that  $a_{\alpha_0}\otimes1-1\otimes a_{\alpha_0}$ is a nonzero element in $\mathcal{R}\otimes_K \mathcal{R}$.

The torsor theorem  is equivalent to the fact that the inclusion $\left(\mathcal{R} \otimes_K  \mathcal{R} \right)^{\phi} \hookrightarrow \mathcal{R} \otimes_K  \mathcal{R}$ extends to a $\mathcal{R}$-$(\phi,\delta)$-isomorphism 
\[  \Theta:  \left(\mathcal{R} \otimes_K  \mathcal{R} \right)^{\phi}  \otimes_{C}   \mathcal{R}  \rightarrow \mathcal{R} \otimes_K  \mathcal{R}.\]
The $C$-$\delta$-algebra $C\{ \mathcal{G}\}=\left(\mathcal{R} \otimes_K  \mathcal{R} \right)^{\phi}$ represents the functor $\mathcal{G}$ (see Proposition 6.18 in  \cite{H-S08} for the case where $C$ is differentially closed and the discussion after its proof to see how it can be adapted to our situation where ${Frac(\mathcal{R})}^{\phi}= C$.) An element $g \in \mathcal{G}(\widetilde{C})$ thereby corresponds to a $C$-$\delta$-algebra morphism $\psi$ from $C\{\mathcal{G}\}$ to $\widetilde{C}$ whose kernel is a prime $\delta$-ideal $\mathfrak{m}_{g}$.  Its action on an element $1 \otimes b \in\widetilde{C} \otimes_C \mathcal{R}$ is given by $ (\psi \otimes id) \circ \Theta^{-1} (b \otimes 1)$ (see for instance the discussion in \cite[Lemma 3.1]{DVHW} in the analogous context where the roles of $\delta$ and $\phi$ are interchanged). A nonzero element $1\otimes b $ of $ \widetilde{C} \otimes_C \mathcal{R}$  is $\mathcal{G}(\widetilde{C})$-invariant if $ \Theta^{-1}(b \otimes 1 -1 \otimes b)$ belongs to $\cap_{g \in \mathcal{G}(\widetilde{C})} \mathfrak{m}_{g} \otimes_C \mathcal{R}$. Now, we claim that  $\cap_{g \in \mathcal{G}(\widetilde{C})} \mathfrak{m}_{g}$ is the zero ideal in $C\{\mathcal{G}\}$. Indeed, since $C\{G \}$ is reduced, differentially finitely generated over $C$ and $\widetilde{C}$ is differentially closed, this is a direct consequence of the differential Nullstellensatz (see \cite{M-MTDF}). To conclude, we have proved that  if a nonzero element $1\otimes b \in \mathcal{R}^{\dagger}$ is invariant under $\mathcal{G}(\widetilde{C})$ then $b\otimes 1 -1 \otimes b =0$ in $\mathcal{R}\otimes_{K} \mathcal{R}$.

Therefore, there exists an element $g$ in $\mathcal{G}(\widetilde{C})$ such that $g(\omega_{\alpha_0} \otimes a_{\alpha_0}) \neq \omega_{\alpha_0} \otimes a_{\alpha_0}$. This   yields $g(a) \neq a$ and concludes the proof.
\end{proof}

\end{document}